\theoremstyle{plain}
\newtheorem{theorem}{Theorem}[section]
\newtheorem{lemma}[theorem]{Lemma}
\newtheorem{corollary}[theorem]{Corollary}
\newtheorem{prop}[theorem]{Proposition}
\newtheorem{defn}[theorem]{Definition}
\newtheorem{example}[theorem]{Example}
\newtheorem{remark}[theorem]{Remark}
\numberwithin{equation}{section}
\newcommand{\Mat}{\operatorname{Mat}}
\newcommand{\Image}{\operatorname{Im}}
\newcommand{\Gr}{\operatorname{Gr}}
\newcommand{\bi}{\mathbf{i}}
\newcommand{\twobar}{/\kern-0.5em/}
\newcommand{\threebar}{/\kern-0.5em/\kern-0.5em/}
\newcommand{\Z}{\mathbb{Z}}
\newcommand{\R}{\mathbb{R}}
\newcommand{\C}{\mathbb{C}}
\newcommand{\bS}{\mathbb{S}}
\newcommand{\bP}{\mathbb{P}}
\newcommand{\cD}{\mathcal{D}}
\newcommand{\cV}{\mathcal{V}}
\newcommand{\cE}{\mathcal{E}}
\newcommand{\Hom}{\mathrm{Hom}}
\newcommand{\End}{\mathrm{End}}
\newcommand{\GL}{\mathrm{GL}}
\newcommand{\cM}{\mathcal{M}}
\newcommand{\cO}{\mathcal{O}}
\newcommand{\pt}{\mathrm{pt}}
\begin{document}
	
\title{K\"ahler geometry of quiver varieties and machine learning}

\author{George Jeffreys}
\address{Department of Mathematics and Statistics, Boston University, 111 Cummington Mall, Boston MA 02215, USA}
\email{georgej@bu.edu}

\author{Siu-Cheong Lau}
\address{Department of Mathematics and Statistics, Boston University, 111 Cummington Mall, Boston MA 02215, USA}
\email{lau@math.bu.edu}

\maketitle

\begin{abstract}
	We develop an algebro-geometric formulation for neural networks in machine learning using the moduli space of framed quiver representations.  We find natural Hermitian metrics on the universal bundles over the moduli which are compatible with the GIT quotient construction by the general linear group, and show that their Ricci curvatures give a K\"ahler metric on the moduli.  Moreover, we use toric moment maps to construct  activation functions, and prove the universal approximation theorem for the multi-variable activation function constructed from the complex projective space.
\end{abstract}

\section{Introduction}

Machine learning by artificial neural networks has made exciting developments and has been applied to many branches of science in recent years.  Mathematically, stochastic gradient flow over a matrix space (or called the weight space) is the central tool.  The non-convex nature of the cost function has made the problem very interesting.  Current research has focused on different types of stochastic gradient flows and finding new types of networks, which have brought great improvements of computational efficiency.

In geometry and physics, the applications of gradient flow and Morse theory have a long history and have brought numerous fundamental breakthroughs.   For instance, the gradient flow of the Yang-Mills functional is used to find Hermitian Yang-Mills connections, whose existence in a stable holomorphic vector bundle is proved by Donaldson \cite{Don} and Uhlenbeck-Yau \cite{UY}.  The celebrated Ricci flow found by Hamilton \cite{Hamilton}, which is a crucial tool to solve the three-dimensional Poincar\'e conjecture, is essentially a gradient flow \cite{Perelman,Perelman2}.  Its K\"ahler analog has been an important tool in finding K\"ahler-Einstein metrics on Fano manifolds \cite{Yau-KE,Tian,Donaldson,CSW,CDS1,CDS2,CDS3}.  In these works, GIT quotients and finite-dimensional models have provided important motivations and guidelines \cite{Donaldson-GIT}.  Hamiltonian Floer theory \cite{Floer}, which is essentially Morse theory on the loop space, was invented to solve the Arnold conjecture \cite{FHS,Ono,FO}.  Various versions of Floer theory have been crucial ingredients in the study of mirror symmetry.

In this paper, we would like to develop a foundational \emph{algebro-geometric formulation} for neural networks in machine learning.  The theory of \emph{quiver representations}, which is a well-developed branch of mathematics motivated from Lie theory and has been an important tool in mathematical physics, will be well suited for this purpose.  

A quiver representation assigns to a directed graph $Q$ a bunch of vector spaces for the vertices and a bunch of linear maps for the arrows.  Such a construction is in common with neural networks.  However, in order to use quiver theory to formulate machine-learning neural networks, there are two main differences between these two subjects that needs to be addressed.

\begin{enumerate}
	\item Compactness of moduli space.  A moduli space of quiver representations \cite{King} is defined by identifying \emph{isomorphic} quiver representations using GIT quotients.  As a result, the moduli space is \emph{compact} when the quiver has no oriented cycle.  On the other hand, the matrix space used in neural networks is non-compact.  In machine learning, isomorphic quiver representations may correspond to physically different input or output information and in general cannot be identified.
	\item Non-linearity.  \emph{Activation functions}, which are non-linear maps on the vector spaces over the vertices, serve as a crucial ingredient to achieve machine learning of non-linear functions.  Such non-linearity jumps out of the category of quiver representations.  This is also related to the first point above.  Namely, such non-linear maps are not necessarily equivariant under the group of automorphisms of quiver representations.
\end{enumerate}

For the first point, we shall use framed quiver representations, which were first found by Nakajima \cite{Nakajima-Duke} in the study of affine Lie algebras.  A framed representation assigns to each vertex a vector space together with a choice of `framing' (for instance it is a basis in the simplest situation).  In the applications considered here, such a decoration makes sure that isomorphic framed quiver representations correspond to the same physical state.  Note that framed quiver moduli $\cM$ are also compact when $Q$ has no oriented cycle.  \emph{Compactness} is one of the main advantages of our algebro-geometric formulation, which makes sure the convergence of a gradient flow.

In this formulation, the weight matrices are encoded as morphisms between the \emph{universal vector bundles} (over the framed quiver moduli) associated to the vertices.  The data flow is encoded by sections of the universal bundles, which are sent from one to another bundles by the morphisms associated to the arrows of $Q$.  The cost function, and hence its gradient flow, is defined on the framed quiver moduli $\cM$.  

In particular, the \emph{critical points and the gradient flow are controlled by the topology of $\cM$} (for instance, the Morse inequalities).
The topology of a framed quiver moduli is well-understood by the work of Reineke \cite{Reineke} when $Q$ has no oriented cycle.  $\cM$ is an iterated Grassmann bundle, and its Poincar\'e polynomial is a product of that of the Grassmannians.

For the purpose of gradient flow, one needs to choose a \emph{K\"ahler metric} on $\cM$, and also Hermitian metrics on the universal vector bundles.  As a result, we have found metrics that are defined by explicit beautiful formulae.  These metrics are not just $U_{\vec{d}}$-equivariant so that they descend to symplectic quotients, but are also $\GL_{\vec{d}}$-equivariant and hence compatible with the GIT construction of $\cM$.  Moreover, they are compatible with the iterated Grassmann structure found by Reineke.  In application, such metrics would simplify the actual computational algorithm over the quiver moduli.  They are summarized as follows.




\begin{theorem}[Combining Theorem \ref{thm:metric}, \ref{thm:Ricci},\ref{thm:metric-with-cycles}] \label{thm:metric-int}
	Let $Q$ be an arbitrary quiver.  
	Fix a vertex $i\in Q_0$.  
	Let $\rho$ be the row vector whose entries are 
	$V_\gamma e^{\left(t(\gamma)\right)}$,
	where $\gamma$ is any path whose head $h(\gamma)$ is $i$ (including the trivial path), $t(\gamma)$ denotes its tail, and $V_{\gamma} \in \Hom(\C^{d_{t(\gamma)}},\C^{d_{h(\gamma)}})$ is the representing matrix of $\gamma$.  Then
	$$(\rho_i\rho_i^*)^{-1} = \left(\sum_{h(\gamma)=i} \left(V_{\gamma} e^{\left(t(\gamma)\right)}\right)\left(V_{\gamma} e^{\left(t(\gamma)\right)}\right)^*\right)^{-1}$$ 
	is $\GL_{\vec{d}}$-equivariant, and it descends to a metric on the universal bundle $\cV_i$ over a certain domain of convergence $\cM^\circ$.
	
	When $Q$ has no oriented cycle, $\cM^\circ = \cM$.  Moreover, the Ricci curvature of the induced metric on $\bigotimes_{i\in Q_0} \cV_i$ gives a K\"ahler metric on $\cM$.
\end{theorem}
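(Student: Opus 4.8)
The plan is to prove the three component statements in turn. \emph{Equivariance and descent.} First I would compute how $\rho_i$ transforms under $g=(g_j)_{j\in Q_0}\in\GL_{\vec{d}}$. Since the representation transforms by $V_\alpha\mapsto g_{h(\alpha)}V_\alpha g_{t(\alpha)}^{-1}$ and the framing by $e^{(j)}\mapsto g_j e^{(j)}$, a telescoping product gives $V_\gamma\mapsto g_i V_\gamma g_{t(\gamma)}^{-1}$ for every path $\gamma$ with $h(\gamma)=i$, hence $V_\gamma e^{(t(\gamma))}\mapsto g_i\big(V_\gamma e^{(t(\gamma))}\big)$ and therefore $\rho_i\mapsto g_i\rho_i$. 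Consequently $\rho_i\rho_i^*\mapsto g_i(\rho_i\rho_i^*)g_i^*$ and $(\rho_i\rho_i^*)^{-1}\mapsto (g_i^{-1})^*(\rho_i\rho_i^*)^{-1}g_i^{-1}$, which is exactly the transformation rule of a Hermitian metric on a rank-$d_i$ bundle whose fibre $\C^{d_i}$ carries the tautological $\GL_{d_i}$-action; so on the locus where it is positive definite, $(\rho_i\rho_i^*)^{-1}$ descends to a Hermitian metric on the universal bundle $\cV_i$. The matrix $\rho_i\rho_i^*=\sum_{h(\gamma)=i}\big(V_\gamma e^{(t(\gamma))}\big)\big(V_\gamma e^{(t(\gamma))}\big)^*$ is always positive semidefinite, and is invertible exactly when $\{V_\gamma e^{(t(\gamma))}:h(\gamma)=i\}$ spans $\C^{d_i}$; as this is an infinite sum once $Q$ has an oriented cycle, $\cM^\circ$ is to be defined as the open set on which the sum converges and spans at every vertex.

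\emph{The acyclic case $\cM^\circ=\cM$.} When $Q$ is acyclic only finitely many paths end at each vertex, so convergence is automatic and I need only the spanning condition at every point of $\cM$. For a stable framed representation set $U_j:=\Span\{V_\gamma e^{(t(\gamma))}:h(\gamma)=j\}\subseteq\C^{d_j}$; then $(U_j)_{j\in Q_0}$ is a subrepresentation containing the image of every framing map --- for an arrow $\alpha\colon j\to k$ one has $V_\alpha U_j=\Span\{V_{\alpha\gamma}e^{(t(\gamma))}\}\subseteq U_k$, and the trivial path gives $e^{(j)}\in U_j$. Since stability of a framed representation means precisely that it is generated by its framing, $U_j=\C^{d_j}$ for all $j$; hence $\rho_j\rho_j^*$ is invertible everywhere on $\cM$, i.e.\ $\cM^\circ=\cM$.

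\emph{The Ricci form is K\"ahler.} Here I would invoke Reineke's realisation of $\cM$ (for acyclic $Q$) as an iterated Grassmann bundle $\cM=\cM_m\to\cM_{m-1}\to\cdots\to\cM_0=\pt$. Fix a topological order of the vertices so that every arrow raises the order; grouping the paths into $v_i$ by their last arrow gives the block identity $\rho_i=\big[\,e^{(i)}\ \big|\ (V_\alpha\rho_k)_{\alpha\colon v_k\to v_i}\,\big]$, equivalently $\rho_i\rho_i^*=e^{(i)}(e^{(i)})^*+\sum_{\alpha\colon v_k\to v_i}V_\alpha(\rho_k\rho_k^*)V_\alpha^*$. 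This exhibits $\cV_i$ simultaneously as a globally generated quotient of a trivial bundle --- via $\rho_i$, with quotient metric $(\rho_i\rho_i^*)^{-1}$ --- and as the tautological quotient bundle on the Grassmann bundle $\cM_i=\Gr\big(d_i,\ \underline{\C}^{\,n_i}\oplus\bigoplus_{\alpha\colon v_k\to v_i}\cV_k\big)\to\cM_{i-1}$ with its recursively induced quotient metric. The first description shows that each $\tr F_{\cV_i}=-\partial\bar\partial\log\det\big((\rho_i\rho_i^*)^{-1}\big)=\partial\bar\partial\log\det(\rho_i\rho_i^*)$ (computed in a local holomorphic frame) is a closed real semipositive $(1,1)$-form; the second shows that along the fibres of $\cM_i\to\cM_{i-1}$ it restricts to the strictly positive Fubini--Study form of the Pl\"ucker polarisation $\det\cV_i$. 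Since $\det\big(\bigotimes_{i\in Q_0}\cV_i\big)=\bigotimes_i(\det\cV_i)^{\otimes\prod_{j\ne i}d_j}$, the Ricci curvature of $\bigotimes_i\cV_i$ is $\sum_i\big(\prod_{j\ne i}d_j\big)\tr F_{\cV_i}$, and I would prove it positive definite by descending induction on the tower: write it as $\big(\prod_{j\ne m}d_j\big)\tr F_{\cV_m}$ plus a form pulled back from $\cM_{m-1}$, which by the inductive hypothesis (applied with the appropriate positive coefficients) is K\"ahler on $\cM_{m-1}$; the pulled-back term is then semipositive with kernel exactly the vertical tangent bundle of $\cM_m\to\cM_{m-1}$, while $\big(\prod_{j\ne m}d_j\big)\tr F_{\cV_m}$ is strictly positive on that vertical space, so the two kernels meet only in $0$ and the sum is a K\"ahler form. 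Closedness and reality hold automatically for a Chern curvature form.

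I expect the crux to lie in the third part, specifically in the compatibility of the explicit formula $(\rho_i\rho_i^*)^{-1}$ with Reineke's iterated Grassmann bundle structure: one must verify that the quotient metric coming from the global presentation agrees with the metric built up recursively stage by stage, so that curvature can be analysed fibrewise along the tower. Granting that bookkeeping, semipositivity of $\tr F_{\cV_i}$ is the standard behaviour of a quotient of a trivial bundle, the fibrewise strict positivity is that of the universal quotient bundle on a Grassmannian, and the global strict positivity is the tower induction above.
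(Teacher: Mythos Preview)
Your argument is correct, and the equivariance check matches the paper verbatim. You diverge from the paper in two substantive places, and both of your alternatives work.

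For positive definiteness in the acyclic case you argue directly from stability: the column spans $U_j=\operatorname{Im}\rho_j$ form a subrepresentation containing $\operatorname{Im} e$, hence equal everything, so each $\rho_j$ is surjective. The paper instead restricts to the moment-map level $\mu^{-1}(I_{\vec d})$ and shows by induction along a topological order that $\rho_i\rho_i^*=I_{d_i}+B_i$ with $B_i\succeq 0$, using the moment-map equation at each vertex. Your route is shorter and stays in the GIT picture; the paper's buys the sharper bound $\rho_i\rho_i^*\succeq I$ on the symplectic slice, which it then recycles as the inductive invariant when treating quivers with oriented cycles.

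For the K\"ahler assertion you run a tower induction on Reineke's iterated Grassmann bundle: Griffiths semipositivity of the globally generated $\cV_i$ gives $\operatorname{tr} F_{\cV_i}\ge 0$, the pulled-back sum from $\cM_{m-1}$ has kernel exactly the vertical tangent space, and $\operatorname{tr} F_{\cV_m}$ is the Fubini--Study form on fibres. The paper instead computes directly that $\partial\bar\partial\log\det(\rho_i\rho_i^*)(v,\bar v)=\bigl\|(\partial_v\rho_i)\!\restriction_{(\operatorname{Im}\rho_i^*)^\perp}\bigr\|_{H_i}^2$, and then argues that simultaneous vanishing over all $i$ means $v$ does not move the point in the quiver Grassmannian, hence $v=0$ in $T\cM$. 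Your approach is more geometric and modular; the paper's is self-contained (it derives semipositivity from scratch rather than citing Griffiths) and produces the explicit pointwise formula $H_T(v,v)=\sum_i\|(\partial_v\rho^{(i)})_2\|_{H_i}^2$ as a byproduct. The compatibility you flag as the crux---that $(\rho_i\rho_i^*)^{-1}$ agrees with the recursively defined quotient metric on the tower---is exactly what the paper isolates and proves as a separate theorem, and the key step there is precisely your block identity $\rho_i\rho_i^*=e^{(i)}(e^{(i)})^*+\sum_{h(\alpha)=i}V_\alpha(\rho_{t(\alpha)}\rho_{t(\alpha)}^*)V_\alpha^*$.
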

The precise definition of $M^\circ$ is given in Section \ref{sec:cycle}.

In this paper, we focus on the framed quiver moduli defined over complex numbers.  In actual applications, we can also restrict to real coefficients.  Then the above formula defines a bundle metric over $\cM_\R$, and the Ricci curvature gives a Riemannian metric on $\cM_\R$.


Now let us address the second point.  Namely, we need to introduce \emph{non-linearity} in addition to the usual theory of quiver representations.  By definition, morphisms between universal vector bundles over $\cM$ are linear along fibers.  They correspond to weight matrices in neural networks.  To introduce non-linearity, we shall treat the universal bundles as fiber bundles and construct suitable \emph{fiber-bundle maps} that play the role of activation functions.

One of the commonly-used activation functions is
$$ \frac{e^{2x}}{1+e^{2x}}: \R \to (0,1). $$
We observe that this function also appears in the base of the symplectic trivialization of the open dense toric orbit of $\bP^1$ as a toric variety:
$$ (\C^\times,\omega_{\C\bP^1}) \cong ((0,1) \times \bS^1,\omega_{\mathrm{std}}), $$
or lifted to the universal cover:
$$ (\C,\pi^*\omega_{\C\bP^1}) \cong ((0,1) \times \R,\omega_{\mathrm{std}}).$$
Here, $\omega_{\C\bP^1}$ denotes the Fubini-Study metric of $\C\bP^1$, that is, the standard area form of the unit sphere; $\omega_{\mathrm{std}} = dx \wedge dy$ is the standard symplectic form.

Similarly, another activation function
$$ \frac{z}{\sqrt{1+|z|^2}}: \C \to \C$$
also arises as a symplectic trivialization:
$ (\C,\omega_{\C\bP^1}) \cong (\{w \in \C: |w|<1\},\omega_\C). $

Motivated from these observations, we consider 
$$\sigma(x)=\left(\frac{e^{2x_i}}{1+\sum_{j=1}^ne^{2x_j}}\right)_{i=1}^n: \R^n \to \Delta$$
and
$$ \psi(\vec{z})=\left(\frac{z_i}{\sqrt{1+\|\vec{z}\|^2}}\right)_{i=1}^n: \C^n \to \{\vec{w} \in \C^n: \|\vec{w}\| < 1\}
$$
as multi-variable activation functions, where $\Delta$ denotes the standard simplex with vertices $0$ and $\epsilon_1,\ldots,\epsilon_n$, the standard basis of $\R^n$.  They arise from symplectomorphisms $(\C^n, \omega_{\C\bP^n}) \cong (\{\vec{w} \in \C^n: \|\vec{w}\| < 1\,\omega_{\C^n}\})$.  More generally, these come from moment maps of \emph{toric varieties} \cite{Guillemin,Abreu}.  We note that $\psi$ has an advantage of being $U(n)$-equivariant.

The \emph{universal approximation theorem} (see for instance \cite{Cybenko,Petrushev,MMC,Pinkus})  provides a theoretical foundation for the success of neural networks.  In existing literature, the theorem was proved for single-variable activation functions.  

In this paper, we prove the universal approximation theorem for the above multivariable function $\sigma = \sigma_{\R^n}$.  Note that $\sigma$ is the softmax function $\left(\frac{e^{2x_i}}{\sum_{j=0}^n e^{2x_j}}\right)_{i=0}^n$ restricted to the hyperplane $x_0=0$ and composed with the projection along $x_0$-direction.  We shall restrict to real coefficients in this theorem.

\begin{theorem}[same as Theorem \ref{thm:app}] \label{thm:app-int}
	Let $K$ be a compact set of $\R^{d_1}$, and $f: K \to \R^{d_3}$ a continuous function.  For any $\epsilon>0$, there exists $d_2 > 0$ and $W_1 \in \Mat(d_2,d_1)$, $W_2 \in \Mat(d_3,d_2)$, $b \in \R^{d_2}$ such that $\|f^U_{W_1,W_2,b} - f\|_{L^2(K)} < \epsilon$.  Here, $	f^U_{W_1,W_2,b}(x) = W_2 \cdot \sigma_{\R^{d_1}}(W_1\cdot x + b)$ is the function coming from the $A_3$ quiver.
\end{theorem}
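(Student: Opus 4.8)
The plan is to reduce the statement to the Stone--Weierstrass theorem by using the asymptotics of the multivariable activation $\sigma$ (acting on the $d_2$-dimensional hidden layer of $A_3$) when all of its inputs are driven to $-\infty$; recall that the $k$-th component of $\sigma(y)$ is $e^{2y_k}/\bigl(1+\sum_{j=1}^{d_2}e^{2y_j}\bigr)$. First I would record the relevant density fact: finite real linear combinations $g(x)=\sum_{k=1}^{N}C_k\,e^{2a_k\cdot x}$, with $C_k\in\R^{d_3}$ and $a_k\in\R^{d_1}$, form (componentwise) an algebra of continuous functions on $\R^{d_1}$ that contains the constants (take $a_k=0$) and separates points (if $x\neq x'$, choose $a$ with $a\cdot x\neq a\cdot x'$). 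Applying Stone--Weierstrass to each of the $d_3$ scalar components and then merging the finitely many frequency vectors into one common list, such functions are uniformly dense in $C(K;\R^{d_3})$, hence dense in $L^2(K;\R^{d_3})$ because $K$ has finite Lebesgue measure. (Equivalently one could cite the classical universal approximation theorem for the non-polynomial single-variable function $t\mapsto e^{t}$.) So, given $\epsilon>0$, I fix such a $g$ with $\|g-f\|_{L^2(K)}<\epsilon/2$, and set $d_2=N$.

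Then I would realize $g$, up to arbitrarily small sup-norm error on $K$, as $f^U_{W_1,W_2,b}$. Take the $k$-th row of $W_1$ to be $a_k$, put $b=(-M,\dots,-M)$ for a large parameter $M>0$, and take the $k$-th column of $W_2$ to be $e^{2M}C_k$. Writing $\ell_k(x)=a_k\cdot x$, one computes
\[
\sigma(W_1x+b)_k=\frac{e^{-2M}e^{2\ell_k(x)}}{1+e^{-2M}\sum_{j=1}^{N}e^{2\ell_j(x)}}.
\]
Since each $\ell_j$ is bounded on the compact set $K$, there is a constant $B$ (independent of $M$) with $\sum_{j}e^{2\ell_j(x)}\le B$ on $K$; hence the denominator lies in $[1,\,1+e^{-2M}B]$ and
\[
\bigl|\,e^{2M}\sigma(W_1x+b)_k-e^{2\ell_k(x)}\,\bigr|\ \le\ e^{-2M}B\,e^{2\ell_k(x)}\ \le\ e^{-2M}B^{2}
\]
uniformly on $K$. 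Multiplying through by $W_2$ gives $\|f^U_{W_1,W_2,b}-g\|_{\infty,K}\le e^{-2M}B^{2}\sum_{k}\|C_k\|$, which tends to $0$ as $M\to\infty$; choosing $M$ large enough therefore forces $\|f^U_{W_1,W_2,b}-g\|_{L^2(K)}\le|K|^{1/2}\|f^U_{W_1,W_2,b}-g\|_{\infty,K}<\epsilon/2$, and the triangle inequality yields $\|f^U_{W_1,W_2,b}-f\|_{L^2(K)}<\epsilon$.

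The step I expect to be the main obstacle is the second one: seeing that $\sigma$ can reproduce a \emph{sum} of elementary building blocks, not just a single one. The naive imitation of the classical argument --- have one input coordinate of $\sigma$ carry an affine function and send the others to $-\infty$, producing a single logistic bump $e^{2\ell(x)}/(1+e^{2\ell(x)})$ --- does not work here, because the common denominator in $\sigma$ couples all the output coordinates, so at most one of them can be non-negligible at a time. The remedy is to send \emph{all} the inputs to $-\infty$ at a common rate $-M$: in that regime $\sigma$ linearizes, $\sigma(W_1x+b)_k\approx e^{-2M}e^{2\ell_k(x)}$, and $W_2$ can rescale by $e^{2M}$; this is precisely why the relevant dense family consists of linear combinations of exponentials of linear functionals, for which Stone--Weierstrass, rather than a logistic-bump construction, is the right tool. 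The remaining details --- the uniform estimates on $K$, the passage from sup-norm to $L^2$, the componentwise reduction for $\R^{d_3}$-valued targets, and the check that $d_2=N\ge1$ --- are routine.
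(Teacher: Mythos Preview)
Your proof is correct and takes a genuinely different route from the paper's. The paper exploits the \emph{tropical} limit of $\sigma$: scaling the input by $t\to+\infty$ turns $\sigma$ into a step function $\sigma_\infty$ whose values are the vertices of the simplex, one per maximal cone of the fan of $\bP^{d_2}$. It then builds an elaborate combinatorial object (a ``centered simplicial web'') and proves that any such web arises as the pullback of that fan by an affine embedding $L:\R^{d_1}\hookrightarrow\R^{d_2}$; this lets one approximate $f$ by a step function adapted to the web, realized as $W_2\circ\sigma_\infty\circ L$, and finally replace $\sigma_\infty$ by $\sigma_t$ for large $t$. You instead use the \emph{opposite} asymptotic regime, sending all inputs of $\sigma$ to $-\infty$ so that the denominator collapses to $1$ and $\sigma$ linearizes to $e^{-2M}(e^{2\ell_1},\dots,e^{2\ell_N})$; after rescaling by $e^{2M}$ this lands you exactly on the span of exponentials $e^{2a\cdot x}$, for which Stone--Weierstrass gives density immediately.

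Your argument is considerably shorter and more elementary: it avoids the entire web construction (Definition~\ref{def:web}, Theorem~\ref{thm:web}, Proposition~\ref{prop:polytope-simplex}) and needs only a one-line estimate in place of the tropical convergence lemma. It also stays strictly within the hypotheses, since your $W_2$ is honestly linear, whereas the paper's $W_2$ is taken affine (it must send $e_0=0$ to a prescribed value $r_{L^{-1}(S_0)}$). What the paper's approach buys is thematic coherence with the toric and tropical geometry developed earlier, and an explicit geometric picture of how the chambers of $\Sigma_{\bP^{d_2}}$ cut $K$ into pieces on which $f$ is nearly constant; your approach trades that picture for brevity.
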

The $A_3$ quiver corresponds to the feed-forward network with one input layer, one middle layer and one output layer.  See Figure \ref{fig:A3}.


\begin{figure}[h]
	\begin{tikzcd}
	a \arrow[r, "\alpha"] & b \arrow[r, "\beta"] & c
	\end{tikzcd}
\caption{The $A_3$ quiver.}
	\label{fig:A3}
\end{figure}

The above theorem is proved by using the tropical limit of the toric manifold $\bP^n$, and a geometric object that we call a centered polyhedral web, which is an analog of a tropical variety in an integral affine manifold.  Since we do not have integral structure in the context here, we need to invent this new notion.

In above, we have focus on explaining non-linearity for a single vector space.  We shall \emph{globalize them as non-linear fiber-bundle morphisms} for the universal bundles over $\cM$.  This can be achieved with the help of Hermitian metrics on the universal bundles, so that the Fubini-Study metric on $\bP^n$ can be globalized as a fiberwise symplectic structure on projective bundles over $\cM$.  Actually, the globalization from a single framing vector space $V$ to the universal fiber bundle over $\cM$ \emph{works for any continuous function} $V\to V$ (and in particular for a symplectomorphism from $V$ to its image).  Combining the ingredients explained above together, we can construct a gradient flow over the framed quiver moduli to achieve machine learning.  The detail is given in Section \ref{sec:nonlinear}.

Such an algebro-geometric formulation has several \emph{advantages}.  First, the gradient flow under consideration runs in a \emph{compact} manifold.  This ensures the existence of absolute extrema, convergence of the flow, and upper bound for the norm of the gradient vector field.  Second, because of compactness, the flow is constrained by \emph{topology} of the manifold due to Morse theory.  See Section \ref{sec:topo}.  Finally, the moduli space has extra \emph{symmetry} coming from framing.   If we use activation functions that respect this symmetry (for instance $\psi$ above enjoys $U(n)$-equivariance), we can perform dimension reduction which improves the effectiveness of the network.  (See Proposition \ref{prop:sym} and \ref{prop:sym2}.)

In summary, from this point of view, the success of neural network is resulted from the interplay between algebraic morphisms and (transcendental) symplectomorphisms.  Interestingly, such an interplay is also an important feature that occurs in the study of complete integrable systems and mirror symmetry for toric manifolds and flag varieties, see for instance \cite{Guillemin,FLTZ12,Abouzaid-toric,CLL,NNU,HKL}.





\subsection*{Some related works}
The relation between neural network and quiver representation was investigated in the recent paper \cite{Armenta-Jodoin}.  Their work considered the quotient space by $(\C^\times)^N$ of pairs $(W,f)$, where $W$ is a quiver representation of $Q$ with the dimension vector $\vec{1}$, and $f$ associates each vertex a function $\C\to\C$ (playing the role of an activation function).  Moreover, in dimension $\vec{1}$ (which is a typical case for machine learning), they invented an interesting way of encoding the data flow as a quiver representation.  (In our work, the data flow is given as sections of universal bundles over the quiver moduli.)

The approach and the goal of this paper is rather different.  We aim at formulating machine learning as a gradient flow over a compact quiver moduli.  In \cite{Armenta-Jodoin}, quiver representations were used in encoding the data in the network; however, the machine learning process was independent of the quiver moduli.  Moreover, `double-framing' was used, and the corresponding moduli space is non-compact.

The map $(W,f) \mapsto W$ gives an infinite-dimensional fiber bundle over the quiver moduli $\cM$, whose fibers are the spaces of choices of activation functions.  In a typical program of machine learning, the activation functions are fixed during the optimization process.  In order to formulate the program as a gradient flow over the compact moduli $\cM$, we found a non-trivial way by equipping intermediate vertices with additional framings and metrics, so that we can lift $f$ to be a well-defined fiber-bundle map over $\cM$.  
Note that $f$ is not equivariant under the group action of $(\C^\times)^N$ ($\GL_{\vec{d}}$ in the higher rank case).  Such a lifting is an important non-trivial step.

Furthermore, we have dealt with representations of general rank $\vec{d}$, and a class of activation functions coming from toric symplectomorphisms.  Different functions (on the same domain and target) are obtained if we deform the toric K\"ahler metric.  To also optimize the activation functions during the learning process (see also \cite{GGL}), we may consider a gradient flow on $\cM\times \mathcal{K}$ where $\mathcal{K}$ denotes the moduli of toric K\"ahler metrics in the same class.  By the celebrated works of \cite{Donaldson-GIT,Semmes}, $\mathcal{K}$ is an infinite-dimensional negatively curved symmetric space.

Recently, there is a rising interest of applying geometric techniques to the study of neural networks.  For instance, in the works \cite{GBH,CYRL}, hyperbolic spaces are applied to machine learning in graphs and achieved great performance.  

Moreover, the applications of symmetry and group equivariance in neural networks were studied and developed in \cite{CW,CGW,CWKW-spherical,CWKW,CAWHCW,dehaan2020natural}.  Overall, these works aim at capturing symmetry of the input data and designing networks that are adapted to such symmetry.  Moreover, homogeneous spaces (in place of vector spaces) have been employed in layers of convolutional neural networks.

In comparison, our paper aims at revealing the geometric nature of neural networks and build a connection with algebraic geometry.  
The resulting framed quiver moduli, which has interesting topology and metrics, is the main geometric object of interest.  Furthermore, we study activation functions that respects the `intrinsic symmetry' over the quiver moduli, which can provide a more effective algorithm by dimension reduction.

In the reverse direction, there are interesting applications of machine learning in frontier geometry and physics.  For instance, \cite{He-Yau} used machine learning to solve problems in computing graph Laplacians, such as recognizing graph Ricci-flatness and predicting the spectral gap.  In physics, \cite{HSTT2,HSTT} used deep learning to study AdS/CFT correspondence by discretizing the equation of motion.  Since we have formulated machine learning using quiver representations, it will be interesting to find direct relations between quiver gauge theory and these problems that can be attacked via machine learning.

\subsection*{Organization of this paper} In Section 2, we will take a quick review on quiver representations and their moduli spaces. In Section 3, we will construct nice Hermitian metrics on universal bundles over the moduli.  For readers who are mainly interested in machine learning, Section 3 can be skipped for the first reading.  Then we give an algebro-geometric formulation of neural network using quiver representations in 
Section 4.  In Section 5, we prove the universal approximation theorem for the multivariable activation function $\sigma$.\\

\section*{Acknowledgment}
We are grateful to Marco Antonio Armenta for informing us about the work \cite{Armenta-Jodoin} and the further useful discussions.
We express our gratitude to Shing-Tung Yau for his generous encouragement.  The work of S.C. Lau in this paper is partially supported by the Simons collaboration grant.

\section{Review of framed quiver moduli}

Let $Q$ be a directed graph.  Denote by $Q_0, Q_1$ the set of vertices and arrows respectively.  A quiver representation $V$ with dimension vector $\vec{d} \in \Z_{\geq 0}^{Q_0}$ associates each arrow $a$ with a matrix $V(a)$ of size $d_{h(a)}\times d_{t(a)}$ (where $h(a), t(a)$ denote the head and tail vertices of $a$ respectively).  The set of complex quiver representations with dimension $\vec{d}$ form a vector space denoted by $R_{\vec{d}}(Q)$.  The moduli space of quiver representations is a GIT quotient of $R_{\vec{d}}(Q)$ by the group of isomorphisms $\GL(\vec{d}) = \prod_{i \in Q_0} \GL(d_i,\C)$ \cite{King}, where $\GL(\vec{d})$ acts on $R_{\vec{d}}(Q)$ via
\begin{equation}
g\cdot (V(a): a\in Q_1) = (g_{h(a)}\cdot V(a) \cdot g_{t(a)}^{-1}: a \in Q_1).
\label{eq:GL(d)}
\end{equation}


In the applications we consider in this paper, since the vector space over the input and output vertices are equipped with fixed basis with physical meanings, we need to use framed quiver representations \cite{Nakajima-Duke,Nakajima-JAMS, Crawley-Boevey, Reineke}.

Let $\vec{d}, \vec{n} \in \Z_{\geq 0}^{Q_0}$.  $\vec{n}$ will be the dimension vector for the framing, which is a linear map $e^{(i)}: \C^{n_i} \to V_i$ at each $i\in Q_0$ (where $V_i = \C^{d_i}$).  Since we will take a quotient by $\GL(\vec{d})$, we shall think of $V_i$ as a vector space without a preferred basis, while $\C^{n_i}$ is equipped with the standard basis.



\begin{defn}
The vector space of framed representations is given by
$$R_{\vec{n},\vec{d}} = R_{\vec{d}} \times \bigoplus_{i\in Q_0}\Hom(\C^{n_i}, \C^{d_i}).$$\\
It carries a natural action of $\GL(\vec{d})$ given by $g \cdot (V, e) = (g \cdot V, (ge^{(i)}: i\in Q_0) )$, where $g \cdot V$ is given by Equation \eqref{eq:GL(d)}.
\end{defn}

We need to remove unstable framed representations from $R_{\vec{n},\vec{d}}$ in order to get a nice quotient by $\GL(\vec{d})$.  

\begin{theorem}[\cite{Nakajima-proc}]
$(V, e) \in R_{\vec{n},\vec{d}}$ is called \textit{stable} if there is no proper subrepresentation $U$ of $V$ which contains $\mathrm{Im} \,e$. The set of all stable points of $R_{\vec{n},\vec{d}}$ is denoted by $R_{\vec{n},\vec{d}}^s$.
Then the quotient $\cM_{\vec{n},\vec{d}} := R_{\vec{n},\vec{d}}^s / \GL(\vec{d})$ is a smooth variety, which is called to be a framed quiver moduli.
\end{theorem}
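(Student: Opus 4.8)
The plan is to exhibit $\cM_{\vec n,\vec d}$ as a King GIT quotient \cite{King} and then upgrade the resulting quasi-projective orbifold to a smooth variety by showing the relevant group acts freely. First I would absorb the framing into an ordinary quiver: let $\hat Q$ be obtained from $Q$ by adjoining one new vertex $\infty$ together with $n_i$ arrows $\infty\to i$ for each $i\in Q_0$. Then representations of $\hat Q$ of dimension vector $\hat d=(\vec d,1)$ are exactly the framed representations $R_{\vec n,\vec d}$ — the maps along the $n_i$ new arrows at $i$ assemble into $e^{(i)}\colon\C^{n_i}\to V_i$ — and, modulo the central $\C^\times$ at $\infty$ that acts trivially on the quotient, the group $\GL(\hat d)$ acts through $\GL(\vec d)$ by $g\cdot(V,e)=(g\cdot V,(g e^{(i)}))$. (Equivalently, one may work directly with $\GL(\vec d)$ acting on the affine space $R_{\vec n,\vec d}$ linearized by the character $\prod_i\det(g_i)^{\pm1}$; I would present it this way to avoid carrying the extra $\C^\times$ around.)

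The crux is the Hilbert--Mumford dictionary. Take King's stability weight $\theta$ on $\hat Q$ with $\theta_i=-1$ for $i\in Q_0$ and $\theta_\infty=\sum_{j}d_j$, so that $\theta\cdot\hat d=0$. A subrepresentation of $(V,e)$ is a pair $(U,\C')$ with $\C'\in\{0,\C\}$; when $\C'=\C$ the condition that the new arrows restrict forces $\mathrm{Im}\,e^{(i)}\subseteq U_i$ for all $i$, so such subrepresentations are precisely the subrepresentations $U\subseteq V$ containing $\mathrm{Im}\,e$, and there $\theta(U,\C)=\sum_j d_j-\sum_i\dim U_i\ge0$ with equality iff $U=V$; when $\C'=0$ one gets any subrepresentation $U\subseteq V$ with $\theta(U,0)=-\sum_i\dim U_i\le0$, strict unless $U=0$. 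Hence $(V,e)$ is $\theta$-semistable iff no proper $U\subsetneq V$ contains $\mathrm{Im}\,e$ — this is exactly the stated stability condition — and in that case every nonzero proper subrepresentation has $\theta<0$, so $(V,e)$ is in fact $\theta$-stable. In particular there are no strictly semistable points: the GIT quotient $R_{\vec n,\vec d}^{ss}/\!\!/_\theta\GL(\vec d)$ coincides with the geometric quotient $R_{\vec n,\vec d}^{s}/\GL(\vec d)=\cM_{\vec n,\vec d}$, which by King is quasi-projective.

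It remains to check freeness of the $\GL(\vec d)$-action on $R_{\vec n,\vec d}^{s}$ — without this one only gets a smooth Deligne--Mumford stack. If $g$ fixes a stable $(V,e)$ then $g_ie^{(i)}=e^{(i)}$ and $g$ commutes with every $V(a)$, so $W_i:=\ker(g_i-\mathrm{id}_{V_i})$ is a subrepresentation (from $gw=w$ one gets $gV(a)w=V(a)gw=V(a)w$) containing $\mathrm{Im}\,e$; by stability $W=V$, i.e.\ $g=\mathrm{id}$. Since $R_{\vec n,\vec d}^{s}$ is an open subvariety of the vector space $R_{\vec n,\vec d}$ it is smooth, and on it the reductive — indeed special — group $\GL(\vec d)$ acts freely and properly (properness being automatic on the stable locus), so $R_{\vec n,\vec d}^{s}\to\cM_{\vec n,\vec d}$ is a Zariski-locally trivial principal $\GL(\vec d)$-bundle and $\cM_{\vec n,\vec d}$ is a smooth quasi-projective variety.

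The main obstacle, and essentially the only nonformal part, is getting the Hilbert--Mumford bookkeeping right: choosing the sign of $\theta$ (equivalently, of the determinant character) so that destabilizing one-parameter subgroups correspond exactly to proper subrepresentations swallowing $\mathrm{Im}\,e$, and keeping track of the spurious $\C^\times$ produced by the quiver-doubling trick. Once the stability dictionary is in place, the equality of semistable and stable loci, the freeness of the action, and the smoothness of the quotient are all routine.
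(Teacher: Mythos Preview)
Your proposal is correct and follows essentially the same route the paper sketches. The paper does not give its own proof of this theorem (it is cited from Nakajima), but in the paragraph immediately following it outlines exactly your argument: adjoin a vertex $\infty$ with $n_i$ arrows $\infty\to i$, identify framed representations with ordinary representations of the enlarged quiver with dimension vector $(\vec d,1)$, rewrite the stated stability as King's slope stability, and then observe that since $(\vec d,1)$ is primitive there are no strictly semistables, so the moduli is smooth and fine. Your write-up supplies the bookkeeping (the explicit weight $\theta$, the subrepresentation dichotomy, the freeness check via $\ker(g_i-\mathrm{id})$) that the paper leaves to the references \cite{King,Crawley-Boevey,Reineke}.
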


Actually $\cM_{\vec{n},\vec{d}}$ can be formulated as a GIT quotient \cite{Crawley-Boevey,Reineke}.  Namely, by adding an extra vertex labeled as $\infty$ to the quiver and $n_i$ arrows from the vertex $\infty$ to the vertex $i$, $(V,e)$ can be identified as a usual representation of this bigger quiver with the dimension vector $(\vec{d},1)$.  The above stability condition can be rewritten as slope stability, and hence it is a GIT quotient \cite{King}.  

Since $(d,1)$ is a primitive vector, $\cM_{\vec{n},\vec{d}}$ is a smooth fine moduli.  There are universal vector bundles $\cV_i$ over $\cM_{\vec{n},\vec{d}}$ corresponding to each vertex $i$, with fibers $\cV_i|_{[V,e]} = V_i$.

\begin{example}
	For the quiver with a single vertex and no arrow, and $n>d$, $$\cM_{n,d} = \Gr(n,d) = \{e \in \Hom(\C^n,\C^d): e \textrm{ is surjective}\}/\GL_{\vec{d}}$$ 
	is the (dual) Grassmannian.  We have the tautological bundle $\cV$ over $\Gr(n,d)$.  (Note that this tautological bundle is dual to the one on $\Gr(d,n)\cong \Gr(n,d)$.)
\end{example}

The topology of $\cM_{\vec{n},\vec{d}}$ is well-understood.  Let's make an ordering of the vertices.  Namely the vertices are labeled by $\{1,\ldots,N\}$, such that $i<j$ implies there is no arrow going from $j$ to $i$.  Such a labeling exists if $Q$ has no oriented cycle.


\begin{theorem}[Reineke \cite{Reineke}] \label{thm:Reineke}
Assume $Q$ has no oriented cycle.  Consider the chain of iterated Grassmann bundles
$M^{(N)}\stackrel{p_N}{\to} M^{(N-1)} \stackrel{p_{N-1}}{\to} \ldots \stackrel{p_{2}}{\to} M^{(1)} \stackrel{p_{1}}{\to} \pt$
(where $\pt$ denotes a singleton)
defined by induction:
$$M^{(i)}=\Gr_{M^{(i-1)}}\left(\underline{\C^{n_{i}}}\oplus\bigoplus_{j\to i}p_{i-1}^*\dots p_{j+1}^*(S_j), d_{i}\right) \to M^{(i-1)},$$
where $S_i$ denotes the tautological bundle on $M_i$ (as a Grassmann bundle over $M_{i-1}$).  (The direct sum is over each arrow $j\to i$.)
Then $\cM_{\vec{n},\vec{d}}\cong M^{(N)}$, with universal bundles $\mathcal{V}_i\cong p_N^*\dots p_{i+1}^*S_i$ for all $i\in Q_0$.\\
\end{theorem}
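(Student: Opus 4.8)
The plan is to induct on the number of vertices $N$, at each stage peeling off the last vertex in the chosen ordering. The base case $N=1$ is immediate: since $Q$ has no oriented cycle there is no loop at the single vertex, so $R_{\vec{n},\vec{d}}=\Hom(\C^{n_1},\C^{d_1})$, a framed representation is stable exactly when $e^{(1)}$ is surjective, and $\cM_{\vec{n},\vec{d}}=\mathrm{Surj}(\C^{n_1},\C^{d_1})/\GL(d_1)=\Gr(n_1,d_1)=M^{(1)}$, with universal bundle $\cV_1$ the tautological bundle $S_1$, exactly as in the Example.

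For the inductive step, let $Q'$ be the full subquiver on the vertices $\{1,\dots,N-1\}$, with framing and dimension vectors $\vec{n}',\vec{d}'$ obtained by deleting the $N$-th entries, and assume the theorem for $Q'$. The choice of ordering forces every arrow incident to $N$ to have the form $a\colon j\to N$ with $j<N$, and there is no loop at $N$; hence $R_{\vec{n},\vec{d}}(Q)\cong R_{\vec{n}',\vec{d}'}(Q')\times\Hom(E,\C^{d_N})$, where $E:=\C^{n_N}\oplus\bigoplus_{a\colon j\to N}\C^{d_j}$ and the second factor records the map $\phi_{V,e}:=\bigl(e^{(N)},(V(a))_{a\colon j\to N}\bigr)\colon E\to\C^{d_N}$. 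The heart of the argument is the following stability criterion: $(V,e)$ is stable for $Q$ if and only if $(V',e')$ is stable for $Q'$ and $\phi_{V,e}$ is surjective. For the forward direction one notes that a proper subrepresentation $U'\supseteq\Image e'$ of $V'$ extends, by setting $U_N:=V_N$, to a proper destabilizing subrepresentation of $V$, and that if $\phi_{V,e}$ fails to be surjective then $U_i:=V_i$ for $i<N$ together with $U_N:=\Image\phi_{V,e}$ is proper and destabilizing; for the converse, any subrepresentation $U\supseteq\Image e$ of $V$ has $(U_i)_{i<N}$ a subrepresentation of $V'$ containing $\Image e'$, hence equal to $V'$ by stability, so that $U_N\supseteq\Image\phi_{V,e}=V_N$. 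Therefore $R^s_{\vec{n},\vec{d}}(Q)\cong R^s_{\vec{n}',\vec{d}'}(Q')\times\mathrm{Surj}(E,\C^{d_N})$, equivariantly for $\GL(\vec{d}')\times\GL(d_N)=\GL(\vec{d})$, where $\GL(d_N)$ acts only on the second factor and $\GL(\vec{d}')$ acts on $\mathrm{Surj}(E,\C^{d_N})$ through its action on $E$ by $\mathrm{id}_{\C^{n_N}}\oplus\bigoplus_{j}g'_j$ (after transporting along $\phi\mapsto\ker\phi$).

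Next I would form the GIT quotient in two stages. Quotienting first by $\GL(d_N)$ replaces $\mathrm{Surj}(E,\C^{d_N})$ by the Grassmannian $\Gr(E,d_N)$ of $d_N$-dimensional quotients of $E$, so $R^s_{\vec{n},\vec{d}}(Q)/\GL(d_N)\cong R^s_{\vec{n}',\vec{d}'}(Q')\times\Gr(E,d_N)$ with the diagonal $\GL(\vec{d}')$-action. Since $\cM_{\vec{n}',\vec{d}'}$ is a fine moduli, $\GL(\vec{d}')$ acts freely on $R^s_{\vec{n}',\vec{d}'}(Q')$, making it a principal $\GL(\vec{d}')$-bundle over $M^{(N-1)}$ via the inductive identification; hence the remaining quotient is the associated fibre bundle with fibre $\Gr(E,d_N)$, which is the relative Grassmannian $\Gr_{M^{(N-1)}}(\mathcal{E}_N,d_N)$ for $\mathcal{E}_N:=\underline{\C^{n_N}}\oplus\bigoplus_{a\colon j\to N}\cV'_j$, the vector bundle over $M^{(N-1)}$ associated to the $\GL(\vec{d}')$-module $E$. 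By the inductive formula $\cV'_j\cong p_{N-1}^*\cdots p_{j+1}^*S_j$ this is precisely $M^{(N)}$, with $p_N$ its structure map. Finally, for the universal bundles: the fibrewise isomorphism $V_N\cong E/\ker\phi_{V,e}$ identifies $\cV_N$ with the tautological quotient bundle $S_N$ on $M^{(N)}$, while the fibres of $\cV_i$ for $i<N$ are unchanged under the forgetful map $M^{(N)}\to M^{(N-1)}$, giving $\cV_i\cong p_N^*\cV'_i\cong p_N^*\cdots p_{i+1}^*S_i$.

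The step I expect to be the real obstacle is not the stability criterion, which is a direct diagram chase once the ordering is used, but the passage from naive orbit spaces to GIT quotients at each stage: one must verify that the stable locus is exactly the locus cut out by the criterion, that $\GL(\vec{d}')$ and $\GL(d_N)$ act freely on the relevant stable loci so that all quotients in sight are geometric (principal-bundle) quotients, and that extracting the $\GL(d_N)$-quotient first is compatible with the GIT construction — equivalently, that the quotient by the product $\GL(\vec{d}')\times\GL(d_N)$ agrees with the iterated quotient. Granting this, identifying the outcome as a relative Grassmann bundle and tracking the tautological and universal bundles through the identifications is routine.
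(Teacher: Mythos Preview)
The paper does not give its own proof of this theorem: it is quoted from Reineke's paper \cite{Reineke} and stated without proof here. Your argument is correct and is essentially Reineke's own approach --- induct on the vertices in the chosen order, peel off the sink vertex $N$, prove the stability criterion (stable for $Q$ $\Leftrightarrow$ stable for $Q'$ and $\phi_{V,e}$ surjective), and take the quotient in two stages to recognize the relative Grassmannian. The paper in fact reuses exactly your forward stability argument (extend $U'\subset V'$ by setting $U_k=V_k$ for $k>j$) inside the proof of Theorem~\ref{thm:same-metric}, so your write-up is fully consistent with how the authors think about the structure. Your caveat about the GIT bookkeeping is well placed but not a genuine obstacle: freeness of $\GL(\vec d)$ on the stable locus is already part of the statement that $\cM_{\vec n,\vec d}$ is a fine moduli, and since the two factors $\GL(\vec d')$ and $\GL(d_N)$ commute and each acts freely on $R^s$, the iterated quotient agrees with the one-step quotient.
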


\begin{corollary} [Reineke \cite{Reineke}] \label{cor:Reineke}
The Poincare polynomial of nonempty $\cM_{\vec{n},\vec{d}}$ is given by $$\prod_{i\in Q_0}\binom{n_i+\sum_{j\to i}d_j}{d_i}_{q^2}$$ 
where $$\binom{n}{d}_q=\prod_{k=1}^d\frac{q^{n-d+k}-1}{q^k-1}.$$\\
\end{corollary}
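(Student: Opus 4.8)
The plan is to read the formula directly off the iterated Grassmann bundle description in Theorem~\ref{thm:Reineke}, using the multiplicativity of Poincar\'e polynomials along Grassmann bundles. First I would recall that the (dual) Grassmannian $\Gr(m,d)$ of rank-$d$ quotients of $\C^m$ admits a Schubert cell decomposition whose cells are indexed by partitions $\lambda$ contained in a $d\times(m-d)$ box, the cell of $\lambda$ having real dimension $2|\lambda|$; summing $q^{\dim_\R}$ over cells yields the Poincar\'e polynomial $\sum_{\lambda\subseteq d\times(m-d)}q^{2|\lambda|}=\binom{m}{d}_{q^2}$, which vanishes exactly when $d>m$, i.e.\ exactly when $\Gr(m,d)=\emptyset$.

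Next I would establish the key bundle statement: for a Grassmann bundle $\pi\colon\Gr_X(E,d)\to X$ of rank-$d$ quotients of a rank-$m$ bundle $E$ over a compact smooth base $X$, the Poincar\'e polynomial factors as $P(\Gr_X(E,d))=P(X)\cdot\binom{m}{d}_{q^2}$. This follows from the Leray--Hirsch theorem, since the Chern classes of the tautological sub- and quotient bundles restrict on each fibre to a generating set of $H^*(\Gr(m,d))$, making $H^*(\Gr_X(E,d))$ a free $H^*(X)$-module with Poincar\'e polynomial as claimed; alternatively one can use the relative Schubert cell decomposition of a Grassmann bundle. Applying this inductively along $M^{(N)}\to\cdots\to M^{(1)}\to\pt$, at stage $i$ the twisting bundle $\underline{\C^{n_i}}\oplus\bigoplus_{j\to i}p_{i-1}^*\cdots p_{j+1}^*(S_j)$ has rank $n_i+\sum_{j\to i}\rank(S_j)=n_i+\sum_{j\to i}d_j$ (as $\rank S_j=d_j$), so
$$P(M^{(i)})=P(M^{(i-1)})\cdot\binom{n_i+\sum_{j\to i}d_j}{d_i}_{q^2},$$
with base case $P(M^{(1)})=\binom{n_1}{d_1}_{q^2}$ because vertex $1$ receives no arrows and the base $\pt$ contributes $1$. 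Telescoping and invoking $\cM_{\vec n,\vec d}\cong M^{(N)}$ gives the asserted product, and nonemptiness of $\cM_{\vec n,\vec d}$ is equivalent to all factors being nonzero, i.e.\ $d_i\le n_i+\sum_{j\to i}d_j$ for every $i$ (otherwise some stage of the tower has empty fibre).

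The main obstacle is not conceptual — Theorem~\ref{thm:Reineke} carries all the geometry — but lies in the bookkeeping: correctly tracking the rank $n_i+\sum_{j\to i}d_j$ of the twisting bundle at each stage and verifying the Leray--Hirsch hypothesis (equivalently, setting up the relative Schubert cell decomposition) for Grassmann bundles over a general base. Once that is in place, the product formula drops out of the telescoping induction.
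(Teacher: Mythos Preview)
Your proposal is correct and is exactly the intended derivation: the paper states this corollary without proof, as an immediate consequence of Theorem~\ref{thm:Reineke}, and your argument---computing the rank $n_i+\sum_{j\to i}d_j$ of the twisting bundle at each stage and applying Leray--Hirsch (or relative Schubert cells) to factor the Poincar\'e polynomial along the Grassmann-bundle tower---is precisely how one reads the product formula off the iterated structure. There is nothing to add.
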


\begin{remark}
	In \cite{Reineke}, the framing $e$ goes in the other direction (from $\C^{d_i}$ to $\C^{n_i}$).  The above theorem is stated in the dual way, which is the convention we take for the rest of this paper.
\end{remark}

\begin{example} \label{ex:A3}
	Consider the $A_3$-quiver which has three vertices $i=1,2,3$ and two arrows $a_1: 1\to 2,a_2: 2 \to 3$.  Suppose $n_1 = d_1$, $n_2 =d_2+1$ and $n_3=d_3$.  Then the iterated Grassmann bundle is $M^{(3)} \to M^{(2)} \to M^{(1)}$, where $M^{(1)} = \Gr(d_1,d_1)=\pt$ (and its tautological bundle is the vector space $\C^{d_1}$); $M^{(2)} = \Gr(d_2+1+d_1,d_2)$ is equipped with the tautological bundle $S_2$ of rank $d_2$; $M^{(3)} = \Gr_{\Gr(d_2+1+d_1,d_2)} (\underline{\C}^{d_3} \oplus S_2,d_3)$ is a Grassmannian bundle over $\Gr(d_2+1+d_1,d_2)$ with fibers $\Gr(d_2+d_3,d_3)$. The corresponding Poincare Polynomial will be $$\left(\prod_{k=1}^{d_3}\frac{q^{2(d_2+k)}-1}{q^{2k}-1}\right)\left(\prod_{k=1}^{d_2}\frac{q^{2(d_1+1+k)}-1}{q^{2k}-1}\right).$$\\
	
	See Figure \ref{fig:A3Grass}.
	
	\begin{figure}[h]
		\begin{tikzcd}
		1 \arrow[d, "a_1"] & Gr(d_1,d_1)=\pt \arrow[l] \\ 2 \arrow[d, "a_2"] & Gr(d_2+1+d_1, d_2) \arrow[l]\arrow[u] \\ 3 & \Gr_{\Gr(d_2+1+d_1,d_2)} (\underline{\C}^{d_3} \oplus S_2,d_3) \arrow[l]\arrow[u]
		\end{tikzcd}
		\caption{The iterated Grassmann bundles associated to the $A_3$ quiver.}
		\label{fig:A3Grass}
	\end{figure}
\end{example}


\section{Hermitian Metric over framed quiver moduli}


In constructing fiber-bundle endomorphisms, it will be crucial to consider K\"ahler metrics on universal bundles. In this section, we find a beautiful formula for the canonical metric on the universal bundle $\cV_i$ over $\cM_{ n, d}$ written in homogeneous coordinates. Using this formula, we then show that the sum of Ricci curvatures over the vertices $i$ give a K\"ahler metric on $\cM_{\vec{n},\vec{d}}$.

First, let us begin by recalling the typical example $\Gr(n,k)$.

\subsection{The Grassmannian} \label{sec:Herm}
Consider
\begin{equation*}
\Gr\left(n,k\right)=\Mat^\C_{n,k}\sslash_{\chi=1} U(k) =\left\{e\in \Mat^\C_{n,k}: ee^{*}=I_{k}\right\}\big/U\left(k\right)
\end{equation*}
for $n\geq k$.
Here we have used the dual description which better matches the frame convention used in this paper.  Namely, $\Gr\left(n,k\right)$ parametrizes $k$-dimensional quotient vector spaces of a fixed $n$-dimensional vector space as opposed to $k$-dimensional subspaces.  The moment map for the standard $U(k)$-action on $\Mat^\C_{n,k}$ is $ee^*: \Mat^\C_{n,k} \to \bi \mathfrak{u}_k$.  We have taken the moment map level $\chi = 1$ in the above symplectic reduction.  Note that $U(k)$ is acting on the left, although in the above expression $U(k)$ appears on the right.

Writing $e=\left(b,p\right)$ where $b \in \Mat^\C_{k,k}$ and $p \in \Mat^\C_{n-k,k}$, the moment-map equation $ee^*=I_k$ becomes
\begin{equation*}
bb^{*}+pp^{*}=I_{k}.
\end{equation*}

We shall consider the chart defined by 
$$U=\{[b,p] \in \Gr(n,k): \det b \not= 0\} \cong \Mat^\C_{n-k,k}$$ 
where the identification is given by the holomorphic coordinates
$$\zeta^h=b^{-1}p \in \Mat^\C_{n-k,k}.$$
We also have the symplectic coordinates
\begin{equation*}
\zeta^u =\left(b^{*}b\right)^{\frac{1}{2}}b^{-1}p\in \Mat^\C_{n-k,k}.
\end{equation*}
The entries of $\zeta^u$ are not meromorphic functions.  On the other hand, $\zeta^u$ has the advantage that it satisfies the moment-map equation
\begin{equation}
b^*b+\zeta^u (\zeta^u)^{*}=I_{k}.
\label{eq:Gr-moment}
\end{equation}
(Note that the first term is $b^*b$ instead of $bb^*$.)


The construction of $\zeta^u$ uses the polar decomposition
$$ b=\left(b\left(b^{*}b\right)^{-\frac{1}{2}}\right) \left(b^{*}b\right)^{\frac{1}{2}}$$
where $\left(b\left(b^{*}b\right)^{-\frac{1}{2}}\right) \in U(k)$ and $\left(b^{*}b\right)^{\frac{1}{2}} \in \bi \mathfrak{u}(k)$ is positive definite.  We obtain the coordinates $\zeta^u$ by observing
$$ [b, p]= \left[\left(b^{*}b\right)^{\frac{1}{2}},\zeta^u\right]$$
using the left-$U(k)$-action, such that the first component $\left(b^{*}b\right)^{\frac{1}{2}}$ is Hermitian, and is determined $\zeta^u$ due to the moment-map equation \eqref{eq:Gr-moment}.

The two coordinate systems are related by
\begin{equation}
\zeta^u =\left(b^{*}b\right)^{\frac{1}{2}}\cdot \zeta ^{h}.
\label{eq:zeta^u-h}
\end{equation}

Let $S$ be the tautological vector bundle whose fibers are the quotient vector spaces.  (This is dual to the tautological bundle of $\Gr(k,n) \cong \Gr(n,k)$.)  It can be written as the quotient of the trivial bundle:
\begin{equation*}
S=\left(\left\{ee^{*}=I_{k}\right\}\times \mathbb{C}^{k}\right)\big/U\left(k\right)
\end{equation*}
where the left action of $U(k)$ on $\C^k$ is the standard one.

We now take the standard metric on $\mathbb{C}^{k}$, which is preserved by $U(k)$ and hence descends to a metric $H$ of $S$.

Denote the standard basis of $\C^k$ by $\epsilon_j$ for $j=1,\ldots,k$.
Under this metric, we have the lifting of a local Hermitian frame over the chart $U=\{\det b\not=0\}$ being $$u_{i}=b\left(b^{*}b\right)^{-\frac{1}{2}}\cdot \epsilon_{i}$$
since
$\left[b, p, b\left(b^{*}b\right)^{-\frac{1}{2}}\epsilon _{i}\right]\sim \left[\left(b^{*}b\right)^{\frac{1}{2}},\zeta^u,\epsilon_{i}\right]$.

We also have the lifting of a local holomorphic frame $$h_{i}=b\epsilon _{i} = b\left(b^{*}b\right)^{\frac{1}{2}}b^{-1}u_{i}$$
since
$\left[b, p, b\epsilon _{i}\right]\sim \left(I_{k},\zeta ^{h},\epsilon _{i}\right)$.  The two frames are related as follows.

\begin{lemma} \label{lem:Gr-h-u}
	$h_{i}=u_{j}a_{i}^{j}$ where
	$\left(a_{i}^{j}\right)=\left(b^{*}b\right)^{\frac{1}{2}}$, $i$ is indexing the colomns and $j$ is indexing the rows.
\end{lemma}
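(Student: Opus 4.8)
The plan is to work directly with the quotient description $S = (\{ee^* = I_k\} \times \C^k)/U(k)$ together with the explicit representatives already used to define the two frames. Over the chart $U = \{\det b \neq 0\}$, a section of $S$ is specified by a choice of representative $(e,v) \in \{ee^*=I_k\}\times\C^k$ varying with the base point; the point $[b,p] \in U$ carries the distinguished representative $e=(b,p)$, and by construction $u_i$ is represented by $((b,p),\, b(b^*b)^{-1/2}\epsilon_i)$ while $h_i$ is represented by $((b,p),\, b\epsilon_i)$ — these are exactly the equivalences $[b,p,b(b^*b)^{-1/2}\epsilon_i]\sim[(b^*b)^{1/2},\zeta^u,\epsilon_i]$ and $[b,p,b\epsilon_i]\sim(I_k,\zeta^h,\epsilon_i)$ recorded above.

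First I would note that the $U(k)$-quotient is linear in the $\C^k$-factor (since $U(k)$ acts linearly there), so $\C$-linear combinations of local sections can be computed on representatives sharing a common first component. Then, writing $A = (a^j_i) := (b^*b)^{1/2}$, so that the $i$-th column of $A$ is $(b^*b)^{1/2}\epsilon_i = \sum_j a^j_i \epsilon_j$, the section $u_j a^j_i = \sum_j a^j_i u_j$ is represented by
$$\Big((b,p),\ \sum_j a^j_i\, b(b^*b)^{-1/2}\epsilon_j\Big) = \Big((b,p),\ b(b^*b)^{-1/2}(b^*b)^{1/2}\epsilon_i\Big) = \big((b,p),\ b\epsilon_i\big),$$
which is precisely the representative of $h_i$. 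Hence $h_i = u_j a^j_i$, consistent with the earlier identity $h_i = b(b^*b)^{1/2}b^{-1}u_i$.

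Finally I would check that the identity does not depend on the chosen orbit representative: replacing $(b,p)$ by $(gb,gp)$ with $g\in U(k)$ multiplies all the $\C^k$-vectors above on the left by $g$, while $(gb)^*(gb) = b^*b$ leaves $(b^*b)^{1/2}$, hence the matrix $A$, unchanged. There is no substantive obstacle here; the only thing requiring care is the index bookkeeping — the entries $a^j_i$ multiply $u_j$ on the right, so $A$ is the change-of-frame matrix with $j$ indexing rows (the unitary frame) and $i$ indexing columns (the holomorphic frame), exactly as stated. The mathematical content is entirely contained in having set up the two canonical representatives correctly, which was done above.
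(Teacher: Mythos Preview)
Your proof is correct and essentially the same as the paper's. The paper starts from the identity $h_i = b(b^*b)^{1/2}b^{-1}u_i$, writes $(h_1\ldots h_k)=(u_1\ldots u_k)(a^j_i)$, and then computes $(a^j_i)=(u_1\ldots u_k)^{-1}b(b^*b)^{1/2}b^{-1}(u_1\ldots u_k)=(b^*b)^{1/2}$ using $(u_1\ldots u_k)=b(b^*b)^{-1/2}$; you instead verify directly on representatives that $\sum_j a^j_i u_j = h_i$ for $A=(b^*b)^{1/2}$, which is the same linear-algebra identity organized in the reverse direction.
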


\begin{proof}
	Consider
	\begin{equation*}
	\left(h_{1}\ldots h_{k}\right)=b\left(b^{*}b\right)^{\frac{1}{2}}b^{-1}\left(u_{1}\ldots u_{k}\right)=\left(u_{1}\ldots u_{k}\right)\left(a_{i}^{j}\right).
	\end{equation*}
	Thus $\left(a_{i}^{j}\right)=\left(u_{1}\ldots u_{k}\right)^{-1}b\left(b^{*}b\right)^{\frac{1}{2}}b^{-1}\left(u_{1}\ldots u_{k}\right)$.
	
	\begin{equation*}
	\left(u_{1}\ldots u_{k}\right)=b\left(b^{*}b\right)^{-\frac{1}{2}}
	\end{equation*}
	since $u_{i}=b\left(b^{*}b\right)^{-\frac{1}{2}}\epsilon _{i}$. Result follows.
\end{proof}


\begin{prop}
	The metric $H$ defined above on the tautological bundle $S$ is represented by the matrix
	$\left(I_{k}+\zeta ^{h}\left(\zeta ^{h}\right)^{*}\right)^{-1}$
	in the local holomorphic frame $h_i$ and the local coordinates $\zeta^h$.
\end{prop}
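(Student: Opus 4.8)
The plan is to compute the Hermitian metric $H$ directly in the holomorphic frame $h_i$ by expressing it in terms of the unitary frame $u_i$, for which the metric is manifestly the identity. Since $u_i = b(b^*b)^{-1/2}\epsilon_i$ and the descended metric on $S$ comes from the standard metric on $\C^k$ (under which the $\epsilon_j$ are orthonormal), and since $b(b^*b)^{-1/2} \in U(k)$, the frame $u_i$ is orthonormal: $H(u_i,u_j) = \delta_{ij}$. So in the $u$-frame the metric matrix is $I_k$.

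Next I would invoke Lemma \ref{lem:Gr-h-u}, which says $h_i = u_j a_i^j$ with $(a_i^j) = (b^*b)^{1/2}$. The standard change-of-frame rule for a Hermitian metric then gives that the matrix of $H$ in the $h$-frame is $A^* \cdot I_k \cdot A = (b^*b)^{1/2}(b^*b)^{1/2} = b^*b$ — wait, that is the matrix of the metric on the frame $h_i$ viewed as $H(h_i,h_j)$, and one must be careful about whether the proposition refers to the matrix $(H(h_i,h_j))$ or its inverse (the two conventions differ by an inverse, depending on whether one records inner products of frame vectors or the matrix acting on component vectors). Taking the convention consistent with the later formula $(I_k + \zeta^h(\zeta^h)^*)^{-1}$, the relevant matrix is $(b^*b)^{-1}$; so the key remaining step is to show
\begin{equation*}
(b^*b)^{-1} = \bigl(I_k + \zeta^h(\zeta^h)^*\bigr)^{-1},
\end{equation*}
equivalently $b^*b = I_k + \zeta^h(\zeta^h)^*$. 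But $\zeta^h = b^{-1}p$, so $\zeta^h(\zeta^h)^* = b^{-1}p p^* (b^{-1})^* = b^{-1}(pp^*)(b^*)^{-1}$, and the moment-map equation $bb^* + pp^* = I_k$ gives $pp^* = I_k - bb^*$. Substituting and conjugating by $b$ on the appropriate sides, $I_k + \zeta^h(\zeta^h)^* = b^{-1}(bb^* + pp^*)(b^*)^{-1} = b^{-1}(b^*)^{-1} = (b^*b)^{-1}$, which is exactly what is needed. (Alternatively, one can read this straight off Equation \eqref{eq:Gr-moment}: $b^*b + \zeta^u(\zeta^u)^* = I_k$ together with $\zeta^u = (b^*b)^{1/2}\zeta^h$ from \eqref{eq:zeta^u-h} gives $b^*b\bigl(I_k + \zeta^h(\zeta^h)^*\bigr) = I_k$ after factoring, which is cleaner and avoids re-deriving the moment map for $bb^*$ versus $b^*b$.)

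I expect the only real subtlety — not a deep obstacle, but the place where care is required — is bookkeeping the difference between $bb^*$ and $b^*b$ and the corresponding transpose/conjugate conventions in the change-of-frame formula, since the $U(k)$-action is a left action while the quotient notation writes $U(k)$ on the right, and the metric matrix convention (inner products of frame vectors versus the Gram matrix governing norms of sections) must be fixed to land on the stated $(I_k + \zeta^h(\zeta^h)^*)^{-1}$ rather than its inverse or transpose. Once the convention is pinned down consistently with Lemma \ref{lem:Gr-h-u}, the computation is a two-line manipulation using only \eqref{eq:Gr-moment} and \eqref{eq:zeta^u-h}. I would therefore structure the proof as: (1) observe $u_i$ is an orthonormal frame; (2) apply Lemma \ref{lem:Gr-h-u} to pass to the $h$-frame, obtaining $b^*b$ (resp. its inverse); (3) use \eqref{eq:Gr-moment} and \eqref{eq:zeta^u-h} to rewrite this as $(I_k + \zeta^h(\zeta^h)^*)^{-1}$.
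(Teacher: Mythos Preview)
Your approach is essentially identical to the paper's: compute the Gram matrix of the $h$-frame via Lemma~\ref{lem:Gr-h-u} to get $b^*b$, then use \eqref{eq:Gr-moment} and \eqref{eq:zeta^u-h} to identify $b^*b$ with $(I_k+\zeta^h(\zeta^h)^*)^{-1}$. Your first instinct and your actual computation are both correct.

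The only issue is the self-inflicted detour in the middle. You correctly obtain $(H(h_i,h_j))=b^*b$, then second-guess yourself into claiming ``the relevant matrix is $(b^*b)^{-1}$'' and stating the goal as $b^*b=I_k+\zeta^h(\zeta^h)^*$ --- but your very next computation shows $I_k+\zeta^h(\zeta^h)^*=(b^*b)^{-1}$, which contradicts that stated goal and instead confirms your original one. There is no convention ambiguity here: the proposition's ``metric represented by the matrix'' means precisely the Gram matrix $(H(h_i,h_j))$, which is $b^*b=(I_k+\zeta^h(\zeta^h)^*)^{-1}$. Delete the hedging paragraph, keep steps (1)--(3) as you outline them at the end, and the proof is clean and matches the paper's line for line. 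Your alternative route through \eqref{eq:Gr-moment} directly (factoring $(b^*b)^{1/2}$ on both sides) is indeed slightly slicker than re-deriving things from $bb^*+pp^*=I_k$.
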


\begin{proof}
	Using Lemma \ref{lem:Gr-h-u},
	$$
	\left(H\left(h_{i},h_{p}\right)\right)
	=\sum_{j,l}\left(H\left(a_{i}^{j}u_{j},a_{p}^{l}u_{l}\right)\right)
	=\sum_{j,l}\left(\overline{a_{i}^{j}}a_{p}^{l}H\left(u_{j},u_{l}\right)\right)
	=\sum_{j} \left(\overline{a_{i}^{j}}a_{p}^{j}\right) 
	= b^*b
	$$
	where $i$ is indexing the rows and $p$ is indexing the columns.
	By the moment map equation \eqref{eq:Gr-moment} and the relation \eqref{eq:zeta^u-h},
	$$
	b^{*}b+\left(b^{*}b\right)^{\frac{1}{2}}\cdot \zeta ^{h}\left(\zeta ^{h}\right)^{*}\left(b^{*}b\right)^{\frac{1}{2}}=I_{k}. 	
	$$
	Then
	$$
	I_{k}+\zeta ^{h}\left(\zeta ^{h}\right)^{*}=\left(b^{*}b\right)^{-1}.
	$$
	Hence
	\begin{equation*}
	H=\left(I_{k}+\zeta ^{h}\left(\zeta ^{h}\right)^{*}\right)^{-1}.
	\end{equation*}
\end{proof}

\begin{example}
	Let's consider the simplest example:
	\begin{equation*}
	\mathbb{P}^{1}=\Gr(2,1)=(\C^2-\{0\})/\C^\times = \mathbb{S}^{3}/\mathrm{U}\left(1\right).
	\end{equation*}
	The tautological bundle for $\Gr(2,1)$ is
	\begin{equation*}
	S = \left(\mathbb{S}^{3}\times \mathbb{C}\right)/U\left(1\right)
	\end{equation*}
	where $U(1)$ acts on $\C$ in the standard way, and it acts on both factors on the left.  (Note that this is dual to the usual notion of the tautological bundle of $\Gr(1,2)=\bP^1$, since we are now considering the family of quotient lines of $\C^2$, which are dual to subspaces of $\C^2$.)
	
	Let's take the standard metric on $\mathbb{C}$.  We have the local Hermitian frame (over $z_{1}\neq 0$) $u$ given by
	\begin{equation*}
	\left(z_{1},z_{2},z_{1}/\left| z_{1}\right| \right) \stackrel{U(1)}{\sim} \left(\left| z_{1}\right| ,\zeta^u ,1\right)
	\end{equation*}
	where $\zeta^u$ is the coordinate of $\bP^1$ which belongs to the open unit disc, and $|z_1|$ is determined by the moment-map equation
	\begin{equation*}
	\left| z_{1}\right| ^{2}+\left| z_{2}\right| ^{2}=\left| z_{1}\right| ^{2}+\left| \zeta^u \right| ^{2}=1.
	\end{equation*}
	We also have the local holomorphic frame $h$ defined by
	\begin{equation*}
	\left(z_{1},z_{2},z_{1}\right) \stackrel{\C^\times}{\sim} \left(1,\zeta^{h},1\right).
	\end{equation*}
	The unitary and holomorphic coordinates are related by
	$\zeta^u =\left| z_{1}\right| \cdot \zeta^{h} = (1-|\zeta^u|^2) \cdot \zeta^{h}$ for $\zeta^h \in \C$.
	The frames are related by
	\begin{equation*}
	h=\left| z_{1}\right| \cdot u.
	\end{equation*}
	The Hermitian frame $u$ always have length one.  Writing the metric in the holomorphic frame $h$:
	\begin{align*}
	\left| h\right| ^{2}&=\left| z_{1}\right| ^{2}=1-\left| \zeta^u \right| ^{2}=1-\left| z_{1}\right| ^{2}\left| \zeta ^{h}\right| ^{2} \\
	&=1-\left(1-\left| z_{1}\right| ^{2}\left| \zeta ^{h}\right| ^{2}\right)\left| \zeta ^{h}\right| ^{2}=\ldots  \\
	&=\frac{1}{1+\left| \zeta ^{h}\right| ^{2}}. 
	\end{align*}
	This is the standard metric on $\cO_{\bP^1}(1)$, whose curvature gives the Fubini-Study metric on $\bP^1$.
\end{example}

\subsection{Metric on framed quiver moduli}

We have seen that the standard metric on the trivial bundle over $\Mat^\C_{n,k}$ descends to give the standard metric on $\Gr(n,k)$.  However, it turns out that for the framed quiver moduli, the standard metric on the trivial bundle over $R_{\vec{n},\vec{d}}$ is not good from the GIT quotient point of view, namely it is not equivariant under $\GL_{\vec{d}}$.  In this section, we find a nice metric over $R_{\vec{n},\vec{d}}$ which is equivariant under $\GL_{\vec{d}}$.



Recall from the last section that $\cM_{\vec{n},\vec{d}} = R_{\vec{n},\vec{d}}^{s}/\GL_{\vec{d}}$.  The universal bundle over the vertex $i$ is given by
$$\cV_i = \left(R_{\vec{n},\vec{d}}^{s}\times \C^{d_i}\right)\big/\GL_{\vec{d}}$$
where $\GL_{\vec{d}}$ acts diagonally on the left, the factor $\GL(d_i,\C)$ of $\GL_{\vec{d}}$ acts on $\C^{d_i}$ in the standard way, and other factors of $\GL_{\vec{d}}$ act trivially on $\C^{d_i}$.

There is an equivalent description of $\cM = \cM_{\vec{n},\vec{d}}$ and the universal bundle $\cV_i$ in terms of symplectic quotient.  Namely, let $\mu: R_{\vec{n},\vec{d}} \to \bi \mathfrak{u}_{\vec{d}} $ be the moment map.  Explicitly, $\mu = (\mu_i)_{i\in Q_0}$ where
$$ \mu_i = e^{(i)} (e^{(i)})^* - \sum_{t(a)=i} V_a^* V_a + \sum_{h(a')=i} V_{a'} V_{a'}^*.$$
Then define
$$ \cM_{\vec{n},\vec{d}} = \mu ^{-1}\left\{-c \right\}\big/U_{\vec{d}}$$
for the following level $c$.

\begin{lemma}
	The slope stability condition $\left(1,\vec{0}\right) \in \C^{\hat{Q}_0}$ corresponds to the moment-map level
	$c = \left(-I_{d_i}\right)_{i\in Q_0} \in \bi \mathfrak{u}_{\vec{d}}$, where $I_k$ denotes the identity matrix of rank $k$.
\end{lemma}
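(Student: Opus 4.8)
The plan is to read off the level $c$ from the GIT data via the standard Kempf--Ness dictionary between GIT and symplectic quotients, fixing the residual sign and normalization ambiguities by cross-checking against the Grassmannian $\Gr(n,k)=\{ee^{*}=I_{k}\}/U(k)$ computed above. First I would make the GIT picture precise: following Crawley-Boevey and Reineke, identify $R_{\vec{n},\vec{d}}$ with the representation space of the enlarged quiver $\hat{Q}$ obtained by adjoining a source vertex $\infty$ of dimension $1$ together with $n_i$ arrows $\infty\to i$, so that the $n_i$ arrows out of $\infty$ assemble into the framing map $e^{(i)}\colon\C^{n_i}\to\C^{d_i}$. Then the Nakajima stability condition ``no proper subrepresentation of $V$ contains $\Image e$'' is precisely slope stability of the resulting $\hat{Q}$-representation of dimension $(\vec{d},1)$ with respect to the weight $(1,\vec{0})\in\C^{\hat{Q}_0}$ (weight $1$ at $\infty$ and weight $0$ at every vertex of $Q$), normalized by the total dimension.

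Next I would convert this slope-stability weight into an honest character of $\GL_{\vec{d}}$. Since the diagonal $\C^{\times}\subset\GL_{(\vec{d},1)}$ acts trivially on $R_{\vec{n},\vec{d}}$, the effective group is $\GL_{(\vec{d},1)}/\C^{\times}\cong\GL_{\vec{d}}$; subtracting from $(1,\vec{0})$ the multiple of the all-ones vector needed to make the weight pair to zero with the dimension vector and then clearing denominators produces a King weight proportional to $\bigl(\sum_{j}d_j;\,-1,\dots,-1\bigr)$, which under the isomorphism above becomes the character $\chi(g)=\prod_{i\in Q_0}\det(g_i)^{-1}$ of $\GL_{\vec{d}}$ (up to a positive power, which is irrelevant for the quotient). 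The feature of $\chi$ that matters here is that it carries the \emph{same} weight at every vertex of $Q$; hence the associated moment-map level is a common scalar multiple of the identity, $(\lambda\,I_{d_i})_{i\in Q_0}$ for a single $\lambda\in\R$.

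Finally I would invoke the Kempf--Ness theorem for the $\GL_{\vec{d}}$-action on $R_{\vec{n},\vec{d}}$ linearized by $\chi$: the GIT quotient is the symplectic reduction of $R_{\vec{n},\vec{d}}$ by $U_{\vec{d}}$ at the level obtained from $d\chi$ via the trace identification $\mathfrak{u}_{\vec{d}}^{*}\cong\bi\mathfrak{u}_{\vec{d}}$. Because $\chi$ is uniform this forces $\lambda=\pm1$ (the $\infty$-vertex of $\hat{Q}$, whose moment-map equation becomes the normalization $\sum_i\|e^{(i)}\|^2=1$, gives an independent check on the magnitude). To pin down the sign I would specialize to the one-vertex quiver with no arrows: there $\mu_i$ reduces to $e^{(i)}(e^{(i)})^{*}=ee^{*}$, and the reduction must reproduce $\{ee^{*}=I_k\}/U(k)=\Gr(n,k)$ as recorded above, so the level is $+I_{d_i}$ at the single vertex; that is, $-c=(I_{d_i})_i$ and therefore $c=(-I_{d_i})_{i\in Q_0}$. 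The computation of $d\chi$ is the same for a general quiver, and the $Q$-arrow terms $-\sum_{t(a)=i}V_a^{*}V_a+\sum_{h(a')=i}V_{a'}V_{a'}^{*}$ in $\mu_i$ do not enter the identification of the level, so the same conclusion holds for arbitrary $Q$.

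I expect the main obstacle to be the sign and normalization bookkeeping along the chain ``slope-stability weight $\to$ King character of $\GL_{(\vec{d},1)}$ $\to$ character of $\GL_{\vec{d}}$ $\to$ Kempf--Ness level'': each step hides a sign convention (the direction of the slope inequality, whether $\mathcal{L}_{\chi}$ is built from $\chi$ or $\chi^{-1}$, the identification $\mathfrak{u}^{*}\cong\bi\mathfrak{u}$, and the overall sign in the moment map), and these must be carried through consistently. Verifying the final answer against the explicitly computed Grassmannian is the safest way to guarantee that the conventions have been applied coherently.
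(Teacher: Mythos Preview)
Your proposal is correct and follows essentially the same route as the paper: both arguments pass from the slope weight $(1,\vec{0})$ to the King character $(\Sigma\vec{d},-1,\ldots,-1)$ by subtracting the appropriate multiple of the all-ones vector and clearing denominators, and then read off the moment-map level $-I_{d_i}$ at each vertex of $Q$. The paper's proof is the two-line version of your argument, stopping immediately after the character computation; your additional Kempf--Ness invocation and Grassmannian sanity check are sound but not strictly needed, since once the King weight is $-1$ at each vertex of $Q$ the identification $c_i=-I_{d_i}$ is immediate under the standard dictionary.
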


\begin{proof}
	The character taken in King's stability \cite{King} corresponding to $\left(1,\vec{0}\right)$ is
	\begin{equation*}
	\left(1+\Sigma \vec{d} \right)\left(\left(1,\vec {0}\right)-\frac{1}{1+\Sigma \vec{d} }\left(1,\vec {1}\right)\right)=\left(\Sigma \vec{d} ,-1,\ldots ,-1\right)
	\end{equation*}
	where $\Sigma \vec{d} = \sum_{i\in Q_0} d_i$, and the first entry is over the root vertex.  (Note that there is no group action over the root vertex.) Thus we should take $c$ to be $-I_{d_i}$ over each vertex $i$.
\end{proof}

The universal bundle over the vertex $i$ is then given by 
$$ \cV_i = \left(\mu ^{-1}\left\{I_{\vec{d}} \right\}\times \C^{d_i}\right)\big/U_{\vec{d}}.$$



Let's review some very basic definitions about group actions.

\begin{defn}
	Suppose a Lie group $G$ acts on a vector bundle $V \stackrel{\pi}{\to} M$ equivariantly, namely, $g \circ \pi = \pi \circ g$ for all $g \in G$, and the action is fiberwise linear.  A metric $H$ on $V$ is said to be $G$-equivariant if
	\begin{equation*}
	H_{x}\left(v,w\right)=H_{g\cdot x}\left(g\cdot v,g\cdot w\right).
	\end{equation*}
\end{defn}

Writing in matrix form when $G=\GL(n,\C)$, the above equation is
$v^{*}\cdot H_{x}\cdot w=v^{*}\cdot\left(g^{*}\cdot H_{g\cdot x} \cdot g\right)\cdot w,$ that is,
\begin{equation}
(g^*)^{-1} \cdot H_{x} \cdot g^{-1}=H_{g\cdot x}.
\label{eq:equiv}
\end{equation}

The following easily follows from the definition.

\begin{lemma} \label{lem:equiv}
	Suppose a Lie group $G$ acts on a vector bundle $V \stackrel{\pi}{\to} M$ equivariantly and fiberwise linearly, and the action of $G$ on $M$ is free and proper.  A Hermitian form $H$ on $V$ descends to the corresponding bundle over the quotient $M/G$ if and only if $H$ is $G$-equivariant.
\end{lemma}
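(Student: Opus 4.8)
The plan is to realize the descent concretely via the principal bundle $\pi_G \colon M \to M/G$ and the associated quotient vector bundle $\bar V := V/G \to M/G$, and then observe that the equivariance identity \eqref{eq:equiv} is exactly the compatibility needed for a fiberwise Hermitian form to be ``constant along orbits''.

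First I would record the relevant quotient structures. Since $G$ acts on $M$ freely and properly, $M/G$ is a smooth manifold and $\pi_G$ is a principal $G$-bundle; in particular $\pi_G$ admits smooth local sections. The induced fiberwise-linear $G$-action on $V$ (covering the action on $M$) is again free and proper, so the quotient $\bar V := V/G$ is a smooth manifold, $\pi$ descends to a projection $\bar\pi \colon \bar V \to M/G$, and the quotient map $q \colon V \to \bar V$ restricts on each fiber to a linear isomorphism $q_x \colon V_x \isoto \bar V_{[x]}$. The only relations imposed by $q$ are $q_{g\cdot x}(g\cdot v) = q_x(v)$ for all $g \in G$, $x \in M$, $v \in V_x$; equivalently $q_{g\cdot x}\circ(g\cdot) = q_x$ as maps $V_x \to \bar V_{[x]}$, and hence $q_{g\cdot x}^{-1} = (g\cdot)\circ q_x^{-1}$ as maps $\bar V_{[x]} \to V_{g\cdot x}$. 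This exhibits $\bar V \to M/G$ as a vector bundle with fibers modeled on those of $V$.

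Now suppose $H$ is $G$-equivariant. For $[x]\in M/G$ and $v',w'\in \bar V_{[x]}$ I would set
\begin{equation*}
\bar H_{[x]}(v',w') := H_x\bigl(q_x^{-1}v',\, q_x^{-1}w'\bigr).
\end{equation*}
Independence of the representative $x$ in its orbit is exactly the equivariance of $H$: using $q_{g\cdot x}^{-1} = (g\cdot)\circ q_x^{-1}$ and \eqref{eq:equiv} we get $H_{g\cdot x}(q_{g\cdot x}^{-1}v', q_{g\cdot x}^{-1}w') = H_{g\cdot x}(g\cdot q_x^{-1}v',\, g\cdot q_x^{-1}w') = H_x(q_x^{-1}v', q_x^{-1}w')$. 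Each $\bar H_{[x]}$ is a Hermitian inner product because $q_x$ is a linear isomorphism and $H_x$ is one. Smoothness is checked locally: over an open $W \subseteq M/G$ choose a smooth section $s \colon W \to M$ of $\pi_G$ together with a smooth frame of $V$ along $s(W)$; then $q$ carries this to a smooth local frame of $\bar V$ over $W$, and in that frame $\bar H$ is represented by the matrix-valued function $x' \mapsto H_{s(x')}$, which is smooth since $H$ and $s$ are. Thus $\bar H$ is a Hermitian metric on $\bar V$ with $H = q^*\bar H$, i.e.\ $H$ descends.

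Conversely, if $H$ descends, meaning $H_x(v,w) = \bar H_{[x]}(q_x v, q_x w)$ for a Hermitian form $\bar H$ on $\bar V$, then for every $g \in G$, using $[g\cdot x] = [x]$ and $q_{g\cdot x}\circ(g\cdot) = q_x$,
\begin{equation*}
H_{g\cdot x}(g\cdot v,\, g\cdot w) = \bar H_{[x]}\bigl(q_{g\cdot x}(g\cdot v),\, q_{g\cdot x}(g\cdot w)\bigr) = \bar H_{[x]}(q_x v,\, q_x w) = H_x(v,w),
\end{equation*}
which is precisely $G$-equivariance of $H$. This establishes both implications. The only point requiring genuine care is the smoothness of $\bar H$, and that is exactly where freeness and properness of the $G$-action on $M$ are used, via the existence of smooth local sections of the principal bundle $M \to M/G$; everything else is a bookkeeping unwinding of the definitions.
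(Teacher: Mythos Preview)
Your proof is correct. The paper does not actually give a proof of this lemma; it simply remarks that the statement ``easily follows from the definition.'' Your argument is precisely the natural unwinding of that remark: set up the quotient bundle $\bar V = V/G$ using freeness and properness, observe that equivariance is exactly the well-definedness condition for the candidate metric $\bar H$ on the quotient, and check smoothness via local sections of the principal bundle. There is nothing to compare against on the paper's side, and your level of detail is entirely appropriate.
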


For framed quiver varieties, we have the framing map $e^{(j)}\colon R_{\vec{n},\vec{d}}^{s}\rightarrow \mathrm{Hom}\left(\C^{n_j},\C^{d_j}\right)$ for each vertex $j$.  Using this, we cook up a $\GL_{\vec{d}}$-invariant Hermitian form on the trivial bundle $\underline{\C^{d_i}}\to R_{\vec{n},\vec{d}}$, which descends to a metric on $\cV_i \to \cM_{\vec{n},\vec{d}}$.

\begin{theorem} \label{thm:metric}
	Suppose $Q$ has no oriented cycle.
	Fix $i\in Q_0$.  
	Let $\rho$ be the row vector whose entries are 
	$V_\gamma e^{\left(t(\gamma)\right)}$,
	where $\gamma$ is any path whose head $h(\gamma)$ is $i$ (including the trivial path), $t(\gamma)$ denotes its tail, and $V_{\gamma} \in \Hom(\C^{d_{t(\gamma)}},\C^{d_{h(\gamma)}})$ is the representing matrix of $\gamma$.  This defines a map 
	$$V_{\gamma} e^{\left(t(\gamma)\right)}: R_{\vec{n},\vec{d}} \to \Hom(\C^{n_{t(\gamma)}},\C^{d_i}). $$  
	Take
	$$\rho\rho^* = \sum_{h(\gamma)=i} \left(V_{\gamma} e^{\left(t(\gamma)\right)}\right)\left(V_{\gamma} e^{\left(t(\gamma)\right)}\right)^*: R_{\vec{n},\vec{d}} \to \End(\C^{d_i}).$$
	
	Then $H=(\rho\rho^*)^{-1}$ is $\GL_{\vec{d}}$-equivariant, and it descends to a metric on $\cV_i$ over $\cM_{\vec{n},\vec{d}}$.
\end{theorem}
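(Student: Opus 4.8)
The plan is to establish three facts in turn: that $\rho\rho^*$ is a well-defined, smooth, invertible $\End(\C^{d_i})$-valued function on the stable locus; that $H=(\rho\rho^*)^{-1}$ satisfies the equivariance relation \eqref{eq:equiv}; and then to quote Lemma \ref{lem:equiv}.

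Since $Q$ has no oriented cycle, any path visiting a vertex twice would produce an oriented cycle, so every path is simple and there are only finitely many paths $\gamma$ with $h(\gamma)=i$; hence $\rho$ is a genuine finite row vector of matrices and $\rho\rho^*=\sum_{h(\gamma)=i}(V_\gamma e^{(t(\gamma))})(V_\gamma e^{(t(\gamma))})^*$ is a finite sum of positive semidefinite matrices, depending polynomially on the representation data. For $v\in\C^{d_i}$ one computes $v^*(\rho\rho^*)v=\sum_{h(\gamma)=i}\|(V_\gamma e^{(t(\gamma))})^*v\|^2$, so the kernel of $\rho\rho^*$ is the orthogonal complement of $W:=\sum_{h(\gamma)=i}\mathrm{Im}(V_\gamma e^{(t(\gamma))})\subseteq V_i$. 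The key step is to identify $W$ with the $i$-th component of the smallest subrepresentation of $V$ containing $\mathrm{Im}\,e$: setting $U_j:=\sum_{h(\gamma)=j}\mathrm{Im}(V_\gamma e^{(t(\gamma))})$, one checks that $U=(U_j)_{j\in Q_0}$ is a subrepresentation (for an arrow $a$, $V_a(U_{t(a)})\subseteq U_{h(a)}$ since $a\gamma$ is again a path with head $h(a)$), that it contains every $\mathrm{Im}\,e^{(j)}$ (take $\gamma$ trivial), and that any subrepresentation containing $\mathrm{Im}\,e$ must contain each $\mathrm{Im}(V_\gamma e^{(t(\gamma))})$. Stability of $(V,e)$ forces $U=V$, hence $W=U_i=V_i$ and $\rho\rho^*$ is positive definite on $R_{\vec{n},\vec{d}}^{s}$; so $H=(\rho\rho^*)^{-1}$ is a smooth, positive-definite Hermitian form on the trivial bundle $\underline{\C^{d_i}}\to R_{\vec{n},\vec{d}}^{s}$.

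For equivariance, take $g=(g_j)\in\GL_{\vec{d}}$. For a path $\gamma=a_\ell\cdots a_1$ from $t(\gamma)$ to $i$, the matrix of $\gamma$ after the action is $(g_{h(a_\ell)}V_{a_\ell}g_{t(a_\ell)}^{-1})\cdots(g_{h(a_1)}V_{a_1}g_{t(a_1)}^{-1})$, which telescopes (using $t(a_{k+1})=h(a_k)$) to $g_i V_\gamma g_{t(\gamma)}^{-1}$; combined with $e^{(t(\gamma))}\mapsto g_{t(\gamma)}e^{(t(\gamma))}$ this gives $V_\gamma e^{(t(\gamma))}\mapsto g_i V_\gamma e^{(t(\gamma))}$, with all intermediate $g_j$ and $g_{t(\gamma)}$ cancelling. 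Thus $\rho\mapsto g_i\rho$, so $\rho\rho^*\mapsto g_i(\rho\rho^*)g_i^*$ and $H\mapsto(g_i^*)^{-1}Hg_i^{-1}$; this is exactly \eqref{eq:equiv} for the factor $\GL(d_i,\C)$ acting on $\C^{d_i}$ (the remaining factors act trivially on $\C^{d_i}$ and, consistently with the computation, leave $H$ unchanged). Finally, the $\GL_{\vec{d}}$-action on $R_{\vec{n},\vec{d}}^{s}$ is free and proper, exhibiting $R_{\vec{n},\vec{d}}^{s}$ as a principal bundle over the smooth moduli $\cM_{\vec{n},\vec{d}}$, so Lemma \ref{lem:equiv} applies and $H$ descends to a Hermitian metric on $\cV_i=(R_{\vec{n},\vec{d}}^{s}\times\C^{d_i})/\GL_{\vec{d}}$.

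The main obstacle is the invertibility argument, specifically the observation that the span of the images of the path-composed framings $V_\gamma e^{(t(\gamma))}$ in $V_i$ is precisely the $i$-component of the minimal subrepresentation generated by the framing; once this is seen, the stability condition is exactly what is needed, and everything else is bookkeeping with the $\GL_{\vec{d}}$-action.
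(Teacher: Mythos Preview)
Your proof is correct, and the equivariance half is essentially the paper's own telescoping computation. The positive-definiteness half, however, is a genuinely different argument from the paper's.

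The paper restricts to the moment-map level $\mu^{-1}(I_{\vec d})$ and proves by induction along an acyclic ordering of $Q_0$ that there $\rho\rho^* = I_{d_i}+B_i$ with $B_i$ positive semidefinite; the inductive step substitutes the moment-map equation $e^{(i)}(e^{(i)})^* = I_{d_i}+\sum_{t(a)=i}V_a^*V_a-\sum_{h(a')=i}V_{a'}V_{a'}^*$ and watches the negative terms cancel against the inductive hypothesis for the incoming vertices. Your argument instead works directly on all of $R^{s}_{\vec n,\vec d}$: you identify $\ker(\rho\rho^*)$ with the orthogonal complement of $U_i$, where $U=(U_j)$ is the subrepresentation generated by $\mathrm{Im}\,e$, and then invoke stability to force $U_i=V_i$. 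This is more conceptual and more direct, and it explains exactly why the stability condition is the right hypothesis. What the paper's approach buys in exchange is the sharper quantitative statement $\rho\rho^*\ge I$ on the moment-map level, which the paper actually needs later (it is the inductive seed in Lemma~\ref{lem:add-cycle} and Lemma~\ref{lem:add-chain} when extending the metric to quivers with oriented cycles). Your argument does not yield that bound, but for Theorem~\ref{thm:metric} as stated it is not required.
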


\begin{proof}
	$(\rho\rho^*)^{-1}$ is $\GL_{\vec{d}}$-equivariant:
	\begin{align*}
	H_{\left(g_{h(a)} V_a g^{-1}_{t(a)}, g_j e^{(j)}\right)_{a\in Q_1, j\in Q_0}}
	&=\left(\sum_\gamma \left(g_{h(\gamma)}V_{\gamma} e^{\left(t(\gamma)\right)}\right)\left(g_{h(\gamma)}V_{\gamma} e^{\left(t(\gamma)\right)}\right)^*\right)^{-1}\\
	&= (g^*_i)^{-1} \left(\sum_\gamma \left(V_{\gamma} e^{\left(t(\gamma)\right)}\right)\left(V_{\gamma} e^{\left(t(\gamma)\right)}\right)^*\right)^{-1}g_i^{-1}
	\end{align*}
	for all $g\in \GL_{\vec{d}}$.
	By Lemma \ref{lem:equiv}, it descends to the bundle $\cV_i$ of the quotient.	
	
	Then we prove that the matrix-valued function $H_{(V_a,e^{(j)})_{a\in Q_1, j\in Q_0}}=(\rho \rho^*)^{-1}$ defines a positive-definite metric on the trivial bundle $\underline{\C^{d_i}}$ \emph{over the moment map level $\mu^{-1}(I_{\vec{d}})$} (rather than the whole $R_{\vec{n},\vec{d}}$).
	We prove by induction on the vertices that $\rho \rho^* = I_{d_i} + B$ where $B$ is a semi-positive-definite Hermitian matrix, and hence $\rho \rho^*$ is positive definite (and so does $(\rho \rho^*)^{-1}$).
	
	Since the quiver does not have oriented cycle, $Q_0$ can be ordered such that $i<j$ whenever there is an arrow $i\to j$.  Let $i_0$ be the minimal vertex.
	
	At $i_0$, there is no incoming arrow (other than the framing), and the moment-map equation reads
	$$ e^{(i_0)} (e^{(i_0)})^* = I_{d_{i_0}} + \sum_{t(a)=i_0} V_a^* V_a. $$
	$\sum_{t(a)=i_0} V_a^* V_a$ is semi-positive definite:
	$v^* \cdot V_a^* V_a \cdot v = \|V_a \cdot v\|^2\geq 0$ for any column vector $v$.  Thus the statement is true for $\rho \rho^* = e_{i_0} e_{i_0}^*$.
	
	Suppose the statement is true for all vertices less than $i \in Q_0$.  At $i$, the moment-map equation is
	$$ e^{(i)} (e^{(i)})^* = I_{d_i} + \sum_{t(a)=i} V_a^* V_a - \sum_{h(a')=i} V_{a'} V_{a'}^*. $$
	Then
	\begin{align*}
	\rho \rho^* &= e^{(i)} (e^{(i)}) ^* + \sum_{h(a)=i} V_a \rho_{(t(a))}\rho_{(t(a))}^* V_{a}^* \\
	&= I_{d_i} + \sum_{t(a)=i} V_a^* V_a - \sum_{h(a')=i} V_{a'} V_{a'}^* + \sum_{h(a')=i} V_{a'} (I_{d_{t(a')}} + B_{t(a')}) V_{a'}^*\\
	&=I_{d_i} + \sum_{t(a)=i} V_a^* V_a + \sum_{h(a')=i} V_{a'} B_{t(a')} V_{a'}^*
	\end{align*}
	where $\rho_{(t(a))}=\left(V_{\gamma} e^{\left(t(\gamma)\right)}\right)_{h(\gamma)=t(a)}$, which by inductive assumption can be written as $I_{d_{t(a')}} + B_{t(a')}$ where $B_{t(a')}$ is semi-positive definite.  The matrices $V_a^* V_a$ and $V_{a'} B_{t(a')} V_{a'}^*$ are semi-positive definite:
	$$v^* V_{a'} B_{t(a')} V_{a'}^* v = (V_{a'}^* v)^* B_{t(a')} (V_{a'}^* v) \geq 0$$
	for all $v$.  This proves the statement for the vertex $i$.
\end{proof}

The expression $(\rho\rho^*)^{-1}$ can be understood as follows.
$\rho^*$ embeds the dual $V_i^{*}$ into the dual frame which is a trivial bundle equipped with the standard metric.  This gives an induced metric on $V_{i}^{*}$, which is $\rho\rho^*$ written in matrix form.  Taking the dual, we get the metric $H_i=(\rho\rho^*)^{-1}$ on $V_{i}$.

By construction, the metrics on the dual $\cV_i^*$ (still denoted as $H_i$) have the following nice property.  Inductively, it gives nice expressions of $H_i$ in terms of \emph{holomorphic coordinates}.

\begin{prop} \label{prop:relate-H_i}
	Suppose $Q$ has no oriented cycle.
	For $v,w\in (\cV_i)^*$, 
	$$H_i(v,w) = H_0((e^{(i)})^*(v),(e^{(i)})^*(w)) + \sum_{h(a)=i} H_{t(a)}(a^*(v),a^*(w))$$
	where $H_0$ denotes the trivial metric on the trivial bundle, and $e^{(i)}, a$ are denoting the holomorphic bundle maps corresponding to the framing and arrow maps respectively.
\end{prop}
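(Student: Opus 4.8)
The plan is to reduce the claim to the recursive matrix identity
\[
\rho_i\rho_i^* \;=\; e^{(i)}(e^{(i)})^* \;+\; \sum_{h(a)=i} V_a\,(\rho_{t(a)}\rho_{t(a)}^*)\,V_a^*,
\]
which already appeared inside the proof of Theorem \ref{thm:metric}, and then to translate it into the language of the induced Hermitian forms on the dual bundles. Recall that by construction $H_i=(\rho_i\rho_i^*)^{-1}$ on $\cV_i$, so the induced metric on the dual $\cV_i^*$ — which, following the discussion after Theorem \ref{thm:metric}, is again written $H_i$ — is represented by the matrix $\rho_i\rho_i^*$; that is, $H_i(v,w)=v^*(\rho_i\rho_i^*)\,w$ for $v,w\in \cV_i^*\cong \C^{d_i}$. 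Likewise $H_0$ is represented by the identity matrix on the trivial bundle and $H_{t(a)}$ on $\cV_{t(a)}^*$ by $\rho_{t(a)}\rho_{t(a)}^*$.

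First I would establish the matrix identity by a ``last-arrow'' decomposition of paths. The row vector $\rho_i$ runs over all paths $\gamma$ with $h(\gamma)=i$, including the trivial path at $i$, whose contribution to $\rho_i\rho_i^*$ is $e^{(i)}(e^{(i)})^*$. Every nontrivial such path factors uniquely as $\gamma=a*\gamma'$, where $a\in Q_1$ is its final arrow (so $h(a)=i$) and $\gamma'$ is a path, possibly trivial, with $h(\gamma')=t(a)$; then $V_\gamma=V_aV_{\gamma'}$ and $t(\gamma)=t(\gamma')$. Since $Q$ has no oriented cycle all the sums involved are finite, and grouping the terms of $\rho_i\rho_i^*=\sum_{h(\gamma)=i}(V_\gamma e^{(t(\gamma))})(V_\gamma e^{(t(\gamma))})^*$ by this final arrow yields
\begin{align*}
\rho_i\rho_i^*
&= e^{(i)}(e^{(i)})^* + \sum_{h(a)=i} V_a\Big(\sum_{h(\gamma')=t(a)}(V_{\gamma'}e^{(t(\gamma'))})(V_{\gamma'}e^{(t(\gamma'))})^*\Big)V_a^* \\
&= e^{(i)}(e^{(i)})^* + \sum_{h(a)=i} V_a\,(\rho_{t(a)}\rho_{t(a)}^*)\,V_a^*,
\end{align*}
which is the desired identity. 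This is exactly the computation carried out in the inductive step of the proof of Theorem \ref{thm:metric}, read without the moment-map substitution; as a formal identity it holds over all of $R_{\vec n,\vec d}$.

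It then remains to sandwich this identity between $v^*$ and $w$. The framing $e^{(i)}$ is the holomorphic bundle map $\underline{\C^{n_i}}\to\cV_i$, whose dual $(e^{(i)})^*\colon\cV_i^*\to\underline{\C^{n_i}}$ is represented by $(e^{(i)})^*$, so $H_0\big((e^{(i)})^*v,(e^{(i)})^*w\big)=v^*e^{(i)}(e^{(i)})^*w$. The arrow $a$ with $h(a)=i$ is the holomorphic bundle map $\cV_{t(a)}\to\cV_i$ with matrix $V_a$, and its dual $a^*\colon\cV_i^*\to\cV_{t(a)}^*$ is represented in the corresponding Hermitian frames by $V_a^*$, so $H_{t(a)}(a^*v,a^*w)=(V_a^*v)^*(\rho_{t(a)}\rho_{t(a)}^*)(V_a^*w)=v^*V_a(\rho_{t(a)}\rho_{t(a)}^*)V_a^*w$. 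Adding these and comparing with the matrix identity gives $H_i(v,w)=H_0\big((e^{(i)})^*v,(e^{(i)})^*w\big)+\sum_{h(a)=i}H_{t(a)}(a^*v,a^*w)$, as claimed. All matrix-valued functions appearing are $\GL_{\vec d}$-equivariant by Theorem \ref{thm:metric}, so the identity descends to the bundle maps over $\cM_{\vec n,\vec d}$.

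The computation is routine; the only points needing care are bookkeeping ones — checking that the last-arrow factorization is a genuine bijection between nontrivial paths into $i$ and pairs (arrow $a$ with $h(a)=i$, path into $t(a)$), and keeping the conjugate-transposes in the various $(\cdot)^*$ consistent, since the duals $\cV_i^*$ carry the Hermitian metrics induced from the $H_i$ rather than the bare linear-dual structure. One should also remember, as in Theorem \ref{thm:metric}, that positive-definiteness of the $H_i$ is only asserted on the moment-map level $\mu^{-1}(I_{\vec d})$, whereas the recursion itself is purely algebraic. Iterating this identity down the (acyclic) ordering of $Q_0$ is what produces the advertised closed expressions for $H_i$ in terms of holomorphic coordinates.
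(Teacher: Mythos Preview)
Your proof is correct and follows essentially the same approach as the paper: both reduce the statement to the recursive matrix identity $\rho_i\rho_i^* = e^{(i)}(e^{(i)})^* + \sum_{h(a)=i} V_a(\rho_{t(a)}\rho_{t(a)}^*)V_a^*$ and then read it as an equality of Hermitian forms on $\cV_i^*$. The paper's proof is simply a terse two-line version of what you wrote out in full.
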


\begin{proof}
	The metric on $\cV_i^*$ is given by the matrix $\rho \rho^*$.  Then the above equation follows from
	$$\rho \rho^* = e^{(i)} (e^{(i)}) ^* + \sum_{h(a)=i} V_a \rho_{(t(a))}\rho_{(t(a))}^* V_{a}^*.$$
\end{proof}

\begin{remark}
	As we have seen, the Grassmannian $\Gr(n,k)$ can be understood as the framed moduli for the quiver which has one vertex and no arrow.  The matrix $e \in \Hom(\C^n,\C^k)$ is the framing map.  Then the moment map equation implies
	$$\rho\rho^* = ee^* = I_k$$ in the above proposition.  This is the standard metric on the trivial bundle $\underline{\C^k}$ that we have used in the last subsection.  In particular $\rho\rho^* = (\rho\rho^*)^{-1}$ in this case.  But this is not true for other quivers.
\end{remark}

\begin{remark}
	Note that the above becomes an infinite sum if the quiver has oriented cycles.  The $\GL_{\vec{d}}$-equivariance still holds.  We should restrict to the open subset of $R^s_{n,d}$ that $(\rho\rho^*)^{-1}$ is convergent.  In the next subsection, we will prove that the same expression defines a metric for any given quiver.
\end{remark}

The GIT description will be important to the proof of Theorem \ref{thm:same-metric}.

There is a residual symmetry $U_{\vec{n}} = \prod_{i\in Q_0} U({n_i})$ acting on $\cM_{\vec{n},\vec{d}}$.  Actually, there is a bigger symmetry by the non-compact group $\GL(\hat{W})$ \cite{Reineke}.  $U_{\vec{n}}$ is considered here since this is the symmetry of the metric $H$ on $\cV_i$ as we shall see.

\begin{defn} \label{def:symmetry}
	The right residual action of $U_{\vec{n}}$ on $\cM$ is defined as
	$$ \left[(V_a,e^{(j)})_{a\in Q_1, j\in Q_0}\right] \cdot g = \left[(V_a,e^{(j)}\circ g_j)_{a\in Q_1, j\in Q_0}\right]$$
	for $g = (g_j \in U(d_i))_{j\in Q_0} \in U_{\vec{n}}$.
\end{defn}
Since the above commutes with the left action of $\GL_{\vec{d}}$, the action is well-defined on $\cM$.

\begin{lemma}
	There is a canonical lift of the action of $U_{\vec{n}}$ on $\cM_{\vec{n},\vec{d}}$ to the universal bundle $\cV_i$, so that the bundle map $\cV_{i}\rightarrow \cM_{\vec{n},\vec{d}}$ is equivariant.
\end{lemma}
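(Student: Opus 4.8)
The plan is to lift the residual action by the simple observation that it already extends to the total space of the \emph{trivial} bundle $R^s_{\vec{n},\vec{d}}\times\C^{d_i}$ acting trivially on the second factor, and that this extension commutes with the $\GL_{\vec{d}}$-action out of which $\cV_i$ is built. Concretely, for $g=(g_j)_{j\in Q_0}\in U_{\vec{n}}$ I would define
$$\left[(V_a,e^{(j)})_{a\in Q_1,\,j\in Q_0},\,v\right]\cdot g \;=\; \left[(V_a,\,e^{(j)}\circ g_j)_{a\in Q_1,\,j\in Q_0},\,v\right],\qquad v\in\C^{d_i},$$
and then verify that this is well defined, covers the action of Definition \ref{def:symmetry} on $\cM_{\vec{n},\vec{d}}$, and is fiberwise linear. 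Equivalently, and a bit more conceptually, the $U_{\vec{n}}$-action is an automorphism of the principal $\GL_{\vec{d}}$-bundle $R^s_{\vec{n},\vec{d}}\to\cM_{\vec{n},\vec{d}}$, and any such automorphism pushes forward to every associated bundle, in particular to $\cV_i=(R^s_{\vec{n},\vec{d}}\times\C^{d_i})/\GL_{\vec{d}}$ via the representation $\GL_{\vec{d}}\to\GL(d_i,\C)$ given by projection to the $i$-th factor.

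The first step is to confirm that $U_{\vec{n}}$ does act on $R^s_{\vec{n},\vec{d}}$ by $(V_a,e^{(j)})\cdot g=(V_a,e^{(j)}\circ g_j)$: stability depends only on $\mathrm{Im}\,e^{(j)}$, which is unchanged under post-composition with the invertible map $g_j$, so the stable locus is preserved. The second step is the compatibility with $\GL_{\vec{d}}$: this group acts on $R^s_{\vec{n},\vec{d}}\times\C^{d_i}$ by $h\cdot\big((V_a,e^{(j)}),v\big)=\big((h_{h(a)}V_ah_{t(a)}^{-1},\,h_je^{(j)}),\,h_iv\big)$, while the proposed right $U_{\vec{n}}$-action only right-multiplies each framing $e^{(j)}$ by $g_j$ and leaves $v$ untouched; since left multiplication by $h_j$ and right multiplication by $g_j$ commute, the two actions on $R^s_{\vec{n},\vec{d}}\times\C^{d_i}$ commute, so the $U_{\vec{n}}$-action descends to the quotient $\cV_i$, by the same bookkeeping as in Lemma \ref{lem:equiv}. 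The third step is immediate: the projection $\cV_i\to\cM_{\vec{n},\vec{d}}$ forgets $v$, hence intertwines the two actions; and for a fixed base point the induced map on the fiber is $v\mapsto v$ in any representative, so under the tautological identification $\cV_i|_{[V,e]}\cong\C^{d_i}$ the lift is literally the identity, exhibiting it as a linear isomorphism from the fiber over $[V,e]$ to the fiber over $[V,e\cdot g]$.

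I do not expect a genuine obstacle; the entire content is keeping track of left versus right actions, and the single point that deserves a sentence is that the residual action preserves $R^s_{\vec{n},\vec{d}}$ and not merely $R_{\vec{n},\vec{d}}$, which is exactly the invertibility of the $g_j$. The adjective ``canonical'' is justified by the associated-bundle picture: having fixed the $U_{\vec{n}}$-action on $R^s_{\vec{n},\vec{d}}$ of Definition \ref{def:symmetry} and the presentation $\cV_i=(R^s_{\vec{n},\vec{d}}\times\C^{d_i})/\GL_{\vec{d}}$, letting $U_{\vec{n}}$ act trivially on $\C^{d_i}$ is the unique choice making the factor projection $R^s_{\vec{n},\vec{d}}\times\C^{d_i}\to R^s_{\vec{n},\vec{d}}$ equivariant, and the resulting lift on $\cV_i$ is the associated-bundle pushforward of a principal-bundle automorphism, so it requires no auxiliary choices. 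Finally, I would record for later use that this is precisely the lift under which the Hermitian metric $H=(\rho\rho^*)^{-1}$ of Theorem \ref{thm:metric} is $U_{\vec{n}}$-invariant: under $e^{(j)}\mapsto e^{(j)}g_j$ each block $V_\gamma e^{(t(\gamma))}$ of $\rho$ is right-multiplied by the unitary $g_{t(\gamma)}$, so $\rho\mapsto\rho D$ with $D$ block-diagonal unitary and $\rho\rho^*\mapsto\rho DD^*\rho^*=\rho\rho^*$.
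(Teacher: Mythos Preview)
Your proposal is correct and follows exactly the same approach as the paper: lift the $U_{\vec{n}}$-action to the trivial bundle $R^s_{\vec{n},\vec{d}}\times\C^{d_i}$ by acting trivially on the fiber, observe that this commutes with the left $\GL_{\vec{d}}$-action, and conclude that it descends to $\cV_i$. You have added more detail (preservation of stability, the associated-bundle viewpoint, and a preview of the metric invariance that the paper proves in the next lemma), but the underlying argument is identical.
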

\begin{proof}
	$\cV_i$ is the $\GL_{\vec{d}}$-quotient of the trivial bundle $R^s_{n,d}\times V_{i}$.  $U_{\vec{n}}$ acts on this by acting on the component $V_i$ trivially.  This action commutes with the left action of $\GL_{\vec{d}}$ on $R^s_{n,d}\times V_{i}$, and hence descends to act on $\cV_i$.
\end{proof}

\begin{lemma} \label{lem:metric-equiv}
	The metric defined in Theorem \ref{thm:metric} are $U_{\vec{n}}$-invariant.
\end{lemma}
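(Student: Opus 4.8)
The plan is to work directly with the presentation $\cV_i = (R^s_{\vec{n},\vec{d}}\times \C^{d_i})/\GL_{\vec{d}}$ used in Theorem \ref{thm:metric}, together with the Hermitian form $(\rho\rho^*)^{-1}$ on the trivial bundle $\underline{\C^{d_i}}\to R^s_{\vec{n},\vec{d}}$. The strategy is to show that this form upstairs is already preserved by the $U_{\vec{n}}$-action on $R^s_{\vec{n},\vec{d}}\times \C^{d_i}$; the invariance of the descended metric on $\cV_i\to \cM_{\vec{n},\vec{d}}$ then follows formally, since everything is compatible with the $\GL_{\vec{d}}$-quotient.

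First I would record how $U_{\vec{n}}$ acts upstairs: an element $g=(g_j)_{j\in Q_0}\in U_{\vec{n}}$ sends $(V_a, e^{(j)})$ to $(V_a, e^{(j)}g_j)$ and, in the canonical lift of the preceding lemma, acts trivially on the fiber $\C^{d_i}$. I would note that this action preserves $R^s_{\vec{n},\vec{d}}$ — indeed $\mathrm{Im}(e^{(j)}g_j)=\mathrm{Im}(e^{(j)})$ because each $g_j$ is invertible, so the stability condition is untouched — and that it commutes with the left $\GL_{\vec{d}}$-action, so it descends to a bundle map of $\cV_i$ covering the residual action on $\cM$.

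The key computation is that the matrix-valued function $\rho\rho^* = \sum_{h(\gamma)=i}\left(V_\gamma e^{(t(\gamma))}\right)\left(V_\gamma e^{(t(\gamma))}\right)^*$ is unchanged under $e^{(j)}\mapsto e^{(j)}g_j$: the $\gamma$-th summand becomes $V_\gamma e^{(t(\gamma))} g_{t(\gamma)} g_{t(\gamma)}^* (e^{(t(\gamma))})^* V_\gamma^*$, and since $g_{t(\gamma)}\in U(n_{t(\gamma)})$ we have $g_{t(\gamma)}g_{t(\gamma)}^* = I_{n_{t(\gamma)}}$, so the summand — hence the whole sum, hence its inverse $(\rho\rho^*)^{-1}$ — is fixed. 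Because $U_{\vec{n}}$ acts trivially on $\C^{d_i}$, this gives directly $H_{i,[x]\cdot g}(g\cdot v,g\cdot w) = v^*\bigl(\rho\rho^*(x\cdot g)\bigr)^{-1}w = v^*\bigl(\rho\rho^*(x)\bigr)^{-1}w = H_{i,[x]}(v,w)$, which is exactly the asserted $U_{\vec{n}}$-invariance.

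There is no real obstacle here; the only points needing a line of care are that $\rho\rho^*$ depends on the framing data only through the combinations $e^{(t(\gamma))}$ that appear in it (so the substitution rule above is the complete story) and that the chosen lift is fiberwise trivial — both are immediate from the construction in Theorem \ref{thm:metric} and the preceding lemma. For a quiver with oriented cycles, the identical termwise computation applies to the infinite sum on the domain of convergence $\cM^\circ$ once one observes that $U_{\vec{n}}$ preserves $\cM^\circ$, which itself follows since $\rho\rho^*$ is $U_{\vec{n}}$-invariant.
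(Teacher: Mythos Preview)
Your proof is correct and follows essentially the same approach as the paper: both compute directly that each summand $V_\gamma e^{(t(\gamma))}(V_\gamma e^{(t(\gamma))})^*$ of $\rho\rho^*$ is unchanged under $e^{(j)}\mapsto e^{(j)}g_j$ because $g_{t(\gamma)}g_{t(\gamma)}^* = I_{n_{t(\gamma)}}$. Your version is somewhat more detailed in spelling out why the action descends and why stability is preserved, and you add the remark about the oriented-cycle case, but the core argument is identical.
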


\begin{proof}
	For any $g \in U_{\vec{n}}$, since $g_j g_j^* = I_{n_j}$ for any $j\in Q_0$,
	\begin{align*}
	H_{\left(V_a, e^{(j)}\right)_{a\in Q_1, j\in Q_0} \cdot g}
	&=\left(\sum_\gamma \left(V_{\gamma} e^{\left(t(\gamma)\right) }\cdot g_{t(\gamma)}\right)\left(V_{\gamma} e^{\left(t(\gamma) \right)}\cdot g_{t(\gamma)}\right)^*\right)^{-1}\\
	&= \left(\sum_\gamma \left(V_{\gamma} e^{\left(t(\gamma)\right)}\right)\left(V_{\gamma} e^{\left(t(\gamma)\right)}\right)^*\right)^{-1} = H_{\left(V_a, e^{(j)}\right)_{a\in Q_1, j\in Q_0}}.
	\end{align*}
\end{proof}

Recall from Theorem \ref{thm:Reineke} that $\cM_{\vec{n},\vec{d}}$ is the total space of an iterated Grassmann bundle $M^{(N)}\stackrel{p_N}{\to} M^{(N-1)} \stackrel{p_{N-1}}{\to} \ldots \stackrel{p_{2}}{\to} M^{(1)} \stackrel{p_{1}}{\to} \pt$.  Moreover, $\cV_i$ is the pull-back of the tautological bundle $S_i$ of the Grassmann bundle $M^{(i)} = \Gr_{M^{(i)}}(\underline{\C^{n_i}}\oplus \bigoplus_{j\to i}p_i^*\ldots p_{j+1}^*(S_j),d_i)\to M^{(i-1)}$.  The tautological bundle of the Grassmannian is equipped with a standard metric as illustrated in Section \ref{sec:Herm}.  Inductively, $\cV_i$ is also equipped with a pull-back metric.  We show that this equals to the metric $H$ we defined by an explicit formula.

\begin{theorem} \label{thm:same-metric}
	For all $i\in Q_0$,
	the metric $H=\left(\sum_{h(\gamma)=i} \left(V_{\gamma} e^{\left(t(\gamma)\right)}\right)\left(V_{\gamma} e^{\left(t(\gamma)\right)}\right)^*\right)^{-1}$ equals to the metric on $\cV_i$ constructed from the iterated Grassmann bundle.
\end{theorem}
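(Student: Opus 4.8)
The plan is to prove this by induction on the vertices, using the same ordering of $Q_0$ as in Theorem~\ref{thm:Reineke} and Theorem~\ref{thm:metric}, namely $i < j$ whenever there is an arrow $i \to j$. The key observation is that both sides of the claimed identity are built up inductively, so it suffices to match the inductive steps. On the side of the iterated Grassmann bundle, the fiber of $M^{(i)} \to M^{(i-1)}$ is a Grassmannian $\Gr$ of quotients of $\underline{\C^{n_i}} \oplus \bigoplus_{j\to i} p_{i-1}^*\cdots p_{j+1}^*(S_j)$, and by the computation in Section~\ref{sec:Herm} (the Proposition giving $H = (I_k + \zeta^h(\zeta^h)^*)^{-1}$), the standard metric on its tautological bundle $S_i$, written in the natural holomorphic frame, is $(I_{d_i} + \zeta_i^h (\zeta_i^h)^*)^{-1}$, where $\zeta_i^h = b_i^{-1}p_i$ is the holomorphic coordinate obtained by trivializing the big ambient bundle using the metrics already constructed on the $S_j$ for $j < i$. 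I would first identify this ambient metric explicitly: a point of the Grassmann bundle over $M^{(i-1)}$ is a surjection $e^{(i)} \oplus (V_a)_{h(a)=i}$ from the ambient bundle onto $\C^{d_i}$, and the metric on the ambient bundle, being a direct sum of the trivial metric on $\underline{\C^{n_i}}$ and the pulled-back metrics $H_{t(a)}$ on the summands $p^*(S_{t(a)})$, is exactly the object appearing in Proposition~\ref{prop:relate-H_i}.

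Next I would translate the Grassmannian formula into the language of Theorem~\ref{thm:metric}. The content of the Proposition in Section~\ref{sec:Herm}, applied with a non-standard metric on the ambient space, says: if the ambient bundle carries metric $G$ and we write the tautological quotient bundle's metric in a holomorphic frame adapted to a splitting $e = (b, p)$ with $b$ the "framing block", then dualizing, the metric on the dual tautological bundle is $b^* G b + (\text{terms from } p)$; equivalently, the metric on the dual $S_i^*$ is $\rho_i G \rho_i^*$ where $\rho_i$ collects the framing map and the incoming arrow maps. Unwinding, $\rho_i G \rho_i^* = e^{(i)}(e^{(i)})^* + \sum_{h(a)=i} V_a \big(\text{metric on } S_{t(a)}^*\big) V_a^*$, and this is precisely the recursion for $\rho\rho^*$ established in the proof of Theorem~\ref{thm:metric} and restated in Proposition~\ref{prop:relate-H_i}. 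So the two inductive recursions literally coincide, provided I match the base case: at the minimal vertex $i_0$ there are no incoming arrows, $\rho\rho^* = e^{(i_0)}(e^{(i_0)})^*$, which on the moment-map level $\mu^{-1}(I_{\vec d})$ equals $I_{d_{i_0}} + \sum_{t(a)=i_0} V_a^* V_a = (b_{i_0}^*b_{i_0})^{-1}$ in the Grassmannian chart, matching the Section~\ref{sec:Herm} computation for $\Gr(n_{i_0} + \sum d_j, d_{i_0})$ restricted to the appropriate slice. I would then invoke Lemma~\ref{lem:equiv} (or rather the uniqueness of the $\GL_{\vec d}$-equivariant metric descending from the trivial metric on the relevant summands) to conclude the two globally-defined metrics agree.

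I expect the main obstacle to be bookkeeping of frames and pullbacks: one must check that the holomorphic frame on $\cV_i \cong p_N^* \cdots p_{i+1}^* S_i$ used implicitly in Proposition~\ref{prop:relate-H_i} (via the holomorphic bundle maps $e^{(i)}$ and $a$) is the \emph{same} frame, up to the natural identification, as the one coming from the chart $\det b_i \neq 0$ in the Grassmann-bundle picture, and that the "metric on the ambient bundle" used in the Grassmannian step is genuinely $H_0 \oplus \bigoplus H_{t(a)}$ and not some twist of it. Concretely, the subtlety is that in Section~\ref{sec:Herm} the ambient bundle carried the \emph{trivial} metric, whereas here it carries a non-trivial metric assembled from the previously-constructed $H_{t(a)}$; I would need to check that the Grassmannian computation is natural enough to go through verbatim with a general Hermitian metric on the ambient bundle (it is — the polar decomposition argument only uses that the metric is Hermitian and positive-definite), and that the resulting formula for the dual tautological metric is $\rho_i (\text{ambient metric}) \rho_i^*$. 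Once that naturality is in place, the induction closes immediately by comparing with the recursion in the proof of Theorem~\ref{thm:metric}.
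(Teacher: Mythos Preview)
Your proposal is correct and takes essentially the same approach as the paper: induction along the vertex ordering, matching the recursion for $\rho_i\rho_i^*$ in Proposition~\ref{prop:relate-H_i} against the induced-metric formula on the dual tautological bundle of each Grassmann-bundle step. The paper's write-up differs only in emphasis: it avoids your frame-matching worry entirely by computing the dual metric directly as $(e^{(i)},(V_a)_{h(a)=i})\cdot\big(I\oplus\bigoplus_{h(a)=i}\rho_{t(a)}\rho_{t(a)}^*\big)\cdot(e^{(i)},(V_a)_{h(a)=i})^*$ and expanding, and it flags explicitly that the forgetful map $\cM\to M^{(j)}$ must be read in the GIT picture (since restricting $(V,e)$ to a subquiver destroys the moment-map equation), which is why your base-case computation on $\mu^{-1}(I_{\vec d})$ is slightly off---you should instead use the moment-map equation for the single-vertex quiver $Q^{(i_0)}$, giving $e^{(i_0)}(e^{(i_0)})^*=I_{d_{i_0}}$ on the nose. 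Also drop the appeal to ``uniqueness of the $\GL_{\vec d}$-equivariant metric'' at the end: that is false as stated (scalar multiples, for one), and you don't need it---the two recursions already coincide term by term.
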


\begin{proof}
	As the proof of Theorem \ref{thm:metric}, we do induction on the vertices, which are totally ordered such that $i<j$ whenever there is an arrow $i\to j$.
	
	First, we have the GIT fiber-bundle map $\cM \to M^{(j)}$, where $M^{(j)}$ is the framed moduli of the quiver $Q^{(j)}$ (which is obtained by removing all vertices $k>j$ and the corresponding arrows from $Q$).  This map $(V,e) \mapsto (V',e')$ is simply forgetting all the irrelevant arrow maps and frame maps that are not supported on the subquiver $Q^{(j)}$.  Note that the stability condition is preserved: any subrepresentation $R'\subset V'$ can be extended to a subrepresentation $R \subset V$ by assigning the whole $V_k$ to the additional vertices $k>j$  (and the arrow maps just come from restriction).  If $\mathrm{Im}(e') \subset R'$, then $\mathrm{Im}(e) \subset R$.
	
	Note that here we use the GIT description instead of symplectic reduction since $(V',e')$ no longer satisfies the moment-map equation in defining $M^{(j)}$ (even when $(V,e)$ satisfies the moment-map equation for $\cM$).
	
	We start with the minimal vertex $i_0$.  The quiver $Q^{(i_0)}$ is simply a single vertex, and the corresponding framed moduli is $ M^{(i_0)} = \Gr(n_{i_0},d_{i_0}).$
	The universal bundle $\cV_{i_0}^{Q^{(i_0)}}$ is the tautological bundle of $M^{(i_0)} = \Gr(n_{i_0},d_{i_0})$.  From the last subsection, the standard metric of $\cV_{i_0}^{Q^{(i_0)}}$ is descended from $I_{d_{i_0}}$, which is exactly $(e^{Q^{(i_0)}}(e^{Q^{(i_0)}})^*)^{-1}$ by the moment map equation for $Q^{(i_0)}$.  The statement is trivial in this case.
	
	Now consider the vertex $i$.  Denote the vertex right before $i$ by $i-1$.  For the quiver $Q^{(i-1)}$, assume that the two metrics on the universal bundle $\cV^{Q^{(i-1)}}_j$ agree for every $j\in Q^{(i-1)}_0$.  We have the bundle map $\pi:M^{(i)} \to M^{(i-1)}$, and $\cV^{Q^{(i)}}_j = \pi^* \cV^{Q^{(i-1)}}_j$ for all $j<i$.  Moreover, the metric on $\cV^{Q^{(i)}}_j$ is pull-back from $\cV^{Q^{(i-1)}}_j$, which equals to $\left(\sum_{h(\gamma)=j} \left(V_{\gamma} e^{\left(t(\gamma)\right)}\right)\left(V_{\gamma} e^{\left(t(\gamma)\right)}\right)^*\right)^{-1}$ by inductive assumption.  The pull-back map does not change the arrow and framing maps of $Q^{(i-1)}$.  This proves the statement for $S^{Q^{(i)}}_j$ for $j<i$.
	
	Consider $\cV^{Q^{(i)}}_i$, which is the tautological bundle associated to the Grassmannian bundle over $M^{(i-1)}$ parametrizing quotients of $$(e^{(i)},(V_a)_{h(a)=i}): \left.\left(\underline{\C^{d_i}} \oplus \left(\bigoplus_{h(a)=i} \cV^{Q^{(i-1)}}_{t(a)}\right)\right)\right|_{x\in M^{(i-1)}} \to \C^{d_i}.$$
	The metric on $(\cV^{Q^{(i)}}_i)^*$ is induced by the embedding $(e^{(i)},(V_a)_{h(a)=i})^*$ to $\left(\underline{\C^{d_i}} \oplus \left(\bigoplus_{h(a)=i} \cV_{t(a)}\right)\right)^*$, whose metric is given by $$I_{d_i} \oplus \bigoplus_{h(a)=i} \left(\sum_{h(\gamma)=t(a)} \left(V_{\gamma} e^{\left(t(\gamma)\right)}\right)\left(V_{\gamma} e^{\left(t(\gamma)\right)}\right)^*\right)$$
	by inductive assumption.  Thus the induced metric on $(\cV^{Q^{(i)}}_i)^*$ is
	\begin{align*}
	&(e^{(i)},(V_a)_{h(a)=i}) \cdot \left(I_{d_i} \oplus \bigoplus_{h(a)=i} \left(\sum_{h(\gamma)=t(a)} \left(V_{\gamma} e^{\left(t(\gamma)\right)}\right)\left(V_{\gamma} e^{\left(t(\gamma)\right)}\right)^*\right)\right) \cdot (e^{(i)},(V_a)_{h(a)=i})^*\\
	&=e^{(i)}(e^{(i)})^* + \sum_{\substack{h(a)=i\\ h(\gamma)=t(a)}} V_a \cdot \left(V_{\gamma} e^{\left(t(\gamma)\right)}\right)\left(V_{\gamma} e^{\left(t(\gamma)\right)}\right)^* \cdot V_a^* \\
	&=\sum_{h(\gamma)=i} \left(V_{\gamma} e^{\left(t(\gamma)\right)}\right)\left(V_{\gamma} e^{\left(t(\gamma)\right)}\right)^*.
	\end{align*}
	Taking reciprocal gives the metric on $\cV^{Q^{(i)}}_i$.  This proves the metric has the given expression.
\end{proof}

\begin{theorem} \label{thm:Ricci}
	The Ricci curvature of the metric on $\bigotimes_{i\in Q_0} \cV_i$ given in Theorem \ref{thm:metric} defines a K\"ahler metric on $\cM$.
\end{theorem}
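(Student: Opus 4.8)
The plan is to make the Ricci form completely explicit through the iterated Grassmann bundle description of Theorem \ref{thm:Reineke}, and then to prove its positivity by climbing the tower of projections one fiber at a time. Write $\cM = M^{(N)}$ and, for each $i \in Q_0$, let $q_i := p_N \circ \cdots \circ p_{i+1} \colon \cM \to M^{(i)}$ be the composed projection, so that $q_N = \mathrm{id}$ and $q_0$ is the constant map to a point. By Theorem \ref{thm:same-metric}, under the identification $\cV_i \cong q_i^* S_i$ the metric $H_i$ is the pull-back of the standard quotient metric on the tautological bundle $S_i$ of the Grassmann bundle $M^{(i)} = \Gr_{M^{(i-1)}}\bigl( E_i, d_i \bigr)$, where $E_i = \underline{\C^{n_i}} \oplus \bigoplus_{j \to i} \bigl(\text{pull-back of } S_j \text{ from } M^{(j)}\bigr)$ carries the standard metric on the trivial summand and the $H_j$ on the remaining summands. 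Since the Ricci curvature of a Hermitian holomorphic vector bundle is the curvature of its determinant line bundle, and $\det\bigl(\bigotimes_{i} \cV_i\bigr) = \bigotimes_i (\det \cV_i)^{\otimes r_i}$ with $r_i := \prod_{j \ne i} d_j > 0$, the form in question is
\[
\Omega \;=\; \sum_{i \in Q_0} r_i \, q_i^* \Omega_i, \qquad \text{where } \Omega_i := \text{curvature of } \bigl(\det S_i, \text{ standard metric}\bigr) \text{ on } M^{(i)}.
\]
Each $\Omega_i$ is a closed real $(1,1)$-form, hence so is $\Omega$, and the problem reduces to showing that $\Omega$ is strictly positive at every point.

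I would first prove, by induction along the chosen ordering of $Q_0$, that the standard metric on every $S_i$ is Griffiths semi-positive on all of $M^{(i)}$, not merely along the fibers of $p_i$. For the minimal vertex $i_0$, $E_{i_0} = \underline{\C^{n_{i_0}}}$ is flat and $S_{i_0}$ is a quotient of it, so its curvature is of the form $\beta \wedge \beta^{*}$ for the second fundamental form $\beta$, hence Griffiths semi-positive. For the inductive step, $E_i$ is a direct sum of a flat trivial bundle with pull-backs of the $S_j$, $j \to i$, which are Griffiths semi-positive by hypothesis; since Griffiths semi-positivity is preserved under pull-back, direct sum, and passage to quotients (with the quotient metric), $E_i$ is Griffiths semi-positive on $M^{(i-1)}$, hence so is $p_i^* E_i$ on $M^{(i)}$, and therefore so is its quotient $S_i$. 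Taking the trace of the Chern curvature, $\Omega_i$ is then a semi-positive $(1,1)$-form on $M^{(i)}$, so each $q_i^* \Omega_i$ is a semi-positive $(1,1)$-form on $\cM$. I would then record the classical fiberwise positivity: over a point $x \in M^{(i-1)}$ the metric on $E_i$ is constant along $p_i^{-1}(x)$, so $S_i$ restricted to $p_i^{-1}(x)$ is the standard universal quotient bundle of the Grassmannian $\Gr\bigl(n_i + \textstyle\sum_{j \to i} d_j,\, d_i\bigr)$ and $\Omega_i|_{p_i^{-1}(x)}$ is its Fubini--Study Kähler form; in particular $\Omega_i$ is strictly positive on every tangent vector to a fiber of $p_i$.

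With these two facts I would deduce positivity of $\Omega$ by a filtration argument. Given a nonzero $v \in T^{1,0}_x \cM$, let $k$ be the largest index with $dq_{k-1}(v) = 0$; this is well defined since $dq_0(v) = 0$ always, while $dq_N(v) = v \ne 0$. Then $w := dq_k(v)$ is a nonzero tangent vector to the fiber of $p_k$ through $q_k(x)$, because $dp_k(w) = dq_{k-1}(v) = 0$, so $\bigl(q_k^* \Omega_k\bigr)(v, \bar v) = \Omega_k(w, \bar w) > 0$ by the Fubini--Study positivity above. For $i < k$ one has $dq_i(v) = 0$ (it factors through $dq_{k-1}(v)$), so $\bigl(q_i^* \Omega_i\bigr)(v, \bar v) = 0$; and for $i > k$ one has $\bigl(q_i^* \Omega_i\bigr)(v, \bar v) \ge 0$ by semi-positivity. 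Summing,
\[
\Omega(v, \bar v) \;\ge\; r_k \, \Omega_k(w, \bar w) \;>\; 0 .
\]
Hence $\Omega$ is a closed positive $(1,1)$-form, i.e.\ a Kähler form on $\cM$.

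The step I expect to be the real obstacle is the inductive claim that the standard metrics on the $S_i$ are Griffiths semi-positive \emph{everywhere} on $M^{(i)}$: the classical Grassmannian computation only yields positivity along the Grassmann fibers, and one must control the mixed base/fiber curvature terms, which is precisely where the iterated structure and the stability of Griffiths semi-positivity under the three operations above are used. Once this global semi-positivity is in place, it is exactly what lets the argument go through with the prescribed weights $r_i = \prod_{j \ne i} d_j$ — indeed with any positive weights — without the usual device of twisting $\Omega_i$ by a large multiple of a Kähler class pulled back from $M^{(i-1)}$.
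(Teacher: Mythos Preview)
Your proof is correct and takes a genuinely different route from the paper's. The paper works directly on $R_{\vec n,\vec d}$: it computes $\partial\bar\partial\log\det(\rho_i\rho_i^*)$, uses the singular value decomposition of $\rho_i$ to rewrite it as the squared norm $\|(\partial_v\rho_i)_2\|^2_{H_i}$ of the component of $\partial_v\rho_i$ on $(\mathrm{Im}\,\rho_i^*)^\perp$, giving termwise semi-positivity; strict positivity of the sum follows because simultaneous vanishing of all $(\partial_v\rho_i)_2$ means $v$ fixes every subspace $\mathrm{Im}(\rho_i^*)\subset\hat W_i$ and hence descends to zero in the quiver Grassmannian. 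You instead invoke Theorem~\ref{thm:same-metric} to transport the problem to the iterated Grassmann tower, establish global Griffiths semi-positivity of each $S_i$ by induction using the closure of this notion under pull-back, direct sum, and quotient, and replace the paper's final step by the filtration $\ker dq_0\supset\cdots\supset\ker dq_N=0$ together with fiberwise Fubini--Study positivity. Your approach is more structural and makes transparent that any positive weights work---in particular the weights $r_i=\prod_{j\neq i}d_j$ actually dictated by $\det\bigotimes_i\cV_i$, a point the paper's proof glosses over by taking the unweighted sum; the paper's approach is more self-contained (no appeal to Theorem~\ref{thm:same-metric}) and yields the explicit pointwise formula \eqref{eq:Ricci} for the K\"ahler form, which is reused later (e.g.\ in Section~\ref{sec:eg}).
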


\begin{proof}
	As in Theorem \ref{thm:metric}, denote $\rho =\rho ^{\left(i\right)}=\left(V_\gamma e^{\left(t(\gamma)\right)}\right)_{\gamma: h(\gamma)=i}$ which is a matrix-valued function on the vector space $R_{\vec{n},\vec{d}}$.  At each point of $R_{\vec{n},\vec{d}}$, $\rho$ is a linear map from 
	\begin{equation}
	\widehat{W}_i := \bigoplus_{\gamma: h(\gamma)=i} \C^{n_{t(\gamma)}}
	\label{eq:W}
	\end{equation}
	to $V_i$.
	The Ricci curvature of the metric $(\rho\rho^*)^{-1}$ is given by $i \partial \overline{\partial }\log \det \rho \rho ^{\mathrm{*}}$.  We have
	\begin{align*}
	\partial \overline{\partial }\log \det \rho \rho ^{\mathrm{*}}
	&=\partial \left(\mathrm{tr~ }\left(\left(\rho \rho ^{\mathrm{*}}\right)^{-1}\overline{\partial }\left(\rho \rho ^{\mathrm{*}}\right)\right)\right)\\
	&=\mathrm{tr}\left(\partial \left(\left(\rho \rho ^{\mathrm{*}}\right)^{-1}\rho \left(\partial \rho \right)^{\mathrm{*}}\right)\right)\\
	&=\mathrm{tr~ }\left(\left(\rho \rho ^{\mathrm{*}}\right)^{-1}\partial \rho \left(\partial \rho \right)^{\mathrm{*}}+\left(\partial \left(\rho \rho ^{\mathrm{*}}\right)^{-1}\right)\rho \left(\partial \rho \right)^{\mathrm{*}}\right)\\
	&=\mathrm{tr~ }\left(\left(\partial \rho \right)^{\mathrm{*}}\left(\rho \rho ^{\mathrm{*}}\right)^{-1}\partial \rho \right)-\mathrm{tr~ }\left(\left(\rho \rho ^{\mathrm{*}}\right)^{-1}\left(\partial \left(\rho \rho ^{\mathrm{*}}\right)\right)\left(\rho \rho ^{\mathrm{*}}\right)^{-1}\rho \left(\partial \rho \right)^{\mathrm{*}}\right)\\
	&=\mathrm{tr~ }\left(\left(\partial \rho \right)^{\mathrm{*}}\left(\rho \rho ^{\mathrm{*}}\right)^{-1}\partial \rho \right)-\mathrm{tr~ }\left(\left(\rho \rho ^{\mathrm{*}}\right)^{-1}\rho \left(\partial \rho \right)^{\mathrm{*}}\left(\rho \rho ^{\mathrm{*}}\right)^{-1}\left(\left(\partial \rho \right)\rho ^{\mathrm{*}}\right)\right)\\
	&=\mathrm{tr~ }\left(\left(\partial \rho \right)^{\mathrm{*}}\left(\rho \rho ^{\mathrm{*}}\right)^{-1}\partial \rho \right)-\mathrm{tr~ }\left(\left(\partial \rho \cdot \left(\rho ^{\mathrm{*}}\left(\rho \rho ^{\mathrm{*}}\right)^{-\frac{1}{2}}\right)\right)^{\mathrm{*}}\left(\rho \rho ^{\mathrm{*}}\right)^{-1}\left(\partial \rho \cdot \left(\rho ^{\mathrm{*}}\left(\rho \rho ^{\mathrm{*}}\right)^{-\frac{1}{2}}\right)\right)\right)
	\end{align*}
	where $\overline{\partial }\rho =0$ since the matrix $\rho $ has polynomial entries in holomorphic coordinates.
	
	We can take the singular value decomposition
	\begin{equation*}
	\rho =U \cdot \left(\text{diag}\left(\lambda _{1},\ldots ,\lambda _{d_i}\right) \,\,\, 0\right) \cdot V^{\mathrm{*}}
	\end{equation*}
	where $U\in U\left(d_i\right),~ V\in U\left(\dim \widehat{W}_{i}\right)$, and $\lambda _{i}>0$. ($\lambda _{i}\neq 0$ since $\rho$ is surjective.)
	Then
	\begin{align*}
	\rho \rho ^{\mathrm{*}}&=U\left(\text{diag}\left(\lambda _{1}^{2},\ldots ,\lambda _{d_i}^{2}\right)\right)U^{\mathrm{*}}. \\
	\rho ^{\mathrm{*}}\left(\rho \rho ^{\mathrm{*}}\right)^{-\frac{1}{2}}&=V\left(\begin{array}{c}
	\text{diag}\left(\lambda _{1},\ldots ,\lambda _{\alpha \left(i\right)}\right)\\
	0
	\end{array}\right)\left(\text{diag}\left(\lambda _{1}^{-1},\ldots ,\lambda _{\alpha \left(i\right)}^{-1}\right)\right)U^{\mathrm{*}}=V\left(\begin{array}{c}
	I_{\alpha \left(i\right)}\\
	0
	\end{array}\right)U^{\mathrm{*}}. 
	\end{align*}
	In other words, $\rho ^{\mathrm{*}}=\left(\rho ^{\mathrm{*}}\left(\rho \rho ^{\mathrm{*}}\right)^{-\frac{1}{2}}\right)\left(\rho \rho ^{\mathrm{*}}\right)^{\frac{1}{2}}$ is decomposed into the rescaling $\rho \rho ^{\mathrm{*}}$ and the orthogonal embedding $\left(\rho ^{\mathrm{*}}\left(\rho \rho ^{\mathrm{*}}\right)^{-\frac{1}{2}}\right)$ to $\mathrm{Im} \rho ^{\mathrm{*}}\subset \widehat{W}_{i}$.
	
	Now take a vector $v \in T^{1,0} R_{\alpha,d} \cong TR_{\alpha,d}$, and evaluate the above two-form by $(v,\bar{v})$.
	The first term $\mathrm{tr~ }\left(\left(\partial _{v}\rho \right)^{\mathrm{*}}\left(\rho \rho ^{\mathrm{*}}\right)^{-1}\partial _{v}\rho \right)$ is the square norm of the linear map 
	\begin{equation*}
	\partial _{v}\rho \colon \left(\widehat{W}_{i},h_{\mathrm{std}}\right)\rightarrow \left(V_{i},~ h_{{\left(\rho \rho ^{\mathrm{*}}\right)^{-1}}}\right).
	\end{equation*}
	Namely we take the standard basis in $\widehat{W}_{i}$ (which is orthonormal under the standard metric $h_{\mathrm{std}}$), map it to $V_{i}$ by $\partial _{v}\rho $, and take the sum of their square norms with respect to the metric $h_{{\left(\rho \rho ^{\mathrm{*}}\right)^{-1}}}$.
	
	The second term
	\begin{equation*}
	\mathrm{tr~ }\left(\left(\partial _{v}\rho \cdot \left(\rho ^{\mathrm{*}}\left(\rho \rho ^{\mathrm{*}}\right)^{-\frac{1}{2}}\right)\right)^{\mathrm{*}}\left(\rho \rho ^{\mathrm{*}}\right)^{-1}\left(\partial _{v}\rho \cdot \left(\rho ^{\mathrm{*}}\left(\rho \rho ^{\mathrm{*}}\right)^{-\frac{1}{2}}\right)\right)\right)
	\end{equation*}	
	is the square norm of the following component $\left(\partial _{v}\rho \right)_{1}$ of $\partial _{v}\rho $. Namely, we decompose 
	$$\widehat{W}_{i}=\left(\mathrm{Im}~ \rho ^{\mathrm{*}}\right)\oplus \left(\mathrm{Im}~ \rho ^{\mathrm{*}}\right)^{\bot }$$
	and write $\partial _{v}\rho =\left(\left(\partial _{v}\rho \right)_{1},\left(\partial _{v}\rho \right)_{2}\right)$ where $\left(\partial _{v}\rho \right)_{1}\colon \mathrm{Im}~ \rho ^{\mathrm{*}}\rightarrow V_{i}$ and $\left(\partial _{v}\rho \right)_{2}\colon \left(\mathrm{Im}~ \rho ^{\mathrm{*}}\right)^{\bot }\rightarrow V_{i}$.
	
	We have 
	\begin{equation}
	\partial _{v}\overline{\partial _{v}}\log \det \rho \rho ^{\mathrm{*}} = \left\| \partial _{v}\rho \right\|_H ^{2}-\left\| \left(\partial _{v}\rho \right)_{1}\right\|_H ^{2}=\left\| \left(\partial _{v}\rho \right)_{2}\right\|_H ^{2}\geq 0
	\label{eq:Ricci}
	\end{equation}
	for all $v\in T^{1,0}$.  ($H$ stands for the metric $(\rho \rho ^{\mathrm{*}})^{-1}$.)
	This proves that the Ricci curvature of the metric for each $i$ is semi-positive definite.
	
	Now consider 
	$$H_T = \sum_{i\in Q_0} \partial \overline{\partial }\log \det \rho^{(i)} \left(\rho^{(i)}\right)^{\mathrm{*}}.$$
	Suppose it is zero when evaluated at $(v,\bar{v})$.  Then each individual term equals to zero.  This forces $\left(\partial _{v}\rho^{(i)} \right)_{2}=0$ for all $i$, that is, image of $\left(\partial _{v}\rho^{(i)} \right)^{\mathrm{*}} = \partial _{v} (\rho^{(i)})^*$  sits in the image of $(\rho^{(i)})^{\mathrm{*}}$. This exactly means $v$ descends to the zero tangent vector in the quotient $\cM$: $v$ does not alter the subspaces given by $(\rho^{(i)})^*: V_i \to \hat{W}_i$ for all $i$.  By the identification of $\cM$ as a quiver Grassmannian \cite{Reineke}, it means $v$ does not change the position of the point $((\rho^{(i)})^*: i\in Q_0)$ in the quiver Grassmannian, and hence must be the zero tangent vector.  This proves the above expression is positive definite.  
\end{proof}

By Equation \eqref{eq:Ricci}, the metric on $T\cM$ produced from the Ricci curvature of $\cV_i$ is
$$ H_{T}(v,v) = \sum_{i\in Q_0} \|(\partial_v \rho^{(i)})_2 \|_{H_i}^2. $$
We have the tautological exact sequence of vector bundles over $\cM$:
$$ 0 \to \bigoplus_{i\in Q_0} \End(\cV_i) \to \bigoplus_{a\in Q_1} \Hom(\cV_{t_a},\cV_{h_a}) \oplus \bigoplus_{i\in Q_0} \cV_i^{n_i} \to T\cM \to 0  $$
where the second arrow is given by sending $X|_{[\phi,e]\in \cM} \in \bigoplus_{i\in Q_0} \End(\cV_i)$ to $$\left((X_{h_a}\phi_a - \phi_a X_{t_a})_{a\in Q_1}, (X_ie^{(i)})_{i\in Q_0} \right)$$ 
(which is the derivative of the action of $\GL_{\vec{d}}$), and $T\cM$ is obtained as the quotient bundle (of the middle one by the first one).
$H(v,v)$ can be defined for $v \in \bigoplus_{a\in Q_1} \Hom(\cV_{t_a},\cV_{h_a}) \oplus \bigoplus_{i\in Q_0} \cV_i^{n_i}$.  $H_{TM}$ is zero on $\bigoplus_{i\in Q_0} \End(\cV_i)$: the action of $\GL(d_j,\C)$ does not change $\rho^{(i)}$ for $j\not=i$; for $X\in \End(\cV_i)$, $(X\cdot \rho^{(i)} \cdot w)^* v = w^* (\rho^{(i)})^*(X^*\cdot v)=0$ for all $v \in \cV_i, w \in (\Image (\rho^{(i)})^*)^\perp$, and so $(\partial_X \rho^{(i)})_2 = 0$.

\subsection{Quiver with oriented cycles} \label{sec:cycle}
When the quiver $Q$ has an oriented cycle, the framed moduli $\cM_{\vec{n},\vec{d}}$ is no longer projective.  Examples of such quivers were studied algebraically by \cite{Fedotov, Engel-Reineke} along the line of Reineke.

On the other hand, the metric given in Theorem \ref{thm:metric} still makes sense for quiver with oriented cycles, as long as we stay in the domain of convergence and prove that it is positive-definite.  Below we will prove this for any given quiver.

Denote the moment-map level by $R^{\mu=I}_{n,d} = \{\mu = 1\}\subset R_{\vec{n},\vec{d}}$.  Let $\|A\|=\sup_{\|v\|=1} \|A\cdot v\|$ be the operator norm of a matrix $A$.  
We take the following open subset of $R^{\mu=I}_{n,d}$.  

\begin{defn}
	Define
	$$R_{\vec{n},\vec{d}}^{\mu=I,\circ} := \{(V,e) \in R^{\mu=I}_{n,d}: \|V(\gamma)\| < 1 \textrm{ for every oriented cycle } \gamma \}$$
	where $V(\gamma) = V(a_k)\ldots V(a_1)$ for $\gamma = a_k \ldots a_1$.
	$$ R_{\vec{n},\vec{d}}^{\circ} := \GL_{\vec{d}} \cdot R_{\vec{n},\vec{d}}^{\mu=I,\circ}. $$
	and
	$$ \cM_{\vec{n},\vec{d}}^\circ := R_{\vec{n},\vec{d}}^{\mu=I,\circ}/U_{\vec{d}} = R_{\vec{n},\vec{d}}^{\circ} / \GL_{\vec{d}}. $$
\end{defn}
The above definition of $\cM_{\vec{n},\vec{d}}^\circ$ makes sense because of the following.
\begin{lemma}
	$R_{\vec{n},\vec{d}}^{\mu=I,\circ}$ is invariant under $U_{\vec{d}}$.
\end{lemma}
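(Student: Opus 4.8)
The plan is to verify that each of the two conditions cutting out $R_{\vec{n},\vec{d}}^{\mu=I,\circ}$ inside $R_{\vec{n},\vec{d}}$ is separately preserved by the (left) action of $U_{\vec{d}}$: the moment-map equation $\mu = I_{\vec{d}}$, and the operator-norm bound $\|V(\gamma)\| < 1$ over all oriented cycles $\gamma$.

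For the first condition, I would recall that the moment map $\mu\colon R_{\vec{n},\vec{d}} \to \bi\mathfrak{u}_{\vec{d}}$ is $U_{\vec{d}}$-equivariant for the conjugation action on the target, i.e. $\mu(g\cdot(V,e)) = g\,\mu(V,e)\,g^{-1}$ for $g\in U_{\vec{d}}$. This is immediate by substituting the transformed data into the explicit formula for $\mu_i$ and using $g_j^{*}=g_j^{-1}$, as $g$ is unitary. Since the level $I_{\vec{d}}$ is central, it is fixed by conjugation, so $R^{\mu=I}_{n,d}=\mu^{-1}(I_{\vec{d}})$ is $U_{\vec{d}}$-invariant.

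For the second condition, fix $g=(g_i)_{i\in Q_0}\in U_{\vec{d}}$ and an oriented cycle $\gamma=a_k\cdots a_1$, so that $t(a_1)=h(a_k)=:i$ and $h(a_j)=t(a_{j+1})$ for $1\le j<k$. By \eqref{eq:GL(d)} each arrow matrix transforms as $V(a_j)\mapsto g_{h(a_j)}V(a_j)g_{t(a_j)}^{-1}$, so in the product $V(\gamma)=V(a_k)\cdots V(a_1)$ every intermediate pair $g_{t(a_{j+1})}^{-1}g_{h(a_j)}=I_{d_{h(a_j)}}$ telescopes, leaving $V(\gamma)\mapsto g_i\,V(\gamma)\,g_i^{-1}$. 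Since $g_i$ is unitary, $\|g_i V(\gamma) g_i^{-1}\| = \|V(\gamma)\|$, so the strict inequality $\|V(\gamma)\|<1$ is preserved for every oriented cycle $\gamma$. Combining the two invariances gives the lemma.

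The argument is short and I do not anticipate a genuine obstacle; the only point requiring care is the bookkeeping of the telescoping product, which just uses the convention that the arrows of $\gamma$ are composable in the written order, together with the elementary invariance of the operator norm under left and right multiplication by unitaries.
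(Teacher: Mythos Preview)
Your argument is correct and follows the same approach as the paper. The paper's proof is terser: it only explicitly checks the cycle condition, writing $(g\cdot V)(\gamma)=g_iV(\gamma)g_i^{-1}$ and invoking unitarity, while taking the $U_{\vec{d}}$-invariance of the moment-map level $\mu^{-1}(I_{\vec{d}})$ as understood from the symplectic-reduction setup; you spell out both halves, which is a harmless (and arguably cleaner) elaboration.
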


\begin{proof}
	For every oriented cycle $\gamma$ at $i \in Q_0$, $(g\cdot V)(\gamma) = g_i V(\gamma) g_i^{-1}$, and hence the condition $\|V(\gamma)\| < 1$ is respected for $g\in U_{\vec{d}}$.
\end{proof}

The main theorem in this section is the following.

\begin{theorem} \label{thm:metric-with-cycles}
	Let $Q$ be an arbitrary quiver.  As in Theorem \ref{thm:metric}, for each $i \in Q_0$, set
	$$H_i = (\rho_i\rho_i^*)^{-1} = \left(\sum_{h(\gamma)=i} \left(V_{\gamma} e^{\left(t(\gamma)\right)}\right)\left(V_{\gamma} e^{\left(t(\gamma)\right)}\right)^*\right)^{-1}$$
	which is an infinite sum, whose terms are ordered by the length of the path $\gamma$.  (There are just finitely many paths under each fixed length.)  This gives a convergent function $H_i: R_{\vec{n},\vec{d}}^\circ \to \End(\C^{d_i})$.
	$H_i$ is $\GL_{\vec{d}}$-equivariant, and it descends to a metric on $\cV_i$ over $\cM_{\vec{n},\vec{d}}^\circ$.
\end{theorem}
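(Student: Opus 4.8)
I would set $P_i := \rho_i\rho_i^*$, view the tuple $P=(P_i)_{i\in Q_0}$ as a solution of a fixed-point equation, and rerun the bookkeeping of Theorem~\ref{thm:metric} with the (now impossible) induction over vertices replaced by a convergence/Neumann-series argument. First I would reduce everything to the moment-map level $R_{\vec n,\vec d}^{\mu=I,\circ}$: under $g\in\GL_{\vec d}$ each block transforms by $V_\gamma e^{(t(\gamma))}\mapsto g_{h(\gamma)}V_\gamma e^{(t(\gamma))}=g_iV_\gamma e^{(t(\gamma))}$ because the framing factor $g_{t(\gamma)}$ cancels, so $\rho_i\mapsto g_i\rho_i$, the transformation law~\eqref{eq:equiv} holds term by term, and convergence at a point is equivalent to convergence at any point of its $\GL_{\vec d}$-orbit. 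Grouping the paths $\gamma$ with $h(\gamma)=i$ by their last arrow gives $P_i=e^{(i)}(e^{(i)})^*+\sum_{h(a)=i}V_aP_{t(a)}V_a^*$, that is $P=E+\Phi(P)$, where $E_i=e^{(i)}(e^{(i)})^*$ and $\Phi$ is the positive operator $\Phi(X)_i=\sum_{h(a)=i}V_aX_{t(a)}V_a^*$ on the space of tuples of Hermitian matrices. The partial sums $P_i^{(N)}=\sum_{h(\gamma)=i,\ |\gamma|\le N}(V_\gamma e^{(t(\gamma))})(V_\gamma e^{(t(\gamma))})^*$ then form a nondecreasing sequence of positive-semidefinite matrices with $P^{(N+1)}=E+\Phi(P^{(N)})$.

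\textbf{Convergence.} The heart of the proof is to show the $P_i^{(N)}$ are uniformly bounded on $R_{\vec n,\vec d}^{\mu=I,\circ}$, so that $P_i=\sum_{N\ge0}\Phi^N(E)_i$ exists. I would decompose $Q$ into strongly connected components, which form a DAG; any path then factors as $\gamma=\sigma_rb_r\sigma_{r-1}\cdots b_1\sigma_0$ with each $\sigma_t$ internal to a single component, each $b_t$ an arrow between components, and $r<|Q_0|$, so it suffices to bound $\sum_\sigma\|V_\sigma\|^2$ over paths $\sigma$ internal to a fixed component $C$ with fixed endpoints (the framing norms $\|e^{(j)}(e^{(j)})^*\|$ are controlled on $\mu^{-1}(I_{\vec d})$ by the moment-map equation $e^{(j)}(e^{(j)})^*=I_{d_j}+\sum_{t(a)=j}V_a^*V_a-\sum_{h(a')=j}V_{a'}V_{a'}^*$). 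Inside $C$, any path of length $\ge|C|$ contains a subcycle of length $\le|C|$; peeling subcycles $c_1,\dots,c_p$ one at a time until a simple path $\pi$ (of length $<|C|$) remains gives $p\ge|\sigma|/|C|-1$ and, since $V_\sigma$ is the product of the arrow matrices in order, $\|V_\sigma\|\le K^{|C|}\prod_t\|V_{c_t}\|\le K^{|C|}\theta^p$, where $K=\max_a\|V(a)\|$ and $\theta=\max\{\|V(c)\|: c \text{ a cycle in } C,\ |c|\le|C|\}<1$ by the defining condition of $\cM_{\vec n,\vec d}^\circ$; this gives geometric decay $\|V_\sigma\|\le C_0\lambda^{|\sigma|}$ with $\lambda=\theta^{1/|C|}<1$. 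I expect this to be the main obstacle: one must still ensure this decay dominates the growth of the number of internal paths of each length, which is exactly the place where the cycle condition of Section~\ref{sec:cycle} enters in its sharp form -- equivalently, one takes $\cM_{\vec n,\vec d}^\circ$ to be the locus on which the restriction of $\Phi$ to each component has spectral radius $<1$, which is the natural domain of convergence and on which the remaining steps are unaffected.

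\textbf{Positivity.} Granting convergence, I would substitute the moment-map equation into $P=E+\Phi(P)$ to obtain $P_i=I_{d_i}+\sum_{t(a)=i}V_a^*V_a+\sum_{h(a')=i}V_{a'}\bigl(P_{t(a')}-I_{d_{t(a')}}\bigr)V_{a'}^*$; writing $B_i:=P_i-I_{d_i}$ this reads $B=R+\Phi(B)$ with $R_i=\sum_{t(a)=i}V_a^*V_a$ positive semidefinite, so $B=\sum_{N\ge0}\Phi^N(R)$ is positive semidefinite by the same convergence. Hence $\rho_i\rho_i^*=P_i\succeq I_{d_i}$, so $H_i=(\rho_i\rho_i^*)^{-1}$ is a well-defined positive-definite Hermitian form (indeed $0\prec H_i\preceq I_{d_i}$). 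This is the cyclic analogue of the vertex-by-vertex induction in Theorem~\ref{thm:metric}, carried out in the limit.

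\textbf{Equivariance and descent.} Finally, $\GL_{\vec d}$-equivariance is the term-by-term computation of Theorem~\ref{thm:metric}, now legitimate by monotone (hence absolute) convergence, giving~\eqref{eq:equiv}. Since $R_{\vec n,\vec d}^\circ=\GL_{\vec d}\cdot R_{\vec n,\vec d}^{\mu=I,\circ}$ is $\GL_{\vec d}$-invariant and contained in $R_{\vec n,\vec d}^s$ (because $\mu^{-1}(I_{\vec d})\subseteq R_{\vec n,\vec d}^s$), the $\GL_{\vec d}$-action there is free and proper, so Lemma~\ref{lem:equiv} shows $H_i$ descends to a metric on $\cV_i$ over $\cM_{\vec n,\vec d}^\circ=R_{\vec n,\vec d}^{\mu=I,\circ}/U_{\vec d}=R_{\vec n,\vec d}^\circ/\GL_{\vec d}$, the quotient being well-defined by the preceding lemma that $R_{\vec n,\vec d}^{\mu=I,\circ}$ is $U_{\vec d}$-invariant.
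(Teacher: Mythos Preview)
Your fixed-point approach to positivity is genuinely different from the paper's, and much more economical. The paper proves $\rho_i\rho_i^*=I+B_i$ with $B_i\succeq0$ by an induction on the \emph{structure} of $Q$: a single oriented cycle is handled by direct computation (Lemma~\ref{lem:single-cycle}), and then one shows that attaching a new chain to an existing quiver preserves the conclusion (Lemmas~\ref{lem:add-cycle} and~\ref{lem:add-chain}) via lengthy term-by-term cancellations among path contributions. Your observation that on the moment-map level $B:=P-I$ satisfies $B=R+\Phi(B)$ with $R\succeq0$, so that $B=(I-\Phi)^{-1}R=\sum_N\Phi^N(R)\succeq0$ once the Neumann series makes sense, replaces all of that in one line.

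The obstacle you flag in the convergence step is, however, a genuine gap, and your parenthetical ``equivalently'' is not an equivalence. The condition $\|V(\gamma)\|<1$ for every oriented cycle is strictly weaker than $\Phi$ having spectral radius $<1$ and does \emph{not} force convergence: take $Q$ with one vertex and two self-loops $a,b$ in dimension $1$; the moment-map equation gives $|e|^2=1$, every cycle has norm $\le\max(|V(a)|,|V(b)|)<1$, yet $\rho\rho^*=\sum_{\ell\ge0}(|V(a)|^2+|V(b)|^2)^\ell$ diverges once $|V(a)|^2+|V(b)|^2\ge1$ (e.g.\ $V(a)=V(b)=0.8$). So your peeling bound $\|V_\sigma\|\le C_0\lambda^{|\sigma|}$ is indeed overwhelmed by the path count, and the paper's own Lemma~\ref{lem:conv} is just as brief at this exact point. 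Your spectral-radius reformulation of $\cM^\circ$ is the natural repair, but it changes the hypothesis of the theorem. Note also that your positivity step needs $\sum_N\Phi^N(R)$ to converge, not merely $\sum_N\Phi^N(E)$; since $R\neq E$, ``by the same convergence'' is a second place where you are tacitly invoking the stronger spectral-radius hypothesis rather than the stated one.
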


We break into several steps to prove the above theorem.  First, consider the convergence.

\begin{lemma} \label{lem:conv}
	$\rho\rho^*$ is absolutely convergent over  $R_{\vec{n},\vec{d}}^{\mu=I,\circ}$.  Hence $(\rho\rho^*)^{-1}$ is well-defined and $\GL_{\vec{d}}$-equivariant on $R_{\vec{n},\vec{d}}^{\circ}$.
\end{lemma}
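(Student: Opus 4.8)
The plan is to exploit positivity throughout. Since every summand $(V_\gamma e^{(t(\gamma))})(V_\gamma e^{(t(\gamma))})^*$ is positive semi-definite, the partial sums of $\rho_i\rho_i^*$ taken over all paths of length at most $L$ form an increasing sequence in the Loewner order, so absolute convergence of $\rho_i\rho_i^*$ is equivalent to a uniform bound on the operator norms of these partial sums. Because $\|(V_\gamma e^{(t(\gamma))})(V_\gamma e^{(t(\gamma))})^*\| = \|V_\gamma e^{(t(\gamma))}\|^2 \le (\max_{j\in Q_0}\|e^{(j)}\|^2)\,\|V_\gamma\|^2$ and $\max_j\|e^{(j)}\|$ is a fixed finite number at a given point of $R_{\vec{n},\vec{d}}^{\mu=I,\circ}$, the whole lemma reduces to the single estimate $\sum_{h(\gamma)=i}\|V_\gamma\|^2<\infty$. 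Granting that, $\rho_i\rho_i^*$ is a norm limit of positive matrices, invertible because stability forces $\mathrm{Im}\,\rho_i$ --- the smallest subrepresentation containing $\mathrm{Im}\,e$ --- to be all of $V_i$; the $\GL_{\vec{d}}$-equivariance of $(\rho_i\rho_i^*)^{-1}$ is then proved exactly as in Theorem \ref{thm:metric}, since at $g\cdot(V,e)$ each entry $V_\gamma e^{(t(\gamma))}$ of $\rho_i$ becomes $g_i\,V_\gamma e^{(t(\gamma))}$ (the intermediate factors telescope), so $\rho_i\rho_i^*$ is merely conjugated by the fixed invertible matrix $g_i$; this same conjugation carries convergence from $R_{\vec{n},\vec{d}}^{\mu=I,\circ}$ to $R_{\vec{n},\vec{d}}^{\circ}=\GL_{\vec{d}}\cdot R_{\vec{n},\vec{d}}^{\mu=I,\circ}$, and descent to $\cM_{\vec{n},\vec{d}}^\circ$ follows from Lemma \ref{lem:equiv}.

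To prove $\sum_{h(\gamma)=i}\|V_\gamma\|^2<\infty$, I would first stratify paths by the strongly connected components of $Q$. Contracting each component to a point turns $Q$ into a directed acyclic graph, so a path ending at $i$ realizes one of finitely many sequences of components; along the acyclic transitions the contributions are finite by the inductive argument in the proof of Theorem \ref{thm:metric}, and can be absorbed into an enlarged effective framing at the vertices where a path first enters the component of $i$. This reduces matters to $Q$ strongly connected, where the only remaining input is the defining inequality $\|V(\gamma)\|<1$ for every oriented cycle $\gamma$. Since there are finitely many simple cycles, $r:=\max\{\|V(\sigma)\|:\sigma\ \text{a simple cycle}\}<1$, and finitely many simple paths; any path $\gamma$ of length $L$ can be reduced, by repeatedly excising closed sub-walks, to one of the simple paths from $t(\gamma)$ to $i$, the excised part being a union of closed walks that decompose further into simple cycles, and re-bracketing the product $V(\gamma)=V(a_L)\cdots V(a_1)$ accordingly gives $\|V_\gamma\|\le C_0\,r^{\lfloor (L-|Q_0|)/\ell\rfloor}$ for $L\ge|Q_0|$, with $\ell$ the maximal simple-cycle length and $C_0$ depending only on the finite list of simple paths.

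The step I expect to be the main obstacle is reconciling such a geometric decay with the fact that the number of length-$L$ paths ending at $i$ also grows exponentially in $L$: summability needs the decay rate to strictly dominate the growth rate, and a bare per-cycle bound $\|V(\gamma)\|<1$ is not by itself visibly strong enough. The clean reformulation of what must be shown is that the positive linear operator $(X_j)_{j\in Q_0}\mapsto\big(\sum_{h(a)=i}V_a X_{t(a)}V_a^*\big)_{i\in Q_0}$ on tuples of Hermitian matrices --- whose iterates reproduce the layers of $\rho_i\rho_i^*$ via the recursion $\rho_i\rho_i^* = e^{(i)}(e^{(i)})^* + \sum_{h(a)=i}V_a\rho_{t(a)}\rho_{t(a)}^*V_a^*$ used in the proof of Theorem \ref{thm:metric} --- has spectral radius strictly less than $1$ on $R_{\vec{n},\vec{d}}^{\mu=I,\circ}$. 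To obtain this I would try to build, from the cycle data and the strongly-connected structure, a tuple $(Y_j)$ of positive-definite matrices that is strictly contracted by this operator, i.e. $\sum_{h(a)=i}V_a Y_{t(a)}V_a^*$ is strictly smaller than $Y_i$ in the Loewner order for every $i$, which at once bounds the spectral radius; making such a $Y$ exist --- possibly after restricting to the natural sub-domain on which the series actually converges --- is the heart of the argument.
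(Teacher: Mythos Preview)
Your diagnosis is right, and in fact sharper than the paper's own proof, which is much shorter but glosses over precisely the point you flag. The paper simply writes each path ending at $i$ as one of the finitely many cycle-free paths $\gamma_1,\dots,\gamma_k$ with ``some oriented cycles'' inserted, chooses $\epsilon>0$ with $\|V_\gamma\|^2<1-\epsilon$ for every cycle, and asserts
\[
\sum_{h(\gamma)=i}\bigl\|V_\gamma e^{(t(\gamma))}\bigr\|^2 \;\le\; \Bigl(\sum_{l=1}^k \bigl\|V_{\gamma_l} e^{(t(\gamma_l))}\bigr\|^2\Bigr)\sum_{p\ge 0}(1-\epsilon)^p,
\]
with no strongly-connected-component reduction and no spectral-radius machinery. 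But this bound implicitly treats the assignment $\gamma\mapsto(\gamma_l,p)$ as at most one-to-one, which fails as soon as two distinct cycles are available --- exactly the exponential-growth obstruction you worry about.

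That gap is not closable under the stated hypothesis: the lemma is false as written. Take $Q$ with one vertex and two self-loops $\ell_1,\ell_2$, dimension $d=1$, $V(\ell_1)=V(\ell_2)=r$ with $1/\sqrt 2<r<1$, and any $e$ with $ee^*=1$. The self-loop commutators vanish, so the moment-map equation reduces to $ee^*=I$; every oriented cycle is a nonempty word $w$ in $\ell_1,\ell_2$ with $\|V(w)\|=r^{|w|}<1$, so this point lies in $R^{\mu=I,\circ}_{n,d}$. Yet $\rho\rho^*=\sum_{L\ge 0}2^L r^{2L}$ diverges. In your own terms, the transfer operator $X\mapsto V_{\ell_1}XV_{\ell_1}^*+V_{\ell_2}XV_{\ell_2}^*$ is multiplication by $2r^2>1$, so no strictly-contracted tuple $(Y_j)$ can exist; the condition ``$\|V(\gamma)\|<1$ for every cycle'' simply does not force the spectral radius below $1$. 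What is actually needed --- and what your reformulation correctly isolates --- is to restrict to the smaller open set where that spectral radius is below $1$.
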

\begin{proof} 
	For $(V,e) \in R_{\vec{n},\vec{d}}^{\mu=I,\circ}$, we consider the expression
	$$\sum_{h(\gamma)=i} \left\|V_{\gamma} e^{\left(t(\gamma)\right)}\right\|\left\|\left(V_{\gamma} e^{\left(t(\gamma)\right)}\right)^* \right\| \leq \sum_{h(\gamma)=i} \left\|V_{\gamma} e^{\left(t(\gamma)\right)}\right\|^2.$$  There are only finitely many paths $\gamma_1,\ldots,\gamma_k$ with $h(\gamma_l)=i$ which do not contain any oriented cycle.  Any other path (with $h(\gamma)=i$) can be written as concatenation of one of these $\gamma_l$ and some oriented cycles at some vertices.  Thus 
	$$\sum_{h(\gamma)=i} \left\|V_{\gamma} e^{\left(t(\gamma)\right)}\right\|^2 \leq \sum_{l=1}^k \|\gamma_l e^{(t(\gamma_l))}\|^2 \sum_{p=0}^\infty (1-\epsilon)^p = \sum_{l=1}^k \|\gamma_l e^{(t(\gamma_l))}\|^2 \sum_{p=0}^\infty (1-\epsilon)^p = \sum_{l=1}^k \frac{\|\gamma_l e^{(t(\gamma_l))}\|^2}{\epsilon} < \infty$$
	where given $V$, there is a fixed $\epsilon \in (0,1)$ such that $\|V_\gamma\|^2 < 1-\epsilon$ for all oriented cycles $\gamma$.  Hence $\rho\rho^*$ is absolutely convergent for every $(V,e)\in R_{\vec{n},\vec{d}}^{\mu=I,\circ}$.
	
	Every element in $R_{\vec{n},\vec{d}}^{\circ}$ can be written as $g \cdot (V,e)$ for $g \in \GL_{\vec{d}}$ and $(V,e) \in R_{\vec{n},\vec{d}}^{\mu=I,\circ}$.  $(\rho\rho^*)^{-1}|_{g \cdot (V,e)} = (g_i^*)^{-1} (\rho\rho^*)^{-1}|_{(V,e)} g_i^{-1}$ where $(\rho\rho^*)^{-1}|_{(V,e)}$ is convergent.
\end{proof}

It remains to prove positive definiteness of $H_i$.  First we consider the following specific quiver which is simply a single oriented cycle.

\begin{lemma} \label{lem:single-cycle}
	If $Q$ is a single oriented cycle with $N$ vertices, then for each vertex $i$, $\rho_i\rho_i^*$ is positive definite.  (In particular, when $N=1$, $Q$ consists of one vertex and a self loop.)
\end{lemma}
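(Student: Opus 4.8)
The plan is to analyze the single oriented cycle directly by writing out $\rho_i\rho_i^*$ explicitly and showing it dominates the identity. Label the vertices of the cycle $1, 2, \ldots, N$ cyclically, with arrows $a_j: j \to j+1$ (indices mod $N$). For a fixed vertex $i$, a path $\gamma$ with $h(\gamma) = i$ is determined by its starting vertex $t(\gamma)$ and how many full loops it wraps before terminating at $i$; so the sum $\rho_i\rho_i^* = \sum_{h(\gamma)=i} (V_\gamma e^{(t(\gamma))})(V_\gamma e^{(t(\gamma))})^*$ breaks up into $N$ geometric-type families indexed by the starting vertex, each convergent by Lemma \ref{lem:conv} since the cycle monodromy $V(\gamma_0)$ (the full loop at $i$) has operator norm $<1$ on $R_{\vec{n},\vec{d}}^{\mu=I,\circ}$.

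The key step is to exploit the moment-map equations at each vertex to telescope the sum, exactly as in the acyclic proof of Theorem \ref{thm:metric}. At vertex $i$ the moment-map equation $\mu_i = I$ reads
$$ e^{(i)}(e^{(i)})^* = I_{d_i} + V_{a_i}^* V_{a_i} - V_{a_{i-1}} V_{a_{i-1}}^*, $$
since in a single cycle there is exactly one outgoing arrow $a_i: i \to i+1$ and one incoming arrow $a_{i-1}: i-1 \to i$. First I would verify that $\rho_i\rho_i^*$ satisfies the recursion $\rho_i \rho_i^* = e^{(i)}(e^{(i)})^* + V_{a_{i-1}} \rho_{i-1}\rho_{i-1}^* V_{a_{i-1}}^*$ — this is immediate from grouping paths ending at $i$ according to whether they are trivial or factor through $a_{i-1}$, and it holds as an identity of convergent sums. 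Substituting the moment-map relation gives
$$ \rho_i\rho_i^* = I_{d_i} + V_{a_i}^* V_{a_i} + V_{a_{i-1}}\bigl(\rho_{i-1}\rho_{i-1}^* - I_{d_{i-1}}\bigr)V_{a_{i-1}}^*. $$
The difficulty compared to the acyclic case is that the recursion is now circular: one cannot simply induct from a minimal vertex. The plan to close the loop is to iterate the recursion all the way around the cycle $N$ times starting and ending at $i$: this expresses $\rho_i\rho_i^*$ as $I_{d_i}$ plus a sum of manifestly positive-semidefinite terms of the form $W^* W$ or $W B W^*$ (with $B$ semidefinite), plus a \emph{remainder} term $V(\gamma_0)^{\ell}(\rho_i\rho_i^* - I_{d_i})V(\gamma_0)^{*\ell}$ after $\ell$ full loops, where $\gamma_0$ is the loop at $i$. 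Since $\|V(\gamma_0)\| < 1$ and $\rho_i\rho_i^*$ is a fixed bounded operator, this remainder tends to $0$ in norm as $\ell \to \infty$. Taking the limit yields $\rho_i\rho_i^* = I_{d_i} + (\text{positive-semidefinite})$, hence $\rho_i\rho_i^*$ is positive definite.

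The main obstacle I anticipate is bookkeeping the iterated substitution cleanly: each time one passes a vertex $j$ one picks up a $V_{a_j}^* V_{a_j}$ term conjugated by the product of arrow matrices traversed so far, and one must check all these terms are positive semidefinite (straightforward, being of the form $X^* X$) and that the geometric series of conjugated terms converges absolutely — which again follows from $\|V(\gamma_0)\| < 1$ together with Lemma \ref{lem:conv}. An alternative, perhaps cleaner, route is to avoid the limit argument entirely: observe that $\rho_i\rho_i^* \succeq e^{(i)}(e^{(i)})^*$ directly from the nonnegative-term expansion, and then argue that if $\rho_i\rho_i^*$ had a kernel vector $v$ then $v^* \rho_i\rho_i^* v = 0$ would force every term $\|V_\gamma e^{(t(\gamma))}{}^* v\|^2 = 0$, in particular $e^{(i)}{}^* v = 0$ and $V_{a_{i-1}}^* v$ lies in the kernel of $\rho_{i-1}\rho_{i-1}^*$; chasing this around the cycle and back to $v$, combined with the moment-map equations (which give $v^*\bigl(I + \sum \cdots\bigr)v = 0$ is impossible), produces a contradiction. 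I would present the limit/telescoping argument as the primary proof since it also makes transparent the structural formula $\rho_i\rho_i^* = I + (\text{sum of } X^*X \text{ terms})$ that generalizes the acyclic case.
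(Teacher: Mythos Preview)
Your proposal is correct and follows essentially the same approach as the paper. Both arguments substitute the moment-map equations $e^{(k)}(e^{(k)})^* = I + V_{a_k}^*V_{a_k} - V_{a_{k-1}}V_{a_{k-1}}^*$ into the expression for $\rho_i\rho_i^*$, telescope around the cycle, and arrive at the structural formula $\rho_i\rho_i^* = I + \sum_{p\geq 0} l^p C (l^p)^*$ with $C$ a sum of terms of the form $X X^*$, hence positive semidefinite.

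The only organizational difference is that the paper expands $\rho_N\rho_N^*$ directly as the double sum over vertices $k$ and loop-counts $p$, performs the telescoping in one pass, and obtains the closed form $\rho_N\rho_N^* = I + \sum_{p\geq 0}\sum_{k} l^p(\cdots)(l^p)^*$ explicitly. You instead package the same telescoping into the fixed-point identity $B_i = C + l\,B_i\,l^*$ (where $B_i=\rho_i\rho_i^*-I$) and then solve it by iterating and letting the remainder $l^\ell B_i (l^*)^\ell\to 0$. This is the same computation viewed recursively rather than unrolled; your version makes the passage to the limit explicit, while the paper's direct expansion avoids the limit step at the cost of a longer bookkeeping calculation. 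Either presentation is fine.
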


\begin{proof}
	By symmetry, we just need to prove for $i=N$.
	Let $l = a_N \ldots a_1$, where $a_k$ is the arrow $(k-1) \to k$.
	$$\rho_N\rho_N^* = \sum_{k=1}^N \left(a_N \ldots a_{k+1} e_k e_k^* a_{k+1}^* \ldots a_N^* + \sum_{p>0} l^p a_N \ldots a_{k+1} e_k e_k^* a_{k+1}^* \ldots a_N^* (l^p)^* \right).$$
	The moment map equation at the vertex $k$ is $e_ke_k^* = I - a_{k}a_k^* + a_{k+1}^*a_{k+1}$.  Then the first term gives
	\begin{align*}
	&\sum_{k=1}^N a_N \ldots a_{k+1} (I - a_{k}a_k^* + a_{k+1}^*a_{k+1}) a_{k+1}^* \ldots a_N^* \\
	=& \sum_{k=1}^N a_N \ldots a_{k+1} a_{k+1}^* \ldots a_N^* - \sum_{k=1}^N a_N \ldots a_{k+1} a_{k}a_k^* a_{k+1}^* \ldots a_N^* + \sum_{k=1}^N a_N \ldots a_{k+1} a_{k+1}^*a_{k+1} a_{k+1}^* \ldots a_N^*\\
	=& I - ll^* + \sum_{k=1}^N a_N \ldots a_{k+1} a_{k+1}^*a_{k+1} a_{k+1}^* \ldots a_N^*.
	\end{align*}
	Similarly, the second term gives
	\begin{align*}
	&\sum_{k=1}^N \sum_{p>0} l^p a_N \ldots a_{k+1} (I - a_{k}a_k^* + a_{k+1}^*a_{k+1}) a_{k+1}^* \ldots a_N^* (l^p)^*\\
	=& \sum_{p>0}l^p (l^p)^* - \sum_{p>0}l^pll^*(l^p)^* + \sum_{p>0}\sum_{k=1}^N l^p a_N \ldots a_{k+1} a_{k+1}^*a_{k+1} a_{k+1}^* \ldots a_N^* (l^p)^*\\
	=& ll^* + \sum_{p>0}\sum_{k=1}^N l^p a_N \ldots a_{k+1} a_{k+1}^*a_{k+1} a_{k+1}^* \ldots a_N^* (l^p)^*.
	\end{align*}
	Combining the two terms,
	$$ \rho_N\rho_N^* = I + \sum_{p\geq 0}\sum_{k=1}^N l^p a_N \ldots a_{k+1} a_{k+1}^*a_{k+1} a_{k+1}^* \ldots a_N^* (l^p)^*$$
	and the second term is semi-positive-definite.  Hence $\rho_N\rho_N^*$ is positive definite.
\end{proof}

The following is the key lemma to prove positive-definiteness for a general quiver.

\begin{lemma} \label{lem:add-cycle}
	Suppose $Q$ has the property that for every $i \in Q_0$, restricted to the intersection of the moment map locus and $R^{Q,\circ}_{n,d}$, $\rho^Q_i(\rho^Q_i)^* = I + B_i$ for some semi-positive definite matrix $B_i$.  Let $Q'$ be obtained by concatenating to $Q$ a chain $x_1 \stackrel{a_1'}{\to} \ldots \stackrel{a_{k-1}'}{\to} x_k$ where $x_1$ and $x_k$ are certain vertices in $Q$.  ($x_1$ can be equal to $x_k$, meaning what we have added is an oriented cycle.  When $k=1$, we have added a loop; $a'_{0}=a'_1$.  When $k=2$, there is no intermediate vertex in the chain.)  Then $Q'$ has the same property.  Namely, for every $i \in Q'_0$, restricted to the intersection of the moment map locus and $R^{Q',\circ}_{n,d}$, $\rho^{Q'}_i(\rho^{Q'}_i)^* = I + B_i'$ for some semi-positive-definite matrix $B_i'$.
\end{lemma}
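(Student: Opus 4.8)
The plan is to adapt the telescoping computation of Lemma~\ref{lem:single-cycle}. Throughout, we work on the intersection of the $Q'$-moment-map locus with $R^{Q',\circ}_{n,d}$, where by Lemma~\ref{lem:conv} the series $\rho^{Q'}_v(\rho^{Q'}_v)^{*}$ converges absolutely for every vertex $v$. The key observation about the new moment-map equations is that they agree with those of $Q$ at every vertex except $x_1$, where the right-hand side gains the extra semi-positive term $+V_{a'_1}^{*}V_{a'_1}$, and $x_k$, where it loses $V_{a'_{k-1}}V_{a'_{k-1}}^{*}$ (both modifications occur at the single vertex $x_1=x_k$ in the closed-chain case); and at each interior vertex $x_j$ ($2\le j\le k-1$) there is the brand-new equation $e^{(x_j)}(e^{(x_j)})^{*}=I+V_{a'_j}^{*}V_{a'_j}-V_{a'_{j-1}}V_{a'_{j-1}}^{*}$.

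First I would handle the interior vertices by a short induction along the chain. For $2\le j\le k-1$ the only arrow of $Q'$ with head $x_j$ is $a'_{j-1}$, so grouping paths by their last arrow (as in the proof of Theorem~\ref{thm:metric}) gives $\rho^{Q'}_{x_j}(\rho^{Q'}_{x_j})^{*}=e^{(x_j)}(e^{(x_j)})^{*}+V_{a'_{j-1}}\rho^{Q'}_{x_{j-1}}(\rho^{Q'}_{x_{j-1}})^{*}V_{a'_{j-1}}^{*}$. Substituting the moment-map equation at $x_j$ and the inductive hypothesis $\rho^{Q'}_{x_{j-1}}(\rho^{Q'}_{x_{j-1}})^{*}=I+B'_{x_{j-1}}$, the term $-V_{a'_{j-1}}V_{a'_{j-1}}^{*}$ cancels against part of $V_{a'_{j-1}}(I+B'_{x_{j-1}})V_{a'_{j-1}}^{*}$, leaving
\[
\rho^{Q'}_{x_j}(\rho^{Q'}_{x_j})^{*}=I+V_{a'_j}^{*}V_{a'_j}+V_{a'_{j-1}}B'_{x_{j-1}}V_{a'_{j-1}}^{*},
\]
which is $I$ plus semi-positive terms. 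Thus the interior vertices are reduced to the vertices of $Q$: $x_1$ serves as the base of this sub-induction, and the $V_{a'_{k-1}}\rho^{Q'}_{x_{k-1}}(\rho^{Q'}_{x_{k-1}})^{*}V_{a'_{k-1}}^{*}$ term just produced is exactly what absorbs the extra $-V_{a'_{k-1}}V_{a'_{k-1}}^{*}$ at $x_k$.

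For a vertex $v\in Q_0$ the plan is to run the telescoping of Lemma~\ref{lem:single-cycle} verbatim: expand $\rho^{Q'}_v(\rho^{Q'}_v)^{*}=\sum_{\gamma}V_\gamma e^{(t\gamma)}(e^{(t\gamma)})^{*}V_\gamma^{*}$ over all paths $\gamma$ of $Q'$ with $h(\gamma)=v$, replace each $e^{(t\gamma)}(e^{(t\gamma)})^{*}$ by the $Q'$-moment-map equation, and note that the contribution of an arrow $c$ with $h(c)=t(\gamma)$ is $-V_\gamma V_cV_c^{*}V_\gamma^{*}=-V_{\gamma c}V_{\gamma c}^{*}$, which cancels the term $V_\delta V_\delta^{*}$ coming from the longer path $\delta=\gamma c$; since every nontrivial path ending at $v$ factors uniquely as $\gamma c$ with $c$ its initial arrow, all terms of positive length cancel in pairs and one is left with
\[
\rho^{Q'}_v(\rho^{Q'}_v)^{*}=I+\sum_{\gamma}\ \sum_{b\colon t(b)=t(\gamma)}\bigl(V_bV_\gamma^{*}\bigr)^{*}\bigl(V_bV_\gamma^{*}\bigr),
\]
a sum of $I$ and manifestly semi-positive-definite matrices (at $v=x_1$ the extra $+V_{a'_1}^{*}V_{a'_1}$ simply contributes one more such outgoing term). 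This identity is the whole algebraic content; the remaining work is to legitimize it.

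The main obstacle is precisely this legitimization: justifying the rearrangement of the absolutely convergent series $\rho^{Q'}_v(\rho^{Q'}_v)^{*}$, the convergence of the final outgoing sum, and the vanishing of the length-$L$ tail $\sum_{|\gamma|=L}V_\gamma V_\gamma^{*}$ as $L\to\infty$ (the partial sums satisfy $S_L=I-\sum_{|\gamma|=L+1}V_\gamma V_\gamma^{*}+\sum_{|\gamma|\le L}\sum_{t(b)=t(\gamma)}(V_bV_\gamma^{*})^{*}(V_bV_\gamma^{*})$, so once the tail is killed the limit is $I$ plus a limit of semi-positive matrices, hence $I+B'_v$ with $B'_v$ semi-positive definite). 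This is where the hypothesis on $Q$, together with Lemma~\ref{lem:conv} applied to both $Q$ and $Q'$, is used. One decomposes each path $\gamma$ of $Q'$ into maximal segments lying in $Q$ and maximal traversals of the new chain; since an interior chain vertex has a unique in-arrow and a unique out-arrow, every internal chain segment is a \emph{full} traversal $\ell=a'_{k-1}\cdots a'_1$, so $\gamma$ reads as an alternating word $\beta_r\,\ell\,\beta_{r-1}\cdots\ell\,\beta_0$ in which each block $\beta_s\ell$ ($1\le s\le r-1$) — or $\ell$ itself when $x_1=x_k$ — is an oriented cycle of $Q'$ and therefore has operator norm $<1$ on $R^{Q',\circ}_{n,d}$. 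This yields geometric decay in the number $r$ of chain traversals, while the interspersed $Q$-blocks are controlled by the convergence statements already available for $Q$; summing over $r$ and over the $Q$-blocks gives absolute convergence of all the rearranged series and forces the tail to zero. The bookkeeping is most delicate in the closed-chain case $x_1=x_k$ and when $Q$ already carries paths from $x_k$ back to $x_1$, so that the new oriented cycles of $Q'$ interleave with cycles present in $Q$; but one more application of the estimate in the proof of Lemma~\ref{lem:conv} absorbs this.
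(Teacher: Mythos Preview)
Your reduction at the interior chain vertices is correct and matches the recursive step used in the acyclic case (Theorem~\ref{thm:metric}); as you note, it reduces everything to the case $v\in Q_0$.

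For $v\in Q_0$, your approach is genuinely different from the paper's. The paper performs an explicit level-by-level cancellation: it classifies $Q'$-paths to $v$ by the number of traversals of the new chain, and at each level uses the inductive hypothesis $\rho^Q_{x_1}(\rho^Q_{x_1})^*=I+B_1$ to collapse the sum over the innermost block of $Q$-paths into a single bounded PSD matrix; the residual negative term at level $p$ is then matched against a positive term at level $p+1$. You instead run a universal moment-map telescoping, obtaining the clean partial-sum identity $S_L=I+P_L-T_{L+1}$ with $P_L$ manifestly PSD. This is more elegant, and note that it never invokes the hypothesis on $Q$ in the algebra: if the tail $T_{L+1}\to 0$ could be established, your argument would prove the conclusion of Theorem~\ref{thm:metric-with-cycles} directly for an arbitrary quiver, making the inductive scheme of Lemmas~\ref{lem:single-cycle}--\ref{lem:add-chain} superfluous.

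That observation is also the warning sign, and the gap is exactly where you locate it. Your convergence sketch does not close it. Writing $\gamma=\beta_r\ell\beta_{r-1}\cdots\ell\beta_0$ and noting that each middle block $\beta_s\ell$ is a $Q'$-cycle of norm $<1$ controls the norm of a \emph{single} $V_\gamma$, but not the sum $T_{L+1}=\sum_{|\gamma|=L+1}V_\gamma V_\gamma^*$: for fixed $r$ there may be infinitely many choices of the $Q$-segments $\beta_s$, and individual norms $<1$ give no bound on such a sum (think of two self-loops at one vertex with $V_{c_1}=V_{c_2}=0.9\,I$). Appealing to the hypothesis on $Q$ or to Lemma~\ref{lem:conv} does not immediately help either: the hypothesis is stated on the $Q$-moment-map locus rather than the $Q'$-locus, and Lemma~\ref{lem:conv} bounds $\sum_\gamma\|V_\gamma e^{(t\gamma)}\|^2$, not the bare $\sum_\gamma\|V_\gamma\|^2$ that governs $T_L$. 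The paper's strategy avoids ever isolating such a bare path sum: at each level the $Q$-path block arrives already weighted by the framing, so it can be replaced wholesale by the bounded matrix $I+B_1$ before one passes to the next level. That is the structural role the inductive hypothesis plays in the paper's proof, and it is what your telescoping shortcut gives up.
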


\begin{proof}	
	First, consider the case that the given vertex $i\in Q_0'$ belongs to $Q_0$.  For the original quiver $Q$, $\rho^Q_i (\rho^Q_i)^* = I + B_i$ where $B_i$ is semi-positive definite.  After concatenating the chain, the terms in $\rho^Q_i$,
	$$ \gamma^{x_1 \to i}_Q e^{(1)} (e^{(1)})^* (\gamma^{x_1 \to i}_Q)^*   \textrm{ and } \gamma^{x_k \to i}_Q e^{(k)} (e^{(k)})^* (\gamma^{x_k \to i}_Q)^*$$
	where $\gamma^{x_1 \to i}_Q$ ($\gamma^{x_k \to i}_Q$ resp.) is a path from $x_1$ (from $x_k$ resp.) to $i$,
	get affected.  Namely, the moment map equation for $e^{(1)} (e^{(1)})^*$ (or $e^{(k)} (e^{(k)})^*$) gets an extra term $(a_1')^* a_1'
	$ ($-a_{k-1}'(a_{k-1}')^*$ resp.).   (If $k=1$, $x_1=x_k$ and $a'_1 = a'_{0}$, and the moment map equation for $e^{(1)} (e^{(1)})^*$ gets both the extra terms $(a_0')^* a_0'
	$ and $-a_0'(a_0')^*$.) As a result, we have an extra negative term 
	\begin{equation}
	- \gamma^{x_k \to i}_Q a_{k-1}'(a_{k-1}')^* (\gamma^{x_k \to i}_Q)^* \label{eq:extra-neg-term}
	\end{equation}
	for each path $\gamma^{x_k \to i}_Q$.  We shall show that these negative terms can be canceled.
	
	We also have additional paths in $Q'$ heading to $i$, which can be divided into the following types:
	\begin{enumerate}
		\item $\gamma_Q^{x_k \to i} a_{k-1}'\ldots a_l'e^{(x_l)}$ for $l=2,\ldots,k-1$.  (This is an empty case when $k=1,2$.)
		\item $\gamma_Q^{x_k \to i} \gamma_{\mathrm{new}} \gamma_Q^{j \to x_1} e_j$ where $\gamma_{\mathrm{new}} := a_{k-1}'\ldots a_1'$ and $\gamma_Q^{j \to x_1}$ is any path in $Q$ from $j \in Q_0$ to $x_1$.  ($\gamma_Q^{x_1 \to x_1}$ can be the trivial path.  $\gamma_{\mathrm{new}}=a_0'$ when $k=1$.)
		\item $\gamma_Q^{x_k \to i} \left(\prod_{r=1}^p \gamma_{\mathrm{new}} \gamma_{Q,r}^{x_k \to x_1}\right) a_{k-1}'\ldots a_l'e^{(x_l)}$ for some $p>0$ and $l=2,\ldots,k-1$.
		\item $\gamma_Q^{x_k \to i} \left(\prod_{r=1}^p \gamma_{\mathrm{new}} \gamma_{Q,r}^{x_k \to x_1}\right) \gamma_{\mathrm{new}} \gamma_Q^{j \to x_1}e_j$ for some $p>0$ and $j \in Q_0$.
	\end{enumerate}
	For (1), by the moment-map equation $e^{(x_l)}(e^{(x_l)})^* = I + (a'_{l})^*a'_{l} - a'_{l-1}(a'_{l-1})^*$, we have
	\begin{align*}
	&\gamma_Q^{x_k \to i} a_{k-1}'\ldots a_l'\cdot e^{(x_l)}(e^{(x_l)})^* \cdot (\gamma_Q^{x_k \to i} a_{k-1}'\ldots a_l')^* \\
	=& \gamma_Q^{x_k \to i} a_{k-1}'\ldots a_l'(\gamma_Q^{x_k \to i} a_{k-1}'\ldots a_l')^* + \gamma_Q^{x_k \to i} a_{k-1}'\ldots a_l'(a'_{l})^*a'_{l} (\gamma_Q^{x_k \to i} a_{k-1}'\ldots a_l')^* \\
	&- \gamma_Q^{x_k \to i} a_{k-1}'\ldots a_l' a'_{l-1}(a'_{l-1})^* (\gamma_Q^{x_k \to i} a_{k-1}'\ldots a_l')^*.
	\end{align*}
	The first term above for $l=k-1$ cancel with the extra negative term \eqref{eq:extra-neg-term} for $\rho_i^Q$.  The third term (which is negative) for $l \in \{3,\ldots,k-1\}$ cancel with the first term of $l-1$.  As a result, after combining (1) with the modified $\rho_i^Q$, the remaining negative terms are
	\begin{equation}
	- \gamma_Q^{x_k \to i} a_{k-1}'\ldots a'_{1} (\gamma_Q^{x_k \to i} a_{k-1}'\ldots a'_{1})^* = - \gamma_Q^{x_k \to i} \gamma_{\mathrm{new}} (\gamma_Q^{x_k \to i} \gamma_{\mathrm{new}})^*. \label{eq:extra-1}
	\end{equation}
	(For the case $k=1$, this trivially holds since $\gamma_{\mathrm{new}}=a'_0$, and the above equals to \eqref{eq:extra-neg-term}.)
	
	Now consider (2): $\gamma_Q^{x_k \to i} \gamma_{\mathrm{new}} \gamma_Q^{j \to x_1} e_j e_j^* (\gamma_Q^{x_k \to i} \gamma_{\mathrm{new}} \gamma_Q^{j \to x_1})^*$.  The moment map equation for $e_j e_j^*$ when $j \not= x_1, x_k$ are the same for $Q$ and $Q'$.  
	Summing $\gamma_Q^{x_k \to i} \gamma_{\mathrm{new}} \gamma_Q^{j \to x_1} e_j e_j^* (\gamma_Q^{x_k \to i} \gamma_{\mathrm{new}} \gamma_Q^{j \to x_1})^*$ over arbitrary $\gamma_Q^{j \to x_1}$ and $j\in Q_0$, we obtain $$(\gamma_Q^{x_k \to i} \gamma_{\mathrm{new}})\cdot \rho^Q_{x_1} \cdot (\gamma_Q^{x_k \to i} \gamma_{\mathrm{new}})^* = (\gamma_Q^{x_k \to i} \gamma_{\mathrm{new}})\cdot (I+B_1) \cdot (\gamma_Q^{x_k \to i} \gamma_{\mathrm{new}})^*$$
	plus
	$$ \sum_{\gamma_Q^{x_1 \to x_1}} \gamma_Q^{x_k \to i} \gamma_{\mathrm{new}} \gamma_Q^{x_1 \to x_1} (a_1')^*a_1' (\gamma_Q^{x_k \to i} \gamma_{\mathrm{new}} \gamma_Q^{x_1 \to x_1})^* - \sum_{\gamma_Q^{x_k \to x_1}} \gamma_Q^{x_k \to i} \gamma_{\mathrm{new}} \gamma_Q^{x_k \to x_1} a_{k-1}'(a_{k-1}')^* (\gamma_Q^{x_k \to i} \gamma_{\mathrm{new}} \gamma_Q^{x_k \to x_1})^*$$
	which is due to the additional terms in the moment map equations for $e^{(x_1)}(e^{(x_1)})^*$ and $e^{(x_k)}(e^{(x_k)})^*$.  (When $k=1$, $\gamma_Q^{x_0 \to x_0}$ being the trivial path at $x_0$ is one of the possibilities.) In above, 	$B_1$ is semi-positive-definite by the assumption on $Q$.  Then the negative terms \eqref{eq:extra-1} cancel with the first term.  After combining the paths in $Q$ and (1) and (2), the remaining negative terms are
	\begin{equation}
	- \sum_{\gamma_Q^{x_k \to x_1}} \gamma_Q^{x_k \to i} \gamma_{\mathrm{new}} \gamma_Q^{x_k \to x_1} a_{k-1}'(a_{k-1}')^* (\gamma_Q^{x_k \to i} \gamma_{\mathrm{new}} \gamma_Q^{x_k \to x_1})^*. \label{eq:extra-2}
	\end{equation}
	(If there is no path $\gamma_Q^{x_k\to x_1}$ in $Q$ from $x_k$ to $x_1$, then this is zero, and we do not have (3) nor (4).  We stop here and get that $\rho_i^{Q'}(\rho_i^{Q'})^* = I + B_i'$ for a semi-positive-definite matrix $B_i'$.)
	
	The terms in (3) for $p=1$ and $l=k-1$ cancel with the above \eqref{eq:extra-2}:
	\begin{align*}
	&\gamma_Q^{x_k \to i}\left(\prod_{r=1}^p \gamma_{\mathrm{new}} \gamma_{Q,r}^{x_k \to x_1}\right) a_{k-1}'\ldots a_l'\cdot e^{(x_l)}(e^{(x_l)})^* \cdot \left(\gamma_Q^{x_k \to i}\left(\prod_{r=1}^p \gamma_{\mathrm{new}} \gamma_{Q,r}^{x_k \to x_1}\right) a_{k-1}'\ldots a_l'\right)^* \\
	=& \gamma_Q^{x_k \to i} \left(\prod_{r=1}^p \gamma_{\mathrm{new}} \gamma_{Q,r}^{x_k \to x_1}\right) a_{k-1}'\ldots a_l'\left(\gamma_Q^{x_k \to i} \left(\prod_{r=1}^p \gamma_{\mathrm{new}} \gamma_{Q,r}^{x_k \to x_1}\right) a_{k-1}'\ldots a_l'\right)^* \\
	&+ \gamma_Q^{x_k \to i} \left(\prod_{r=1}^p \gamma_{\mathrm{new}} \gamma_{Q,r}^{x_k \to x_1}\right) a_{k-1}'\ldots a_l'(a'_{l})^*a'_{l} \left(\gamma_Q^{x_k \to i} \left(\prod_{r=1}^p \gamma_{\mathrm{new}} \gamma_{Q,r}^{x_k \to x_1}\right) a_{k-1}'\ldots a_l'\right)^* \\
	&- \gamma_Q^{x_k \to i}\left(\prod_{r=1}^p \gamma_{\mathrm{new}} \gamma_{Q,r}^{x_k \to x_1}\right) a_{k-1}'\ldots a_l' a'_{l-1}(a'_{l-1})^* \left(\gamma_Q^{x_k \to i} \left(\prod_{r=1}^p \gamma_{\mathrm{new}} \gamma_{Q,r}^{x_k \to x_1}\right) a_{k-1}'\ldots a_l'\right)^*.
	\end{align*}
	Like in (1), for each $p$, the third term (which is negative) for $l \in \{3,\ldots,k-1\}$ cancel with the first term for $l-1$.  (When $p=0$ and $l=k-1$, the third term is exactly \eqref{eq:extra-2}.)  Then the remaining negative terms are \eqref{eq:extra-1} modified by inserting the loops $\left(\prod_{r=1}^p \gamma_{\mathrm{new}} \gamma_{Q,r}^{x_k \to x_1}\right)$ for $p>0$, that is, $	- \gamma_Q^{x_k \to i}\left(\prod_{r=1}^p \gamma_{\mathrm{new}} \gamma_{Q,r}^{x_k \to x_1}\right) \gamma_{\mathrm{new}} (\gamma_Q^{x_k \to i}\left(\prod_{r=1}^p \gamma_{\mathrm{new}} \gamma_{Q,r}^{x_k \to x_1}\right) \gamma_{\mathrm{new}})^*$.  Then like in (2), these negative terms cancel with terms in (4).  Summing up to finite $p$, the only negative terms left are \eqref{eq:extra-2} modified by inserting the loops $\left(\prod_{r=1}^p \gamma_{\mathrm{new}} \gamma_{Q,r}^{x_k \to x_1}\right)$:
	$$- \sum_{\gamma_Q^{x_k \to x_1}} \gamma_Q^{x_k \to i} \left(\prod_{r=1}^p \gamma_{\mathrm{new}} \gamma_{Q,r}^{x_k \to x_1}\right) \gamma_{\mathrm{new}} \gamma_Q^{x_k \to x_1} a_{k-1}'(a_{k-1}')^* \left(\gamma_Q^{x_k \to i} \left(\prod_{r=1}^p \gamma_{\mathrm{new}} \gamma_{Q,r}^{x_k \to x_1}\right) \gamma_{\mathrm{new}} \gamma_Q^{x_k \to x_1}\right)^*$$
	which cancel with terms in (3) for $(p+1)$.  As $p \to \infty$, $\|\left(\prod_{r=1}^p \gamma_{\mathrm{new}} \gamma_{Q,r}^{x_k \to x_1}\right)\|\to 0$.  This finishes the proof that $\rho^{Q'}_i(\rho^{Q'}_i)^* = I + B_i$, for $i\in Q_0$.
	
	For the case that $i=x_j$ for $j=2,\ldots,k-1$, the proof is similar.  (We do not need to consider this case when $k=1,2$.)  The paths in $Q'$ heading to $x_j$ are divided into the following types:
	\begin{enumerate}
		\item $ a_{j-1}'\ldots a_l'e^{(x_l)}$ for $l=2,\ldots,j$.
		\item $a_{j-1}'\ldots a_1'
		\left(\prod_{r=1}^p\gamma_{Q,r}^{x_k \to x_1} \gamma_{\mathrm{new}}\right)\gamma_Q^{j \to x_1} e_j$ for some $p\geq 0$, $j \in Q_0$.
		\item $a_{j-1}'\ldots a_1'\left(\prod_{r=1}^p\gamma_{Q,r}^{x_k \to x_1} \gamma_{\mathrm{new}}\right)
		\gamma_Q^{x_k \to x_1} a_{k-1}' \ldots a_l'e^{(x_l)}$ for some $p\geq 0$, $l=2,\ldots,k-1$.
	\end{enumerate}
	The cancellation is similar and we do not repeat here.
\end{proof}

Similarly, adding a chain at a single vertex of $Q$ preserves the positive-definiteness property.

\begin{lemma} \label{lem:add-chain}
	Suppose $Q$ as in Lemma \ref{lem:add-cycle}.
	Let $Q'$ be obtained by concatenating to $Q$ a chain $x_1 \stackrel{a_1'}{\to} \ldots \stackrel{a_{k-1}'}{\to} x_k$ at either $x_1$ or $x_k$ in $Q_0$.  Then for every $i \in Q'_0$,  $\rho^{Q'}_i(\rho^{Q'}_i)^* = I + B_i'$ for some semi-positive-definite matrix $B_i'$.
\end{lemma}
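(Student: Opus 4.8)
The plan is to reduce the statement to Lemma \ref{lem:add-cycle}, using also the trivial observation that adjoining an isolated vertex (with a framing) preserves the property in question. Suppose first that the chain $x_1 \stackrel{a_1'}{\to} \cdots \stackrel{a_{k-1}'}{\to} x_k$ is attached at its tail, i.e.\ $x_1 \in Q_0$ while $x_2, \dots, x_k$ are all new (the case of attachment at the head $x_k$ is symmetric and is treated at the end). First I would enlarge $Q$ to a quiver $\tilde Q$ by adjoining $x_2, \dots, x_k$ as isolated vertices carrying their framings $e^{(x_2)}, \dots, e^{(x_k)}$. Adjoining an isolated vertex changes no path whose head lies among the old vertices, so $\rho_i^{\tilde Q}(\rho_i^{\tilde Q})^* = \rho_i^{Q}(\rho_i^{Q})^* = I + B_i$ on the moment-map locus for every old $i$; and for each new isolated vertex $v$ the moment-map equation is simply $e^{(v)}(e^{(v)})^* = I_{d_v}$ (there are no arrows in or out of $v$), so $\rho_v^{\tilde Q}(\rho_v^{\tilde Q})^* = I_{d_v}$, which is of the form $I + B_v$ with $B_v = 0$. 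Since no oriented cycle is created, $R^{\tilde Q, \circ}_{n,d}$ differs from $R^{Q,\circ}_{n,d}$ only by the extra, unconstrained matrix coordinates; hence $\tilde Q$ satisfies the hypothesis of Lemma \ref{lem:add-cycle}.

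Next I would build the chain by adjoining the arrows $a_1', a_2', \dots, a_{k-1}'$ one at a time, in this order. Adjoining $a_l' \colon x_l \to x_{l+1}$ to the current quiver is exactly the $k = 2$ instance of Lemma \ref{lem:add-cycle} (a ``chain'' consisting of a single arrow joining two vertices that are already present), \emph{provided} the current quiver satisfies that lemma's hypothesis. It does: by induction the current quiver already has the property $\rho_i(\rho_i)^* = I + B_i$ for all of its vertices, and --- the point to check --- adjoining $a_l'$ creates no new oriented cycle, because at that stage the target $x_{l+1}$ has no outgoing arrow ($a_{l+1}'$ not yet added when $l < k-1$, and $x_k$ being a genuine leaf of $Q'$ when $l = k-1$), so there is no directed path from $x_{l+1}$ back to $x_l$. (In the notation of the proof of Lemma \ref{lem:add-cycle}, this is precisely the case in which the path-types $(3)$ and $(4)$ there are empty.) Iterating Lemma \ref{lem:add-cycle} through $a_1', \dots, a_{k-1}'$ produces $\rho_i^{Q'}(\rho_i^{Q'})^* = I + B_i'$ with $B_i'$ semi-positive definite for every $i \in Q'_0$ on the intersection of the moment-map locus with $R^{Q',\circ}_{n,d}$, which is the claim.

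For the case in which the chain is attached at its head $x_k \in Q_0$, with $x_1, \dots, x_{k-1}$ new, I would argue identically but build the chain in the reverse order $a_{k-1}', a_{k-2}', \dots, a_1'$: when $a_l' \colon x_l \to x_{l+1}$ is adjoined, the source $x_l$ has no incoming arrow yet ($a_{l-1}'$ not yet added when $l > 1$, and $x_1$ being a source vertex of $Q'$ when $l = 1$), so $x_l$ is unreachable at that stage and no oriented cycle is created; each step is again a $k = 2$ instance of Lemma \ref{lem:add-cycle}.

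I do not expect a genuine obstacle: the statement is essentially contained in Lemma \ref{lem:add-cycle}, and the only thing requiring attention is verifying that the intermediate quivers arising in the inductive build-up still meet that lemma's hypothesis --- which comes down to the no-new-cycle check carried out above. If one instead wanted a self-contained proof modelled on Lemma \ref{lem:add-cycle}, the hard part would again be the bookkeeping of cancellations among the negative terms contributed by the modified moment-map equations $e^{(x_l)}(e^{(x_l)})^* = I + (a_l')^* a_l' - a_{l-1}'(a_{l-1}')^*$ along the chain; but here that bookkeeping collapses to a single finite telescoping sum, precisely because a dangling chain introduces no feedback loop and hence no infinite cancellation to control.
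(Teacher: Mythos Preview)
Your reduction is correct, and it takes a genuinely different route from the paper's own proof.  The paper argues directly: for a vertex $i$ on the new chain it appeals to Theorem \ref{thm:metric} (no cycles involved), and for $i\in Q_0$ it writes out $\rho^{Q'}_i(\rho^{Q'}_i)^*$ explicitly and observes that the negative contributions from the modified moment-map equations $e^{(x_l)}(e^{(x_l)})^* = I + (a_l')^*a_l' - a_{l-1}'(a_{l-1}')^*$ telescope against the new path terms, leaving $\rho^Q_i(\rho^Q_i)^*$ plus a manifestly semi-positive correction.  Your approach instead absorbs the whole argument into Lemma \ref{lem:add-cycle}: adjoin the new vertices first as isolated framed points (where the property is immediate from $e^{(v)}(e^{(v)})^*=I$), then add the arrows $a_l'$ one at a time, each step being the $k=2$ instance of Lemma \ref{lem:add-cycle} with no path $\gamma_Q^{x_k\to x_1}$ present (so types (3) and (4) in that proof are vacuous).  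Your check that no oriented cycle is created at any intermediate stage is exactly what guarantees that the domains $R^{\,\cdot\,,\circ}_{n,d}$ are unchanged and that the hypothesis of Lemma \ref{lem:add-cycle} is met at each step.

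What your approach buys is economy: Lemma \ref{lem:add-chain} becomes a formal corollary of Lemma \ref{lem:add-cycle}, with no repeated bookkeeping.  What the paper's direct computation buys is an explicit closed formula for $B_i'$ (namely $B_i' = B_i + \sum_{\gamma_Q^{x_k\to i}}\sum_{r} (\gamma_Q^{x_k\to i}a'_{k-1}\cdots a'_r)(a'_r)^*a'_r(\cdots)^*$ in the $x_k\in Q_0$ case), and independence from the more intricate Lemma \ref{lem:add-cycle}.
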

\begin{proof}
	The proof in this case is simpler than that of Lemma \ref{lem:add-cycle}, since there is no new oriented cycle.
	
	Consider the case that $x_k \in Q_0$.  If $i$ belongs to the chain, then the only paths that head to $i$ are contained in the chain.  Since no oriented cycle is involved in all such paths, Theorem \ref{thm:metric} already gives the result.
	
	If $i \in Q_0$, then
	\begin{align*}
	\rho^{Q'}_i(\rho^{Q'}_i)^* =&\sum_{j\in Q_0-\{x_k\}} \sum_{\gamma_Q^{j\to i}} (\gamma_Q^{j\to i} e^{(j)})(\gamma_Q^{j\to i} e^{(j)})^* + \sum_{\gamma_Q^{x_k \to i}} \gamma_Q^{x_k\to i}\cdot e^{(x_k)}(e^{(x_k)})^*\cdot(\gamma_Q^{x_k\to i})^* \\
	&+ \sum_{\gamma_Q^{x_k \to i}}\sum_{r=1}^{k-1} (\gamma_Q^{x_k\to i} a_{k-1}\ldots a_r)e_{x_r} e_{x_r}^*  \cdot(\gamma_Q^{x_k\to i} a'_{k-1}\ldots a'_r)^*\\
	=& \rho_i^Q(\rho_i^Q)^* - \sum_{\gamma_Q^{x_k \to i}} \gamma_Q^{x_k\to i}(a'_{k-1}(a'_{k-1})^*)(\gamma_Q^{x_k\to i})^* \\
	&+ \sum_{\gamma_Q^{x_k \to i}}\sum_{r=1}^{k-1} (\gamma_Q^{x_k\to i} a'_{k-1}\ldots a'_r)(I-a'_{r-1}(a'_{r-1})^*+(a'_r)^*a'_r) \cdot(\gamma_Q^{x_k\to i} a'_{k-1}\ldots a'_r)^*\\
	=& \rho_i^Q(\rho_i^Q)^* + \sum_{\gamma_Q^{x_k \to i}}\sum_{r=1}^{k-1}(\gamma_Q^{x_k\to i} a'_{k-1}\ldots a'_r)(a'_r)^*a'_r\cdot(\gamma_Q^{x_k\to i} a'_{k-1}\ldots a'_r)^*.
	\end{align*}
	Since $\rho_i^Q(\rho_i^Q)^* = I + B_i$ for some semi-positive-definite matrix $B_i$, and the second term is semi-positive-definite, $\rho^{Q'}_i(\rho^{Q'}_i)^*$ satisfies the requirement.
	
	The case that $x_1 \in Q_0$ is similar and the proof is omitted.
\end{proof}

\begin{proof}[Proof of Theorem \ref{thm:metric-with-cycles}]
	Without loss of generality, suppose $Q$ is connected.  (Otherwise $\cM_{\vec{n},\vec{d}}$ and $\cV_i$ decompose into products coming from the connected components, and we just need to study each component.)
	The case without oriented cycle is given in Theorem \ref{thm:metric}.  Suppose $Q$ has at least one oriented cycle.  By Lemma \ref{lem:single-cycle}, the statement is true for this oriented cycle as a quiver.  There must be additional arrows if this single oriented cycle is not yet the whole $Q$.  Then we can either add a chain as in Lemma \ref{lem:add-cycle} or \ref{lem:add-chain}, and the statement still holds.  (Both the cases of loop at a vertex or multiple edge are covered by Lemma \ref{lem:add-cycle}.)  Inductively the statement holds for $Q$.
\end{proof}

\begin{example}
	For the $A_2$-quiver, $\rho_i\rho_i^*$ gives a metric for all $i$.  By Lemma \ref{lem:add-cycle}, this is still true if we add an oriented cycle with arrows $l_1,\ldots,l_p$.  See Figure \ref{fig:A2loop}.  
\end{example}

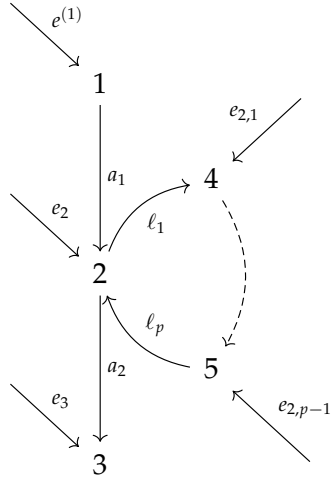
\begin{figure}[h]
	\begin{tikzcd}
	\arrow[rd, "e^{(1)}"]& & &\\	
	& 1 \arrow[dd, "a_1"]& & \arrow[ld, "e_{2, 1}"']\\
	\arrow[rd, "e_2"]& & 4 \arrow[dd, bend left, dashrightarrow]&\\
	& 2 \arrow[dd, "a_2"] \arrow[ru, bend left, "\ell_1"']& &\\
	\arrow[rd, "e_3"]& & 5 \arrow[lu, bend left, "\ell_p"']&\\
	& 3 & & \arrow[lu, "e_{2, p-1}"']\\
	\end{tikzcd}
	\caption{$A_2$ modified by adding an oriented cycle.}
	\label{fig:A2loop}
\end{figure}

Note that the following equality still holds over $R^\circ_{n,d}$:
	$$\rho \rho^* = e^{(i)} (e^{(i)}) ^* + \sum_{h(a)=i} V_a \rho_{(t(a))}\rho_{(t(a))}^* V_{a}^*.$$
Thus Proposition \ref{prop:relate-H_i} still holds for quivers with oriented cycles.

\begin{prop}
	For any quiver $Q$ and every $v,w\in (\cV_i)^*$, 
	$$H_i(v,w) = H_0((e^{(i)})^*(v),(e^{(i)})^*(w)) + \sum_{h(a)=i} H_{t(a)}(a^*(v),a^*(w)).$$
\end{prop}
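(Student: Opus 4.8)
The plan is to deduce the identity from the same algebraic recursion used in Proposition~\ref{prop:relate-H_i}, namely
\[
\rho_i\rho_i^* \;=\; e^{(i)}(e^{(i)})^* \;+\; \sum_{h(a)=i} V_a\,\bigl(\rho_{t(a)}\rho_{t(a)}^*\bigr)\,V_a^*,
\]
the only new feature being that when $Q$ has oriented cycles this is an identity of \emph{absolutely convergent} series over $R_{\vec n,\vec d}^{\circ}$ rather than of finite sums. As in Proposition~\ref{prop:relate-H_i}, $H_i$ here denotes the induced metric on the dual bundle $\cV_i^*$, whose Gram matrix in the tautological frame is $\rho_i\rho_i^*$.

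First I would prove the recursion. A path $\gamma$ with $h(\gamma)=i$ is either the trivial path at $i$, giving the summand $e^{(i)}(e^{(i)})^*$, or it factors uniquely as $\gamma=\gamma'a$ with $a$ its last arrow (so $h(a)=i$) and $h(\gamma')=t(a)$; conversely every pair $(a,\gamma')$ with $h(a)=i$ and $h(\gamma')=t(a)$ arises exactly once. Since $V_\gamma e^{(t(\gamma))}=V_a\bigl(V_{\gamma'}e^{(t(\gamma'))}\bigr)$, the rank-one terms obey $(V_\gamma e^{(t(\gamma))})(V_\gamma e^{(t(\gamma))})^*=V_a\bigl(V_{\gamma'}e^{(t(\gamma'))}\bigr)\bigl(V_{\gamma'}e^{(t(\gamma'))}\bigr)^*V_a^*$, so peeling off the trivial path gives the displayed formula as an identity of formal series. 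To regroup the terms legitimately I would invoke Lemma~\ref{lem:conv}: over $R_{\vec n,\vec d}^{\mu=I,\circ}$ the family $\{(V_\gamma e^{(t(\gamma))})(V_\gamma e^{(t(\gamma))})^*\}_{h(\gamma)=i}$, and likewise the family indexed by paths with head $t(a)$ for each arrow $a$ into $i$, is absolutely summable, so any grouping converges to the same value; the identity then propagates from $R_{\vec n,\vec d}^{\mu=I,\circ}$ to all of $R_{\vec n,\vec d}^{\circ}$ by the $\GL_{\vec d}$-equivariance of both sides.

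It remains to unwind this into the stated pairing formula, exactly as in the acyclic case. The bijection above is the orthogonal splitting $\widehat{W}_i=\underline{\C^{n_i}}\oplus\bigoplus_{h(a)=i}\widehat{W}_{t(a)}$ of the ambient trivial bundle (now of infinite rank, with $\ell^2$ fibre metric $H_0$), under which the holomorphic embedding $\rho_i^*$ of $\cV_i^*$ decomposes as $(e^{(i)})^*\oplus\bigoplus_{h(a)=i}\bigl(\rho_{t(a)}^*\circ a^*\bigr)$, where $a$ and $e^{(i)}$ are the holomorphic bundle maps attached to the arrow and the framing, and $\rho_i^*(v)$ lands in $\ell^2$ precisely because $v^*\rho_i\rho_i^*v<\infty$ on $R_{\vec n,\vec d}^{\circ}$. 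Taking $H_0$-inner products through this splitting, and using that $\rho_{t(a)}^*$ induces the metric $H_{t(a)}$ on $\cV_{t(a)}^*$, yields
\[
H_i(v,w)=H_0\bigl((e^{(i)})^*v,(e^{(i)})^*w\bigr)+\sum_{h(a)=i}H_{t(a)}\bigl(a^*v,a^*w\bigr)
\]
for all $v,w\in\cV_i^*$. The one real obstacle beyond Proposition~\ref{prop:relate-H_i} is the interchange of infinite summations over paths; this is entirely taken care of by the absolute convergence of Lemma~\ref{lem:conv}, after which the proof is a line-by-line repetition of the acyclic argument.
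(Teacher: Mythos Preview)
Your proof is correct and follows exactly the paper's approach: the paper simply notes that the recursion $\rho_i\rho_i^* = e^{(i)}(e^{(i)})^* + \sum_{h(a)=i} V_a\,\rho_{t(a)}\rho_{t(a)}^*\,V_a^*$ still holds over $R_{\vec n,\vec d}^{\circ}$, whence the argument of Proposition~\ref{prop:relate-H_i} carries over verbatim. Your write-up spells out the absolute-convergence justification via Lemma~\ref{lem:conv} in more detail than the paper does, but the substance is identical.
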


Now we consider a version of Theorem \ref{thm:Ricci} in this case.  We define $\hat{W}_i$ by Equation \eqref{eq:W}.  But this time, it is an infinite direct sum of Hilbert spaces (meaning that it consists of infinite sequence $w=(w_\gamma : h(\gamma)=i)$ with $\|w\|^2=\sum_\gamma \|w_\gamma\|^2 < \infty$).

\begin{lemma}
	For each $(V,e)\in R^\circ_{n,d}$, $\rho_i(V,e)$ defines a bounded linear map $\hat{W}_i \to V_i$.  Its adjoint $\rho_i(V,e)^*: V_i \to \hat{W}_i$ has a singular-value decomposition.
\end{lemma}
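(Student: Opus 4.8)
The plan is to reduce the statement to the absolute convergence established in Lemma~\ref{lem:conv}, after which everything is formal Hilbert-space bookkeeping together with the finite-dimensional spectral theorem. Throughout write $\rho = \rho_i(V,e)$ for the formal row $\bigl(V_\gamma e^{(t(\gamma))}\bigr)_{h(\gamma)=i}$ of matrices, and recall that $\hat{W}_i = \bigoplus_{h(\gamma)=i}\C^{n_{t(\gamma)}}$, the $\ell^2$-direct sum appearing in \eqref{eq:W}, is a separable Hilbert space.

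First I would show that $\rho$ is a bounded operator $\hat{W}_i\to V_i$. For $w=(w_\gamma)_{h(\gamma)=i}\in\hat{W}_i$ put $\rho\,w := \sum_{h(\gamma)=i} V_\gamma e^{(t(\gamma))}w_\gamma$; by the Cauchy--Schwarz inequality
$$\sum_{h(\gamma)=i}\bigl\|V_\gamma e^{(t(\gamma))}w_\gamma\bigr\| \;\le\; \Bigl(\sum_{h(\gamma)=i}\bigl\|V_\gamma e^{(t(\gamma))}\bigr\|^2\Bigr)^{1/2}\|w\|_{\hat{W}_i},$$
so the series defining $\rho\,w$ converges absolutely in $V_i$ and $\|\rho\|_{\mathrm{op}}\le\bigl(\sum_{h(\gamma)=i}\|V_\gamma e^{(t(\gamma))}\|^2\bigr)^{1/2}$ once this scalar is known to be finite. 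On the moment-map locus $R^{\mu=I,\circ}_{\vec{n},\vec{d}}$, the bound $\sum_{h(\gamma)=i}\|V_\gamma e^{(t(\gamma))}\|^2<\infty$ is exactly the estimate carried out inside the proof of Lemma~\ref{lem:conv}. For a general point of $R^\circ_{\vec{n},\vec{d}}$ I would write it as $g\cdot(V,e)$ with $g\in\GL_{\vec{d}}$ and $(V,e)$ on the moment-map locus; since $h(\gamma)=i$ one has $(g\cdot V)_\gamma(g\cdot e)^{(t(\gamma))} = g_i\,V_\gamma e^{(t(\gamma))}$, so all these operator norms are merely scaled by the fixed constant $\|g_i\|$ and square-summability persists.

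Next I would identify the adjoint and its singular-value decomposition. As $\rho$ is bounded between Hilbert spaces, its adjoint $\rho^*:V_i\to\hat{W}_i$ exists and is bounded, with the explicit description $(\rho^* v)_\gamma = \bigl(V_\gamma e^{(t(\gamma))}\bigr)^* v$; that $\rho^* v$ actually lies in $\hat{W}_i$ follows from $\sum_\gamma\bigl\|(V_\gamma e^{(t(\gamma))})^* v\bigr\|^2 \le \|v\|^2\sum_\gamma\|V_\gamma e^{(t(\gamma))}\|^2<\infty$, the very sum controlled above, and the composite $\rho\rho^*\in\End(V_i)$ recovers the $d_i\times d_i$ matrix of Theorem~\ref{thm:metric-with-cycles}. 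Since the target $V_i=\C^{d_i}$ is finite-dimensional, $\rho^*$ has finite rank, so a singular-value decomposition must exist. Concretely: diagonalize the positive-definite Hermitian matrix $\rho\rho^* = U\,\mathrm{diag}(\lambda_1^2,\dots,\lambda_{d_i}^2)\,U^*$ for a unitary $U$ and $\lambda_j>0$ (positivity from Theorem~\ref{thm:metric-with-cycles}), set $u_j = U\epsilon_j$ and $v_j := \lambda_j^{-1}\rho^* u_j\in\hat{W}_i$, and compute $\langle v_j,v_k\rangle_{\hat{W}_i} = \lambda_j^{-1}\lambda_k^{-1}\langle u_j,\rho\rho^* u_k\rangle_{V_i} = \delta_{jk}$, so $\{v_1,\dots,v_{d_i}\}$ is orthonormal in $\hat{W}_i$. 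Then $\rho = \sum_{j=1}^{d_i}\lambda_j\,u_j v_j^*$ (equivalently $\rho^* = \sum_{j=1}^{d_i}\lambda_j\,v_j u_j^*$) is the desired singular-value decomposition, the exact analogue of the one used in the proof of Theorem~\ref{thm:Ricci}.

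I do not expect a real obstacle: the one point that requires care is that convergence of $\rho\rho^*$, hence the entire argument, holds only over the open set $R^\circ_{\vec{n},\vec{d}}$, so the reduction to the moment-map locus in the boundedness step must genuinely invoke $\GL_{\vec{d}}$-equivariance; once Lemma~\ref{lem:conv} is available the rest is the standard spectral theorem for the finite matrix $\rho\rho^*$ plus routine facts about bounded operators between Hilbert spaces.
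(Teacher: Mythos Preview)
Your proof is correct and follows essentially the same approach as the paper: both reduce boundedness of $\rho$ to the square-summability of $\|V_\gamma e^{(t(\gamma))}\|$ obtained in Lemma~\ref{lem:conv}, and then obtain the singular-value decomposition from the fact that $\rho^*$ has finite rank. Your version is slightly more detailed---you use Cauchy--Schwarz where the paper uses the cruder bound $\|w_\gamma\|\le\|w\|$, you explicitly treat points off the moment-map locus via $\GL_{\vec{d}}$-equivariance, and you construct the SVD by hand from the spectral decomposition of $\rho\rho^*$ rather than invoking the general fact for compact operators---but these are presentational choices, not substantive differences.
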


\begin{proof}
	For $w=(w_\gamma : h(\gamma)=i) \in \hat{W}_i$, $\rho_i(V,e)$ maps it to 
	$$\sum_{\gamma: h(\gamma)=i} V_\gamma e^{(t(\gamma))} w_\gamma. $$
	Like in the proof of Lemma \ref{lem:conv}, consider $\sum_{\gamma: h(\gamma)=i} \|V_\gamma e^{(t(\gamma))}\| \|w_\gamma\| \leq \|w\| \sum_{\gamma: h(\gamma)=i} \|V_\gamma e^{(t(\gamma))}\| < +\infty.$
	This also shows that if $\|w\|=1$, then the image of $w$ is also bounded.  $\rho_i(V,e)^*$ has image being finite-dimensional (since $V_i$ is finite-dimensional), and hence is a compact operator.  Thus it has a singular-value decomposition.
\end{proof}

\begin{prop}
	The Ricci curvature of the metric given by $(\rho_i \rho_i^*)^{-1}$ is semi-positive definite on $\cM^\circ$.
\end{prop}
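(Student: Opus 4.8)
The plan is to mimic the proof of Theorem \ref{thm:Ricci}, the only new ingredient being that the auxiliary space $\hat{W}_i := \bigoplus_{\gamma\colon h(\gamma)=i}\C^{n_{t(\gamma)}}$ is now an infinite-dimensional Hilbert space. First, observe that $\rho_i\rho_i^*$ is, for every $(V,e)\in R_{\vec{n},\vec{d}}^{\mu=I,\circ}$, a positive-definite operator on the \emph{finite-dimensional} space $V_i$ (Theorem \ref{thm:metric-with-cycles}), so $\log\det\rho_i\rho_i^*$ is a well-defined smooth real-valued function there whose $i\partial\bar\partial$ is $U_{\vec{d}}$-invariant and descends to the curvature form of $(\det\cV_i,\det H_i)$ over $\cM^\circ$, i.e.\ to the Ricci curvature of the metric. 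Since $\rho_i$ is surjective, $\mathrm{Im}\,\rho_i^*\subset\hat{W}_i$ is a closed $d_i$-dimensional subspace, and by the preceding Lemma $\rho_i^*$ admits the polar/singular-value decomposition $\rho_i^* = \bigl(\rho_i^*(\rho_i\rho_i^*)^{-1/2}\bigr)(\rho_i\rho_i^*)^{1/2}$, with $\rho_i^*(\rho_i\rho_i^*)^{-1/2}$ a partial isometry of $V_i$ onto $\mathrm{Im}\,\rho_i^*$.

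The one point requiring care is convergence. For a holomorphic tangent vector $v$ I would first check that $\partial_v\rho_i$, whose components indexed by paths $\gamma$ with $h(\gamma)=i$ are $\partial_v\bigl(V_\gamma e^{(t(\gamma))}\bigr)$, is a Hilbert--Schmidt operator $\hat{W}_i\to V_i$; this follows from exactly the estimate used in the proof of Lemma \ref{lem:conv}, since applying the Leibniz rule to $V_\gamma$ produces at most $\mathrm{length}(\gamma)$ summands and the geometric decay coming from $\|V(c)\|<1$ for oriented cycles $c$ dominates this polynomial factor. Consequently $\partial_v\rho_i(\partial_v\rho_i)^*$ and $(\partial_v\rho_i)^*(\rho_i\rho_i^*)^{-1}\partial_v\rho_i$ are absolutely convergent (the latter has rank $\le d_i$ and is therefore trace class), and the series defining $\rho_i\rho_i^*$ may be differentiated term by term; these facts legitimize every manipulation below.

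With this in hand the computation is identical to that in Theorem \ref{thm:Ricci}. Using $\bar\partial\rho_i=0$ (each entry $V_\gamma e^{(t(\gamma))}$ is polynomial in the holomorphic coordinates on $R_{\vec{n},\vec{d}}$) one obtains
\begin{align*}
\partial\bar\partial\log\det\rho_i\rho_i^*
&= \mathrm{tr}\bigl((\partial\rho_i)^*(\rho_i\rho_i^*)^{-1}\partial\rho_i\bigr)\\
&\quad - \mathrm{tr}\Bigl(\bigl(\partial\rho_i\cdot\rho_i^*(\rho_i\rho_i^*)^{-1/2}\bigr)^*(\rho_i\rho_i^*)^{-1}\bigl(\partial\rho_i\cdot\rho_i^*(\rho_i\rho_i^*)^{-1/2}\bigr)\Bigr),
\end{align*}
all traces being over the finite-dimensional $V_i$. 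Evaluating at $(v,\bar v)$ and using the orthogonal splitting $\hat{W}_i = \mathrm{Im}\,\rho_i^*\oplus(\mathrm{Im}\,\rho_i^*)^{\perp}$, the first term equals $\|\partial_v\rho_i\|_{H_i}^2$ and the second equals $\|(\partial_v\rho_i)_1\|_{H_i}^2$, where $(\partial_v\rho_i)_1$ denotes the restriction of $\partial_v\rho_i$ to $\mathrm{Im}\,\rho_i^*$; hence
$$\partial_v\bar\partial_v\log\det\rho_i\rho_i^* = \|\partial_v\rho_i\|_{H_i}^2-\|(\partial_v\rho_i)_1\|_{H_i}^2 = \|(\partial_v\rho_i)_2\|_{H_i}^2\ge 0,$$
$(\partial_v\rho_i)_2$ being the restriction to $(\mathrm{Im}\,\rho_i^*)^{\perp}$. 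Thus the Ricci curvature of $(\cV_i,H_i)$ is semi-positive on $\cM^\circ$. The main obstacle is precisely the convergence bookkeeping of the previous paragraph; once that is settled the argument is a verbatim transcription of the finite-dimensional case, with the only conceptual check being that the partial-isometry decomposition of $\rho_i^*$ still makes sense with an infinite-dimensional source, which it does because $\rho_i$ and $\rho_i^*$ have finite rank $d_i$.
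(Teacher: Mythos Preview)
Your proposal is correct and follows essentially the same route as the paper: both reduce to the computation in Theorem \ref{thm:Ricci}, invoke the polar decomposition $\rho_i^* = \bigl(\rho_i^*(\rho_i\rho_i^*)^{-1/2}\bigr)(\rho_i\rho_i^*)^{1/2}$ (supplied by the preceding lemma), and interpret the two trace terms as $\|\partial_v\rho_i\|_{H_i}^2$ and $\|(\partial_v\rho_i)_1\|_{H_i}^2$ to conclude semi-positivity. The only difference is in the bookkeeping for finiteness: the paper simply cycles each trace onto the $d_i$-dimensional side $V_i$ to see it is a finite sum, whereas you argue directly that $\partial_v\rho_i$ is Hilbert--Schmidt via the geometric-decay estimate from Lemma \ref{lem:conv}; both justifications are valid and lead to the same conclusion.
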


\begin{proof}
	By the previous lemma, the proof of Theorem \ref{thm:Ricci} on semi-positive definiteness still works.  Namely, 
	$$\partial \overline{\partial }\log \det \rho \rho ^{\mathrm{*}}
	=\mathrm{tr~ }\left(\left(\partial \rho \right)^{\mathrm{*}}\left(\rho \rho ^{\mathrm{*}}\right)^{-1}\partial \rho \right)-\mathrm{tr~ }\left(\left(\partial \rho \cdot \left(\rho ^{\mathrm{*}}\left(\rho \rho ^{\mathrm{*}}\right)^{-\frac{1}{2}}\right)\right)^{\mathrm{*}}\left(\rho \rho ^{\mathrm{*}}\right)^{-1}\left(\partial \rho \cdot \left(\rho ^{\mathrm{*}}\left(\rho \rho ^{\mathrm{*}}\right)^{-\frac{1}{2}}\right)\right)\right).$$
	Note that the two terms on the RHS are finite: $\mathrm{tr~ }\left(\left(\partial \rho \right)^{\mathrm{*}}\left(\rho \rho ^{\mathrm{*}}\right)^{-1}\partial \rho \right) = \mathrm{tr~ }\left(\rho \rho ^{\mathrm{*}}\right)^{-1}\partial \rho \left(\left(\partial \rho \right)^{\mathrm{*}} \right) = \sum_{j=1}^{d_i} \langle \partial \rho \left(\partial \rho \right)^{\mathrm{*}} \epsilon_j, \left(\rho \rho ^{\mathrm{*}}\right)^{-1} \epsilon_j \rangle_{V_i}$ which is a finite sum, and similar for the second term.  $\rho ^{\mathrm{*}}=\left(\rho ^{\mathrm{*}}\left(\rho \rho ^{\mathrm{*}}\right)^{-\frac{1}{2}}\right)\left(\rho \rho ^{\mathrm{*}}\right)^{\frac{1}{2}}$ is decomposed into the rescaling $(\rho \rho ^{\mathrm{*}})^{\frac{1}{2}}$ and the orthogonal embedding $\left(\rho ^{\mathrm{*}}\left(\rho \rho ^{\mathrm{*}}\right)^{-\frac{1}{2}}\right)$ to $\mathrm{Im}\, \rho^{\mathrm{*}}\subset \widehat{W}_{i}$.  Then the above equals to $\|(\partial \rho)_2\|_H^2 \geq 0$ as in Theorem \ref{thm:Ricci}.
\end{proof}


	

\section{Fiberwise Nonlinearity} \label{sec:nonlinear}
In the mathematical study of quivers, we mostly focused on linear representations.  In particular, the morphisms between universal vector bundles are linear along fibers.  
On the other hand, nonlinear `activation functions' play a key role in machine learning.  In this section, we construct some natural non-linear fiber-bundle endomorphisms of the universal bundles $\cV_i$ over $\cM_{\vec{n},\vec{d}}$ by using fiberwise symplectomorphisms.


For simplicity, we shall take $n_i=d_i+1$ for all $i\in Q_0$ in this section.


\subsection{Activation functions arising from toric moment maps and symplectomorphisms} \label{sec:toric}

In this section, we make the observation that several activation functions commonly used in machine learning actually belong to a much bigger class, namely the $T$-equivariant symplectomorphisms on open subsets of a symplectic toric variety.  

First, let's recall the basic setup for toric varieties.
Let's equip $\C^m$ with the standard K\"ahler structure.  We obtain a symplectic toric variety $(X,\omega_X)$ as a symplectic quotient by the real torus $T^{m-d}$.  We assume $X$ is smooth.  The $T^{m-d}$-action can be specified by an injective homomorphism $\Z^{m-d} \to \Z^m$, which induces a map $T^{m-d} \to T^m$, and $T^m$ acts on $\C^m$ by coordinate-wise multiplication.  We assume that the quotient of $\Z^m$ by the image of $\Z^{m-d}$ is again a lattice, which we identify as $\Z^d$.  We denote by $v_i \in \Z^d$ the images of the standard basic vectors of $\Z^m$ under the quotient map $\Z^m \to \Z^d$.

The residual action of $T^d$ on $X$ gives a moment-map fibration over a polytope $P$, which is given by the intersection of $m$ half-spaces in $\R^d$: 
$$ \{x \in \R^d: \ell_j(x) := v_j \cdot x - c_j \geq 0\} $$
where the constants $c_j \in \R^d$ are determined by the level taken in the symplectic quotient.  We assume that the level is chosen such that for all $j$, $\{\ell_j(x) = 0\} \cap P$ is a (non-empty) codimension-one boundary of the polytope $P$.

Consider the open toric orbit of $X$, which can be identified as $(\C^\times)^d$ by fixing a basis of $\Z^d$.  Denote by $\omega_X$ the K\"ahler form induced on the symplectic quotient.  Let $\omega_{\mathrm{std}} = \sum_{i=1}^d dx_i \wedge d\theta_i$ be the standard symplectic form on $\R^d \times T^d$ (where $T^d$ denotes the real $d$-torus).  
The symplectic form $\omega_X|_{(\C^\times)^d}$ has an explicit description by the following beautiful formula.

\begin{theorem}[\cite{Guillemin,Abreu}] \label{thm:toric}
$$ \left(\frac{1}{2} \left(\sum_{i=1}^m v_i \log \ell_i(x)\right), \mathrm{Id}\right): (P^\circ \times T^d, \omega_{\mathrm{std}}|_{P^\circ \times T^d}) \to \R^d \times T^d \stackrel{\exp}{\cong} ((\C^\times)^d,\omega_X|_{(\C^\times)^d})$$ 
is a symplectomorphism.
\end{theorem}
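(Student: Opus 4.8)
The plan is to realise $X$ as the symplectic reduction of $\C^m$ by $T^{m-d}$ and to trace through what this does on the open orbit, where the desired map becomes an explicit action--angle coordinate change.  Write $\mu_{\C^m}(z) = (|z_1|^2,\dots,|z_m|^2) \in \R^m$ for the moment map of the standard $T^m$-action (with the normalisation of the K\"ahler form on $\C^m$ making this hold), let $\nu\colon \C^m \to (\mathfrak{t}^{m-d})^*$ be its composition with the projection $\R^m \to (\mathfrak{t}^{m-d})^*$ dual to $\Z^{m-d}\hookrightarrow\Z^m$, and set $X = \nu^{-1}(\lambda)/T^{m-d}$ with reduced form $\omega_X$, the constants $c_j$ being a lift of $\lambda$ to $\R^m$.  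The first step is to identify the residual moment map $\mu_X\colon X\to (\mathfrak{t}^d)^*$ for the $T^d = T^m/T^{m-d}$-action, with image $P$, and to record the facet identity
$$\ell_j\bigl(\mu_X([z])\bigr) = |z_j|^2 \qquad (j=1,\dots,m),$$
which holds by construction: $\ell_j$ is the $j$-th coordinate function on $\R^m$ restricted to the affine slice $(\mathfrak{t}^d)^* + c$.  On the open orbit $\{z_j\neq 0\ \text{for all }j\}$ the holomorphic quotient by the complexified torus gives $(\C^\times)^m/(\C^\times)^{m-d}\cong(\C^\times)^d$, with holomorphic coordinates $w_1,\dots,w_d$ pulled back from the monomials indexed by $(\Z^d)^*\hookrightarrow(\Z^m)^*$; since $(v_j)_k$ is the $j$-th entry of the image of $e_k^*$ under this inclusion, this yields the monomial relation $|w_k| = \prod_{j=1}^m |z_j|^{(v_j)_k}$.

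Next I would put $\omega_X$ into action--angle normal form on the open orbit.  Writing $w_k = e^{u_k + \bi\,\theta_k}$, the $T^d$-action is $\theta\mapsto\theta+\alpha$, so $\mu_X$ depends on $u$ alone, and the complex structure interchanges $\partial_{u_k}$ with $\partial_{\theta_k}$.  Closedness, invariance, the moment-map identity $\iota_{\partial_{\theta_k}}\omega_X = -d(\mu_X)_k$, and the K\"ahler condition $\omega_X(J\cdot,J\cdot)=\omega_X$ together force
$$\omega_X = \sum_{k=1}^d d(\mu_X)_k\wedge d\theta_k$$
on the open orbit, with $(\partial(\mu_X)_k/\partial u_l)$ symmetric positive definite; in particular $u\mapsto x := \mu_X(u)$ is a diffeomorphism $\R^d \to P^\circ$ (the standard fact that $\mu_X$ restricts to a diffeomorphism of the open orbit onto $P^\circ$).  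Combining the facet identity with the monomial relation,
$$u_k = \frac{1}{2}\log|w_k|^2 = \frac{1}{2}\sum_{j=1}^m (v_j)_k \log|z_j|^2 = \frac{1}{2}\sum_{j=1}^m (v_j)_k\,\log\ell_j\bigl(\mu_X(u)\bigr),$$
so the inverse of $u\mapsto\mu_X(u)$ is exactly $x\mapsto \frac{1}{2}\sum_{i=1}^m v_i\log\ell_i(x)$, which is therefore a diffeomorphism $P^\circ\to\R^d$.

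Finally I would assemble the pieces.  The map in the statement is $(x,\theta)\mapsto\bigl(\frac{1}{2}\sum_i v_i\log\ell_i(x),\,\theta\bigr)$ followed by $\exp$; by the previous step it is a diffeomorphism $P^\circ\times T^d\to(\C^\times)^d$, and since $x\mapsto\frac{1}{2}\sum_i v_i\log\ell_i(x)$ and $u\mapsto\mu_X(u)$ are mutually inverse, pulling $\omega_X=\sum_k d(\mu_X)_k\wedge d\theta_k$ back along it gives $\sum_k dx_k\wedge d\theta_k = \omega_{\mathrm{std}}$, as claimed.  I expect the main obstacle to be the careful lattice/convention bookkeeping of the first step: setting up the reduction, pinning down the residual moment map and the polytope $P$, and matching normalisations so that the facet identity and the monomial relation come out exactly (an affine shift of the $c_j$ only translates $u$ by a constant and is absorbed into the conventions).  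The one genuinely geometric input beyond bookkeeping is the action--angle normal form, where the K\"ahler hypothesis enters through $\omega_X(J\cdot,J\cdot)=\omega_X$; alternatively one may invoke Abreu's symplectic potential $g(x)=\frac{1}{2}\sum_j\ell_j(x)\log\ell_j(x)$ on $P^\circ$, whose gradient (up to the same affine normalisation) is the stated map --- the Legendre dual of Guillemin's K\"ahler potential.
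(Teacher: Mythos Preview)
The paper does not give its own proof of this theorem: it is stated with the citation \cite{Guillemin,Abreu} and then immediately used, with no argument provided. So there is nothing in the paper to compare your proposal against.

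That said, your outline is essentially the standard Guillemin--Abreu argument and is correct in structure. The key steps --- the facet identity $\ell_j(\mu_X([z])) = |z_j|^2$ coming from the symplectic reduction, the action--angle normal form $\omega_X = \sum_k d(\mu_X)_k \wedge d\theta_k$ on the open orbit, and the identification of the inverse of the moment map with $x \mapsto \frac{1}{2}\sum_i v_i \log \ell_i(x)$ --- are all the right ingredients. Your closing remark about Abreu's symplectic potential $g(x) = \frac{1}{2}\sum_j \ell_j(x)\log\ell_j(x)$ is exactly the formulation the paper itself uses later (see Remark~\ref{rmk:toric}), so you have correctly located the content.
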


Taking the universal cover $\R^d \to T^d$ and lifting the above, one obtains the following.

\begin{corollary} \label{cor:sympl}
	The inverse of $ \left(\frac{1}{2} \left(\sum_{i=1}^m v_i \log \ell_i(x)\right), \,\mathrm{Id}\right)$ gives a symplectomorphism 
	$$\sigma_\C = (\sigma(\mathrm{Re}(\vec{z})),\mathrm{Im}(\vec{z})): (\C^d,\exp^* \omega_X) \to (P^\circ \times \R^d,\omega_{\mathrm{std}})$$
	where $\exp: \C^d \to (\C^\times)^d \subset X$, and $\sigma: \R^d \to P^\circ$ is the inverse of $\left(\frac{1}{2} \left(\sum_{i=1}^m v_i \log \ell_i(x)\right), \,\mathrm{Id}\right)$.
\end{corollary}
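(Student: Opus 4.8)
The plan is to obtain the corollary from Theorem~\ref{thm:toric} by lifting the symplectomorphism given there to universal covers; once the conventions are matched this is a purely formal argument requiring no new analytic input. Write $\phi(x):=\frac12\sum_{i=1}^m v_i\log\ell_i(x)$ for the first component of the map in Theorem~\ref{thm:toric}, and let $\Psi:=\exp\circ(\phi,\mathrm{Id})\colon P^\circ\times T^d\to(\C^\times)^d$ denote that map, which Theorem~\ref{thm:toric} asserts to be a symplectomorphism $(P^\circ\times T^d,\omega_{\mathrm{std}})\isoto((\C^\times)^d,\omega_X|_{(\C^\times)^d})$. Since $\exp$ here is the polar-coordinate identification $\R^d\times T^d\isoto(\C^\times)^d$, which is a diffeomorphism, and $\Psi$ is a diffeomorphism, the map $(\phi,\mathrm{Id})=\exp^{-1}\circ\Psi$ is a diffeomorphism $P^\circ\times T^d\to\R^d\times T^d$; being the identity on the $T^d$-factor, it follows that $\phi$ itself is a diffeomorphism $P^\circ\isoto\R^d$. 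This is precisely what legitimizes the definition $\sigma:=\phi^{-1}\colon\R^d\to P^\circ$ appearing in the statement.

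Next I would pass to covers. The complex exponential $\exp\colon\C^d\to(\C^\times)^d$ is a covering map, and under the identifications $\C^d\cong\R^d\times\R^d$, $\vec z\mapsto(\mathrm{Re}\,\vec z,\mathrm{Im}\,\vec z)$, and $(\C^\times)^d\cong\R^d\times T^d$ by polar coordinates, it becomes $\mathrm{id}_{\R^d}\times\pi$ where $\pi\colon\R^d\to T^d$ is the standard covering of the real torus. Since $P^\circ\times\R^d$ is simply connected, $\Psi\circ(\mathrm{id}_{P^\circ}\times\pi)$ lifts through $\exp$; explicitly the lift is
\begin{equation*}
\widetilde\Psi\colon P^\circ\times\R^d\longrightarrow\C^d,\qquad \widetilde\Psi(x,y)=\phi(x)+iy,
\end{equation*}
so that $\exp\circ\widetilde\Psi=\Psi\circ(\mathrm{id}_{P^\circ}\times\pi)$. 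Because $\phi$ is a diffeomorphism and $\widetilde\Psi$ is the identity on the $\R^d$-factor, $\widetilde\Psi$ is a diffeomorphism. As $\mathrm{id}_{P^\circ}\times\pi$ and $\exp$ are local diffeomorphisms, $\mathrm{id}_{P^\circ}\times\pi$ pulls $\omega_{\mathrm{std}}|_{P^\circ\times T^d}=\sum_i dx_i\wedge d\theta_i$ back to $\sum_i dx_i\wedge dy_i=\omega_{\mathrm{std}}$, and
\begin{equation*}
\widetilde\Psi^*\!\left(\exp^*\omega_X\right)=\left(\exp\circ\widetilde\Psi\right)^*\omega_X=\left(\mathrm{id}_{P^\circ}\times\pi\right)^*\Psi^*\omega_X=\left(\mathrm{id}_{P^\circ}\times\pi\right)^*\!\left(\omega_{\mathrm{std}}|_{P^\circ\times T^d}\right)=\omega_{\mathrm{std}},
\end{equation*}
the third equality being Theorem~\ref{thm:toric}. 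Thus $\widetilde\Psi\colon(P^\circ\times\R^d,\omega_{\mathrm{std}})\isoto(\C^d,\exp^*\omega_X)$ is a symplectomorphism.

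Finally I would simply invert: $\widetilde\Psi^{-1}(\vec z)=(\phi^{-1}(\mathrm{Re}\,\vec z),\,\mathrm{Im}\,\vec z)=(\sigma(\mathrm{Re}\,\vec z),\,\mathrm{Im}\,\vec z)=\sigma_\C(\vec z)$, which gives the symplectomorphism $(\C^d,\exp^*\omega_X)\isoto(P^\circ\times\R^d,\omega_{\mathrm{std}})$ claimed. The only delicate point I foresee is the bookkeeping in the middle step: one must check that, for the identification of the open toric orbit of $X$ with $(\C^\times)^d$ fixed in the setup, the complex exponential genuinely intertwines $\mathrm{id}_{\R^d}\times\pi$ with the polar-coordinate covering, so that the four symplectic forms involved ($\omega_{\mathrm{std}}$ on $P^\circ\times T^d$, $\omega_X$ on $(\C^\times)^d$, and their pullbacks to $P^\circ\times\R^d$ and $\C^d$) correspond as stated, with any normalization constant such as $2\pi$ absorbed into the meaning of ``standard''. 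Beyond this the argument is formal: no estimates, and no convexity input beyond what is already packaged in Theorem~\ref{thm:toric}.
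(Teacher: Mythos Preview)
Your proof is correct and follows exactly the approach the paper indicates: the paper states the corollary immediately after Theorem~\ref{thm:toric} with the single sentence ``Taking the universal cover $\R^d \to T^d$ and lifting the above, one obtains the following,'' and your argument carefully spells out precisely this lifting. No separate proof appears in the paper, so your write-up is a faithful expansion of what the authors leave implicit.
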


\begin{example}	\label{ex:C}
	For $\C^d$, the moment polytope $P$ is $\R_{\geq 0}^d = \{x_i \geq 0: i=1,\ldots,d\}$.  $v=(1,\ldots,1)$.  The above map is simply $\left(\frac{\log x_i}{2}\right)_{i=1}^d: \R_{>0}^d \to \R^d$.  The symplectomorphism $((\C^\times)^d,\omega_{\C^n}|_{\C^\times)^d}) \stackrel{\cong}{\to} (\R_{>0}^d \times T^d, \omega_{\mathrm{std}}|_{P^\circ \times T^d})$ is $\left(|z_i|^2, \frac{\log z_i - \log \bar{z_i}}{2i} \right)_{i=1}^d$.
\end{example}

\begin{example}	\label{ex:Pn}
For the complex projective space $\bP^d$, the corresponding moment polytope $P$ (for a chosen level) is the $d$-simplex given by $\ell_i \geq 0$ where $\ell_i(x)=x_i$ for $i=1,\ldots,d$, and $\ell_{d+1}(x)=1-x_1-x_2-\dots-x_d$.  The generators are $v_i = \epsilon_i$ for $i=1,\ldots,d$ (the standard basis) and $v_{d+1}=-\sum_{i=1}^d v_i$.  We have
$$d\left(\frac{1}{2}\sum_{i=1}^{d+1} \ell_i(x) \log \ell_i(x)\right) = \frac{1}{2}\sum_{i=1}^d \log\left(\frac{x_i}{1-\sum_{j=1}^d x_j}\right) dx_i$$
as a map $P^\circ \to \R^d$.  By direct computation, the inverse of this map equals to
\begin{equation} \label{eq:expsigma}
\sigma(\vec{r})=\left(\frac{e^{2r_i}}{1+\sum_{j=1}^d e^{2r_j}}\right)_{i=1}^d: \R^d \to P^\circ.
\end{equation}
Written in terms of the complex coordinates $\vec{z} \in (\C^{\times})^d$, the symplectomorphism $((\C^{\times})^d, \omega_{\bP^n}|_{(\C^\times)^d})\to(P^\circ \times T^d, \omega_{\mathrm{std}}|_{P^\circ \times T^d})$ is given by
$$\left(\frac{|z_i|^2}{1+\sum_{j=1}^d |z_j|^2}, \frac{\log z_i - \log \bar{z_i}}{2i} \right)_{i=1}^d.$$
Pulling back by $\C^d \to (\C^{\times})^d$, we have the symplectomorphism $\sigma_\C = \left(\left(\frac{e^{2r_i}}{1+\sum_{j=1}^d e^{2r_j}}\right)_{i=1}^d,\mathrm{Id}\right):(\C^d,\exp^* \omega_X) \to (P^\circ \times \R^d,\omega_{\mathrm{std}})$.
	
When $d=1$, $\frac{e^{2r}}{1+ e^{2r}}$ is a commonly-used activation function.  See Figure \ref{fig:strip}.  By taking a direct product, $\left(\frac{e^{2r_i}}{1+ e^{2r_i}}\right)_{i=1}^d: \R^d \to [0,1]^d$ corresponds to $(\bP^1)^d$.
	
\end{example}

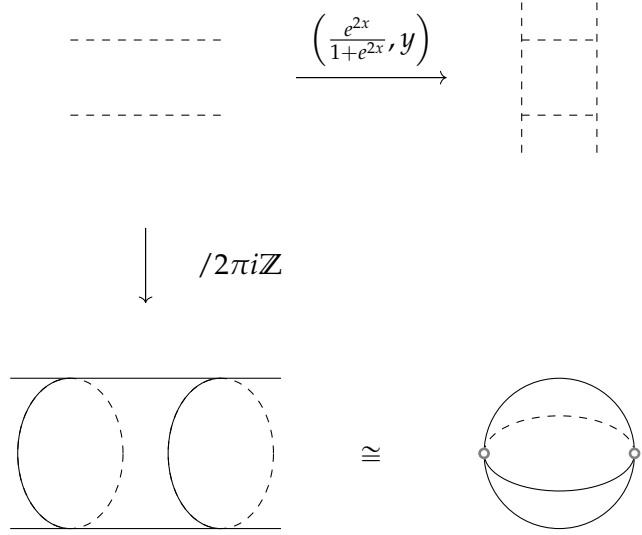
\begin{figure}
	\begin{tikzpicture}
	\draw[dashed] (-1, 0.5) -- (1, 0.5);
	\draw[dashed] (-1, -0.5) -- (1, -0.5);
	\draw(0, -2) -- (0, -3)
	(1.25, -2.5) node{$/2\pi i\Z$};
	\draw[->](0, -2) -- (0, -3);
	\draw (-1.8, -4) -- (1.8, -4);
	\draw[dashed] (1, -5) ellipse (0.7cm and 1cm);
	\draw (1, -4) arc (90:270:0.7cm and 1cm);
	\draw[dashed] (-1, -5) ellipse (0.7cm and 1cm);
	\draw (-1, -4) arc (90:270:0.7cm and 1cm);
	\draw (-1.8, -6) -- (1.8, -6);
	
	\node[] at (3, -5) {$\cong$};
	\draw (4.5, -5) arc (180:360:1cm and 0.5cm);
	\draw[dashed] (4.5, -5) arc (180:0:1cm and 0.5cm);
	\draw (5.5, -5) circle (1cm);
	\filldraw [white] (4.5, -5) circle (2pt);
	\filldraw [gray] (4.5, -5) circle (2pt);
	\filldraw [white] (4.5, -5) circle (1pt);
	\filldraw [white] (6.5, -5) circle (2pt);
	\filldraw [gray] (6.5, -5) circle (2pt);
	\filldraw [white] (6.5, -5) circle (1pt);
	\draw(2, 0) -- (4, 0)
	(3, 0.5) node{$\left(\frac{e^{2x}}{1+e^{2x}}, y\right)$};
	\draw[->](2,0) -- (4,0);
	\draw[dashed] (5, -1) -- (5, 1);
	\draw[dashed] (6, -1) -- (6, 1);
	\draw[dashed] (5, 0.5) -- (6, 0.5);
	\draw[dashed] (5, -0.5) -- (6, -0.5);

	\end{tikzpicture}
	\caption{$\C$ as a Covering Space Mapped to an Open Strip}
	\label{fig:strip}
\end{figure}

\begin{remark} \label{rmk:toric}
	In above, we have taken the quotient K\"ahler structure from $\C^m$.  For a general toric K\"ahler structure, the symplectomorphism in Theorem \ref{thm:toric} is given by $$\left(d\left(\frac{1}{2}  \left(\sum_{i=1}^m \ell_i \log \ell_i(x)\right)+h\right), \mathrm{Id}\right)$$
	where $h$ is a smooth function on the closed polytope $P$ such that the Hessian of $\left(\frac{1}{2}  \left(\sum_{i=1}^m \ell_i \log \ell_i(x)\right)+h\right)$ is positive definite in $P^\circ$ \cite{Abreu}.
	
	In particular, for a general projective toric variety $X$, we can take an embedding of $X$ to $\bP^N$ by toric holomorphic sections of a very ample line bundle $L$, and use the induced toric K\"ahler structure from $\bP^N$.  Then the symplectomorphism $\sigma_\C$ is given by $(\sigma, \mathrm{Id})$ where
	$$ \sigma(\vec{r})=\frac{\sum_{i=1}^d e^{2(\vec{u_i},\vec{r})} \vec{u_i} }{\sum_{j=1}^d e^{2(\vec{u_j},\vec{r})}}: \R^d \to P^\circ, $$
	$\vec{u_i}$ are points such that their convex hull equal to $P$, and $(\vec{u},\vec{r})$ is the standard dot product on $\R^d$.  See \cite[Section 4.2]{Fulton}.
\end{remark}

Now we have the symplectomorphisms $\phi_X:(P^\circ \times T^d, \omega_{\mathrm{std}}|_{P^\circ \times T^d}) \stackrel{\cong}{\to} ((\C^\times)^d,\omega_X|_{(\C^\times)^d})$ and $\phi_{\C^d}:\R_{>0}^d \times T^d \stackrel{\cong}{\to} ((\C^\times)^d,\omega_{\C^d}|_{(\C^\times)^d})$ (Example \ref{ex:C}).  For the toric structure of $X$, let's arrange the order of the indices such that the first $d$ vectors $v_i$ for $i=1,\ldots,d$ form a basis of $\Z^d$.  (We assume $m \geq d$.)  Moreover, we take the first $d$ constants $c_j=0$ for $j=1,\ldots,d$.  Then $P^\circ \subset \R_{>0}^d$.

Consider the composition $\phi_{\C^d} \circ \phi_X^{-1}: ((\C^\times)^d,\omega_X|_{(\C^\times)^d})\to ((\C^\times)^d,\omega_{\C^d}|_{(\C^\times)^d})$.  It is a symplectomorphism onto the image $\phi_{\C^d}(P^\circ \times T^d)$.

\begin{prop} \label{prop:sympl}
	$\phi_{\C^d} \circ \phi_X^{-1}$ extends to a $T$-equivariant symplectomorphism 
	$$\psi: (\C^d,\omega_X|_{\C^d}) \stackrel{\cong}{\to} (\pi_{\C^d}^{-1}(P-B),\omega_{\C^d}|_{\pi_{\C^d}^{-1}(P-B)})$$
	where $B = \bigcup_{i=d+1}^m \{\ell_i(x) = 0\}$, and $\pi_{\C^d} = (|z_i|^2)_{i=1}^d: \C^d \to \R_{\geq 0}^d$ is the moment map for $\C^d$.
\end{prop}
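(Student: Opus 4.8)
The plan is to compute $\psi:=\phi_{\C^d}\circ\phi_X^{-1}$ explicitly on the open orbit, observe that the resulting formula extends smoothly across the toric boundary divisors of $\C^d$, and then check $T$-equivariance, bijectivity onto $\pi_{\C^d}^{-1}(P-B)$, and the symplectomorphism property, the last two by continuity from the open orbit. Choosing the basis of the lattice $\Z^d$ to be $v_1,\dots,v_d$ we may assume $v_i=\epsilon_i$ for $i\le d$; since also $c_i=0$ there, $\ell_i(x)=x_i$ for $i\le d$, the origin is the vertex of $P$ cut out by $\{\ell_1=0\},\dots,\{\ell_d=0\}$, and $\C^d$ is the corresponding affine toric chart $U_\sigma\subset X$ (with standard coordinates the restrictions of the open-orbit coordinates), so that $\omega_X|_{\C^d}$ is the restriction of $\omega_X$ to this chart. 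I will use the standard toric facts that the moment map of $X$ restricts on $U_\sigma$ to a smooth surjection $\mu_X\colon\C^d\to P-B$ which is $T$-invariant and satisfies $\mu_X(w)_j=0$ exactly on $\{w_j=0\}$, and that $\ell_i>0$ on $P-B$ for every $i>d$ (since $B=\bigcup_{i>d}\{\ell_i=0\}$ and $\ell_i\ge0$ on $P$).

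Because both $\phi_X$ and $\phi_{\C^d}$ are of the form (a function of the moment coordinate, the identity on the torus angle) by Theorem~\ref{thm:toric} and Example~\ref{ex:C}, on the open orbit $\psi$ keeps the arguments of the $w_j$ and only rescales their moduli, and a short computation gives
\begin{equation*}
\psi(w)_j \;=\; w_j\, h_j\!\big(\mu_X(w)\big),\qquad h_j(x):=\prod_{i=d+1}^{m}\ell_i(x)^{-(v_i)_j/2},
\end{equation*}
where $(v_i)_j$ is the $j$-th coordinate of $v_i$; equivalently $\pi_{\C^d}(\psi(w))=\mu_X(w)$, i.e.\ $|w_j|^2 h_j(\mu_X(w))^2=\mu_X(w)_j$. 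Since each $\ell_i$ with $i>d$ is smooth and strictly positive on $P-B$ and $\mu_X$ is smooth on $\C^d$, every $h_j\circ\mu_X$ is a smooth strictly positive function on all of $\C^d$; hence the displayed formula defines a smooth map $\psi\colon\C^d\to\C^d$ extending $\phi_{\C^d}\circ\phi_X^{-1}$. The identity $|w_j|^2 h_j(\mu_X(w))^2=\mu_X(w)_j$ holds on the dense subset $(\C^\times)^d$ and both sides are continuous on $\C^d$, so it holds everywhere; thus $\pi_{\C^d}(\psi(w))=\mu_X(w)\in P-B$, i.e.\ $\psi$ maps $\C^d$ into $\pi_{\C^d}^{-1}(P-B)$. $T$-equivariance is then immediate, since $\mu_X$ is $T$-invariant: $\psi(t\cdot w)_j=t_j w_j\,h_j(\mu_X(w))=t_j\,\psi(w)_j$.

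For bijectivity I would exhibit the inverse directly: set $\chi(z)_j:=z_j/h_j(\pi_{\C^d}(z))$ for $z\in\pi_{\C^d}^{-1}(P-B)$, which is smooth because $h_j$ is smooth and positive on $P-B$ and $\pi_{\C^d}$ is smooth. Using $\pi_{\C^d}\circ\psi=\mu_X$ one checks directly that $\chi\circ\psi=\mathrm{id}_{\C^d}$, while $\psi\circ\chi$ agrees with the identity on the dense open orbit $\pi_{\C^d}^{-1}(P^\circ)$ and hence, by continuity, on all of $\pi_{\C^d}^{-1}(P-B)$. Therefore $\psi$ is a diffeomorphism onto $\pi_{\C^d}^{-1}(P-B)$ with inverse $\chi$. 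For the symplectic statement, on the open orbit $\psi^*\omega_{\C^d}=(\phi_X^{-1})^*\phi_{\C^d}^*\omega_{\C^d}=(\phi_X^{-1})^*\omega_{\mathrm{std}}=\omega_X$ because $\phi_X^*\omega_X=\omega_{\mathrm{std}}=\phi_{\C^d}^*\omega_{\C^d}$; since $\psi^*\omega_{\C^d}$ and $\omega_X|_{\C^d}$ are both smooth $2$-forms on $\C^d$ agreeing on the dense set $(\C^\times)^d$, they are equal. This shows $\psi$ is the desired $T$-equivariant symplectomorphism.

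The main obstacle is the extension step. Writing $\psi(w)_j=w_j\cdot\big(|w_j|^{-1}(\mu_X(w)_j)^{1/2}\big)$, the second factor looks singular along $\{w_j=0\}$, so the real content of the proposition is that it is in fact smooth and nonvanishing there; this is exactly where the toric geometry of the chart enters, through the identity $|w_j|^{-1}(\mu_X(w)_j)^{1/2}=h_j(\mu_X(w))$ together with the fact that the forms $\ell_i$ with $i>d$ — the facets of $P$ that do not pass through the vertex of the chart $\C^d=U_\sigma$, whose union is $B$ — stay strictly positive on the moment image $P-B$ of the chart. Once this smoothness is established, the remaining assertions follow formally from density of the open orbit and the fact that $\phi_X$ and $\phi_{\C^d}$ are action--angle symplectomorphisms sharing the same angle coordinates.
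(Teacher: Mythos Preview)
Your argument is correct and in fact cleaner than the paper's. Both proofs begin with the same explicit formula relating $z^{\C^d}$ and $z^X$ through the factors $\prod_{i>d}\ell_i^{(v_i)_j/2}$, and both deduce the symplectic condition by density from the open orbit. The difference lies in how invertibility is established. The paper writes only the map $z^{\C^d}\mapsto z^X$ (your $\chi$, using $\pi_{\C^d}$) and then proves it is a local diffeomorphism by a direct computation showing the Jacobian is positive definite; global bijectivity onto $\C^d$ is left somewhat implicit. You instead exploit the moment map $\mu_X$ of $X$ on the chart to write $\psi$ itself in closed form as $\psi(w)_j=w_j\,h_j(\mu_X(w))$, observe that $h_j\circ\mu_X$ is smooth and nonvanishing because $\mu_X(\C^d)\subset P-B$ where the $\ell_i$ with $i>d$ are strictly positive, and then verify $\chi\circ\psi=\mathrm{id}$ algebraically from $\pi_{\C^d}\circ\psi=\mu_X$. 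This bypasses the Jacobian calculation entirely and yields global bijectivity directly. The price is that you invoke the standard toric fact that the affine chart $U_\sigma\cong\C^d$ has moment image exactly $P-B$ (the star of the corresponding vertex), whereas the paper stays purely in the $\C^d$-coordinates; but this fact is elementary and your use of it makes the structure of the argument more transparent.
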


\begin{proof}
	$\phi_X$ is given by $2r^X_i = \log x_i + \sum_{j=d+1}^m (v_j^{(i)} \log \ell_j(x))$, where $v_j=(v_j^{(1)},\ldots,v_j^{(d)})$.  $\phi_{\C^d}$ is given by $2r^{\C^d}_i =\log x_i$.  Hence $e^{2r^X_i}=e^{2r^{\C^d}_i} \prod_{j=d+1}^m \ell_j^{v_j^{(i)}}(e^{2r^{\C^d}_1},\ldots,e^{2r^{\C^d}_d})$.  In terms of the complex coordinates, this gives $$z^X_i=z^{\C^d}_i \left(\prod_{j=d+1}^m \ell_j^{v_j^{(i)}}\left(|z^{\C^d}_1|^2,\ldots,|z^{\C^d}_d|^2\right)\right)^{1/2}.$$  
	It is obviously well-defined over $\pi_{\C^d}^{-1}(P-B)$.  We need to show that it has inverse, which gives the required extension of $\phi_{\C^n} \circ \phi_X^{-1}$.  Since $(\phi_{\C^n} \circ \phi_X^{-1})^*(\omega_{\C^n})=\omega_X$ on $(\C^\times)^d$, this still holds over $\C^d$ as the equality is a closed condition.
	
	Consider the Jacobian of $\vec{z}^X(\vec{z}^{\C^d},\overline{\vec{z}^{\C^d}})$.  We shall show it is positive definite, and hence invertible.  To simplify, we write $z = z^{\C^d}$.  Denote $G=\frac{1}{2}\left(\sum_{i=1}^m \ell_i(x) \log \ell_i(x) - v\cdot x\right)$.  For any non-zero vector $(a_1,\ldots,a_d)$,
	\begin{align*}
	&\sum_{i,j} \overline{a_j} a_i \partial_{z_i} z_j^X \\
	=& \sum_{i,j} \overline{a_j} a_i \partial_{z_i} \left(\exp (\partial_{x_j}|_{x_p = z_p\bar{z_p}} G) \cdot \frac{z_j}{|z_j|} \right) \\
	=& \sum_{i,j} \overline{a_j} a_i \exp (\partial_{x_j}|_{x_p = z_p\bar{z_p}} G) \cdot \partial_{z_i} \left(\frac{z_j}{|z_j|} \right) + \sum_{i,j} \overline{a_j} a_i \frac{z_j}{|z_j|} \cdot \exp (\partial_{x_j}|_{x_p = z_p\bar{z_p}} G) \cdot \frac{\partial (z_i\bar{z_i})}{\partial z_i} \cdot \frac{\partial^2 G}{\partial x_i \partial x_j} \\
	=& \sum_{i} |a_i|^2 |z_i^X| \cdot \left(\frac{1}{2|z_i|}\right) + \sum_{i,j} |z_j|^{-1} \cdot |z_j^X| \cdot \overline{a_j}z_j \cdot a_i  \bar{z_i} \cdot \frac{\partial^2 G}{\partial x_i \partial x_j}.
	\end{align*}
	Note that $|z_j|^{-1}|z_j^X| = \left(\prod_{k=d+1}^m \ell_k^{v_k^{(j)}}\left(|z^{\C^d}_1|^2,\ldots,|z^{\C^d}_d|^2\right)\right)^{1/2}$ which is positive.  Let $c = \min \{|z_j|^{-1}|z_j^X|: j=1,\ldots,d\}$.  Then the second term is no less than $c a_i  \bar{z_i} \cdot \frac{\partial^2 G}{\partial x_i \partial x_j}$.  Since $\frac{\partial^2 G}{\partial x_i \partial x_j}$ is positive definite on $P^\circ$, it is semi-positive definite on $P-B$.  Thus this term is non-negative.  The first term is positive.  Thus $\sum_{i,j} \overline{a_j} a_i \partial_{z_i} z_j^X>0$.  Similarly $\sum_{i,j} a_j \overline{a_i} \overline{\partial_{z_i}} z_j^X$.  Hence the Jacobian is positive-definite and hence invertible.
\end{proof}

\begin{example}
We continue to consider $\bP^d$.  From Example \ref{ex:Pn}, $x_i = \frac{|z^{\bP^d}_i|^2}{1+\sum_{j=1}^d |z^{\bP^d}_j|^2}$.  $\pi^{-1}_{\C^d}(P-B) = \{\|\vec{z}^{\C^d}\| < 1\}$.  From Example \ref{ex:C}, $x_i = |z^{\C^d}_i|^2$.  Hence the symplectomorphism $(\C^d,\omega_{\bP^d}|_{\C^d}) \stackrel{\cong}{\to} (\{\|\vec{z}^{\C^d}\| < 1\},\omega_{\C^n}|_{\pi_{\C^d}^{-1}(P-B)})$ is
\begin{equation} \label{eq:multi-sigmoid}
z^{\C^d}_i = \frac{z^{\bP^d}_i}{\sqrt{1+ \|\vec{z}^{\bP^d}\|^2}}.
\end{equation}
When $d=1$, this gives
$\frac{z}{\sqrt{1+|z|^2}}$
which is another activation function used in machine learning.  ($z$ is restricted in $\R$ in most algorithms.)  The symplectomorphism can be easily checked in this case: ($z=z^{\bP^1}$ for simplicity)
\begin{align*}
d z^{\C} \wedge \overline{d z^{\C}} =&  d \frac{z}{\sqrt{1+ |z|^2}} \wedge d \frac{\overline{z}}{\sqrt{1+|z|^2}} \\
=& \frac{(1+z\bar{z})dz - (|z|^2 dz + z^2 \overline{dz})/2}{(1+z\bar{z})^{3/2}} \wedge \frac{(1+z\bar{z})\overline{dz} - (|z|^2 \overline{dz} + \bar{z}^2 dz)/2}{(1+z\bar{z})^{3/2}} = \frac{dz\wedge \overline{dz}}{(1+z\bar{z})^2}
\end{align*}
giving the Fubini-Study metric.  See Figure \ref{fig:disk}.

By taking the direct product, the symplectomorphism for the case $X=(\bP^1)^d$ is $\left(\frac{z_j}{\sqrt{1+|z_j|^2}}\right)_{i=1}^d$.
\end{example}

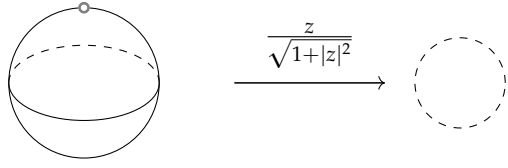
\begin{figure}
\begin{tikzpicture}
	\draw (-1, 0) arc (180:360:1cm and 0.5cm);
	\draw[dashed] (-1, 0) arc (180:0:1cm and 0.5cm);
	\draw (0, 0) circle (1cm);
	\filldraw [white] (0, 1) circle (2pt);
	\filldraw [gray] (0, 1) circle (2pt);
	\filldraw [white] (0, 1) circle (1pt);
    \draw(2, 0) -- (4, 0)
        (3, 0.5) node{$\frac{z}{\sqrt{1+|z|^2}}$};
        \draw[->](2,0) -- (4,0);
    \draw[dashed] (5, 0) circle (0.6cm);
    
\end{tikzpicture}
\caption{$\C$ as a Chart in $\mathbb{P}^1$ Mapped to an Open Disk}
\label{fig:disk}
\end{figure}

Due to the nice fact that the Fubini-Study metric on $\bP^d$ is $U(d)$-invariant (and so does the standard metric on $\C^d$), we have the following (which is not true for $(\bP^1)^d$ nor general toric manifolds).

\begin{lemma} \label{lem:psi-equiv}
For $X=\bP^d$, the symplectomorphism $\psi: (\C^d,\omega_{\bP^d}|_{\C^d}) \stackrel{\cong}{\to} \left(\pi_{\C^d}^{-1}(P-B),\omega_{\C^n}|_{\pi_{\C^d}^{-1}(P-B)}\right)$ in Proposition \ref{prop:sympl} is $U(d)$-equivariant.
\end{lemma}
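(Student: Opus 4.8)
The plan is to reduce everything to the closed-form expression for $\psi$ derived just above and to the elementary fact that $U(d)$ preserves the Euclidean norm.

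First I would make the two $U(d)$-actions explicit. On the target side, $\pi_{\C^d}^{-1}(P-B)$ is the open unit ball $\{\vec{w} \in \C^d : \|\vec{w}\| < 1\}$, on which $U(d)$ acts by the standard linear action $\vec{w} \mapsto g\vec{w}$; being unitary, this preserves the ball and the Euclidean form $\omega_{\C^d}$. On the source side, $\C^d$ is the affine chart $\{[1:z_1:\cdots:z_d]\} \subset \bP^d$, and $U(d)$ acts by $\vec{z} \mapsto g\vec{z}$ as the subgroup of $PU(d+1)$ fixing the point $[1:0:\cdots:0]$; since the Fubini--Study form is $PU(d+1)$-invariant, this action preserves $\omega_{\bP^d}|_{\C^d}$. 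So both ends carry genuine symplectic $U(d)$-actions, and the assertion is precisely that $\psi(g\cdot\vec{z}) = g\cdot\psi(\vec{z})$ for every $g\in U(d)$.

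Next I would verify this identity using Equation \eqref{eq:multi-sigmoid}, which identifies $\psi$ with the map $\vec{z} \mapsto \vec{z}/\sqrt{1+\|\vec{z}\|^2}$. Since $\|g\vec{z}\| = \|\vec{z}\|$ for $g\in U(d)$,
\[
\psi(g\vec{z}) = \frac{g\vec{z}}{\sqrt{1+\|g\vec{z}\|^2}} = \frac{g\vec{z}}{\sqrt{1+\|\vec{z}\|^2}} = g\left(\frac{\vec{z}}{\sqrt{1+\|\vec{z}\|^2}}\right) = g\cdot\psi(\vec{z}),
\]
which is exactly $U(d)$-equivariance. One could instead note that $\psi$ is assembled from the $U(d)$-invariant Kähler structures on $\C^d$ and $\bP^d$ (the Fubini--Study metric being $U(d)$-invariant, unlike the product metric on $(\bP^1)^d$ or a generic toric metric, which is why the lemma is special to $X=\bP^d$), but the one-line computation above is the most economical route.

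I do not expect any genuine obstacle here: the only point deserving a moment's care is confirming that the $U(d)$-actions on source and target are the naive linear ones and that they are symplectic --- immediate from unitarity and from $PU(d+1)$-invariance of $\omega_{\bP^d}$ --- after which the displayed computation finishes the proof.
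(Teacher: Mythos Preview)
Your proposal is correct and follows exactly the paper's approach: the paper's proof is the single displayed computation using the explicit formula \eqref{eq:multi-sigmoid} and the fact that $\|g\vec{z}\|=\|\vec{z}\|$ for $g\in U(d)$. Your additional remarks making the two $U(d)$-actions explicit and checking that they are symplectic are reasonable context, but the paper omits them and just writes down the one-line chain of equalities.
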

\begin{proof}
Using the explicit expression \eqref{eq:multi-sigmoid}, 
$$\psi(U \cdot \vec{z}^{\bP^d})=\frac{U \cdot \vec{z}^{\bP^d}}{\sqrt{1+ \|U\cdot \vec{z}^{\bP^d}\|^2}} = \frac{U \cdot \vec{z}^{\bP^d}}{\sqrt{1+ \|\vec{z}^{\bP^d}\|^2}} = U \cdot \psi(\vec{z}^{\bP^d})$$
for all $U \in U(d)$.
\end{proof}

As explained in Remark \ref{rmk:toric}, we can also equip $X$ with another $T$-invariant K\"ahler form (that do not come from the standard K\"ahler structure on $\C^m$).  The symplectomorphism is given by
$$ \left(\frac{1}{2} d\left(\sum_{i=1}^m \ell_i(x) \log \ell_i(x)+ h(x)\right), \mathrm{Id}\right): (P^\circ \times T^d, \omega_{\mathrm{std}}|_{P^\circ \times T^d}) \to \R^d \times T^d \stackrel{\exp}{\cong} ((\C^\times)^d,\omega^h|_{(\C^\times)^d}).$$ 
Thus the toric construction is rather flexible.

\begin{example}
	The `softplus' function $x=\log (1+e^{2y}): \R \to \R_{>0}$ gives an example of such a K\"ahler structure on $\C$ (by identifying it with $\R_{>0} \times \R$ with the standard symplectic structure).  The inverse is $y =\frac{1}{2}\log (e^x - 1)$, whose difference with $\frac{1}{2}\log x$ is $h' = \frac{1}{2}\log \frac{e^x - 1}{x} = \frac{1}{2}\log \left(1+\sum_{k=1}^\infty \frac{x^k}{(k+1)!}\right)$ which is indeed a smooth function on $\R_{\geq 0}$.  Moreover, $y' = \frac{e^x}{2(e^x-1)} > 0$ on $\R_{>0}$.
\end{example}

\subsection{Symplectomorphisms of fiber bundles over the moduli}
In the last subsection, we have exhibited various symplectic embeddings for $\C^n$.  Now we want to make a family version of these maps over the framed quiver moduli $\cM_{\vec{n},\vec{d}}$.  The last subsection can be understood as constructing self-maps on a fiber of a vector bundle over $\cM_{\vec{n},\vec{d}}$.

To globalize \eqref{eq:multi-sigmoid}, we consider the universal bundle $\cV_i$ equipped with a Hermitian metric $H_i$.  (We have constructed a nice Hermitian metric on $\cV_i$ in Section \ref{sec:Herm}.)  We have a fiberwise symplectic structure $\omega_{\cV_i}$ induced from the Hermitian metric.  Moreover, we have the projective bundle $\mathbb{P}(\cV_i\oplus\mathcal{O}_\cM)$ which is a fiberwise compactification of $\cV_i$.  Then the fiber bundle $\mathbb{P}(\cV_i\oplus\mathcal{O}_\cM)$ is equipped with a fiberwise K\"ahler metric $\omega_{\mathbb{P}(\cV_i\oplus\mathcal{O}_\cM)}$ induced from $H_i$ (namely, $\frac{i}{2}\partial\bar{\partial}\log (H_i \oplus H_0)$ where $H_0$ is the trivial metric on $\cO_M$).

\begin{prop}
 There is a fiberwise symplectomorphism $$\psi_{\cV_i}: (\cV_i,\omega_{\mathbb{P}(\cV_i\oplus\mathcal{O}_\cM)}|_{\cV_i}) \stackrel{\cong}{\to} \left(\{v \in \cV_i: H_i(v,v) < 1\},\omega_{\cV_i}|_{\{H_i(v,v) < 1\}}\right).$$
\end{prop}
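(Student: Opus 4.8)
The plan is to reduce the statement to a single fiber, where it becomes exactly the symplectomorphism $\vec z\mapsto \vec z/\sqrt{1+\|\vec z\|^2}$ attached to $\bP^{d_i}$ in Section \ref{sec:toric}, and then to globalize using the $U(d_i)$-equivariance of that model map. Concretely, I would first write down the intrinsic candidate
$$ \psi_{\cV_i}(v) = \frac{v}{\sqrt{1 + H_i(v,v)}}, $$
which makes sense fiberwise since $H_i$ is positive definite, is fiber-preserving, is smooth in $v$ and in the base point (because $H_i$ is a smooth metric, established in Theorem \ref{thm:metric-with-cycles}), and lands in $\{v : H_i(v,v)<1\}$ since $H_i(\psi_{\cV_i}(v),\psi_{\cV_i}(v)) = H_i(v,v)/(1+H_i(v,v))$. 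Its inverse $w\mapsto w/\sqrt{1-H_i(w,w)}$ is likewise smooth on the open sub-bundle $\{H_i(w,w)<1\}$, so $\psi_{\cV_i}$ is a fiber-preserving diffeomorphism onto that sub-bundle.

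Next I would verify the symplectic property one fiber at a time. Fixing $x\in\cM$ and choosing a unitary frame for $(\cV_i|_x,H_i|_x)$ produces an isometry $\cV_i|_x \cong (\C^{d_i},h_{\mathrm{std}})$; under it the fiberwise form $\omega_{\cV_i}$ restricts to the flat form $\omega_{\C^{d_i}}$, and the form $\omega_{\mathbb{P}(\cV_i\oplus\mathcal{O}_\cM)} = \tfrac{i}{2}\partial\bar\partial\log(H_i\oplus H_0)$ restricts, on the affine chart $\cV_i|_x \subset \mathbb{P}(\cV_i|_x\oplus\C)$, to the Fubini--Study form $\omega_{\bP^{d_i}}|_{\C^{d_i}}$. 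In this frame $\psi_{\cV_i}$ is literally $\vec z\mapsto \vec z/\sqrt{1+\|\vec z\|^2}$, which the computation in Section \ref{sec:toric} (Equation \eqref{eq:multi-sigmoid}) identifies as a symplectomorphism $(\C^{d_i},\omega_{\bP^{d_i}}|_{\C^{d_i}})\isoto(\{\|\vec w\|<1\},\omega_{\C^{d_i}})$. Hence $\psi_{\cV_i}$ pulls back the target form to the source form on each fiber.

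Finally I would check frame-independence: two unitary frames at $x$ differ by an element of $U(d_i)$, and by Lemma \ref{lem:psi-equiv} the model map $\vec z\mapsto\vec z/\sqrt{1+\|\vec z\|^2}$ is $U(d_i)$-equivariant, so the fiberwise symplectomorphism constructed in the previous step does not depend on the chosen frame and coincides with the globally defined $\psi_{\cV_i}$ of the first step. Combining the three steps gives the claim. I expect the main obstacle to be the bookkeeping in the middle step, namely verifying that a unitary frame simultaneously identifies $\omega_{\mathbb{P}(\cV_i\oplus\mathcal{O}_\cM)}|_{\cV_i}$ and $\omega_{\cV_i}$ with the standard Fubini--Study and flat forms, so that the $\bP^{d_i}$ model computation applies verbatim; everything else (well-definedness, smoothness, the explicit inverse, and the $U(d_i)$-equivariance) follows directly from Section \ref{sec:toric} and the positive-definiteness of $H_i$.
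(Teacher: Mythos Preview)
Your proposal is correct and follows essentially the same approach as the paper: define $\psi_{\cV_i}(v)=v/\sqrt{1+H_i(v,v)}$ and observe that on each fiber, after identifying $(\cV_i|_p,H_i|_p)$ with $(\C^{d_i},h_{\mathrm{std}})$, this is precisely the symplectomorphism \eqref{eq:multi-sigmoid} from Section~\ref{sec:toric}. The paper's proof is a terse two-sentence version of exactly this; your additional care with the explicit inverse, the unitary-frame identification of the two fiberwise forms, and the $U(d_i)$-equivariance check for frame-independence simply fills in details the paper leaves implicit.
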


\begin{proof}
	For each $p \in \cM$, we have computed the symplectomorphism $$(\cV_i|_p,\omega_{\mathbb{P}(\cV_i\oplus\mathcal{O}_\cM)}|_{\cV_i|_p}) \stackrel{\cong}{\to} \left(\{v \in \cV_i|_p: H_i(v,v) < 1\},\omega_{\cV_i}|_{\{H_i(v,v) < 1\}}\right)$$
	in \eqref{eq:multi-sigmoid}, with the metric given by $H_i|_p$ here.  Thus
	\begin{equation}
	\psi_{\cV_i}(v) = \frac{v}{\sqrt{1+H_i(v, v)}}
	\label{eq:psi}
	\end{equation}
	gives a fiberwise symplectomorphism whose image is ${\{H_i(v,v) < 1\}}$.
\end{proof}

Recall that the universal bundle $\cV_i \to \cM$ admits an action of $U_{\vec{n}}$ coming from framing (Definition \ref{def:symmetry}).  One advantage of $\psi_{\cV_i}$ is that it is equivariant under this action.

\begin{lemma} \label{lem:action}
	For $g \in U_{\vec{n}}$,
	$$\psi_{\cV_i}\circ g = g \circ \psi_{\cV_i}$$
	if we used the metric $H_i$ given in Theorem \ref{thm:metric}.
\end{lemma}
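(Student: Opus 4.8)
The plan is to prove the identity by a direct computation from the explicit formula \eqref{eq:psi}, $\psi_{\cV_i}(v)=v/\sqrt{1+H_i(v,v)}$, invoking only two facts that are already available: that the canonical lift of the $U_{\vec{n}}$-action to $\cV_i$ is $\C$-linear along each fibre, and that the metric $H_i$ of Theorem \ref{thm:metric} is $U_{\vec{n}}$-invariant (Lemma \ref{lem:metric-equiv}).

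First I would set up notation carefully, since an element $g=(g_j)\in U_{\vec{n}}$ moves base points. A point of $\cV_i$ lying over $p=[(V_a,e^{(j)})]\in\cM$ (the bracket denoting the $\GL_{\vec{d}}$-orbit in $R_{\vec{n},\vec{d}}^s\times\C^{d_i}$) is a class $[(V_a,e^{(j)}),v]$ with $v\in\C^{d_i}$; by Definition \ref{def:symmetry} together with the lifting lemma immediately following it, $g$ sends this class to $[(V_a,e^{(j)}\circ g_j),v]$, which lies over $p\cdot g$. In particular the lift restricts on each fibre to a $\C$-linear isomorphism $\cV_i|_p\to\cV_i|_{p\cdot g}$ (it is the identity on the $\C^{d_i}$-factor). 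In this fibrewise language, Lemma \ref{lem:metric-equiv} says precisely that $H_i|_{p\cdot g}(g\cdot v,\,g\cdot w)=H_i|_p(v,w)$ for all $v,w\in\cV_i|_p$.

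With this in hand, for $v\in\cV_i|_p$ the computation is immediate:
\begin{align*}
\psi_{\cV_i}(g\cdot v)
&= \frac{g\cdot v}{\sqrt{1+H_i|_{p\cdot g}(g\cdot v,\,g\cdot v)}}
= \frac{g\cdot v}{\sqrt{1+H_i|_{p}(v,v)}}
= g\cdot\!\left(\frac{v}{\sqrt{1+H_i|_p(v,v)}}\right)
= g\cdot\psi_{\cV_i}(v),
\end{align*}
where the second equality is the $U_{\vec{n}}$-invariance of $H_i$ (Lemma \ref{lem:metric-equiv}) and the third is the $\C$-linearity of $g$ on the fibre $\cV_i|_p$. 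This is exactly the asserted identity $\psi_{\cV_i}\circ g=g\circ\psi_{\cV_i}$.

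I do not expect a substantive obstacle: all the content of the lemma is carried by Lemma \ref{lem:metric-equiv}, and the role of the hypothesis ``if we used the metric $H_i$ given in Theorem \ref{thm:metric}'' is precisely to guarantee that the scalar $v\mapsto H_i(v,v)$ appearing in \eqref{eq:psi} is $U_{\vec{n}}$-invariant — a generic Hermitian metric on $\cV_i$ would not be, and then $\psi_{\cV_i}$ would not commute with $g$. The only point requiring care is the bookkeeping of fibres, since $g$ does not fix $p$; once one is consistent about $\cV_i|_p$ versus $\cV_i|_{p\cdot g}$, the argument is a one-line calculation.
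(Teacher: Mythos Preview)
Your proof is correct and follows essentially the same approach as the paper's: both invoke the explicit formula \eqref{eq:psi} together with the $U_{\vec{n}}$-invariance of $H_i$ from Lemma \ref{lem:metric-equiv}. Your version is simply more explicit about the base-point bookkeeping, which the paper leaves implicit.
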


\begin{proof}
	This follows from \eqref{eq:psi} and $H_i(g\cdot v,g \cdot v)=H_i(v,v)$ by Lemma \ref{lem:metric-equiv}.
\end{proof}

\begin{remark}
	Equation \eqref{eq:psi} has an alternative derivation using the framing.
	Namely, we have the surjective morphism $\rho: \underline{\hat{W}_i} \to \cV_i$ (see Equation \eqref{eq:W}), whose dual give a fiberwise-linear embedding $\rho^*: \cV_i \stackrel{H_i}{\cong} \cV_i^* \to \underline{\hat{W}_i}^*$.  (The underline means the trivial bundle over $\cM$ associated with the vector space.)  Then $\bP(\underline{\hat{W}_i}^*\oplus \cO)$ (with the standard metric) induces a fiberwise K\"ahler form on $\cV_i^*$, and we have a fiberwise symplectic embedding $(\cV_i,\omega_{\bP(\underline{\hat{W}_i}^*\oplus \cO)}) \hookrightarrow (\cV_i,\omega_{\cV_i})$.  This gives
	$$ \frac{\rho\rho^*\cdot H_i\cdot v}{\sqrt{1+H_0(\rho^*\cdot H_i\cdot v, \rho^*\cdot H_i\cdot v)}} = \frac{\rho\rho^*\cdot H_i\cdot v}{\sqrt{1+v^* \cdot H_i^* \cdot \rho \rho^*\cdot H_i\cdot v}}.$$
	Now if we use the metric $H_i = (\rho \rho^*)^{-1}$ given by Theorem \ref{thm:metric}, then the above equals to the expression in \eqref{eq:psi}.
\end{remark}

We also have a more flexible construction using the framing, which globalize \emph{any given non-linear continuous map} $\sigma_\C: \C^{n_i} \to \C^{n_i}$.  Namely, $\sigma_\C$ can be regarded as a fiberwise non-linear self-map on the trivial bundle $\underline{\C^{n_i}} \to \underline{\C^{n_i}}$ (still denoted by $\sigma_\C$).  Then we take the composition
$$ \cV_i \stackrel{H_i}{\cong} \cV_i^* \stackrel{(e^{(i)})^*}{\to} \underline{\C^{n_i}} \stackrel{\sigma_\C}{\to} \underline{\C^{n_i}} \stackrel{e^{(i)}}{\to} \cV_i $$
and denote it by $\sigma_{\cV_i}$.  See Figure \ref{fig:framing}.

\begin{figure}[h]
	\begin{tikzcd}
	\underline{\C^n} \arrow[d, "e^{(i)}"] \arrow[loop left, "\sigma_\C"] \arrow[r, "\cong"]& \underline{\C^n}^*\\
	\cV_i \arrow[r, "\cong"]& \cV_i^* \arrow[u, "(e^{(i)})^*"]
	\end{tikzcd}
	\caption{Using the framing to globalize an activation function $\sigma_\C:\C^n \to \C^n$ over $\cM$.}
	\label{fig:framing}
\end{figure}

It is easy to get the following explicit expression in terms of $\sigma_\C$.

\begin{lemma}
	The above fiber-bundle map $\sigma_{\cV_i}: \cV_i \to \cV_i$ equals to
	\begin{equation}
	\sigma_{\cV_i}(v) = \sum_{k=1}^{n_i}  (\sigma_\C)_k\left(H_i(e^{(i)}_1,v),\ldots, H_i(e^{(i)}_{n_i},v) \right) \cdot e^{(i)}_k
	\label{eq:sigma}
	\end{equation}
	where we write $\sigma_\C = ((\sigma_\C)_1,\ldots,(\sigma_\C)_{n_i})$ and $e^{(i)} = (e^{(i)}_1 \ldots e^{(i)}_{n_i})$.
\end{lemma}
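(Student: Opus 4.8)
The plan is to prove the formula by simply unwinding the four-fold composition that defines $\sigma_{\cV_i}$, tracking a vector $v$ in a fiber $\cV_i|_p$ over an arbitrary point $p \in \cM$ through each arrow in turn. First I would record the effect of the musical isomorphism $\cV_i \xrightarrow{\ H_i\ } \cV_i^*$: with the convention $H_i(v,w) = v^*H_i w$ used throughout the paper, this conjugate-linear isomorphism sends $v$ to the functional $\phi_v \in \cV_i^*$ given by $\phi_v(w) = H_i(v,w)$ (equivalently, in matrix notation, $v \mapsto v^*H_i$).

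Next I would apply $(e^{(i)})^*\colon \cV_i^* \to (\underline{\C^{n_i}})^*$, which is precomposition with the framing bundle map $e^{(i)}$, so that $(e^{(i)})^*(\phi_v)$ sends the $k$-th standard basis vector $\epsilon_k$ of $\C^{n_i}$ to $\phi_v(e^{(i)}_k)$. Under the identification $(\underline{\C^{n_i}})^* \cong \underline{\C^{n_i}}$ via the standard basis (taken as a Hermitian adjoint, compatibly with the convention used for the musical isomorphism), this is the vector $\big(H_i(e^{(i)}_1,v),\ldots,H_i(e^{(i)}_{n_i},v)\big) \in \C^{n_i}$; that is, the composite $\cV_i \to \underline{\C^{n_i}}$ works out to $v \mapsto (e^{(i)})^* H_i v$ in matrix form. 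Applying $\sigma_\C$ replaces this vector by $\sigma_\C\big(H_i(e^{(i)}_1,v),\ldots,H_i(e^{(i)}_{n_i},v)\big)$, whose $k$-th component is $(\sigma_\C)_k\big(H_i(e^{(i)}_1,v),\ldots,H_i(e^{(i)}_{n_i},v)\big)$, and applying $e^{(i)}\colon \underline{\C^{n_i}} \to \cV_i$, which sends $\sum_k c_k \epsilon_k$ to $\sum_k c_k e^{(i)}_k$, yields exactly the right-hand side of \eqref{eq:sigma}. Finally I would note that each of the four maps is a genuine bundle map over $\cM$ — $H_i$ and $e^{(i)}$ are established as such in Section \ref{sec:Herm} and the preceding discussion, and $\sigma_\C$ is by hypothesis a fiberwise self-map of the trivial bundle — so the composite is a well-defined fiber-bundle endomorphism and the pointwise identity automatically globalizes over $\cM$.

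There is essentially no obstacle here beyond careful bookkeeping. The one point that requires attention is the complex-conjugation convention in the two identifications $\cV_i \cong \cV_i^*$ and $(\underline{\C^{n_i}})^* \cong \underline{\C^{n_i}}$: since $\sigma_\C$ is an arbitrary map, the two (anti)linear identifications must be chosen compatibly — both as Hermitian adjoints, matching the paper's matrix notation $H_i(v,w) = v^*H_i w$ — so that the arguments fed to $(\sigma_\C)_k$ come out as $H_i(e^{(i)}_k,v)$ rather than their complex conjugates. With this convention fixed, the chain of identifications above closes the argument.
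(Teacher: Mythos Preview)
Your proposal is correct and is exactly the straightforward unwinding of the composition that the paper has in mind; indeed the paper gives no proof at all for this lemma, introducing it only with the remark ``It is easy to get the following explicit expression in terms of $\sigma_\C$.'' Your careful tracking of the conjugation conventions (so that the arguments come out as $H_i(e^{(i)}_k,v)$ rather than their conjugates) is the only genuine content, and you have handled it correctly.
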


For instance, we can take $\sigma$ to be the one coming from the symplectomorphism in Corollary \ref{cor:sympl}.  We shall prove the universal approximation theorem for such $\sigma$ in Section \ref{sec:approx}.

\subsection{A machine learning program using the framed quiver moduli}

Let $Q$ be a digraph and denote by $\C\cdot Q$ its path algebra over $\C$.  
Let $\vec{d} \in \Z_{\geq 0}^{Q_0}$ be a dimension vector.  We take the framing dimension vector to be $\vec{n} = \vec{d} + \vec{1}$, where $\vec{1}_i := 1$ for all $i\in Q_0$.  The one additional framing vector is used for translation (called a `bias' vector).

We fix a collection of input vertices and a collection of output vertices $I_{\mathrm{in}}, I_{\mathrm{out}} \subset Q_0$, and 
$$\gamma \in I_{\mathrm{out}} \cdot (\C\cdot Q) \cdot I_{\mathrm{in}} = \bigoplus_{i \in I_{\mathrm{in}}, j \in I_{\mathrm{out}}} j \cdot (\C\cdot Q) \cdot i.$$
(The trivial path at a vertex $i$ is again denoted by $i$.)

\subsubsection{Machine learning using a flat space.} \label{sec:flat}
Let's first formulate a typical machine learning program in the quiver setup.  The following flat space 
$$U \cong \prod_{a\in Q_1} \Hom(V_{t(a)},V_{h(a)}) \times \prod_{i \in Q_0} V_i$$ 
is used frequently in the subject.  

\begin{lemma}
	The open subset 
	$$U=\{[V,e] \in \cM_{\vec{d}+\vec{1},\vec{d}}: (e^{(i)}_1,\ldots,e^{(i)}_{d_i}) = I_{d_i} \textrm{ for all } i\in Q_0\} \subset \cM$$ 
	gives a coordinate chart of $\cM$. ($I_{d_i}$ denotes the identity matrix of rank $d_i$.)
\end{lemma}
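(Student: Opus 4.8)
The plan is to show that the map sending $[V,e]$ to its quiver-representation data $(V,e^{(i)}_{\mathrm{rest}})$ — where $e^{(i)}_{\mathrm{rest}}$ denotes the last column $e^{(i)}_{d_i+1}$ of the framing at each vertex, thought of in the trivialization where the first $d_i$ columns are the identity — gives a homeomorphism (indeed an algebraic isomorphism) of $U$ onto $\prod_{a\in Q_1}\Hom(V_{t(a)},V_{h(a)}) \times \prod_{i\in Q_0} V_i$, which is manifestly an open subset of affine space. The key point is that inside each $\GL_{\vec d}$-orbit of a stable point there is a \emph{unique} representative with $(e^{(i)}_1,\ldots,e^{(i)}_{d_i}) = I_{d_i}$ for all $i$.

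First I would argue that every stable point admits such a representative. Since $\vec n = \vec d + \vec 1$, the framing at vertex $i$ is a map $e^{(i)}\colon \C^{d_i+1}\to \C^{d_i}$; write $e^{(i)} = (b^{(i)}\ |\ p^{(i)})$ with $b^{(i)}\in\Mat_{d_i,d_i}$ and $p^{(i)}\in\C^{d_i}$. Stability forces $\Image e^{(i)}$ together with the arrow maps to generate $V_i$; in particular, if $b^{(i)}$ were singular its image plus the single extra vector $p^{(i)}$ spans a proper subspace unless the arrows push in more directions — but by considering the minimal vertex $i_0$ (no incoming arrows), stability of the whole representation forces $e^{(i_0)}$ to be surjective, hence $b^{(i_0)}$ or $(b^{(i_0)}|p^{(i_0)})$ has rank $d_{i_0}$; a small linear-algebra argument (or: $\Gr(d_{i_0}+1,d_{i_0}) = \bP^{d_{i_0}}$ and the chart $\det b^{(i_0)}\neq 0$ covers it minus a hyperplane, but we need the full chart $U$) — more carefully, I would instead just \emph{define} $U$ via the condition $\det b^{(i)}\neq 0$ for all $i$, act by $g_i := (b^{(i)})^{-1}$ to normalize $b^{(i)} = I_{d_i}$, and observe that this $\GL_{\vec d}$-transformation leaves the point in the same class; the residual stabilizer of the normalized form is trivial since $g_i\cdot I_{d_i} = I_{d_i}$ forces $g_i = I_{d_i}$. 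This gives both existence and uniqueness of the normalized representative on the locus where all $b^{(i)}$ are invertible, so $U$ is identified set-theoretically with $\{(V,p)\}$ where $p = (p^{(i)})_{i\in Q_0}$ and $V$ ranges over $R_{\vec d}(Q) = \prod_{a}\Hom(V_{t(a)},V_{h(a)})$.

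Next I would check this identification is a chart in the scheme-theoretic (or smooth-manifold) sense: the locus $\{\det b^{(i)}\neq 0\ \forall i\}$ in $R^s_{\vec n,\vec d}$ is $\GL_{\vec d}$-invariant and open, the slice $\{b^{(i)} = I_{d_i}\ \forall i\}$ meets each orbit in it transversally in exactly one point (the stabilizer computation above shows the action is free and the slice has complementary dimension $\dim R^s - \dim\GL_{\vec d} = \dim\cM$), so by the slice theorem / the universal property of the GIT quotient the composite $\{b^{(i)} = I\} \hookrightarrow R^s \to \cM$ is an open immersion onto $U$, and its image is the affine space $\prod_a \Hom(V_{t(a)},V_{h(a)}) \times \prod_i \C^{d_i}$.

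The main obstacle I anticipate is not the uniqueness-of-representative bookkeeping (that is routine) but verifying that $U$ as defined is genuinely \emph{open and nonempty} in $\cM$ and that every point of it is stable — i.e. reconciling the definition ``$(e^{(i)}_1,\ldots,e^{(i)}_{d_i}) = I_{d_i}$'' (a condition on a representative) with ``$\det b^{(i)}\neq 0$'' (a $\GL_{\vec d}$-invariant condition on the class), and confirming that on this locus the stability hypothesis of the moduli problem is automatically satisfied so that $U$ really sits inside $\cM_{\vec d+\vec 1,\vec d}$ rather than some partial compactification. For this I would note that if $b^{(i)}$ is invertible for all $i$ then $\Image e^{(i)} = V_i$ already at every vertex, so $\Image e^{(i)}$ alone (ignoring arrows) is not contained in any proper subrepresentation — indeed there is no proper subspace of $V_i$ containing all of $V_i$ — hence $(V,e)$ is stable for every $V\in R_{\vec d}(Q)$, which also re-proves nonemptiness and shows $U$ is cut out in $\cM$ by the open conditions $\det b^{(i)}\neq 0$, completing the argument that $U$ is a coordinate chart isomorphic to the stated affine space.
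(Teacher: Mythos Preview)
Your proposal is correct and follows essentially the same approach as the paper's proof: verify that representatives with $(e^{(i)}_1,\ldots,e^{(i)}_{d_i})=I_{d_i}$ are automatically stable (since $\Image e^{(i)}=V_i$), that each orbit contains at most one such representative (the stabilizer computation), and that the resulting slice is the affine space $\prod_a\Hom(V_{t(a)},V_{h(a)})\times\prod_i V_i$. Your added discussion reconciling the representative-level condition with the $\GL_{\vec d}$-invariant condition $\det b^{(i)}\neq 0$, and your invocation of the slice picture, make explicit what the paper leaves implicit; the brief detour wondering whether \emph{every} stable point admits such a representative is unnecessary (and you rightly abandon it), since $U$ is only claimed to be a chart, not all of $\cM$.
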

\begin{proof}
	First, such $(V,e)$ are stable: $\mathrm{Im}(e)$ is the whole $V$.  Second, for distinct $(V,e),(V',e')$ satisfying the above condition, $[V,e]\not=[V',e']$: since they are stable, their orbits are closed.  Suppose $g\cdot (V,e)=(V',e')$ for some $g \in \GL_{\vec{d}}$.  Then $g_i \cdot e^{(i)} = (e')^{(i)}$.  But since $(e^{(i)}_1,\ldots,e^{(i)}_{d_i}) = I_{d_i} = ((e')^{(i)}_1,\ldots,(e')^{(i)}_{d_i})$, this forces $g = \mathrm{Id}$ and so $(V,e)=(V',e')$, contradicting that they are distinct.
	
	Then we have the chart map $U \stackrel{\cong}{\to} \prod_{a\in Q_1} \Hom(V_{t(a)},V_{h(a)}) \times \prod_{i \in Q_0} V_i$ defined by 
	$$ W_a:=(e^{(i)}_1,\ldots,e^{(i)}_{d_i})^{-1}V(a),\,\, b_i:=(e^{(i)}_1,\ldots,e^{(i)}_{d_i})^{-1}e^{(i)}_{d_i+1}. $$
\end{proof}

Now we fix a path $\gamma$ from the input vertices $I_{\mathrm{in}}$ to the output vertices $I_{\mathrm{out}}$.
For each element $[V,e]\in U$, by composing the affine linear maps $V_a(\cdot)+e^{(t(a))}_{d_{t(a)}+1}$ attached to arrows $a$ in the path $\gamma$, together with some non-linear functions $\sigma_i: V_i \to V_i$ that are called `activation functions', one obtains a non-linear function 
$$f^U_{[V,e]}: \bigoplus_{i\in I_{\mathrm{in}}} \C^{d_{i}} \stackrel{(e^{(i)}_1,\ldots,e^{(i)}_{d_i})}{\cong} \bigoplus_{i\in I_{\mathrm{in}}} V_i \to \bigoplus_{j\in I_{\mathrm{out}}}V_j \stackrel{(e^{(j)}_1,\ldots,e^{(j)}_{d_i})}{\cong} \bigoplus_{j\in I_{\mathrm{out}}}\C^{d_{j}}  $$ 
which is used to approximate a non-explicitly given function $f$.  A stochastic gradient flow of the error function on $U$ is employed to find the optimal point in $U$.

Let's write down the symmetry in Lemma \ref{lem:action} in the chart $U$.

\begin{lemma} \label{lem:inv-chart}
	The chart $U$ is invariant under the right action by 
	$$\prod_{i\in Q_0} U(d_i) \subset \prod_{i\in Q_0} U(d_i+1) = U_{\vec{n}}$$
	where $U(d_i) \subset U(d_i+1)$ is embedded as $\left(\begin{array}{cc} U(d_i) & 0 \\ 0 & 1 \end{array}\right)$.
	
	Consider the trivialization $\cV_i|_U \cong \left(\prod_{a\in Q_1} \Hom(V_{t(a)},V_{h(a)}) \times \prod_{j \in Q_0} V_j\right) \times V_i$.  The right action of $g \in U(d_i)$ on $\cV_i|_U$ is given by 
	$$(W_a,b_j,v)_{\substack{a \in Q_1\\j \in Q_0, b_j \in V_j\\v \in V_i}}\cdot g = (g^{-1}\cdot W_a, g^{-1}\cdot b_j, g^{-1}v)_{\substack{a \in Q_1\\j \in Q_0, b_j \in V_j\\v \in V_i}}$$
	where $g^{-1}\cdot W_a$ equals to $g^{-1}W_a$ if $h(a)=i$, $W_ag$ if $t(a)=i$, and $W_a$ otherwise; $g^{-1}\cdot b_j$ equals to $g^{-1}b_j$ if $j=i$, and $b_j$ if $j\not=i$.
\end{lemma}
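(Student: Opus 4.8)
The plan is to prove both assertions by one explicit computation, exploiting the fact that the right $U_{\vec{n}}$-action of Definition \ref{def:symmetry} rotates only the framing maps, and that such a rotation is undone by a left $\GL_{\vec{d}}$-transformation supported at the relevant vertex. Fix $[V,e]\in U$ with its canonical representative, so that $e^{(j)}=(I_{d_j}\mid b_j)$ with $b_j=e^{(j)}_{d_j+1}$ and $V(a)=W_a$. For $g=(g_j)_{j\in Q_0}\in\prod_{j\in Q_0}U(d_j)\subset U_{\vec{n}}$, write $\tilde g_j=\left(\begin{smallmatrix} g_j & 0\\ 0 & 1\end{smallmatrix}\right)\in U(d_j+1)$ for the block-diagonal embedding. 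By Definition \ref{def:symmetry}, $[V,e]\cdot g=\bigl[(V(a))_a,(e^{(j)}\circ\tilde g_j)_j\bigr]$, and $e^{(j)}\circ\tilde g_j=(I_{d_j}\mid b_j)\tilde g_j=(g_j\mid b_j)$. This is no longer in canonical form, but applying $h=(g_j^{-1})_{j\in Q_0}\in\GL_{\vec{d}}$ (legitimate, since each $g_j$ is unitary hence invertible) turns $e^{(j)}\circ\tilde g_j$ into $g_j^{-1}(g_j\mid b_j)=(I_{d_j}\mid g_j^{-1}b_j)$ and, by \eqref{eq:GL(d)}, turns $W_a$ into $h_{h(a)}W_a h_{t(a)}^{-1}=g_{h(a)}^{-1}W_a g_{t(a)}$. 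Hence $[V,e]\cdot g$ again lies in $U$, with chart coordinates $W_a\mapsto g_{h(a)}^{-1}W_a g_{t(a)}$ and $b_j\mapsto g_j^{-1}b_j$; this establishes the first claim.

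For the bundle statement I would restrict to a single factor: take $g\in U(d_i)$, viewed in $U_{\vec{n}}$ as the element which is $\tilde g=\left(\begin{smallmatrix} g & 0\\ 0 & 1\end{smallmatrix}\right)$ at the vertex $i$ and the identity at every other vertex. Recall that $\cV_i=(R^s_{\vec{n},\vec{d}}\times V_i)/\GL_{\vec{d}}$ with $\GL_{\vec{d}}$ acting diagonally on the left — the factor $\GL(d_i)$ acting on $V_i$ in the standard way, the others trivially — and that the canonical lift of the $U_{\vec{n}}$-action leaves the $V_i$-factor untouched, so that $[(V,e),v]\cdot g=[(V,e\circ\tilde g),v]$. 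Applying the same $h\in\GL_{\vec{d}}$ as above, now with $h_i=g^{-1}$ and $h_k=I_{d_k}$ for $k\neq i$, one gets $[(V,e\circ\tilde g),v]=[h\cdot(V,e\circ\tilde g),\,h_i v]=[h\cdot(V,e\circ\tilde g),\,g^{-1}v]$, where $h\cdot(V,e\circ\tilde g)$ is precisely the canonical representative computed in the previous paragraph (specialized to $g_i=g$, $g_k$ trivial). Reading this off in the trivialization $\cV_i|_U\cong\left(\prod_{a\in Q_1}\Hom(V_{t(a)},V_{h(a)})\times\prod_{j\in Q_0}V_j\right)\times V_i$ gives exactly the stated formula: $W_a\mapsto g^{-1}\cdot W_a$ (left multiplication by $g^{-1}$ if $h(a)=i$, right multiplication by $g$ if $t(a)=i$, both if $a$ is a loop at $i$, and $W_a$ otherwise), $b_j\mapsto g^{-1}\cdot b_j$ (equal to $g^{-1}b_i$ at $i$ and unchanged elsewhere), and $v\mapsto g^{-1}v$.

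The computation carries no conceptual difficulty; the only point requiring care is keeping track of the interplay between the two group actions — the right $U_{\vec{n}}$-action rotating the framing and the left $\GL_{\vec{d}}$-action used to renormalize back into the chart. The essential observation is that, after rotating the $i$-th framing matrix by $g$ on the right, the unique $\GL_{\vec{d}}$-element returning the configuration to $U$ has $i$-th component $g^{-1}$ and is the identity elsewhere — forced by the requirement that the first $d_i$ columns of the $i$-th framing matrix equal $I_{d_i}$ — and this single transformation simultaneously dictates how the arrow maps $W_a$, the bias vectors $b_j$, and the fibre vector $v$ change. That such an $h$ is unique is exactly what the preceding lemma (identifying $U$ with a coordinate chart) provides, and it is what makes the displayed action well-defined.
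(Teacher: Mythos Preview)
Your proof is correct and follows essentially the same route as the paper: act on the framing by the right $U_{\vec{n}}$-action, then renormalize back to the chart by the unique left $\GL_{\vec{d}}$-element with $i$-th component $g^{-1}$, reading off the effect on $W_a$, $b_j$, and $v$. Your write-up is more detailed (you spell out the general $g\in\prod_j U(d_j)$ case first and note the uniqueness of the normalizing element), but the computation and the idea are identical to the paper's.
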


\begin{proof}
	$U$ consists of points $[V,e]$ where $(e^{(j)}_1,\ldots,e^{(j)}_{d_i})$ are invertible for all $j\in Q_0$.  This property is invariant under the action of $\prod_{i\in Q_0} U(d_i)$.  Hence $U$ is an invariant subset.
	
	$(W_a,b_j,v)_{\substack{a \in Q_1\\j \in Q_0, b_j \in V_j\\v \in V_i}}$ corresponds to the point $[W,e,v] \in \cV_i = (R^s_{n,d} \times V_i)/\GL_{\vec{d}}$. where $e^{(j)} = (I_{d_j} \,\, b_j)$.
	\begin{align*}
	&[W,e,v]\cdot g = [W,e\cdot g, v] = [W, (g \,\, b_j)_{j\in Q_1}, v] \\
	=& [g^{-1}\cdot W, (I_{d_j} \,\, g^{-1}\cdot b_j)_{j\in Q_1}, g^{-1}\cdot v]
	\end{align*}
	where the left action by $g^{-1}$ (as an element in $\GL_{\vec{d}}$) is as specified by definition.
\end{proof}

Now we prove an important symmetric property that the activation function \eqref{eq:multi-sigmoid} enjoys, which can be used to reduce the dimensions.

\begin{prop} \label{prop:sym}
	Suppose the activation functions $\sigma_i:V_i \to V_i$ are taken to be the one given in Equation \eqref{eq:multi-sigmoid}.  Then $f^U$ is invariant under $\prod_{j\not\in I_{\mathrm{in}} \cup I_{\mathrm{out}}} U(d_j) \subset U_{\vec{n}}$.  (See the embedding in Lemma \ref{lem:inv-chart}.)
\end{prop}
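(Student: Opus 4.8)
The plan is to feed a vector through the composition that defines $f^U_{[V,e]}$, track how the group element $g=(g_j)_{j\notin I_{\mathrm{in}}\cup I_{\mathrm{out}}}$ alters each weight, bias and activation along the way, and check that the resulting factors cancel telescopically. It suffices to treat a single path $\gamma=(i_0\xrightarrow{a_1}i_1\xrightarrow{a_2}\cdots\xrightarrow{a_m}i_m)$ with $i_0\in I_{\mathrm{in}}$ and $i_m\in I_{\mathrm{out}}$, since $f^U$ for a general element of $I_{\mathrm{out}}\cdot(\C\cdot Q)\cdot I_{\mathrm{in}}$ decomposes according to the paths occurring in $\gamma$ and the argument below applies to each of them separately; moreover a factor $U(d_j)$ with $j$ not on $\gamma$ only affects chart coordinates $W_a,b_j$ that do not enter $f^U_{[V,e]}$, so we may assume every $j$ under consideration lies on $\gamma$. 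By Lemma \ref{lem:inv-chart}, acting by $g$ replaces the chart data of $[V,e]$ by $W_{a_k}\mapsto g_{i_k}^{-1}W_{a_k}g_{i_{k-1}}$ and $b_{i_k}\mapsto g_{i_k}^{-1}b_{i_k}$, where $g_{i_0}=g_{i_m}=I$, and more generally $g_j=I$ at any $j\in I_{\mathrm{in}}\cup I_{\mathrm{out}}$ passed through in between.

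The remaining ingredient is the behaviour of the activation $\sigma_{i_k}$. Since $\sigma_{i_k}$ is the globalization \eqref{eq:psi} of \eqref{eq:multi-sigmoid} built from the metric $H_{i_k}$ of Theorem \ref{thm:metric}, Lemma \ref{lem:action} (equivalently, Lemma \ref{lem:metric-equiv} combined with the fibrewise formula for the $U(d_{i_k})$-action in Lemma \ref{lem:inv-chart}) gives, in the chart trivialization, $\sigma_{i_k}|_{[V,e]\cdot g}=g_{i_k}^{-1}\circ\sigma_{i_k}|_{[V,e]}\circ g_{i_k}$. Now run the network at $[V,e]\cdot g$: the first affine map sends $x\in V_{i_0}$ to $g_{i_1}^{-1}(W_{a_1}x+b_{i_1})$; applying $\sigma_{i_1}|_{[V,e]\cdot g}$ and using the conjugation identity yields $g_{i_1}^{-1}\,\sigma_{i_1}|_{[V,e]}(W_{a_1}x+b_{i_1})$; the next weight $g_{i_2}^{-1}W_{a_2}g_{i_1}$ absorbs $g_{i_1}^{-1}$, and so on. Inductively, the output of the $k$-th activation at $[V,e]\cdot g$ equals $g_{i_k}^{-1}$ times the output of the $k$-th activation at $[V,e]$; at the final arrow the weight $g_{i_m}^{-1}W_{a_m}g_{i_{m-1}}=W_{a_m}g_{i_{m-1}}$ absorbs the last leftover factor, and since $g_{i_m}=I$ no bias correction remains. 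Hence $f^U_{[V,e]\cdot g}=f^U_{[V,e]}$.

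The one point needing genuine care is the conjugation identity for $\sigma_{i_k}$: one must check that the abstract $U_{\vec n}$-equivariance of $\psi_{\cV_{i_k}}$ (Lemma \ref{lem:action}) really reads, in the chart trivialization of $\cV_{i_k}$ over $U$, as $\sigma_{i_k}|_{[V,e]\cdot g}=g_{i_k}^{-1}\sigma_{i_k}|_{[V,e]}g_{i_k}$. This reduces to two facts: under $g$ the fibre coordinate transforms by $v\mapsto g_{i_k}^{-1}v$, and the metric matrix, being $U_{\vec n}$-invariant, transforms by $H_{i_k}\mapsto g_{i_k}^{*}H_{i_k}g_{i_k}$ under the $\GL_{\vec d}$-renormalization $(g_{i_k}\ b_{i_k})=g_{i_k}(I_{d_{i_k}}\ g_{i_k}^{-1}b_{i_k})$ that brings the point back into the chart $U$ — so that $\sqrt{1+H_{i_k}(v,v)}$ is unchanged after substituting $v\mapsto g_{i_k}^{-1}v$. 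With this in hand the rest is pure bookkeeping of the telescoping cancellation.
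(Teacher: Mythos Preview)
Your telescoping argument is correct and is exactly the mechanism the paper uses: acting by $g\in U(d_j)$ at an intermediate vertex $j=h(a_k)$ sends $W_{a_k}\mapsto g^{-1}W_{a_k}$, $W_{a_{k+1}}\mapsto W_{a_{k+1}}g$, $b_j\mapsto g^{-1}b_j$, which in the composite is the same as replacing $\sigma_j$ by $g\,\sigma_j\,g^{-1}$; equivariance of $\sigma_j$ then finishes. The paper states this in one line.

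Where you go astray is in identifying which activation is in play. Proposition~\ref{prop:sym} sits in the \emph{flat-space} setup of Section~4.3.1, where the $\sigma_i:V_i\to V_i$ are the \emph{fixed} functions \eqref{eq:multi-sigmoid}, not the globalized bundle maps $\psi_{\cV_i}$ of \eqref{eq:psi} built from the metric $H_i$. So there is no base-point dependence, and writing $\sigma_{i_k}|_{[V,e]\cdot g}$ versus $\sigma_{i_k}|_{[V,e]}$ is spurious: both are simply $\sigma_{i_k}$. The relevant equivariance is therefore Lemma~\ref{lem:psi-equiv} (the $U(d)$-equivariance of the explicit map $v\mapsto v/\sqrt{1+\|v\|^2}$), and the paper cites exactly that. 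Your invocation of Lemma~\ref{lem:action} and Lemma~\ref{lem:metric-equiv}, together with the whole third paragraph about how $H_{i_k}$ transforms under the $\GL_{\vec d}$-renormalization, is machinery for the \emph{other} proposition (the unnumbered one at the end of Section~4.3.2 about $f_{\tilde\gamma}$ with $\psi_{\cV_i}$), not this one. Once you drop that detour, your proof collapses to the paper's.
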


\begin{proof}
	The terms of $f^U$ are of the form $\sigma_{h(a_k)} \left(W_{a_k} \ldots \left(\sigma_{h(a_1)}\left(W_{a_1}(v)+b_{h(a_1)}\right)\ldots \right) + b_{h(a_k)}\right)$.  By the above lemma, for $g \in U(d_{h(a_i)})$ where $h(a_i)\not\in I_{\mathrm{in}} \cup I_{\mathrm{out}}$, the action of $g$ results in $\sigma_{h(a_i)} \mapsto g\cdot \sigma_{h(a_i)} g^{-1}$ in the above expression and does not affect any other part.  
	By Lemma \ref{lem:psi-equiv}, $\sigma_i$ is $U(d_i)$-equivariant, and hence $f^U$ remains invariant. 
\end{proof}

By the above proposition, we can descend $f^U$ to the orbit space $U/\prod_{j\not\in I_{\mathrm{in}} \cup I_{\mathrm{out}}} U(d_j)$ to reduce the dimensions.  However, the quotient space will be highly singular.  Instead, we can realize the dimension reduction by restricting $f^U$ to a submanifold $U'$ of $U$ whose orbit occupies the whole $U$, and do stochastic gradient flow on $U'$ instead of $U$. The following gives one simple possibility.

\begin{prop} \label{prop:sym2}
	Consider a subset $\{a_1,\ldots,a_p\}$ of arrows whose heads and tails do not belong to $I_{\mathrm{in}}\cup I_{\mathrm{out}}$, and for any two distinct arrows $a_1,a_2$ in the subset, $t(a_1) \not= h(a_2)$.  Then the vector subspace
	$$U' := \{(W_a,b_j)_{\substack{a \in Q_1\\j \in Q_0, b_j \in V_j}} \in U: W_{a_i}  \textrm{ is of the form } \left(D \,\, W_{a_i}'\right) \,\, \forall i=1,\ldots,p\}$$
	has its orbit being the whole $U$, that is, $\left(\prod_{j\not\in I_{\mathrm{in}} \cup I_{\mathrm{out}}} U(d_j)\right) \cdot U' = U$.  In above, $D$ is a diagonal matrix (of the maximum possible size) and $W_{a_i}'$ is any matrix occupying the rest.
\end{prop}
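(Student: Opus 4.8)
The plan is as follows. Fix an arbitrary point of $U$, written in the chart coordinates of Lemma~\ref{lem:inv-chart} as $(W_a,b_j)_{a\in Q_1,\,j\in Q_0}$. I will produce an element $g\in\prod_{j\notin I_{\mathrm{in}}\cup I_{\mathrm{out}}}U(d_j)$ such that $(W_a,b_j)\cdot g\in U'$; since then $(W_a,b_j)=\bigl((W_a,b_j)\cdot g\bigr)\cdot g^{-1}$, this point lies in the orbit $\left(\prod_{j\notin I_{\mathrm{in}}\cup I_{\mathrm{out}}}U(d_j)\right)\cdot U'$, which is exactly the claim. Recall from Lemma~\ref{lem:inv-chart} that the right action of a tuple $g=(g_j)_j$ on the arrow coordinates is $W_a\mapsto g_{h(a)}^{-1}W_a\,g_{t(a)}$, that $U$ is preserved by this action, and that the condition defining $U'$ constrains only the $W_{a_i}$ and not the $b_j$, so the bias coordinates are irrelevant.

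The construction of $g$ uses the singular value decomposition. For each of the finitely many arrows $a_i$, $i=1,\dots,p$, write $W_{a_i}=P_i\,\Sigma_i\,Q_i^{*}$ with $P_i\in U(d_{h(a_i)})$, $Q_i\in U(d_{t(a_i)})$ and $\Sigma_i\in\Hom(\C^{d_{t(a_i)}},\C^{d_{h(a_i)}})$ the rectangular diagonal matrix of singular values. All off-diagonal entries of $\Sigma_i$ vanish, so $\Sigma_i$ has precisely the form $\left(D \,\, W'\right)$ prescribed in the definition of $U'$, with $D$ the diagonal block of the maximal size $\min(d_{h(a_i)},d_{t(a_i)})$. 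Now set $g_{h(a_i)}:=P_i$ and $g_{t(a_i)}:=Q_i$ for $i=1,\dots,p$, and $g_v:=\mathrm{Id}$ for every other vertex $v\notin I_{\mathrm{in}}\cup I_{\mathrm{out}}$. Then $g_{h(a_i)}^{-1}W_{a_i}g_{t(a_i)}=P_i^{-1}(P_i\Sigma_iQ_i^{*})Q_i=\Sigma_i$ for every $i$, while every other $W_a$ is at worst conjugated by identity matrices, so $(W_a,b_j)\cdot g\in U'$, as desired.

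The point that needs care — and the main obstacle — is to verify that this prescription actually defines a single element $g$ of $\prod_{j\notin I_{\mathrm{in}}\cup I_{\mathrm{out}}}U(d_j)$: one must check that it never assigns two different unitaries to the same vertex and never acts on a vertex of $I_{\mathrm{in}}\cup I_{\mathrm{out}}$. The latter is immediate from the hypothesis that the heads and tails of the $a_i$ avoid $I_{\mathrm{in}}\cup I_{\mathrm{out}}$. The former is exactly what the combinatorial hypothesis on $\{a_1,\dots,a_p\}$ is designed to guarantee: because distinct arrows in the subset have no tail of one equal to a head of another (the condition $t(a)\neq h(a')$), and — as the argument requires — pairwise distinct tails and pairwise distinct heads, each vertex appearing among the $h(a_i)$ and $t(a_i)$ is assigned by exactly one singular value decomposition, in exactly one role. (If, say, two arrows shared a tail, the two decompositions would generically impose incompatible demands on $g$ at that vertex, and one can check that a generic $(W_a,b_j)$ then lies in no translate of $U'$, so some such independence is genuinely needed.) It then only remains to record the routine facts that $U'$ is indeed a linear subspace of $U$, being cut out by the vanishing of the off-diagonal entries in the first $\min(d_{h(a_i)},d_{t(a_i)})$ columns of each $W_{a_i}$, and that $(W_a,b_j)\cdot g$ still lies in the chart $U$ by Lemma~\ref{lem:inv-chart}; this completes the argument.
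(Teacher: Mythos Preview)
Your proof takes exactly the same route as the paper's---singular value decomposition---and the paper's own argument is in fact the single sentence ``This follows from the singular-value decomposition of a matrix $W$ as $A\cdot(D\ \ W')\cdot B$ where $A$ and $B$ are unitary matrices of appropriate sizes.'' Your more careful treatment of the well-definedness of $g$ (flagging the implicit need for pairwise distinct heads, pairwise distinct tails, and no loops among the $a_i$, beyond the stated hypothesis $t(a_i)\neq h(a_j)$) actually goes further than what the paper checks.
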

\begin{proof}
	This follows from the singular-value decomposition of a matrix $W$ as $A \cdot \left(D \,\, W'\right) \cdot B$ where $A$ and $B$ are unitary matrices of appropriate sizes.
\end{proof}

\subsubsection{Machine learning using the quiver moduli.} \label{sec:ML}
The quiver moduli $\cM = \cM_{\vec{n},\vec{d}}$ gives a compactification of $U$.  Compactness is important for the formulation of Morse theory and convergence of a gradient flow.  We would like to use the whole $\cM$ in application of machine learning.  Non-trivial metrics over the moduli will play a crucial role.

First, consider the situation before adding in activation functions.  
Each arrow $a\in Q_1$ is associated with a vector-bundle morphism $a_\cM: \cV_{t(a)} \to \cV_{h(a)}$.  For each path $a_k\ldots a_1$, we take the map
\begin{equation}
(a_k)_\cM\left(\ldots \left((a_2)_\cM\left((a_1)_\cM(v) + e^{(h(a_1))}_{d_{h(a_1)}+1}\right) + e^{(h(a_2))}_{d_{h(a_2)}+1}\right)\ldots\right)+ e^{(h(a_k))}_{d_{h(a_k)}+1}
\label{eq:compose}
\end{equation}
which is fiberwise affine linear.
Thus a path $\gamma$ gives an affine bundle morphism
$$\gamma_\cM: \cV_{I_{\mathrm{in}}} := \bigoplus_{i \in I_{\mathrm{in}}} \cV_i \to \cV_{I_{\mathrm{out}}} := \bigoplus_{j \in I_{\mathrm{out}}} \cV_j.$$
Then we have
\begin{equation}
L_\gamma \left(\left(s^{(i)}_{k}\right)_{\substack{i\in I_{\mathrm{in}}\\k\in\{1,\ldots,d_i\} }}\right)=\left(H_j \left(e^{(j)}_p,\sum_{i\in I_{\mathrm{in}}} \gamma_\cM \cdot \sum_{k=1}^{d_i} s^{(i)}_{k} e^{(i)}_{k} \right)\right)_{\substack{j \in I_{\mathrm{out}},\\ p\in\{1,\ldots,d_j\}}}: \bigoplus_{i\in I_{\mathrm{in}}} \underline{\C^{d_i}} \to \bigoplus_{j\in I_{\mathrm{out}}} \underline{\C^{d_j}}.
\label{eq:L}
\end{equation}

Suppose a continuous function $f = K \to \bigoplus_{j\in I_{\mathrm{out}}} \C^{d_j}$ is given, where $K$ is a compact subset of $\bigoplus_{i\in I_{\mathrm{in}}} \C^{d_i}$.  
Then the fiberwise integral
\begin{equation}
\cE := \int_K \left\|f - L_\gamma \right\|^2_{\cV_{I_{\mathrm{out}}}} d\mu_K
\label{eq:E}
\end{equation}
gives a smooth function on $\cM$.  

\begin{remark}
	Alternatively, we can define the fiber-bundle morphism
	$$ f_\cM: \sum_{j\in I_{\mathrm{out}}}\sum_{l=1}^{d_j} f^{(j)}_l \cdot e^{(j)}_l:\underline{K} \to \cV_{I_{\mathrm{out}}} $$
	and take
	$$ \int_K \left\|f_\cM\left(\left(s^{(i)}_{k}\right)_{\substack{i\in I_{\mathrm{in}}\\k\in\{1,\ldots,d_i\} }}\right) - \sum_{i\in I_{\mathrm{in}}} \gamma_\cM \cdot \sum_{k=1}^{d_i} s^{(i)}_{k} e^{(i)}_{k} \right\|^2_{\cV_{I_{\mathrm{out}}}} d\mu_K. $$
	However, with such a definition, we need to worry that $(e^{(j)}_1,\ldots,e^{(j)}_{d_j})$ for some output $j$ degenerates (as a frame), in which case approximating $f$ and approximating $f_\cM$ are different.
\end{remark}

We have a gradient flow $r: \R \to \cM$ which can be used to minimize $\cE$:
$$ \frac{dr}{dt} = -(\nabla \cE)(r(t))$$
where $\nabla \cE = (d \cE)^{\#_g}$ where $(\cdot)^\#_g: T^*\cM \stackrel{g}{\cong} T\cM$ is the identification by a metric $g$ on $\cM$.  ($(\nabla \cE)^p = g^{pq} \partial_q \cE$ in local coordinates.)  $g$ can be taken to be the induced metric from the trivial metric on the vector space $R_{\vec{n},\vec{d}}$ via symplectic reduction.  Alternatively, $g$ can be taken to be the metric given by the Ricci curvature in Theorem \ref{thm:Ricci} (when $Q$ has no oriented cycle), which has a better expression in homogeneous coordinates.

Note that $(L_\gamma)|_{[V,e]}$ is affine linear on $\bigoplus_{i\in I_{\mathrm{in}}} \underline{\C^{d_i}}$ for every $[V,e]\in\cM$, which is not good enough for the purpose of approximating $f$.  We introduce fiberwise non-linearity below.

\begin{defn}
	Let $A$ be a finite set whose every element is associated with two vertices (head $h$ and tail $t$) in $Q_0$.  Elements in $A$ are called activation arrows.  (These are not arrows in $Q_1$.)  The semiring generated by $Q_1$ and $A$, denoted by $\Gamma(Q,A)$, has the underlying vector space spanned by the independent set $\coprod_{p=0}^\infty S_p$ where $S_p$ is defined inductively as follows.
	
	\begin{enumerate}
		\item $S_0$ consists of all paths of $Q$.
		\item Suppose $S_p$ has been defined, and each element in $S_p$ has a head $h$ and a tail $t$.  $S_{p+1}$ consists of $\gamma \cdot \alpha \cdot \tilde{\gamma}$, where $\alpha \in A$, $\gamma$ is any path of $Q$ with $h(\alpha) = t(\gamma)$, and $\tilde{\gamma} \in t(\alpha)\cdot (\C\cdot S_p)$.  ($\C\cdot S_p$ denotes the vector space generated by $S_p$;  $t(\alpha)\cdot (\C \cdot S_p)$ is the subspace generated by elements of $S_p$ with head being $t(\alpha)$.)  The above element has the head $h(\gamma)$ and the tail $t(\tilde{\gamma})$.	
	\end{enumerate}
	The above vector space $\Gamma(Q,A)$ has an obvious product by concatenation.  ($s_1 \cdot s_2 = 0$ if $h(s_2)\not=t(s_1)$.)  Note that for $\alpha \in A$, $(s_1+c s_2)\cdot \alpha = s_1 \cdot \alpha + c s_2 \cdot \alpha$, but $\alpha \cdot (s_1+c s_2) \not= \alpha \cdot s_1 + c \alpha \cdot s_2$.
\end{defn}

Now, suppose each element $\alpha \in A$ is associated with a fiber-bundle morphism $$\alpha_\cM: \cV_{t(\alpha)} \to \cV_{h(\alpha)}.$$  
Then by composing the corresponding affine linear morphisms (as in Equation \eqref{eq:compose}) and $\alpha_\cM$, an element $\tilde{\gamma} \in \Gamma(Q,A)$ induces a fiber-bundle morphism
$$ \tilde{\gamma}_\cM:  \cV_{t(\tilde{\gamma})} \to \cV_{h(\tilde{\gamma})}.$$
In particular, if we fix $\tilde{\gamma} \in I_{\mathrm{out}} \cdot \Gamma(Q,A) \cdot I_{\mathrm{in}}$, then we define $$f_{\tilde{\gamma}}: \bigoplus_{i\in I_{\mathrm{in}}} \underline{\C^{d_i}} \to \bigoplus_{j\in I_{\mathrm{out}}} \underline{\C^{d_j}}$$ 
like in the definition of $L_\gamma$ in \eqref{eq:L} by replacing $\gamma_\cM$ by $\tilde{\gamma}_\cM$.  Then a stochastic gradient flow for the corresponding error function (by replacing $L_\gamma$ by $f_{\tilde{\gamma}}$ in \eqref{eq:E}) can be carried out.

For now, we set $A$ to be $Q_0$ as a set, and each element $\mathfrak{o}_i$ has the head and tail being $i \in Q_0$.  We can associate $\mathfrak{o}_i$ with the fiber-bundle morphism $\psi_{\cV_i}$ given in \eqref{eq:psi}, or $\sigma_{\cV_i}$ in \eqref{eq:sigma}.  Then we obtain $f_{\tilde{\gamma}}$ above which is non-linear along fibers for the purpose of machine learning.

If we associate $\mathfrak{o}_i$ with $\psi_{\cV_i}$ given in \eqref{eq:psi}, then the symmetry of $\prod_{j\not\in I_{\mathrm{in}} \cup I_{\mathrm{out}}} U(d_j) \subset U_{\vec{n}}$ is respected, by Lemma \ref{lem:action}.  The proof is similar to that for Proposition \ref{prop:sym} and is omitted here.

\begin{prop}
	Suppose $\mathfrak{o}_i$ is assigned as $\psi_{\cV_i}$ given in \eqref{eq:psi}, and $H_i$ is taken to be the metric in Theorem \ref{thm:metric}.  Then $f_{\tilde{\gamma}}$ is invariant under $\prod_{j\not\in I_{\mathrm{in}} \cup I_{\mathrm{out}}} U(d_j) \subset U_{\vec{n}}$.  (See the embedding in Lemma \ref{lem:inv-chart}.)
\end{prop}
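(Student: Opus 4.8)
The plan is to run the same argument as in the proof of Proposition \ref{prop:sym}, but carried out invariantly over all of $\cM$ rather than in the flat chart $U$. Fix $g=(g_j)_{j\in Q_0}$ with $g_j\in U(d_j)$ for $j\notin I_{\mathrm{in}}\cup I_{\mathrm{out}}$ and $g_j=\mathrm{Id}$ otherwise, embedded in $U_{\vec n}=\prod_j U(d_j+1)$ by $g_j\mapsto\mathrm{diag}(g_j,1)$. What must be shown is that $f_{\tilde\gamma}|_{[V,e]\cdot g}=f_{\tilde\gamma}|_{[V,e]}$ as maps $\bigoplus_{i\in I_{\mathrm{in}}}\C^{d_i}\to\bigoplus_{j\in I_{\mathrm{out}}}\C^{d_j}$, for every $[V,e]\in\cM$ (there is no action on the source/target spaces since $g$ is trivial at the input and output vertices).

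First I would record the equivariance of each ingredient of $f_{\tilde\gamma}$ under this $g$-action. (i) Every arrow morphism $a_\cM\colon\cV_{t(a)}\to\cV_{h(a)}$, written on representatives as $[V,e,v]\mapsto[V,e,V(a)v]$, commutes with the lifted action $[V,e,v]\cdot g=[V,e\circ g,v]$, so $a_\cM$ is $U_{\vec n}$-equivariant. (ii) The bias section $e^{(i)}_{d_i+1}$ of $\cV_i$ is $g$-fixed: since $g_i=\mathrm{diag}(g_i,1)$ (or $\mathrm{Id}$) one has $(e^{(i)}\circ g_i)_{d_i+1}=e^{(i)}_{d_i+1}$, hence it transforms to itself as a section — this is precisely why the mixing that would occur for the full $U(d_i+1)$ is killed by restricting to the $\mathrm{diag}(g_j,1)$-subgroup. (iii) Each activation morphism $(\mathfrak o_i)_\cM=\psi_{\cV_i}$ of \eqref{eq:psi} is $U_{\vec n}$-equivariant by Lemma \ref{lem:action}, which relies on the $U_{\vec n}$-invariance of the metric $H_i$ of Theorem \ref{thm:metric} (Lemma \ref{lem:metric-equiv}). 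Finally, since $g_i=\mathrm{Id}$ for $i\in I_{\mathrm{in}}\cup I_{\mathrm{out}}$, the frame vectors $e^{(i)}_k$ there are literally unchanged, and $H_j$ is $g$-invariant, so the input trivialization $s\mapsto\sum_k s^{(i)}_k e^{(i)}_k$ and the output reading $v\mapsto(H_j(e^{(j)}_p,v))_p$ appearing in \eqref{eq:L} are unaffected by $g$.

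Then I would induct on the recursive strata $S_0\subset S_1\subset\cdots$ building $\Gamma(Q,A)$: for any $\tilde\gamma\in\Gamma(Q,A)$ the induced fibre-bundle morphism $\tilde\gamma_\cM\colon\cV_{t(\tilde\gamma)}\to\cV_{h(\tilde\gamma)}$ is $g$-equivariant, i.e. $\tilde\gamma_\cM|_{[V,e]\cdot g}(w\cdot g)=(\tilde\gamma_\cM|_{[V,e]}(w))\cdot g$. The base case — paths of $Q$ with their affine bias shifts — follows from (i) and (ii); the step for $\gamma\cdot\alpha\cdot\tilde\gamma'$ follows by composing the inductive hypothesis for $\tilde\gamma'$, property (iii) for $\alpha=\mathfrak o_i$, and the base case for $\gamma$. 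Feeding this into the definition of $f_{\tilde\gamma}$ and using that the input and output identifications are $g$-fixed, together with $H_j|_{[V,e]\cdot g}(e^{(j)}_p,v\cdot g)=H_j|_{[V,e]}(e^{(j)}_p,v)$ (as $e^{(j)}_p$ is $g$-fixed and $H_j$ is $g$-invariant), yields $f_{\tilde\gamma}|_{[V,e]\cdot g}=f_{\tilde\gamma}|_{[V,e]}$. The only \emph{delicate points}, and hence the main obstacle, are purely bookkeeping: verifying that the intermediate bias vectors $e^{(i)}_{d_i+1}$ are genuinely $g$-fixed (which dictates the embedding $U(d_i)\hookrightarrow U(d_i+1)$), and that $\psi_{\cV_i}$ is defined using exactly the $U_{\vec n}$-invariant metric of Theorem \ref{thm:metric} so that Lemma \ref{lem:action} applies; with these in hand the argument is the same equivariance induction as in Proposition \ref{prop:sym}.
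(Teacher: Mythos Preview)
Your proposal is correct and follows exactly the approach the paper intends: the paper omits this proof entirely, stating only that it is similar to Proposition~\ref{prop:sym}, and what you have written is precisely that argument carried out globally over $\cM$, with Lemma~\ref{lem:action} playing the role that Lemma~\ref{lem:psi-equiv} plays in the chart version. Your careful verification that the bias sections $e^{(i)}_{d_i+1}$ are $g$-fixed (forcing the embedding $U(d_i)\hookrightarrow U(d_i+1)$ via $\mathrm{diag}(g_i,1)$) and that the output pairing $H_j(e^{(j)}_p,\cdot)$ is unaffected are exactly the bookkeeping points the paper leaves implicit.
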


\begin{remark}
	Since we are taking affine linear morphisms $(a_i)_\cM(v) + e^{(h(a_i))}_{d_{h(a_i)}+1}$ for the arrows which involves the term $e^{(h(a_i))}_{d_{h(a_i)}+1}$, only the symmetry $U(d_j)$ rather than $U(d_j+1)=U(n_j)$ is respected.  If the bias vector $e^{(h(a_i))}_{d_{h(a_i)}+1}$ is not used in the program, then $f_{\tilde{\gamma}}$ will be invariant under the bigger group $\prod_{j\not\in I_{\mathrm{in}} \cup I_{\mathrm{out}}} U(n_j)$.
\end{remark}

\subsubsection{A simple example} \label{sec:eg}
Recall the quiver in Example \ref{ex:A3}.  Let's take $n_1=d_1, n_2=d_2+1, n_3=d_3$ instead of $n_i=d_i+1 \,\forall i$, since we do not need to use bias vectors at the input and output vertices.  The path $\gamma$ is simply $a_2a_1$, and $\tilde{\gamma} = a_2 \cdot \mathfrak{o}_1 \cdot a_1$.

We have the universal bundles $\cV_1 \cong \underline{\C}^{d_1}$, $\cV_2$ and $\cV_3$.  We have $\rho^{(1)} = e^{(1)}$, $\rho^{(2)} = (e^{(2)} \,\, a_1e^{(1)})$, $\rho^{(3)} = (e^{(3)} \,\, a_2e^{(2)} \,\, a_2a_1e^{(1)})$.
In terms of homogeneous coordinates (namely the coordinates $(e^{(1)},e^{(2)},e^{(3)},a_1,a_2)$ on the vector space $R_{\vec{n},\vec{d}}$, where each entry is a matrix of suitable size), 
the metrics given in Theorem \ref{thm:metric} are 
\begin{align*}
H_1 &= \left(e^{(1)} (e^{(1)})^*\right)^{-1}, \, H_2 = \left(e^{(2)} (e^{(2)})^* + a_1 e^{(1)} (e^{(1)})^* a_1^*\right)^{-1}, \,\\ H_3 &= \left(e^{(3)} (e^{(3)})^* + a_2 e^{(2)} (e^{(2)})^* a_2^* + a_2 a_1 e^{(1)} (e^{(1)})^* a_1^* a_2^*\right)^{-1}.
\end{align*}
on $\cV_i, i=1,2,3$ respectively.  The activation functions we constructed in the previous subsection are
$$ \psi_i(v) = \frac{v}{\sqrt{1+v^* H_i v}} \textrm{ and } \sigma_i(v) = \sum_{k=1}^{d_i+1} (\sigma_\C)_k\left((e^{(i)}_1)^*H_iv,\ldots,(e^{(i)}_{d_i+1})^*H_iv\right)\cdot e^{(i)}_k,$$
where we can take $(\sigma_\C)_k (\vec{z}) = \frac{e^{2 \mathrm{Re}(z_k)}}{1+\sum_{j=1}^{d_i+1} e^{2\mathrm{Re}(z_j)}} + \bi \, \mathrm{Im}(z_k)$ for instance.  Both have the $\GL$-equivariance property $\psi_i(g \cdot v) = g \cdot \psi_i(v)$ and $\sigma_i(g\cdot v) = g\cdot \sigma_i(v)$.

The function (over $\cM$) cooked up from this quiver is $f_{\tilde{\gamma}}: \C^{d_1} \to \C^{d_3}$,
$$f_{\tilde{\gamma}}(s_1,\ldots,s_{d_1}) = \left(H_3 \left(e^{(3)}_p, a_2 \cdot \sigma_2\left(a_1 \cdot \sum_{k=1}^{d_1} s_k e^{(1)}_{k}  + e^{(2)}_{d_2+1}\right)\right)\right)_{p=1}^{d_3} $$
if we use $\sigma_2$ as the activation function, or the same expression with $\sigma_2$ replaced by $\psi_2$.  Then we run a stochastic gradient flow on $\cM$ (or on the vector space $R_{\vec{n},\vec{d}}$ upstairs) to minimize the distance of $f_{\tilde{\gamma}}$ and a function $f$ coming from reality.

To run the gradient flow, we need to take a metric on $\cM$.  Recall that we have the metric on the tangent bundle of $\cM$ coming from the Ricci curvatures of $H_i$:
{\small
	\begin{equation}
	H_T = \sum_{i=1}^3 \mathrm{tr~ }\left(\left(\partial_v \rho^{(i)} \right)^{\mathrm{*}}\cdot H_i\cdot\partial_w \rho^{(i)} \right)
	-\sum_{i=1}^3\mathrm{tr~ }\left(\left(\partial_v \rho^{(i)} \cdot (\rho^{(i)}) ^{\mathrm{*}}\cdot H_i^{\frac{1}{2}}\right)^{\mathrm{*}}\cdot H_i \cdot \left(\partial_w \rho^{(i)} \cdot (\rho^{(i)}) ^{\mathrm{*}}\cdot H_i^{\frac{1}{2}}\right)\right).
	\label{eq:H_T2}
	\end{equation}
}
Consider the open subset of the vector space $R_{\vec{n},\vec{d}}$ in which $(e_1^{(i)},\ldots,e_{d_i}^{(i)})$ is invertible.  (This is the preimage of the chart $U\subset \cM$.)  A tangent vector $v$ of $\cM$ is lifted as $(\delta a_1,\delta a_2, \delta e_{d_2+1}^{(2)})$ (and all other components are set to be zero).  
Since $\rho^{(1)} = e^{(1)}$, $\rho^{(2)} = (e^{(2)}, a_1 e^{(1)})$, $\rho^{(3)} = (e^{(3)}, a_2 e^{(2)}, a_2a_1e^{(1)})$, we have
$\partial_v \rho^{(1)} = 0$, $\partial_v \rho^{(2)} = \left((0\,\, \delta e_{d_2+1}^{(2)}),\,\, (\delta a_1)e^{(1)}\right)$, and 
$$\partial_v \rho^{(3)} = \left(0,\,\, (\delta a_2)e^{(2)} + (0\,\,a_2 \delta e_{d_2+1}^{(2)}),\,\,\, (\delta a_2)a_1e^{(1)} + a_2 (\delta a_1)e^{(1)} \right).$$
Then the above metric $H_T$ can be computed explicitly in terms of the homogeneous coordinates.

\begin{remark}
	In above, if we use trivial metrics over the vector space $R_{\vec{n},\vec{d}}$ instead of $H_i$ and $H_T$, the expressions will get simpler; however they will only be $U_{\vec{d}}$-equivariant rather than $\GL_{\vec{d}}$-equivariant.  Then we need to restrict to the moment-map level $\mu^{-1}(I) \subset R_{\vec{n},\vec{d}}$ and its tangent bundle, in order to stay in the same moduli $\cM$ downstairs.  This would increase the computational complexity.
\end{remark}

We can also write in inhomogeneous coordinates $(W_1,W_2,b)$ in the chart $U \subset \cM$, where $(e_1^{(i)},\ldots,e_{d_i}^{(i)})=I_{d_i}$ for all $i=1,2,3$.  $W_1,W_2$ are the matrices of the arrows $a_1,a_2$ and $e_{d_2+1}^{(2)}=b$ is the bias vector. Then $H_1 = I$, $H_2 = (I+bb^* + W_1 W_1^*)^{-1}$, and $H_3 = (I + W_2W_2^*+W_2 bb^* W_2^* +W_2W_1W_1^*W_2^*)^{-1}$.  Then we can run the gradient flow in $U \subset \cM$ (which has lower dimensions than $R_{\vec{n},\vec{d}}$).

Note that when $W_1, W_2, b$ are close to zero, $H_i$ are close to the identity matrix, and $\sigma_i$ is close to $\sigma_\C$.  Moreover, the second term of \eqref{eq:H_T2} is close to zero, and the first term is close to the standard metric.  Thus when $W_1,W_2,b$ are small, the function $f_{\tilde{\gamma}}$ is close to the commonly used one 
\begin{equation} f^U_{W_1,W_2,b}(v) = W_2\cdot ( \sigma_\C(W_1 \cdot v + b)),
\label{eq:f^U-A3}
\end{equation}
and the gradient flow is close to the usual one on
 the flat space $U$ (see Section \ref{sec:flat}).  The additional terms can be understood as modifications to ensure the flow converges in $\cM$.




\subsection{A discussion on Morse inequalities} \label{sec:topo}

By the work of Reineke \cite{Reineke}, the framed quiver moduli $\cM$ is a tower of Grassmannians (Theorem \ref{thm:Reineke}), and hence its Poincar\'e polynomial is a product of that of Grassmannians (Corollary \ref{cor:Reineke}).  Such topological invariants give important information about a gradient flow on $\cM$.

In particular the Morse inequalities for a Morse function $\cE$ on a compact manifold $\cM$ state as follows.  Let $c^j(\cE)$ be the number of critical points of index $j$ for $\cE$.  Then for every $j$, $$c^j(\cE) \geq h^j(\cM)$$ where $h^j(\cM)$ denotes the cohomological numbers (which are coefficients of the Poincar\'e polynomial).

Given a gradient flow, which is a path $\gamma: \R\to \cM$ satisfying the gradient flow equation, $\lim_{t\to \pm \infty} \gamma(t)$ are critical points.  Moreover, critical points carry important effect to the rate of the gradient flow.  Namely, when the flow $\gamma$ gets close to a critical point with index being $1,\ldots, \dim \cM - 1$, $\|\gamma'(t)\| = \| \mathrm{grad}\, \cE(\gamma(t))\|$ becomes small.  In other words the flow slows down when it passes through a neighborhood of a critical point.  Such a slowing-down effect of saddle points was studied in machine learning in \cite{PDGB,DPGCGB}.

The cohomological numbers $h^j(\cM)$ give the minimum number of critical points and hence are important invariants of a neural network (which simply means a directed graph $Q$ together with a dimension vector $\vec{d}$ here).  Over $\C$, $h^j(\cM) = 0$ when $j$ is odd.  Thus the Euler characteristic $\chi_{Q,\vec{d}}$ equals to $\sum_j h^j(\cM)$, which is the minimal total number of critical points.  It is computed by simply setting $q=1$ in Corollary \ref{cor:Reineke}.   Another important invariant is $\dim \cM$ (that is, the number of training parameters of the network), which is simply
$$ \cD_{Q,\vec{d}} = \sum_{i \in Q_0} d_i \left(n_i + \sum_{\substack{j \to i\\j \not= i}} d_j\right) $$
using the notation of Corollary \ref{cor:Reineke}.  (We take $n_i = d_i+1$ in this section.)
These are illustrated in the two practical examples below.

\begin{example}
	Consider $Q$ being the $A_{k+2}$ quiver, which has $k+2$ vertices  labeled by $0,\ldots,k+1$, and there is exactly one arrow from $i$ to $i+1$ for $i=0,\ldots,k$, and no arrow otherwise.  Set $d_{-1}:=0$.  Then the minimal total number of critical points is
	$$ \chi_{Q,\vec{d}} = \prod_{i=-1}^{k} \binom{d_i+d_{i+1}+1}{d_{i+1}} $$
	and
	$$ \cD_{Q,\vec{d}} = \sum_{i=-1}^k d_{i+1}(d_i +1).$$
\end{example}

\begin{example} \label{ex:Res}
	Now consider the following quiver $A_{k+2}'$, which has vertices labeled by $0,\ldots,k+1$, and there is one arrow from vertex $i$ to vertex $j$ for every $i<j$.  Set $d_{-1}:=0$.  Then
	$$ \chi_{Q,\vec{d}} = \prod_{i=-1}^{k} \binom{\sum_{j=0}^{i+1} d_j+1}{d_{i+1}} $$
	and
	$$ \cD_{Q,\vec{d}} = \sum_{i=-1}^k d_{i+1}\left(\sum_{j=0}^i d_j +1\right).$$	
\end{example}
Figure \ref{fig:chi} shows the graph of $\log \chi_{Q,\vec{d}}$ versus $\cD_{Q,\vec{d}}$ for the two examples, where we set $k= 3$, $d_1 = 600, d_5 = 10$, and $d_2=d_3=d_4$.    

Note that the quiver denoted by $A_{k+2}'$ in Example \ref{ex:Res} is a simple analog of the network known as ResNet, which adds arrows to the $A_{k+2}$-quiver that skip the middle vertices to get around with the `gradient-vanishing problem'.   Namely, in the $A_{k+2}$ case, the derivatives of $\cE$ with respect to matrix entries for arrows in the early stage are typically very small by chain rule, which is not good for the flow rate.  Arrows that skip the middle vertices are added, so that there are short paths which involve the early arrows.  

From Figure \ref{fig:chi}, we see that in the same dimensions, the minimal number of critical points in $\cM$ is smaller for $A_5'$ than that for $A_5$.  (We have numerically verified this for general $k$.)  This gives a supporting evidence that $\chi_{Q,\vec{d}}$ is an important invariant in applications to machine learning.

\begin{figure}[htb!]
	\includegraphics[scale=0.8]{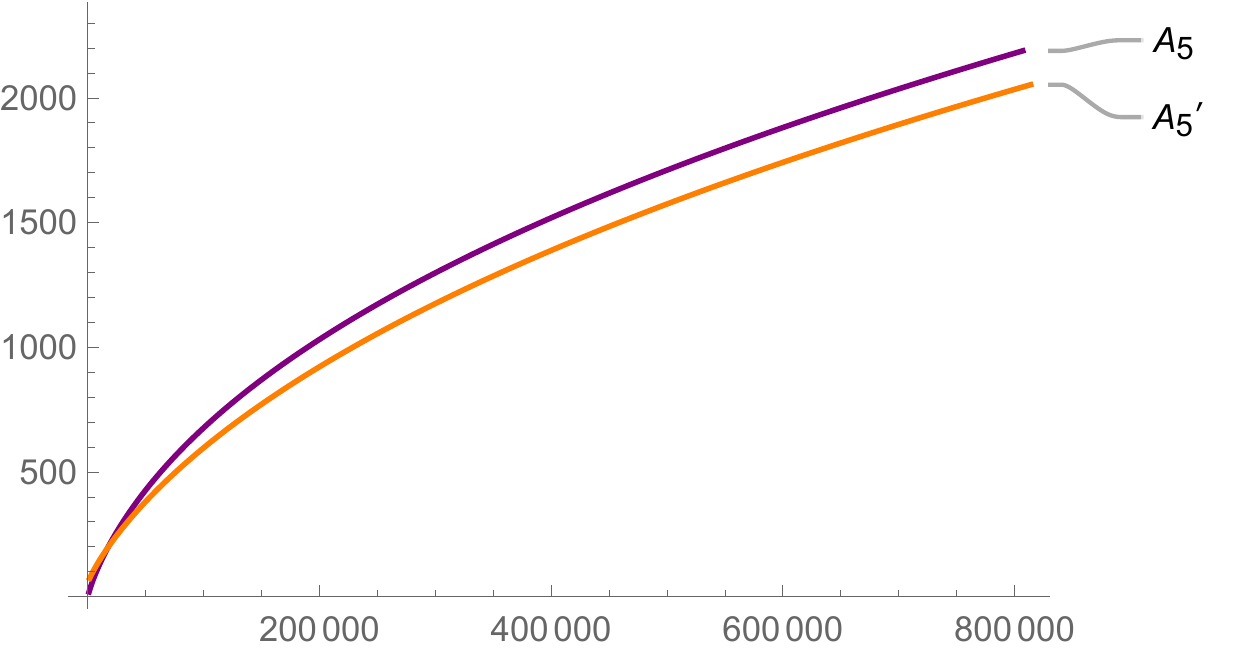}
	\caption{A plot of $\log \chi_{Q,\vec{d}}$ (y-axis) versus $\cD_{Q,\vec{d}}$ (x-axis) for $A_5$ and $A_5'$.}
	\label{fig:chi}
\end{figure}

\subsection{A remark on Abelianization}
	In many basic neural networks, each vertex of $Q$ is associated with a vector space of only dimension one.  When $\vec{d}=\vec{1}$, that is, all entries of the dimension vector equal to one, $\cM_{\vec{n},\vec{1}}$ is a quotient by the Abelian group $(\C^\times)^{\Sigma \vec{d}}$, and hence a toric variety.  Indeed, by Theorem \ref{thm:Reineke}, $\cM_{\vec{n},\vec{1}}$ is a tower of projective spaces $\bP^k$ for a sequence of $k$.

	Given $Q$ and $\vec{d}$, we can always construct a bigger quiver $Q^{\textrm{Ab},\vec{d}}$as follows.  For each vertex $i \in Q_0$, we make $d_i$ copies indexed by $(i,p)$ for $p=1,\ldots,d_i$.  For each arrow of $Q$ from $i$ to $j$, we make a corresponding arrow for $Q^{\textrm{Ab},\vec{d}}$ from $(i,p)$ to $(j,q)$ for every $p=1,\ldots,d_i$ and $q=1,\ldots,d_j$.  See Figure \ref{fig:Abel} for an example.
	
	\begin{figure}[htb!]
		\includegraphics[scale=0.5]{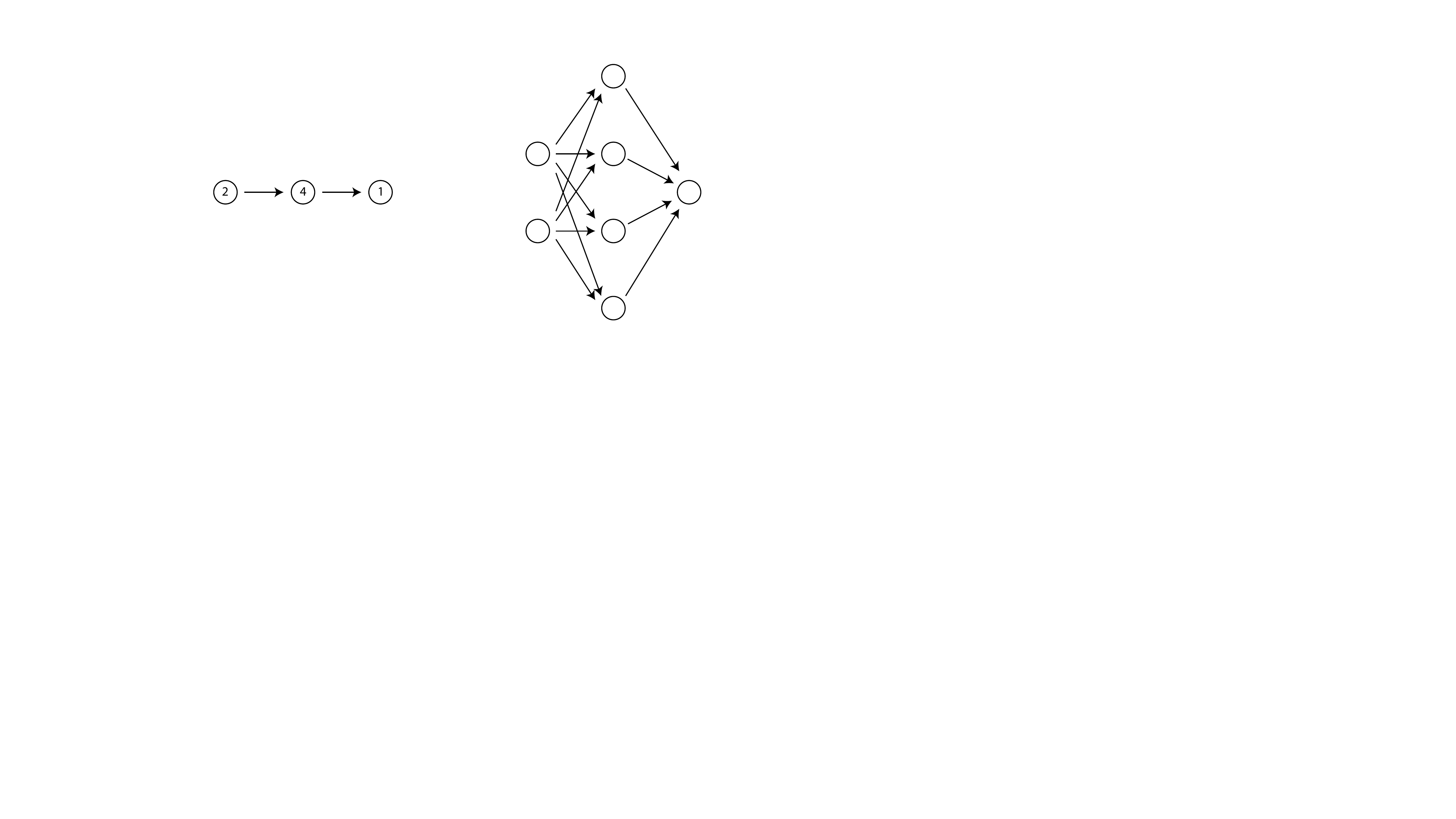}
		\caption{An example of Abelianization.  The LHS shows a quiver $Q$ together with the dimension vector $\vec{d}$.  The RHS shows $Q^{\mathrm{Ab},\vec{d}}$.}
		\label{fig:Abel}
	\end{figure}
	
	Given a dimension vector $\vec{n} \in Q_0$, define $\vec{n}^{\textrm{Ab}} \in Q^{\textrm{Ab},\vec{d}}_0$ by $\vec{n}^{\textrm{Ab}}_{(i,p)} = \vec{n}_i$ for all $p$.  The relation between $\cM^Q_{\vec{n},\vec{d}}$ and the toric variety $\cM^{Q^{\textrm{Ab},\vec{d}}}_{\vec{n}^{\textrm{Ab}},\vec{1}}$ is known as Abelianization and is well-studied in \cite{Martin}.  The basic example is $\Gr(n,d)$ (which is the framed moduli for the quiver with a single vertex), whose Abelianization is $(\bP^n)^d$ (the disconnected quiver with $d$ vertices and no arrow).
	
	Namely, the moduli spaces $\cM^Q_{\vec{n},\vec{d}}$ and $\cM^{Q^{\textrm{Ab},\vec{d}}}_{\vec{n}^{\textrm{Ab}},\vec{1}}$ are GIT quotients of the same vector space $R^Q_{\vec{n},\vec{d}} = R^{Q^{\textrm{Ab},\vec{d}}}_{\vec{n}^{\textrm{Ab}},\vec{1}}$ by $\GL_{\vec{d}}$ and $(\C^\times)^{\Sigma \vec{d}}$ respectively.  More precisely, we have the fiber bundle $\mu_{U_{\vec{d}}}^{-1}(\{I\}) / T \to \cM^Q_{\vec{n},\vec{d}}$ with fibers being a product of complete flags $U_{\vec{d}}/T^{\Sigma \vec{d}}$, and the inclusion $\mu_{U_{\vec{d}}}^{-1}(\{I\}) / T \subset \cM^{Q^{\textrm{Ab},\vec{d}}}_{\vec{n}^{\textrm{Ab}},\vec{1}}$.  The universal bundles $\cV_i$ over $\cM^Q_{\vec{n},\vec{d}}$ is descended from the direct sum of universal line bundles $\bigoplus_{p=1}^{d_i} \cV_{(i,p)}$ over $\cM^{Q^{\textrm{Ab},\vec{d}}}_{\vec{n}^{\textrm{Ab}},\vec{1}}$ (restricted to the above subset).  The cohomology of $\cM^Q_{\vec{n},\vec{d}}$ is generated by the Chern classes $c_k(\cV_i)$, which can be written as the $k$-th elementary symmetric polynomials in $c_1(\cV_{(i,p)})$ for $p=1,\ldots,d_i$.  On the other side, the cohomology of $\cM^{Q^{\textrm{Ab},\vec{d}}}_{\vec{n}^{\textrm{Ab}},\vec{1}}$ is generated by $c_1(\cV_{(i,p)})$.
	
	Note that the functions $f^Q_{\tilde{\gamma}}$ and $\cE^Q$ over  $\cM^Q_{\vec{n},\vec{d}}$ constructed in Section \ref{sec:ML} \emph{cannot} be lifted to $\cM^{Q^{\textrm{Ab},\vec{d}}}_{\vec{n}^{\textrm{Ab}},\vec{1}}$.  The reason is that, $f^Q_{\tilde{\gamma}}$ and $\cE^Q$ are $\GL_{\vec{d}}$-equivariant functions on $R^{Q,s}_{n,d}$, the subset of stable representations, rather than the whole $R^{Q}_{n,d}$.  The definition of $f^Q_{\tilde{\gamma}}$ involves the metrics on the universal bundles $\cV_i$, which take the expression $(\rho_i \rho_i^*)^{-1}$, and it is only defined over $R^{Q,s}_{n,d}$ where $\rho_i$ is surjective.  Rather, we have the functions $f^{Q^{\textrm{Ab},\vec{d}}}_{\tilde{\gamma}}$ and $\cE^{Q^{\textrm{Ab},\vec{d}}}$ on $\cM^{Q^{\textrm{Ab},\vec{d}}}_{\vec{n}^{\textrm{Ab}},\vec{1}}$, which uses the metrics $(\rho_{i,p} \rho_{i,p}^*)^{-1}$ on the line bundles $\cV_{i,p}$.
	
	Previously we have taken $\vec{n} = \vec{d} + \vec{1}$ for $\cM^Q_{n,d}$.  After Abelianization,  the dimension vectors $\vec{n}^{\mathrm{Ab}}$ and $\vec{1}$ for  $\cM^{Q^{\textrm{Ab},\vec{d}}}_{\vec{n}^{\textrm{Ab}},\vec{1}}$ no longer satisfy such equality.  This is actually not a problem, since the function $f^{Q^{\textrm{Ab},\vec{d}}}_{\tilde{\gamma}}$ defined on $\cM^{Q^{\textrm{Ab},\vec{d}}}_{\vec{2},\vec{1}}$ can be lifted to $\cM^{Q^{\textrm{Ab},\vec{d}}}_{\vec{n}^{\textrm{Ab}},\vec{1}}$.  Alternatively, we can set the $k$-th framing vectors to be zero for all $k=2,\ldots, d_i+1$ and for all vertices $(i,p)\in Q^{\textrm{Ab},\vec{d}}_0$.  This gives a subvariety of  $\cM^{Q^{\textrm{Ab},\vec{d}}}_{\vec{n}^{\textrm{Ab}},\vec{1}}$ which is isomorphic to $\cM^{Q^{\textrm{Ab},\vec{d}}}_{\vec{2},\vec{1}}$.

\section{Universal Approximation Theorem} \label{sec:approx}

In Section \ref{sec:toric}, we have introduced the multi-variable functions $\sigma$ coming from moment maps of toric varieties.  For instance, $\sigma_k(x)=\frac{e^{2 x_k}}{1+\sum_{j=1}^{d} e^{2 x_j}}$ for $X=\bP^d$.  In this section, we will give a theoretical basis for using this as an activation function, by proving the universal approximation theorem for this function.

The universal approximation theorem ensures that in theory, any given function on a compact set can be approximated (as close as you want) by the functions produced from directed graphs (denoted by $f^U_{[V,e]}$ in Section \ref{sec:flat}).  There are several different versions of this theorem \cite{Cybenko, LESHNO1993861, NEURIPS2018_03bfc1d4, lu2017expressive}.  To the authors' knowledge, the past works have focused on proving the theorem for single-variable activation functions.

In the work of Cybenko in proving the theorem below, rescaling on the domain of the activation function plays a key role.  The rescaling technique will also be very useful in our situation.

\begin{theorem}[\cite{Cybenko}]
	Let $\phi: \R \to \R$ be any continuous function with $\lim_{x\to\infty}\phi(x)=1$ and $\lim_{x\to-\infty}\phi(x)=0$. Let $K$ be a compact set in $\R^d$.
	Then the collection of functions $G: K \to \R$ of the form
	$$G(x)=\sum_{j=1}^N\alpha_j\phi(y_j^Tx+\theta_j)$$
	where $N \in \Z_{>0}$,$y_j \in \R^d$, and $\theta_j, \alpha_j \in \R$,
	are dense in the space of continuous functions $C(K)$.
\end{theorem}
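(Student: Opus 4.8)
The plan is to follow Cybenko's strategy: first reduce the density assertion to the statement that $\phi$ is \emph{discriminatory} --- meaning that the only finite signed regular Borel measure $\mu$ on $K$ with $\int_K \phi(y^{T}x+\theta)\,d\mu(x)=0$ for all $y\in\R^d$ and $\theta\in\R$ is $\mu=0$ --- and then to prove this property by a rescaling argument combined with Fourier uniqueness.

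For the reduction I would let $S\subseteq C(K)$ be the closed linear span of the functions $x\mapsto\phi(y^{T}x+\theta)$; each lies in $C(K)$ since $\phi$ is continuous, so the claim is $S=C(K)$. If $S\neq C(K)$, Hahn--Banach produces a nonzero bounded linear functional on $C(K)$ vanishing on $S$, and the Riesz representation theorem identifies it with a nonzero finite signed regular Borel measure $\mu$ annihilating every $\phi(y^{T}x+\theta)$. So it suffices to show such a $\mu$ must be $0$.

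The heart of the argument is the following. Fix $y\in\R^d$ and $b\in\R$; for every $\lambda>0$ and $\theta\in\R$ the pair $(\lambda y,\ \lambda b+\theta)$ is admissible, so $\int_K\phi\big(\lambda(y^{T}x+b)+\theta\big)\,d\mu(x)=0$. Since $\phi$ is bounded, letting $\lambda\to\infty$ and applying the bounded convergence theorem gives $\mu(\{y^{T}x>-b\})+\phi(\theta)\,\mu(\{y^{T}x=-b\})=0$, because the integrand converges pointwise to $\mathbf{1}_{\{y^{T}x+b>0\}}+\phi(\theta)\,\mathbf{1}_{\{y^{T}x+b=0\}}$. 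Now sending $\theta\to+\infty$ and $\theta\to-\infty$ and using $\phi(\theta)\to1$ and $\phi(\theta)\to0$ yields $\mu(\{y^{T}x\ge a\})=0$ and $\mu(\{y^{T}x=a\})=0$ for all $a\in\R$. Hence the pushforward signed measure $\nu$ on $\R$ given by $\nu(B)=\mu(\{x\in K:y^{T}x\in B\})$ vanishes on all half-lines and singletons, therefore on the algebra they generate, forcing $\nu=0$. Applying this to the bounded Borel function $s\mapsto e^{ims}$ (treating real and imaginary parts separately) gives $\int_K e^{imy^{T}x}\,d\mu(x)=0$; as $y$ and $m$ vary, $my$ covers all of $\R^d$, so the Fourier transform of $\mu$ vanishes identically and $\mu=0$. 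This shows $\phi$ is discriminatory, which by the reduction completes the proof.

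I expect the rescaling step to be the main obstacle: one must turn the soft, globally continuous $\phi$ into a sharp indicator of a half-space, which the dilation $y\mapsto\lambda y$ achieves, while the separate limits in the shift $\theta$ are precisely what is needed to annihilate the leftover boundary term $\phi(\theta)\,\mu(\{y^{T}x=-b\})$ on the bounding hyperplane. Once the measure is known to kill all half-spaces, the passage to Fourier transforms and the Hahn--Banach/Riesz wrapper are routine.
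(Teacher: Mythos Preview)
The paper does not give its own proof of this theorem: it is simply quoted from \cite{Cybenko} as background, with no argument supplied. So there is nothing in the paper to compare your proposal against for this particular statement.

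That said, your sketch is essentially Cybenko's original proof and is correct. The reduction via Hahn--Banach and Riesz to the discriminatory property, the rescaling $\lambda\to\infty$ to produce half-space indicators, the two limits in $\theta$ to kill the hyperplane term, and the passage to the Fourier transform of $\mu$ are exactly the standard steps. One small remark: you invoke boundedness of $\phi$ for dominated convergence; this is fine since a continuous function with finite limits at $\pm\infty$ is automatically bounded, but it is worth saying explicitly.

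It may interest you that the paper's own approximation result, Theorem~\ref{thm:app}, for the genuinely multivariable activation $\sigma(x)=\bigl(e^{2x_i}/(1+\sum_j e^{2x_j})\bigr)_i$, is proved by a completely different and more geometric route: rather than a measure-theoretic duality argument, the authors use the tropical limit $\sigma_t(x)=\sigma(tx)\to\sigma_\infty$ (Lemma~\ref{lem:sigma_infty}), construct ``centered polyhedral webs'' to chop $K$ into small pieces, and then realize these webs as affine-linear preimages of the fan of $\bP^{d_2}$ (Theorem~\ref{thm:web}). The rescaling idea is common to both, but Cybenko's functional-analytic machinery is replaced by explicit combinatorial geometry.
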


The above function $G$ can be understood as $f^U_{W_1,W_2,b}$ \eqref{eq:f^U-A3} produced from the graph $A_3$, when the dimension at the output vertex is $d_3 = 1$, and $\sigma: \R^{d_2} \to \R^{d_2}$ is taken to be $\sigma(\vec{x})=(\phi(x_1),\ldots,\phi(x_{d_2}))$.  (Take $d_2 = N$, $W_1 = (y_j^T)_{j=1,\ldots,N}$, $b=(\theta_j)_{j=1,\ldots,N}$, and $W_2 = (\alpha_j)_{j=1,\ldots,N}$.) 
For general dimension $d_3$, we simply have $(G_1,\ldots,G_{d_3})$, where $G_i$ are of the same form as above (with different $\alpha_{j,i}$) which can be used to approximate any given continuous function $K\to \R^{d_3}$.

We will prove the following theorem.
Consider the quiver with three vertices as in Section \ref{sec:eg}, and the function $f^U_{W_1,W_2,b}(v)=W_2(\sigma(W_1\cdot v + b))$ in \eqref{eq:f^U-A3}, where $\sigma$ is the multi-variable activation function on $\R^{d_2}$ made from $\bP^{d_2}$.

\begin{theorem} \label{thm:app}
	Let $K$ be a compact set of $\R^{d_1}$, and $f: K \to \R^{d_3}$ a continuous function.  For any $\epsilon>0$, there exists $d_2 > 0$ and $W_1 \in \Mat(d_2,d_1)$, $W_2 \in \Mat(d_3,d_2)$, $b \in \R^{d_2}$ such that $\|f^U_{W_1,W_2,b} - f\|_{L^2(K)} < \epsilon$.
\end{theorem}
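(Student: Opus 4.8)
The plan is to combine a \emph{tropical rescaling limit} of the activation $\sigma$ with the elementary $L^2$-density of step functions that are constant on the cells of a fine Voronoi diagram. Write $\ell(v):=W_1v+b\in\R^{d_2}$, so $f^U_{W_1,W_2,b}(v)=W_2\,\sigma(\ell(v))$. The key observation about $\sigma$ is that, dividing numerator and denominator by $e^{2t\,\ell_i(v)}$, one has $\sigma(t\,\ell(v))_i=\big(e^{-2t\,\ell_i(v)}+\sum_{j=1}^{d_2}e^{2t(\ell_j(v)-\ell_i(v))}\big)^{-1}$, which tends as $t\to+\infty$ to $1$ if $\ell_i(v)>0$ and $\ell_i(v)>\ell_j(v)$ for all $j\ne i$, and to $0$ otherwise. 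Hence, away from the Lebesgue-null locus where two of the affine functions $0,\ell_1,\dots,\ell_{d_2}$ agree, $\sigma(t\,\ell(\cdot))_i\to\mathbf{1}_{P_i}$ with $P_i:=\{v:\ell_i(v)\ge 0,\ \ell_i(v)\ge\ell_j(v)\ \forall j\}$; the cells $P_1,\dots,P_{d_2}$ together with the centre cell $P_0:=\{v:\ell_j(v)\le 0\ \forall j\}$ form what I will call the \emph{centered polyhedral web} of $\max(0,\ell_1,\dots,\ell_{d_2})$. Since $K$ is compact, $\sigma(t\,\ell(\cdot))$ is dominated by a constant lying in $L^2(K)$, so dominated convergence gives $f^U_{tW_1,W_2,tb}\to g$ in $L^2(K,\R^{d_3})$ as $t\to+\infty$, where $g$ is the step function equal to the $i$-th column of $W_2$ on each $P_i$ and to $0$ on $P_0$.

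I would then choose the web so that the cells meeting $K$ are those of a fine Voronoi partition. Fix a cube $Q$ with $K$ contained in its interior, place sites $s_1,\dots,s_m$ on a grid in $Q$ of spacing small enough that every Voronoi cell that meets $K$ has diameter $<\delta$, and put $\ell_i(v):=2\,s_i\cdot v-|s_i|^2+C$. Then $\{v:\ell_i(v)\ge\ell_j(v)\ \forall j\}$ equals the Voronoi cell $V_i$ of $s_i$, and choosing $C$ large enough that $\ell_i>0$ on $K$ for all $i$ forces $P_0$ to miss $K$, so $P_i\cap K=V_i\cap K$. Now set $d_2:=m$, let $W_1$ have rows $2\,s_i^{T}$, let $b:=(-|s_i|^2+C)_{i=1}^m$, and let $W_2$ have $i$-th column $c_i:=f(v_i)$ for a chosen point $v_i\in V_i\cap K$ when $V_i\cap K\ne\emptyset$ (and $c_i:=0$ otherwise). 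By the first paragraph, $f^U_{tW_1,W_2,tb}\to g=\sum_{i}c_i\,\mathbf{1}_{V_i\cap K}$ in $L^2(K,\R^{d_3})$ as $t\to+\infty$.

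Finally, uniform continuity of $f$ on $K$ (Heine--Cantor) gives $\|g-f\|_{L^2(K)}\le\omega_f(\delta)\,\mathrm{vol}(K)^{1/2}$, where $\omega_f$ is a modulus of continuity of $f$. Choose $\delta$ so that the right-hand side is $<\epsilon/2$, then $t$ so large that $\|f^U_{tW_1,W_2,tb}-g\|_{L^2(K)}<\epsilon/2$; the data $W_1':=tW_1\in\Mat(m,d_1)$, $W_2\in\Mat(d_3,m)$, $b':=tb\in\R^m$ then satisfy $\|f^U_{W_1',W_2,b'}-f\|_{L^2(K)}<\epsilon$, proving the theorem with $d_2=m$.

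The step I expect to be the main obstacle — and the one for which the centered polyhedral web is the natural language — is the middle paragraph: one must produce partitions of $K$ realizable in the form $\sigma\circ(\text{affine})$ that are simultaneously arbitrarily fine and \emph{genuine partitions}, not overlapping half-space arrangements. The constraint $\sum_i\sigma(x)_i\le 1$ rules out the latter (no two components can tend to $1$ at the same point), which is precisely why one is forced into the polyhedral-web / regular-subdivision picture; power and Voronoi diagrams are the realizable partitions that are nonetheless flexible enough, and the shift by $C$ conveniently deletes the centre cell. Without that shift one runs the same rescaling argument with general, possibly sign-changing affine functions and must keep track of $P_0$, which is combinatorially heavier but still works. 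The remaining ingredients — the tropical limit itself and the $L^2$-density of step functions — are routine.
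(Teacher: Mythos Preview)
Your proof is correct and takes a genuinely different route from the paper's.  Both arguments share the same two ``outer'' steps: the tropical rescaling $\sigma(t\,\ell(\cdot))\to\sigma_\infty$ pointwise a.e., promoted to $L^2(K)$ by dominated convergence, and the uniform-continuity step function approximation.  The difference lies entirely in how the partition of $K$ is produced.

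The paper builds an elaborate inductive object, the \emph{centered simplicial web}, and proves a separate theorem (Theorem~\ref{thm:web}) showing that any such web in $\R^{d_1}$ arises as the preimage of the fan of $\bP^{d_2}$ under an affine embedding $L:\R^{d_1}\to\R^{d_2}$; the partition is then obtained by first constructing a concentric polyhedral web refining $K$, approximating it by a centered simplicial web, and invoking Theorem~\ref{thm:web}.  You bypass this machinery entirely with the classical observation that the level-set decomposition of $\max_i(2\,s_i\cdot v-|s_i|^2)$ is exactly the Voronoi diagram of the sites $s_i$, so a grid of sites inside a large cube $Q\supset K$ gives a partition with arbitrarily small mesh and an explicit $(W_1,b)$ in one line.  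The additive shift by $C$ to force $P_0\cap K=\emptyset$ is a nice touch that lets you keep $W_2$ linear (a genuine matrix), whereas the paper's proof implicitly needs $W_2$ affine to assign an arbitrary value on the chamber corresponding to $e_0=0$.

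What each approach buys: yours is shorter, self-contained, and avoids the somewhat delicate inductive geometry of Definition~\ref{def:web} and the proof of Theorem~\ref{thm:web}.  The paper's route, on the other hand, makes the link to toric/tropical geometry explicit (the web is literally a slice of the fan of $\bP^{d_2}$), which is consonant with the paper's broader theme; it also yields, in principle, smaller $d_2$ for a given refinement, since only one new coordinate is introduced per compact chamber rather than one per Voronoi site.
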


The compact set is given as a subset in $\R^n$.  Thus from now on we restrict to the real field, which will suffice for the theorem.  This means we take real-valued matrices and the real part $\sigma$ of $\sigma_\C$.




\subsection{Tropical limit}
A crucial idea in the work of \cite{Cybenko} is to compose $\phi$ with a rescaling, so that it tends to a step function in the limit.  We can apply such a rescaling to the multi-variable function 
$\sigma: \R^d \to P^\circ$.  This is well-known in toric geometry and is called the tropical limit. 

Let $\Sigma$ be the dual fan of the moment polytope $P$.  $\Sigma$ is the collection of cones that are dual to the boundary strata of the polytope $P$.  In particular, maximal cones of $\Sigma$ are one-to-one corresponding to corners of $P$.  

We assume that $|\Sigma| = \R^d$.  $\R^d$ is stratified into the relative interiors of cones in $\Sigma$.
We recall the following interesting fact from toric geometry.  It plays an important role in the study of holomorphic discs and Lagrangian Floer theory for toric varieties.

\begin{lemma} \label{lem:sigma_infty}
	Let $X_\Sigma$ be a toric variety (equipped with any toric K\"ahler form).  Let $C$ be a cone in $\Sigma$.  For any $x\not=0$ which lies in the relative interior of $C$, $p_C=\lim_{t\to -\infty} \sigma(t x)$ exists and equal to a point (which is independent of $x$) in the boundary stratum of $P$ that is dual to $C$.
	
	In other words, the family of functions $\sigma_t(x) = \sigma(tx)$ converges (as $t \to +\infty$) to the discontinuous function $\sigma_\infty$, where $\sigma_\infty (x) = p_C$ if $x$ belongs to the relative interior of $C$.
\end{lemma}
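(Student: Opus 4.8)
The plan is to use the fact that, by construction (Theorem~\ref{thm:toric} and Remark~\ref{rmk:toric}), $\sigma$ is, up to an affine normalization, the inverse of the gradient map of the \emph{symplectic potential}
$$ g(x) = \tfrac12\sum_{i=1}^m \ell_i(x)\log\ell_i(x) + h(x), $$
with $h$ smooth on the \emph{closed} polytope $\bar P$ (and $h$ linear in the standard case). Since $s\mapsto s\log s$ extends continuously to $[0,\infty)$ with value $0$ at $0$, the function $g$ is continuous on $\bar P$, while along every inward direction at a boundary point of $P$ its directional derivative is $-\infty$ (the term $\ell_i\log\ell_i$ with $\ell_i\to0$ dominates). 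Hence $q\mapsto\langle y,q\rangle-g(q)$ is strictly concave on $P^\circ$, continuous on $\bar P$, and strictly increasing as one moves inward from $\partial P$; it therefore attains its maximum over $\bar P$ at the unique interior critical point solving $\nabla g(q)=y$, that is,
$$ \sigma(y) = \operatorname*{arg\,max}_{q\in\bar P}\big(\langle y,q\rangle - g(q)\big)\in P^\circ \qquad(y\in\R^d). $$

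First I would locate the limiting face. Fix $x\neq0$ in the relative interior of a cone $C\in\Sigma$ and let $F$ be the boundary stratum of $P$ dual to $C$; by definition of the normal (= dual) fan, $\langle x,\cdot\rangle$ attains its maximum $M$ over $\bar P$ exactly along $\bar F$. Since $\sigma(tx)=\operatorname*{arg\,max}_{q\in\bar P}\big(\langle x,q\rangle-\tfrac1t g(q)\big)$ and $g$ is bounded on the compact set $\bar P$, comparing the value of $\langle x,\cdot\rangle-\tfrac1t g$ at $q_t:=\sigma(tx)$ with its value at a fixed point of $\bar F$ and letting $t\to+\infty$ shows $\langle x,q_t\rangle\to M$; as $\bar P$ is compact, $\{q_t\}$ is relatively compact and every cluster point lies in $\bar F$.

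Next I would pin down the cluster point. For any $q\in\bar F$, the maximizing property of $q_t$ gives $g(q)-g(q_t)\ge t\big(\langle x,q\rangle-\langle x,q_t\rangle\big)=t\big(M-\langle x,q_t\rangle\big)\ge0$, so $g(q_t)\le g(q)$ for \emph{every} $q\in\bar F$ and every $t>0$. Because $\ell_i\equiv0$ on $\operatorname{aff}(F)$ for the facets $i$ containing $F$, and the transverse blow-up of $\operatorname{Hess}g$ is carried by directions normal to $F$, the restriction $g|_{\bar F}$ is again (up to a smooth summand) a symplectic potential of the polytope $F$: it is continuous on $\bar F$, strictly convex on $F^\circ$, and has inward directional derivative $-\infty$ at $\partial F$; hence it has a \emph{unique} minimizer $p_C$, with $p_C\in F^\circ$. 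Passing to any subsequential limit $q_{t_n}\to p\in\bar F$ and using continuity of $g$ on $\bar P$ yields $g(p)=\lim_n g(q_{t_n})\le g(q)$ for all $q\in\bar F$, forcing $p=p_C$. As the limit is independent of the subsequence, $\lim_{t\to+\infty}\sigma(tx)=p_C$, which depends on $x$ only through $C$ and lies in the relative interior of the boundary stratum dual to $C$; the case $t\to-\infty$ follows by replacing $x$ with $-x$.

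The step I expect to be the crux is the claim that the restriction of the symplectic potential $g$ to the affine hull of a face is again a symplectic potential of that face — equivalently, that $g|_{F}$ is strictly convex on $F^\circ$ with gradient blowing up along $\partial F$. This is the standard face-compatibility property of symplectic potentials, and it follows from Abreu's characterization used in Theorem~\ref{thm:toric}: the $\ell_i\log\ell_i$ terms indexed by facets through $F$ contribute only to directions normal to $F$, so $\operatorname{Hess}(g|_{\operatorname{aff}F})$ is the finite limit of $\operatorname{Hess}g$ from $P^\circ$ and is still positive definite, while the remaining $\ell_i\log\ell_i$ terms reproduce exactly the Guillemin potential of $F$. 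This is precisely what upgrades ``the limit lies in $\bar F$'' to ``the limit is the single interior point $p_C$.''
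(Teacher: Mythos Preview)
Your argument is correct and takes a genuinely different route from the paper's proof. The paper works directly in a toric chart: it picks a maximal cone $C^{\max}\supset C$, uses the associated affine chart $\C^d\subset X$, observes that the lift $(e^{tx_1},\dots,e^{tx_k},1,\dots,1)$ of $tx$ converges (as $t\to-\infty$) to the concrete point $(0,\dots,0,1,\dots,1)$ in the torus orbit corresponding to $C$, and then applies continuity of the moment map. It is a two-line geometric computation that exhibits $p_C$ as the moment-map image of a specific point.

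Your approach is convex-analytic: you identify $\sigma=(\nabla g)^{-1}$ as the Legendre arg-max, use boundedness of $g$ on $\bar P$ to force cluster points into the dual face $\bar F$, then extract the sharper inequality $g(q_t)\le g(q)$ for every $q\in\bar F$ to pin the limit down as the unique minimizer of $g|_{\bar F}$. This buys you an \emph{intrinsic} characterization of $p_C$ (as the minimum of the restricted symplectic potential) that is independent of any choice of chart, at the cost of invoking the face-compatibility of symplectic potentials, which you correctly flag as the crux. The paper's proof sidesteps that lemma entirely by never needing to analyze $g|_F$.

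One caveat on conventions: the fan $\Sigma$ of a toric variety is the \emph{inner} normal fan of $P$ (the rays are the $v_i$, and $\ell_i=\langle v_i,\cdot\rangle-c_i\ge0$ on $P$), so for $x$ in the relative interior of $C\in\Sigma$ the functional $\langle x,\cdot\rangle$ is \emph{minimized}, not maximized, along the dual face. Your computation is for the outer normal fan; this is exactly the sign swap $x\leftrightarrow -x$ (equivalently $t\to+\infty$ versus $t\to-\infty$) that you note at the end, and it matches the discrepancy already present in the two paragraphs of the lemma statement itself. So your final sentence is doing more work than it appears: it is not just covering an extra case but reconciling your convention with the paper's.
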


\begin{proof}
	The cone $C$ corresponds to a complex torus orbit of the toric variety $X$.  To be more explicit, consider a maximal cone $C^{\mathrm{max}}$ that contains $C$.  Without loss of generality, let $C^{\mathrm{max}} = \R_{\geq 0} \cdot\{v_1,\ldots,v_d\}$, and $C = \R_{\geq 0} \cdot\{v_1,\ldots,v_k\}$.  $C^{\mathrm{max}}$ gives a local chart $\C^d$ of the toric variety, and the complex torus orbit corresponding to $C$ is given by $z_1=\ldots=z_k=0$.
	
	We have a special point given by $z_1=\ldots=z_k=0,\, z_{k+1}=\ldots=z_{d}=1$ in the orbit.  We assert that $p_C=\lim_{t\to -\infty} \sigma(t x) \in P$ (for any $x$ in the relative interior of $C$) is the moment-map image of this point.
	
	To see this, we write $x = \sum_{i=1}^k x_i v_i$ where $x_i\not=0$ for all $i=1,\ldots,k$.  Consider the lifting of $tx = (tx_1,\ldots,tx_k,0,\ldots,0)$: $(e^{tx_1},\ldots,e^{tx_k},1,\ldots,1)$ in the chart $\C^d \subset X$.  Then $\sigma(tx)$ is the moment-map image of $(e^{tx_1},\ldots,e^{tx_k},1,\ldots,1)$.  Taking $t\to -\infty$, $(e^{tx_1},\ldots,e^{tx_k},1,\ldots,1) \to (z_1=\ldots=z_k=0, z_{k+1}=\ldots=z_{d}=1)$.  Thus $\sigma(tx)$ converges to the above special point $p_C$.
\end{proof}

In terms of solving equations, $p_C$ is the solution of the simultaneous equations $x_1=\ldots=x_k=0$ and $x_i \prod_{j=d+1}^m \ell_j^{v_{j,i}}\left(0,\ldots,0,x_{k+1},\ldots,x_d\right) = 1$ for $i=k+1,\ldots,m$.  (We have used the dual basis of $\{v_1,\ldots,v_d\}$ to write the coordinates of $P$, and $v_j = \sum_{i=1}^d v_{j,i} v_i$.) By above, the solution exists and is unique.

\begin{example}  \label{ex:Pd}
	Consider $X=\bP^d$.  Denote the coordinates of $\R^d$ by $(x_1,\ldots,x_d)$, and set $x_0 \equiv 0$.  The $(d+1-l)$-cones $C$ of $\Sigma$ are given by $\left\{x_{i_1}=\ldots=x_{i_l} > x_k \textrm{ for all } k\in \{0,\ldots,d\}-\{i_1,\ldots,i_l\}\right\}$, where $i_1,\ldots,i_l \in \{0,\ldots,d\}$ are fixed, $l=1,\ldots,d+1$.  For $\sigma = \left( \frac{e^{2 x_p}}{1+\sum_{j=1}^{d} e^{2 x_j}} \right)_{p=1}^d$, the point $p_C=\lim_{t\to -\infty} \sigma(t x)$ has coordinates $(p_C)_{i_r} = 1/l$ for $r=1,\ldots,l$ and $i_r \not=0$, and $(p_C)_j = 0$ for all other $j\not=i_1,\ldots,i_l,0$.  In particular, for the maximal cones $S_i = \{x_i > x_k \textrm{ for all } k\in \{0,\ldots,d\}-\{i\}\}$, $p_{S_i} = \epsilon_i$ for $i=0,\ldots,d$ where $\epsilon_0 := 0$ and $\{\epsilon_1,\ldots,\epsilon_d\}$ denotes the standard basis.
\end{example}

\begin{figure}[htb!]
	\includegraphics[scale=0.3]{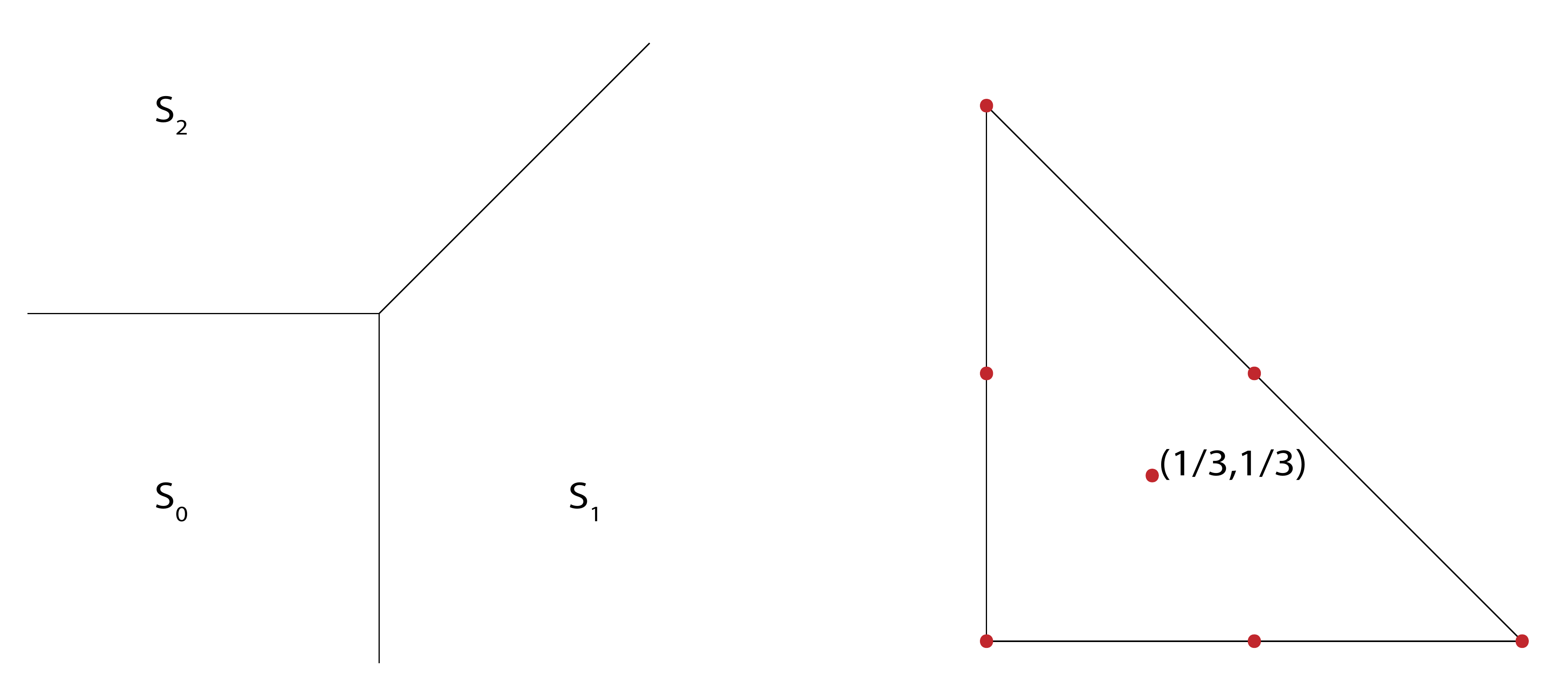}
	\caption{The left shows the fan picture of $\bP^2$, and the right shows the moment-map polytope.  The dots show the limit points $p_C$ for each cone $C$ of the fan.}
	\label{fig:P2}
\end{figure}

\begin{corollary} \label{cor:sigma_inf}
	Let $K$ be any compact set in $\R^d$.  For any $\epsilon>0$ and an open neighborhood of the union $U$ of codimension-one strata of $\Sigma$, there exists $t\gg 0$ such that $|\sigma_t(x) - \sigma_\infty(x)| < \epsilon$ for all $x \in K-U$.  ($\sigma_\infty$ is defined in Lemma \ref{lem:sigma_infty}.)
\end{corollary}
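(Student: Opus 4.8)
The plan is to deduce Corollary~\ref{cor:sigma_inf} from the pointwise limit in Lemma~\ref{lem:sigma_infty} by a compactness argument, made effective through the explicit formula for $\sigma$ recorded in Remark~\ref{rmk:toric}. One point must be handled with care first, namely the meaning of $U$: for a genuinely \emph{uniform} estimate on $K-U$, the set $U$ has to be a neighborhood of the entire $(d-1)$-skeleton of $\Sigma$ (equivalently, of the discontinuity locus of $\sigma_\infty$), not merely of the relative interiors of the top-dimensional walls. Near a lower-dimensional cone the rescaling rate degenerates, so a neighborhood that is too thin there would still contain points deep inside maximal cones at arbitrarily small scale, where $\sigma_t$ has not yet converged. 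So I would fix notation: let $\Delta\subset\R^d$ be the union of all cones of $\Sigma$ of dimension $<d$ (the closure of the union of the codimension-one strata), which is precisely where $\sigma_\infty$ is discontinuous, and take $U$ to be any open neighborhood of $\Delta$. Then $K':=K\setminus U$ is compact and contained in $\R^d\setminus\Delta=\bigsqcup_{C\text{ maximal}}\mathrm{int}(C)$.

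Next I would extract a uniform gap. Writing $\vec u_1,\dots,\vec u_r$ for the corners of $P$ (equivalently the maximal cones $C_i:=\{\vec s:(\vec u_i,\vec s)\ge(\vec u_j,\vec s)\ \forall j\}$ of $\Sigma$), Remark~\ref{rmk:toric} gives
$$\sigma(\vec s)=\frac{\sum_i e^{2(\vec u_i,\vec s)}\,\vec u_i}{\sum_j e^{2(\vec u_j,\vec s)}},\qquad \sigma_\infty(\vec s)=\vec u_i\ \text{ for }\vec s\in\mathrm{int}(C_i).$$
For each $i$ set $K_i':=K'\cap C_i$, a compact subset of $\mathrm{int}(C_i)$. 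The continuous function $\vec s\mapsto (\vec u_i,\vec s)-\max_{j\neq i}(\vec u_j,\vec s)$ is strictly positive on $\mathrm{int}(C_i)$, hence bounded below by some $\delta_i>0$ on $K_i'$. Let $\delta:=\min\{\delta_i:K_i'\neq\varnothing\}>0$ (if every $K_i'$ is empty the statement is vacuous).

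Finally I would run the elementary estimate. For $\vec s\in K_i'$ and $t>0$,
$$\sigma(t\vec s)-\sigma_\infty(\vec s)=\sigma(t\vec s)-\vec u_i=\frac{\sum_{j\neq i}e^{2t\big((\vec u_j,\vec s)-(\vec u_i,\vec s)\big)}(\vec u_j-\vec u_i)}{1+\sum_{j\neq i}e^{2t\big((\vec u_j,\vec s)-(\vec u_i,\vec s)\big)}},$$
and since each exponent is $\le -2t\delta$ while the denominator is $\ge 1$, we get $|\sigma(t\vec s)-\sigma_\infty(\vec s)|\le M e^{-2t\delta}$ with $M:=\max_i\sum_j|\vec u_j-\vec u_i|$ a constant depending only on $P$; crucially this bound is uniform over $\vec s\in K'=K\setminus U$. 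Choosing $t\gg 0$ with $M e^{-2t\delta}<\epsilon$ then finishes the proof. (For $X=\bP^d$ one may argue directly from $\sigma_k(x)=e^{2x_k}/\sum_{j=0}^d e^{2x_j}$ as in Example~\ref{ex:Pd}, with the same gap $\delta=\min_i\min_{K_i'}(x_i-\max_{j\neq i}x_j)$.) The only genuine obstacle is the clarification of $U$ noted above; once that is in place the argument is just compactness plus the exponential bound.
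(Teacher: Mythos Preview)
Your proof is correct and in fact more careful than the paper's own argument. The paper gives a three-line soft sketch: every $x\in K-U$ lies in the interior of a maximal cone, $\sigma_t(x)\to p_C$ pointwise by Lemma~\ref{lem:sigma_infty}, both $\sigma_t$ and $\sigma_\infty$ are continuous on $K-U$, and then ``the result follows from the compactness of $K-U$.'' But pointwise convergence of continuous functions to a continuous limit on a compact set does \emph{not} by itself yield uniform convergence; one needs monotonicity, equicontinuity, or an explicit rate. Your extraction of the uniform gap $\delta>0$ on each compact piece $K_i'\subset\mathrm{int}(C_i)$ and the resulting exponential bound $Me^{-2t\delta}$ supply exactly that missing ingredient, so your route is genuinely different and strictly more rigorous. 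Your clarification about $U$ is also well taken and is implicitly assumed in the paper's first sentence: for $K\setminus U$ to sit inside the union of open maximal cones, $U$ must contain the whole $(d-1)$-skeleton (including the origin and all lower-dimensional faces), otherwise your $\delta$ could degenerate to $0$ along a sequence in $K\setminus U$ and no uniform $t$ would exist.

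One small caveat: the formula you quote from Remark~\ref{rmk:toric} is for the K\"ahler structure induced from a projective embedding, and the $\vec u_i$ there range over a spanning set of lattice points of $P$, not only the corners. Your estimate still goes through unchanged, since on $\mathrm{int}(C_i)$ the linear functional $\langle\,\cdot\,,\vec s\rangle$ attains its strict maximum over all of $P$ at the corner $\vec u_i$, so every exponent with $j\neq i$ is still $\le -2t\delta$. In any case the only instance used downstream (Theorem~\ref{thm:app}) is $X=\bP^d$, where the $\vec u_i$ are exactly the $d+1$ corners of the simplex (Example~\ref{ex:Pn}) and your argument applies verbatim.
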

\begin{proof}
	Any $x \in K-U$ belongs to one of the maximal cones $C$.
	By Lemma \ref{lem:sigma_infty}, $\sigma_t(x)$ converges to $p_C$.  Moreover, both $\sigma_t$ and $\sigma_\infty$ are continuous on $K-U$.  Then the result follows from the compactness of $K-U$.
\end{proof}

In order to prove Theorem \ref{thm:app}, we consider a particular type of polyhedral decompositions of $\R^n$, which we call to be a centered simplicial web.

\subsection{Centered polyhedral web}

\begin{defn} \label{def:web}
	A centered simplicial web with $N$ ordered compact chambers in $\R^n$ is a polyhedral decomposition of $\R^n$ whose vertices are all trivalent, defined inductively on the number of compact chambers as follows.
	
	A centered simplicial web with zero compact chamber is the polyhedral decomposition given by the fan of $\bP^n$, up to an affine linear isomorphism in $\GL(n,\R) \ltimes \R^n$.
	
	Now suppose the notion of a centered simplicial web with $N$ ordered compact chambers has been defined, which has exactly $(n+1)$ non-compact rays (which we call the outer rays), whose corresponding infinite lines intersect at exactly one point called the $N$-th center that lies in the union of the $N$ compact chambers.   Moreover, the web is required to have $(n+1)$ non-compact chambers; each non-compact chamber is adjacent to $n$ outer rays and opposite to the remaining one outer ray.  (`Opposite' here means that the non-compact chamber is disjoint from the corresponding outer ray, whose infinite line intersects the chamber at a half-line.)  The outer rays are one-to-one corresponding to their opposite non-compact chambers.
	
	A centered simplicial web with $(N+1)$ ordered compact chambers is defined as follows.  First, take a centered simplicial web with $N$ ordered compact chambers.  Second, we choose a non-compact chamber, and denotes the direction of its opposite ray by a non-zero vector $v$.  Third, we take an affine hyperplane which intersects all the relative interior of the $n$ adjacent rays of the non-compact chamber.  This bounds a new compact chamber and the intersection points $V_i$ are the new vertices.
	Finally, we choose the $(N+1)$-th center to be $c_{N+1}=c_N-tv$, where $c_N$ is the $N$-th center, and $t\in \R_{>0}$ is taken such that $c_{N+1}$ lies in the union of the compact chambers (including the new one).  Then we have $n$ new rays emanated from the vertices $V_i$ whose infinite lines pass through $c_{N+1}$.  This gives a new web with $(N+1)$ ordered compact chambers, and it still has $(n+1)$ non-compact chambers, each of which is adjacent to $n$ outer rays and opposite to one outer ray.
\end{defn}

See Figure \ref{fig:trop-eg} for some examples of centered simplicial webs in $\R^2$.

\begin{figure}[htb!]
	\includegraphics[scale=0.3]{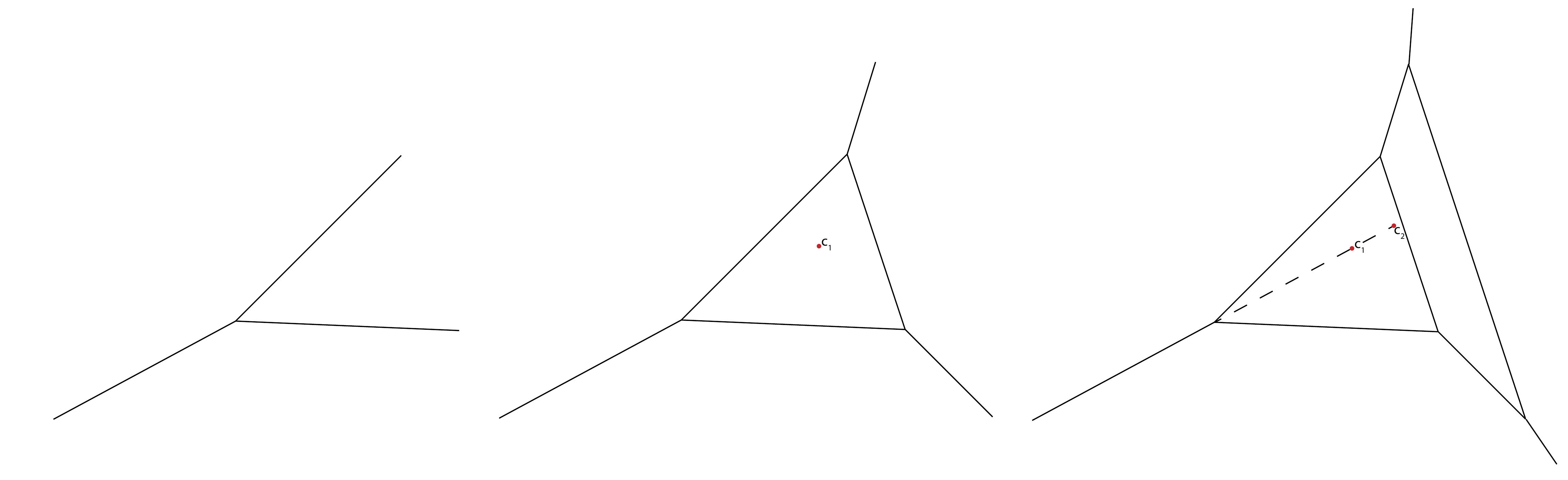}
	\caption{Examples of centered simplicial web in $\R^2$.  They have zero,one, and two compact chambers respectively.}
	\label{fig:trop-eg}
\end{figure}

From the above definition, there is a one-to-one correspondence between the centers and compact chambers.  Moreover, the $(k+1)$-th compact chamber $C_{k+1}$ (for $k=1,\ldots,N-1$) is associated with a one-strata of the web, which is a subset of the opposite ray of the non-compact chamber containing $C_{k+1}$ in the $(k+1)$-th inductive step.  Furthermore, both the $k$-th and $(k+1)$-th centers lie in the infinite line of the associated 1-strata of $C_{k+1}$.

\begin{remark}
	The above notion is closely related to tropical subvarieties.  In the tropical context, there is an integral structure on the ambient space and the balancing condition (whose definition requires the integral structure) is imposed on each vertex of a tropical variety.  However, we do not have an integral structure here, since $\sigma_\C$ is defined on the universal cover $\C^d$ rather than $(\C^\times)^d$ (see Corollary \ref{cor:sympl} and Example \ref{ex:Pn}).  It means affine linear maps are taken over $\R$ rather than over $\Z$.  Instead of the balancing condition, we impose the notion of centers in the above definition.
\end{remark}

\begin{theorem} \label{thm:web}
	Given a centered simplicial web $A$ with $N$ ordered compact chambers in $\R^n$, there exists an affine-linear embedding $L:\R^n \to \R^d$, where $d=n+N$, such that the $L$-preimage of the fan of $\bP^d$ in $\R^d$ equals to the web $A$.
\end{theorem}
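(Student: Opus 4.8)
The plan is to argue by induction on the number $N$ of compact chambers, reducing everything to the tropical description of the fan of $\bP^d$ recorded in Example~\ref{ex:Pd}. Writing the coordinates on $\R^d$ as $(x_1,\dots,x_d)$ and setting $x_0\equiv 0$, the maximal cone $S_i$ of the fan $\Sigma_{\bP^d}$ is $\{x_i\ge x_k\ \text{for all }k\}$, and more generally the cone indexed by a subset $I\subseteq\{0,\dots,d\}$ is $\{\,x_i=x_j\ (i,j\in I),\ x_i\ge x_k\ (i\in I,\ k\notin I)\,\}$. Hence for an affine-linear map $L=(f_1,\dots,f_d)\colon\R^n\to\R^d$, putting $f_0\equiv 0$, the preimage $L^{-1}(\Sigma_{\bP^d})$ is exactly the polyhedral decomposition of $\R^n$ into the regions on which a fixed subset of $\{f_0,\dots,f_d\}$ attains the common maximum; equivalently (the standard Newton-subdivision duality), it is the decomposition dual to the regular subdivision of the point configuration $\{0,a_1,\dots,a_d\}\subset\R^n$, where $a_i$ is the linear part of $f_i$ and the lifting heights are $f_i(0)$, with vertices of the subdivision's polytope corresponding to the non-compact chambers and interior points of the configuration to the compact ones. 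So the theorem amounts to: every centered simplicial web with $N$ compact chambers comes, up to an affine-linear relabelling, from a regular triangulation of an $n$-simplex together with $N$ extra points in its interior, all $n+1+N$ points being used as vertices. For $N=0$ this is immediate --- an affine-linear automorphism $g\in\GL(n,\R)\ltimes\R^n$ takes the fan of $\bP^n$ to $A$, so $L:=g^{-1}$ (with $d=n$) works.

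For the inductive step, recall that a web $A'$ with $N+1$ compact chambers is built from a web $A$ with $N$ by choosing a non-compact chamber $D$ (with opposite outer ray direction $v$), cutting $D$ by an affine hyperplane $\mathcal H$ meeting the relative interiors of all $n$ edges of $D$, and moving the center to $c_{N+1}=c_N-tv$ for a suitable $t>0$. Given, by induction, $L=(g_1,\dots,g_d)\colon\R^n\to\R^d$ ($d=n+N$, $g_0\equiv 0$) with $L^{-1}(\Sigma_{\bP^d})=A$, I will append one affine function $g_{d+1}$ and set $L'=(g_1,\dots,g_d,g_{d+1})\colon\R^n\to\R^{d+1}$. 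Then $L'^{-1}(\Sigma_{\bP^{d+1}})$ is the max-decomposition of $\{g_0,\dots,g_{d+1}\}$, which refines $A$: the new chamber is $\{g_{d+1}\ge g_k\ \forall k\le d\}$, while every other cell of $A$ persists except where $g_{d+1}$ beats the previous maximum. An affine function on $\R^n$ carries exactly $n+1$ real parameters, the same number the web operation carries ($n$ for $\mathcal H$, one for $t$); in the dual picture, appending $g_{d+1}$ adjoins one new point $p$ to the configuration, placed in the interior of the simplex dual to the current center $c_N$, with a height small enough that $p$ is genuinely inserted --- i.e.\ a stellar subdivision at $p$ --- which leaves every other simplex untouched. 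Matching this combinatorial data then gives $L'^{-1}(\Sigma_{\bP^{d+1}})=A'$, and $L'$ remains an embedding since $L$ is and appending a coordinate preserves injectivity; this closes the induction.

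\textbf{The main obstacle} is the verification, in the inductive step, that $g_{d+1}$ can always be chosen to reproduce the web operation \emph{exactly}. One must show that the hypothesis that $\mathcal H$ meets the relative interiors of all $n$ edges of the simplicial cone $D$ is precisely what forces $\{g_{d+1}\ge g_k\ \forall k\}$ to be a bounded $n$-simplex rather than unbounded or of smaller dimension; that the overall scale of $g_{d+1}$ can be taken small enough that no chamber of $A$ away from $D$ is disturbed while the $n+1$ non-compact chambers of $A'$ and their new outer rays emerge correctly; and --- most delicately --- that the centered requirement ``$c_{N+1}=c_N-tv$ with $c_{N+1}$ in the union of the compact chambers'' corresponds under the duality exactly to the range of admissible heights for the new point $p$ that keeps the stellar subdivision regular. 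Pinning down this dictionary between the inductive generation of centered simplicial webs and iterated placings in a regular triangulation is where the real content lies; once it is in place the remainder is bookkeeping, with the parameter count above serving as a consistency check.
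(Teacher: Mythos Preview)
Your approach via the tropical max-description, appending one affine coordinate at each step, is genuinely different from the paper's proof. The paper goes in the opposite direction: given $A$ with $N$ compact chambers in $\R^n$, it explicitly builds a centered simplicial web $B$ in $\R^{n+1}$ with $N-1$ compact chambers whose slice by $\R^n\times\{0\}$ is $A$, and iterates until reaching a web with no compact chamber, which is then affinely isomorphic to the fan of $\bP^{n+N}$. Your route is natural and can be made to work, but the dual picture you set up is backwards, and this is not a detail left to bookkeeping --- it makes the inductive step fail as written.

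Specifically: if you place $a_{d+1}$ in the \emph{interior} of $\operatorname{conv}\{a_0,\dots,a_d\}$ and perform a stellar subdivision, then $\{g_{d+1}\ge g_k\ \forall k\}$ is bounded and every other region $\{g_j=\max\}$ can only \emph{shrink}. But examine the web operation: after cutting the non-compact chamber $D$ by $\mathcal H$ and drawing the $n$ new outer rays through $c_{N+1}$, those rays leave $\overline D$ and enter the neighbouring non-compact chambers (since $c_{N+1}$ lies inside the new compact piece). Hence the new non-compact chambers are \emph{not} contained in the old ones --- the large new non-compact chamber absorbs the unbounded part of $D$ together with wedges carved from its neighbours. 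No interior placement of $a_{d+1}$ can produce this.

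The correct move is the opposite one: place $a_{d+1}$ \emph{outside} the current simplex so that the old extreme point $a_{i_0}$ (with $D=\{g_{i_0}=\max\}$) becomes interior. Then $\{g_{d+1}=\max\}$ is the new large unbounded chamber and $\{g_{i_0}=\max\}$ shrinks to the new compact one. The function $g_{d+1}$ is determined by the $n$ linear conditions $g_{d+1}=g_{i_0}$ on $\mathcal H$ together with the single condition $g_{d+1}(c_{N+1})=g_k(c_{N+1})$ for the $n$ remaining extreme indices $k\ne i_0$. That these last $n$ conditions are mutually consistent (all giving the same value) is precisely the content of the ``centered'' hypothesis: $c_{N+1}=c_N-tv$ lies on the infinite line of the outer ray opposite $D$, on which all the $g_k$ with $k$ extreme, $k\ne i_0$, already coincide. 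This is the genuine role of the centered condition, which your parameter-count heuristic does not detect. One must then still verify that the resulting $\{g_{d+1}=\max\}$ does not meet the earlier compact chambers, but that is a secondary check once the construction is pointed the right way.
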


\begin{proof}
	We shall prove the following statement: given a centered simplicial web $A$ with $N \geq 1$ compact chambers in $\R^n$, there exists a centered simplicial web $B$ with $N-1$ compact chambers in $\R^{n+1}$ such that the intersection of $\R^n \cong \R^n\times\{0\}$ with $B$ equals to $A$.  Then by applying this statement $N$ times, we obtain a web $B^{(N)}$ with zero compact chamber in $\R^{n+N}$ whose intersection with $\R^n \times \{0\}$ gives $A$.  By an affine linear isomorphism on $\R^{n+N}$, $B^{(N)}$ is identified with the fan of $\bP^{n+N}$.  The required map $L$ is given by the composition of the inclusion $\R^n \times \{0\} \subset \R^{n+N}$ with this linear isomorphism.
	
	The above statement is proved by induction on $N$.  First consider the case $N=1$.  We take a point $V$ away from $\R^n \times \{0\} \subset \R^{n+1}$.  Then we take a cone at $V$ over the compact simplicial chamber of $A$.  Moreover,
	the line joining $V$ with the given center of $A$ intersects with the complement of the cone and produces a ray emanated from $V$.  This gives a simplicial web in $\R^{n+1}$ with no compact chamber, whose intersection with $\R^n \times \{0\}$ is exactly $A$.  (See Figure \ref{fig:trop-01}.)
	
	Now suppose it is true for $N$.  Consider a centered simplicial web $A$ in $\R^n \cong \R^n\times\{0\}$ with $N+1$ ordered compact chambers.  We can take away the $(N+1)$-th compact chamber $C=C_{N+1}$ (and forget the corresponding center $c_{N+1}$) and obtain a centered simplicial web $A'$ with $N$ compact chambers.  By inductive hypothesis, there exists a centered simplicial web $B'$ in $\R^{n+1}$ with $N-1$ compact chambers whose intersection with $\R^n\times\{0\}$ gives $A'$.
	
	The compact chamber $C$ of $A$ is contained in a non-compact chamber $C'$ of $A'$, which is the intersection of $\R^n\times\{0\}$ with a non-compact chamber $D'$ of $B'$.  
	$C'$ is opposite to an outer ray $R$ of $A'$, which is the intersection of $\R^n\times\{0\}$ with a non-compact 2-plane $P$ of $B'$.
	Note that the last two centers $c^A_N$ and $c^A_{N+1}$ of $A$ are contained in the line of $R$, and hence contained in the infinite 2-plane of $P$.  Consider the two rays of $B$ that are adjacent to $P$.  Denote the one which is opposite to the chamber $D'$ by $L$.  The other one is denoted by $L'$, which must be adjacent to $D'$.
	
	Now we construct a web $B$ in $\R^{n+1}$ whose intersection with $\R^n\times\{0\}$ gives $A$.  A point $V$ in the relative interior of the ray $L'$ is taken to be a new vertex.  Consider the line passing through $V$ and the last center $c^A_{N+1}$.  This line lies in the infinite 2-plane of $P$.  Thus for a generic choice of $V$, it must intersect with
	the infinite line of $L$ at a point, which we shall define as the new center $c^B_N$ for $B$.  $V$ is taken far away enough in the ray $L'$ so that the intersection point $c^B_N$ equals to $c^B_{N-1} - v$ for some vector $v$ in the direction of $L$.
	
	Consider the $n$ outer rays of $A'$ that are adjacent to the chamber $C'$.  They are the intersections of $\R^n\times\{0\}$ with the corresponding $n$ outer-2-planes of $B'$ that are adjacent to $L'$.  The last chamber $C$ of $A$ is formed by the hyperplane through the vertices taken in the relative interior of the $n$ outer rays of $A'$.	
	The lines joining $V$ to these vertices in $A$ lie in the outer-2-planes of $B'$, and hence intersect with the corresponding outer rays of $B'$ at certain points, which we take to be new vertices of $B$.  The hyperplane through $V$ and these new vertices bounds a new chamber.
	The lines joining $c^B_N$ with the new vertices produce the outer rays of the new web $B$.  This gives $B$ whose intersection with $\R^n\times\{0\}$ equals to $A$.
\end{proof}

\begin{figure}[htb!]
	\includegraphics[scale=0.5]{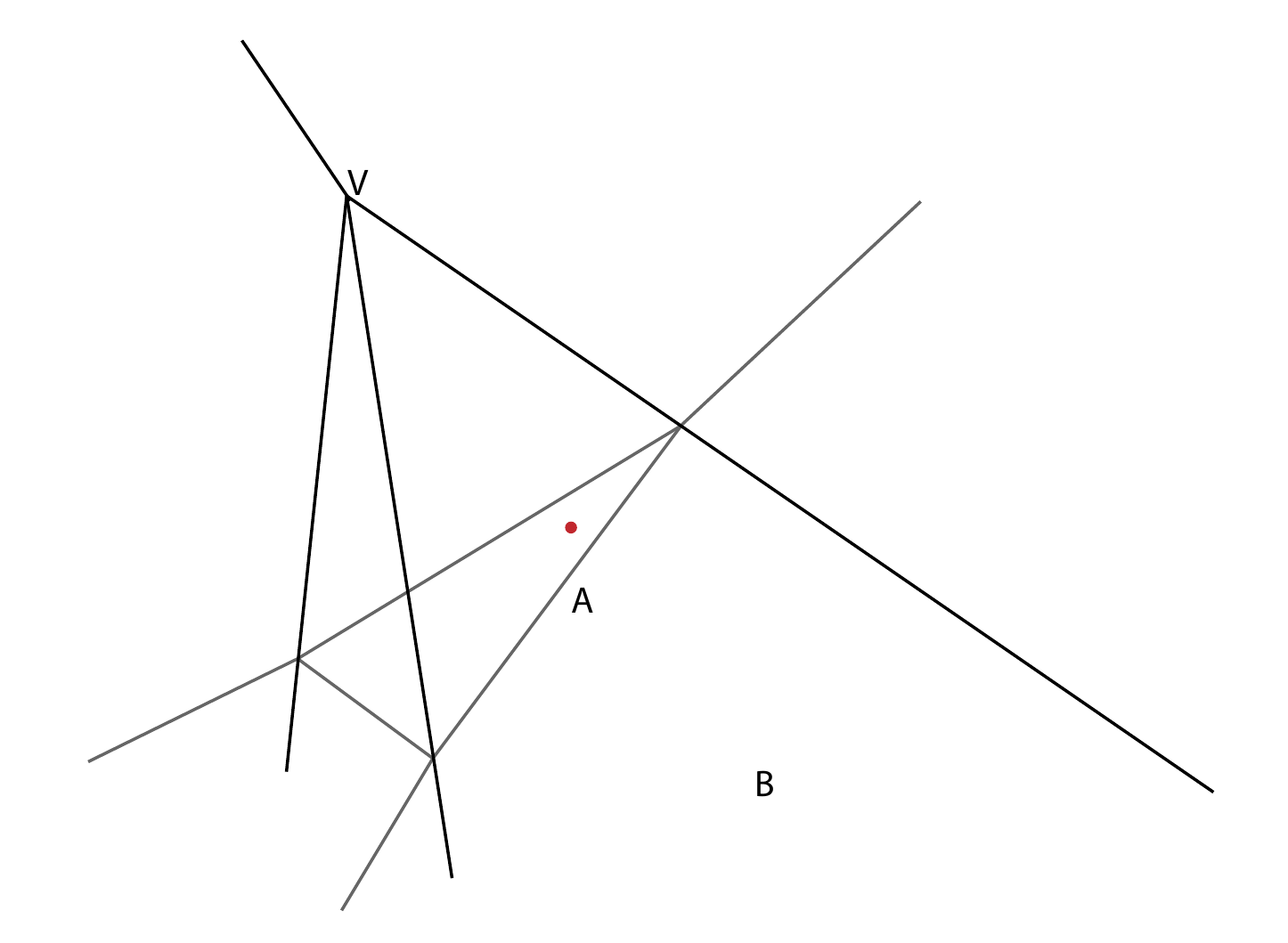}
	\caption{Construction of a tropical web with zero compact chamber whose intersection with a hyperplane equals to a given tropical web with one compact chamber.}
	\label{fig:trop-01}
\end{figure}

The inductive step in the above proof is illustrated by Figure \ref{fig:trop-N+1}.

\begin{figure}[htb!]
	\includegraphics[scale=0.3]{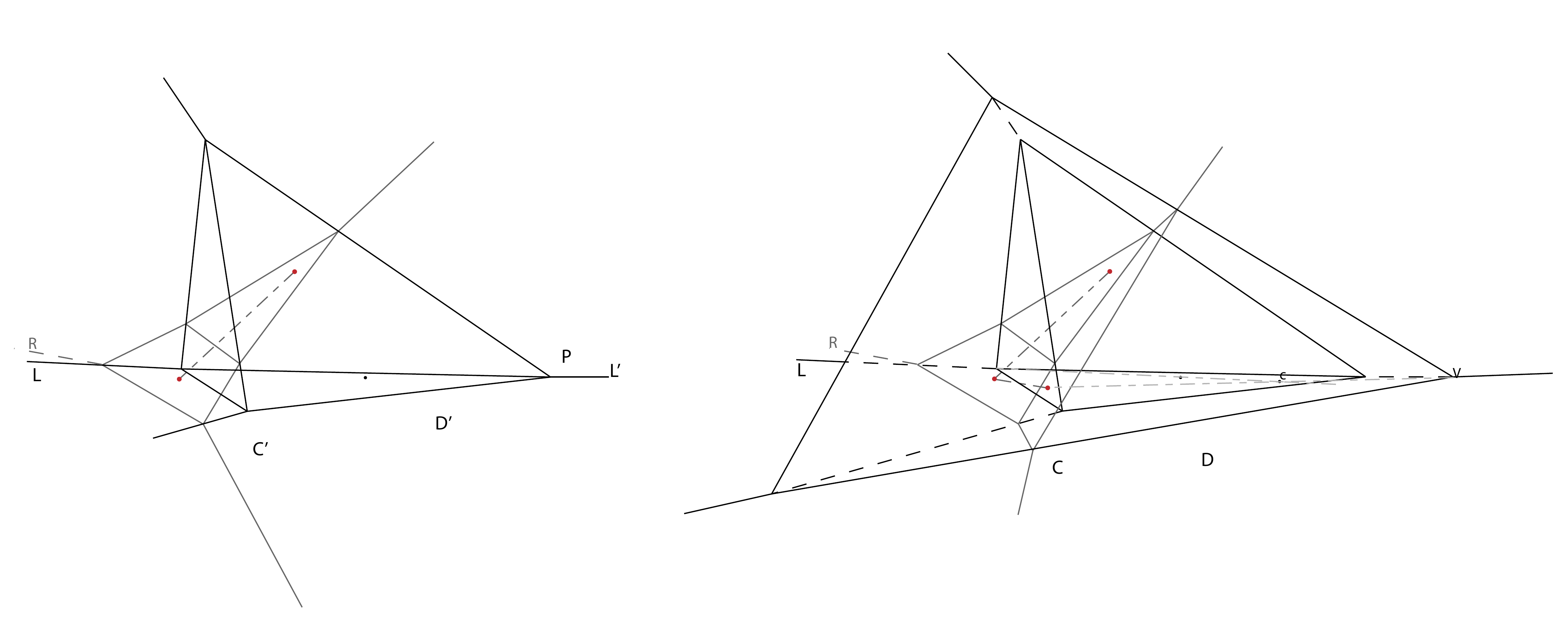}
	\caption{Construction of a tropical web with $3$ compact chambers whose intersection with a hyperplane equals to a given tropical web with $2$ compact chambers.}
	\label{fig:trop-N+1}
\end{figure}

Next, we consider polytopes rather than simplices and the corresponding webs formed from polytopes.  Motivated from the well-known fact below, we define a centered polyhedral web to be the intersection of a centered simplicial web with an affine subspace.

\begin{prop} \label{prop:polytope-simplex}
	For a polytope $P$ with $m$ facets in $\R^n$ where $m > n+1$, there exists a simplex $S$ in $\R^{m-1}$ 
	such that $S \cap (\R^{n} \times \{0\}) = P$ (where $\R^n$ is identified with $\R^{n} \times \{0\}$).
\end{prop}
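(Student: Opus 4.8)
The plan is to exhibit $P$ directly as a transverse slice of the standard $(m-1)$-simplex, by means of the ``slack variable'' embedding together with one rescaling normalization. I may assume $P$ is full-dimensional in $\R^n$ (otherwise one works inside its affine hull) and is given by an irredundant facet presentation $P=\{x\in\R^n:\langle u_j,x\rangle\le c_j,\ j=1,\dots,m\}$ with every $u_j\neq 0$. Since $P$ is bounded and nonempty, its recession cone $\{y:\langle u_j,y\rangle\le 0\ \text{for all }j\}$ is $\{0\}$; dually this says that $u_1,\dots,u_m$ positively span $\R^n$, and hence (a standard consequence) that there exist $\lambda_j>0$ with $\sum_{j=1}^m\lambda_j u_j=0$. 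Replacing each pair $(u_j,c_j)$ by $(\lambda_j u_j,\lambda_j c_j)$ — which defines the very same half-space — I may therefore assume the normalization $\sum_{j=1}^m u_j=0$. This normalization is the one genuine idea in the argument: without it the construction below would only present $P$ as a slice of the positive orthant, i.e.\ of a simplicial cone, rather than of an honest simplex.

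Next I would consider the affine map $\Phi\colon\R^n\to\R^m$, $\Phi(x)=\bigl(c_j-\langle u_j,x\rangle\bigr)_{j=1}^m$, recording the slacks of the defining inequalities. Its linear part is injective because the $u_j$ span $\R^n$, so $\Phi$ is an affine embedding, and by construction $\Phi^{-1}\bigl(\R^m_{\ge 0}\bigr)=P$, whence $\Phi(P)=\Phi(\R^n)\cap\R^m_{\ge 0}$. The normalization $\sum_j u_j=0$ gives $\sum_{j=1}^m\bigl(c_j-\langle u_j,x\rangle\bigr)=\sum_{j=1}^m c_j=:\gamma$ for every $x$, so $\Phi(\R^n)$ lies in the affine hyperplane $H=\{z\in\R^m:\sum_j z_j=\gamma\}$; and evaluating at an interior point of $P$, where all slacks are strictly positive, shows $\gamma>0$. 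Consequently
$$\Delta:=\bigl\{z\in\R^m:z_j\ge 0\ \text{for all }j,\ \textstyle\sum_{j=1}^m z_j=\gamma\bigr\}=\gamma\,\Delta^{m-1}$$
is a genuine $(m-1)$-simplex lying in the $(m-1)$-dimensional affine space $H$ (its vertices $\gamma e_1,\dots,\gamma e_m$ being affinely independent), and since $\Phi(\R^n)\subseteq H$ we get $\Phi(P)=\Phi(\R^n)\cap\Delta$.

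Finally I would transport this picture into $\R^{m-1}$. Choose any affine isomorphism $\Theta\colon H\isoto\R^{m-1}$ carrying the $n$-dimensional affine subspace $\Phi(\R^n)$ onto $\R^n\times\{0\}$, and then post-compose with a suitable affine automorphism of $\R^{m-1}$ preserving $\R^n\times\{0\}$ so as to arrange that $\Theta\circ\Phi$ is the identity under the identification $\R^n=\R^n\times\{0\}$. Setting $S:=\Theta(\Delta)$, which is a simplex in $\R^{m-1}$ since $\Theta$ is affine, one obtains
$$S\cap(\R^n\times\{0\})=\Theta(\Delta)\cap\Theta\bigl(\Phi(\R^n)\bigr)=\Theta\bigl(\Delta\cap\Phi(\R^n)\bigr)=\Theta\bigl(\Phi(P)\bigr)=P,$$
as required. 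The only step calling for a little care is this last one — arranging that the slice equals $P$ literally rather than merely up to an affine change of coordinates on $\R^n$ — which is precisely why the residual freedom in the choice of $\Theta$ is kept; everything else is routine and no serious obstacle arises. (The hypothesis $m>n+1$ is used only to guarantee $m-1>n$; in the borderline case $m=n+1$ the polytope $P$ is already a simplex and there is nothing to prove.)
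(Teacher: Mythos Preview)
Your proof is correct. The key step---rescaling the facet normals so that $\sum_j u_j = 0$, which forces the slack embedding $\Phi$ to land in an affine hyperplane and hence in a genuine simplex rather than merely the positive orthant---is carried out cleanly, and the final rectification by an affine isomorphism is handled carefully.

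The paper takes a closely related but dual route: it assumes $0\in\mathrm{int}(P)$, forms the polar polytope $P^\vee$ with vertices the facet normals $\nu_1,\dots,\nu_m$, observes the obvious affine surjection $S^\vee\to P^\vee$ from the standard simplex, and then dualizes to obtain the injection $\R^n\hookrightarrow\R^{m-1}$ sending $P$ into $S=(S^\vee)^\vee$. The paper's projection of $S^\vee$ along $-\sum_i\epsilon_i$ to land in $\R^{m-1}$ plays exactly the role of your normalization $\sum_j u_j=0$: both devices kill the one-dimensional redundancy in the facet data. Your primal slack-variable argument is arguably more elementary and self-contained (it does not invoke polar duality and does not require translating $P$ so that $0$ is interior), while the paper's dual version is more conceptual and makes the simplex $S$ appear canonically as $(S^\vee)^\vee$. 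Either way the content is the same standard fact.
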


The simplex $S$ in Proposition \ref{prop:polytope-simplex} can be constructed as follows.  Without loss of generality suppose $0 \in P$. Consider the dual polytope $P^\vee = \{\nu \in (\R^n)^*: (\nu,v) \leq 1 \textrm{ for all } v \in P \}$, which is the convex hull of its vertices $\nu_i$ for $i=1,\ldots,m$.  Then we have a surjective map from the standard simplex $S^\vee = \{\sum_i a_i \epsilon_i^* \in (\R_{\geq 0}^m)^*: \sum_i a_i = 1\}$ (where $\{\epsilon_i^*: i=1,\ldots,m\}$ is the standard basis) to $P^\vee$ by sending $\epsilon_i^*$ to $\nu_i$.  $S^\vee$ in the affine subspace $\{\sum_i a_i = 1\}$ can be identified as a simplex in $(\R_{\geq 0}^{m-1})^*$ by the projection along the direction $-\sum_i \epsilon_i$.  Then the dual linear map gives the desired linear injection $\R^n \to \R^{m-1}$ which sends $P$ into $S$.  By composing with a linear isomorphism, the image of $\R^n$ can be made to be $\R^n \times \{0\}$.

As a result, a centered polygonal web with one compact chamber (which is constructed by taking a polygon with a chosen center $c$ and outer rays at vertices whose lines pass through $c$) can be obtained as an intersection with $\R^n \times \{0\}$ of a centered simplicial web with one compact chamber in $\R^{m-1}$ (where $m$ is the number of non-compact chambers).  See the left of Figure \ref{fig:poly-web-1-chamber} for an example.

\begin{figure}[htb!]
	\includegraphics[scale=0.4]{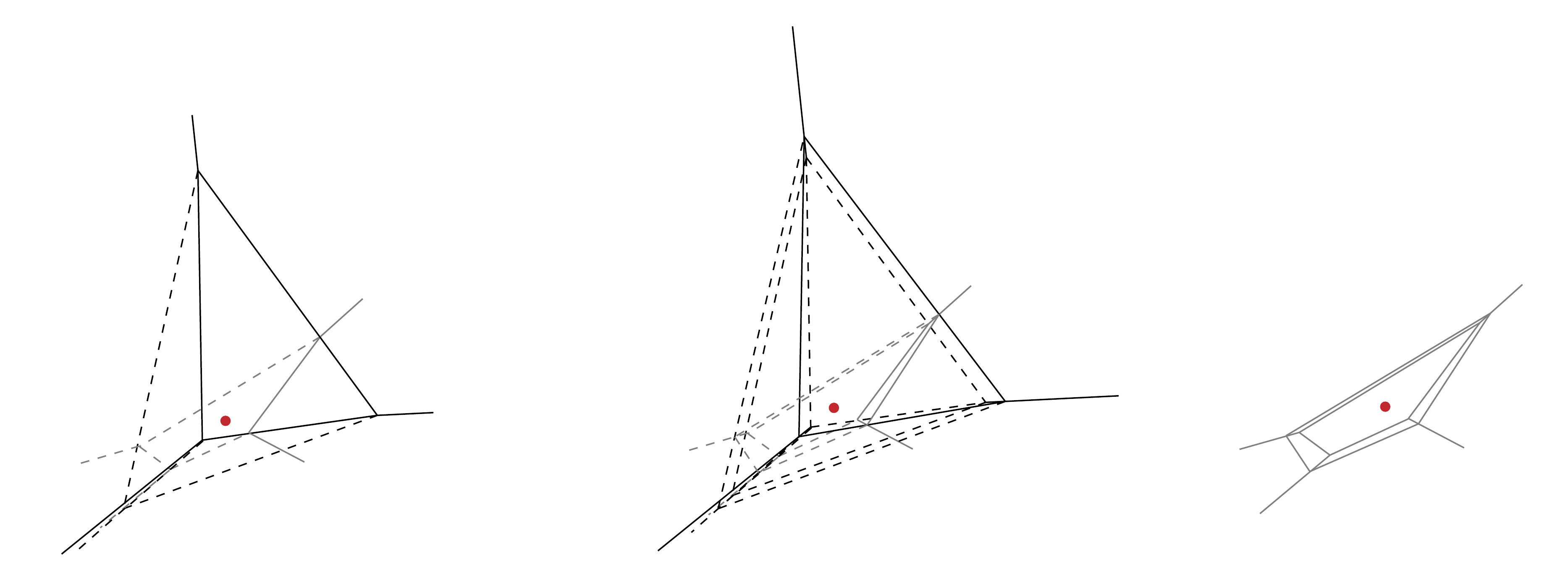}
	\caption{The left shows an example of a polyhedral web with one compact chamber, which is given as an intersection of a simplicial web with a subspace.  The right two figures show a concentric simplicial and a polygonal web.}
	\label{fig:poly-web-1-chamber}
\end{figure}

The following degenerate configuration will be helpful.  In Definition \ref{def:web}, suppose we take all the centers to be the same.  Moreover, suppose the new hyperplane introduced to bound a new chamber is allowed to intersect the outer rays at the original vertices (rather than their relative interior).   Then we can construct the following configuration.

\begin{defn}
	Let's take $(n+1)$ rays emanated from $0 \in \R^n$, such that any $n$ of them are linearly independent.  For each ray, we take a sequence of distinct points $V_{i,k}$ (where $i=1,\ldots,n+1$ is indexing the ray) such that $V_{i,k+1}-V_{i,k}$ is pointing in the ray direction.  Then for each $k>0$, we take a simplex with vertices at $V_{i,k}$ for $i=1,\ldots,n+1$.  This gives a polyhedral decomposition of $\R^n$.  This is called a concentric simplicial web.
\end{defn}

By taking an intersection of the above degenerate simplicial configuration with a subspace (that passes through the center $0$), we get a configuration made from a sequence of polytopes whose vertices lie in a fixed collection of $p$ rays, where $p$ is the number of vertices of each polytope.
We call this a concentric polyhedral web.  See the right of Figure \ref{fig:poly-web-1-chamber}.


\subsection{Proof of the approximation theorem}

We are now ready to prove Theorem \ref{thm:app}.

\begin{proof}[Proof of Theorem \ref{thm:app}]
	For any $\delta > 0$, we can take a concentric polyhedral web $B$ in $\R^{d_1}$, such that for every chamber $C$ of $B$, $C \cap K$ is contained in a $\delta$-ball.  $B$ is constructed as follows.  Without loss of generality, let $K \ni 0$.  First, we take a polytope $P$ that lies in a $\delta$-ball centered at $0 \in \R^{d_1}$.  $P$ induces a subdivision on the unit sphere $\bS^{d_1-1}$ by projecting its boundary strata onto $\bS^{d_1-1}$.  $P$ is taken with sufficiently many vertices such that the induced subdivision on the unit sphere $\bS^{d_1-1}$ lies in a $\delta'$-ball for a chosen $\delta'$.
	By Proposition \ref{prop:polytope-simplex}, $P = S \cap (\R^{d_1} \times \{0\})$ for some simplex $S$ in $\R^{m-1}$ where $m$ is the number of facets of $P$.  We take $0 \in S$ to be the center.  Then we take rays from $0$ through the vertices of $S$ and construct a concentric simplicial web.  Since $K$ is compact, by taking $\delta'$ sufficiently small, and the sequences of vertices in the rays sufficiently close to each other, the resulting concentric polyhedral web $B$ can be made such that every chamber intersects $K$ in a $\delta$-ball.
	
	Next, we take a centered simplicial web $A$ in $\R^{m-1}$ whose centers are chosen sufficiently close to each other, and the vertices in the inductive steps are taken such that $A \cap K$ is sufficiently close $B \cap K$. Namely, for every chamber $C$ of $A$, $C \cap K$ lies in the $\delta$-neighborhood of $C' \cap K$ for the corresponding chamber $C'$ of $B$.  In particular, $C \cap K$ is contained in a $2\delta$-ball.
	
	By Theorem \ref{thm:web}, there exists $L': \R^{m-1} \to \R^{d_2}$ such that $A$ is the $L'$-preimage of the fan $\Sigma_{\bP^{d_2}}$.  By composing $L'$ with $\R^{d_1} \times \{0\} \subset \R^{m-1}$, we obtain $L: \R^{d_1} \to \R^{d_2}$ such that for every maximal cone $S_i$ of $\Sigma_{\bP^{d_2}}$, $L^{-1}(S_i) \cap K$ lies in a $(2\delta)$-ball.
	
	Since $f$ is uniformly continuous in $K$, for every $\epsilon > 0$, $\delta$ can be taken such that $|f(x)-f(y)|<\epsilon$ for every $x,y \in K$ lying in a $2\delta$-ball.  In particular, we have a step function $s = \sum_{C} r_C \delta_C$ supported over $A$ (where $\delta_C(x) = 1$ for $x \in C$ and $0$ otherwise, and $C$ are chambers of $A$) such that $\|f - s\|_{L^2(K)} < \epsilon \sqrt{\mathrm{Vol}(K)}$.
	
	We have the step function $\sigma_\infty$ which sends the interior of the maximal cones $S_i$ of $\Sigma_{\bP^{d_2}}$ to $e_i$ for $i=0,\ldots,d_2$, where $e_0 = 0$.  (See Example \ref{ex:Pd}.)   The cone $S_i$ corresponds to chambers $L^{-1}(S_i)$ of $A$ under $L$.  Since $\{e_i-e_0: i=1,\ldots,d_2\}$ forms a basis, there exists a unique affine linear map $W_2: \R^{d_2} \to \R^{d_3}$ which sends $e_i$ to $r_{L^{-1}(S_i)}$ for all $i=0,\ldots,d_2$.  Thus $s = W_2 \circ \sigma_\infty \circ L$.
	
	Finally, by Corollary \ref{cor:sigma_inf}, there exists $t \gg 0$ such that $|W_2 \circ \sigma_t - W_2 \circ \sigma_\infty|<\epsilon / 2$ on $L(K)-U$, where $U$ is an arbitrary open neighborhood of the codimension-one strata of $\Sigma_{\bP^d_2}$.  Moreover, $|W_2 \circ \sigma_t - W_2 \circ \sigma_\infty|$ is bounded.  Hence by taking $\mathrm{Vol}(L^{-1}(U) \cap K)$ sufficiently small, we have $\|W_2 \circ \sigma_t \circ L - W_2 \circ \sigma_\infty \circ L\|_{L^2(K)} < \epsilon$.  In conclusion, $\|W_2 \circ \sigma \circ (tL) - f\|_{L^2(K)} \leq \| W_2 \circ \sigma \circ (tL) - W_2 \circ \sigma_\infty \circ L \|_{L^2(K)} + \|W_2 \circ \sigma_\infty \circ L - f\|_{L^2(K)}$ can be made arbitrarily small.
\end{proof}

\bibliographystyle{amsalpha}
\bibliography{geometry}	
\end{document}